\documentclass[11pt]{article}


\usepackage[utf8]{inputenc}

\usepackage[titletoc,title]{appendix}
\usepackage{authblk}
\usepackage{caption}
\usepackage{url}
\usepackage{amsthm}
\usepackage{float}
\usepackage{hhline}
\usepackage{multicol}
\setlength{\columnsep}{1cm}
\usepackage{subfiles}
\usepackage{enumerate}
\usepackage[shortlabels]{enumitem}
\usepackage{empheq}
\usepackage{tikz-cd}
\usepackage{stmaryrd}

\usepackage{thmtools}
\usepackage{thm-restate}

\usepackage{ulem}
\usepackage{mathrsfs}

\newcommand*{\charles}[1]{{\textcolor{red}{[\textbf{CA:} #1]}}}

\newcommand*{\david}[1]{{\textcolor{green}{[\textbf{DCS:} #1]}}}

\usepackage{amsmath,amsfonts,amssymb,mathtools}

\usepackage{graphicx,float}



\usepackage{geometry}
\geometry{margin=80pt}


\usepackage{comment}

\usepackage{hyperref}
\usepackage[capitalize]{cleveref}

\hypersetup{
pdftitle={Critical points of the distance function to a generic submanifold},
citecolor=red,
colorlinks=true,
linkcolor=blue
}



\bibliographystyle{alpha}

\author[1]{Charles Arnal$^*$}
\author[2]{David Cohen-Steiner$^*$}
\author[3]{Vincent Divol$^*$}
\affil[1]{Université Paris-Saclay, CNRS, Inria, Laboratoire de Mathématiques d'Orsay}
\affil[2]{Université Côte d’Azur, Inria Sophia Antipolis-Mediterranée}
\affil[3]{CEREMADE, Université Paris-Dauphine, Université PSL}
\newcommand{\eps}{\varepsilon}
\newcommand{\dotp}[1]{\langle #1 \rangle}
\newcommand{\op}[1]{\left\| #1  \right\|_{\mathrm{op}}}
\newcommand{\p}[1]{\left(#1 \right)}

\newcommand{\Med}{\mathrm{Med}}
\newcommand{\Emb}{\mathrm{Emb}}
\newcommand{\diam}{\mathrm{diam}}
\newcommand{\Conv}{\mathrm{Conv}}
\newcommand{\Vol}{\mathrm{Vol}}

\newcommand{\dd}{\mathrm{d}}
\newcommand{\id}{\mathrm{id}}
\renewcommand{\Im}{\mathrm{Im}}

\newcommand{\defeq}{\vcentcolon=}
\newcommand{\eqdef}{=\vcentcolon}

\newcommand{\Z}{{\mathbb Z}}

\newcommand{\R}{{\mathbb R}}
\newcommand{\Q}{{\mathbb Q}}

\newcommand{\A}{{\mathsf A}}

\newcommand{\M}{{\mathcal M}}
\newcommand{\N}{{\mathbb N}}

\newcommand{\U}{{\mathcal U}}

\renewcommand{\S}{{\mathsf S}}
\renewcommand{\M}{{\mathsf M}}

\newcommand{\cA}{\mathcal{A}}

\newcommand{\cJ}{\mathcal{J}}

\newcommand{\cU}{\mathcal{U}}

\declaretheorem[name=Theorem,numberwithin=section]{thm}
\newtheorem{cor}[thm]{Corollary}

\newtheorem{definition}[thm]{Definition}

\newtheorem{remark}[thm]{Remark}
\newtheorem{example}[thm]{Example}
\newtheorem{proposition}[thm]{Proposition}
\newtheorem{lemma}[thm]{Lemma}

\newtheorem{claim}{Claim}

\newcommand{\Pall}{\hyperref[P1]{(P1-4)}}

\begin{document}
\title{Critical points of the distance function to a generic submanifold}
\maketitle

\def\thefootnote{*}\footnotetext{The authors contributed equally to this work.}\def\thefootnote{\arabic{footnote}}





\begin{abstract}
In general, the critical points of the distance function $d_{\mathsf M}$ to a compact submanifold $\mathsf M \subset \mathbb{R} ^D$ can be poorly behaved.
In this article,  we show that this is generically not the case by listing regularity conditions on the critical and $\mu$-critical points of a submanifold and by proving that they are generically satisfied and stable with respect to small $C^2$ perturbations.  
More specifically, for any compact abstract manifold $M$, the set of embeddings $i:M\rightarrow \mathbb R^D$ such that the submanifold $i(M)$ satisfies those conditions is open and dense in the Whitney $C^2$-topology.
When those regularity conditions are fulfilled, we prove  that the distance function to $i(M)$ satisfies Morse-like conditions and that  the critical points of the distance function to an $\varepsilon$-dense subset of the submanifold (e.g., obtained via some sampling process) are well-behaved. 
 We also provide many examples that showcase how the absence of these conditions allows for pathological situations. 
\end{abstract}

\tableofcontents

\section*{Acknowledgements}
We thank Dominique Attali, Frédéric Chazal, Herbert Edelsbrunner, Jisu Kim and Mathijs Wintraecken for helpful discussions. We are particularly grateful to André Lieutier for his instrumental suggestions.

\section{Introduction}

Questions regarding the distance function $d_\M : x \mapsto \inf\{d(x,y) : \; y\in \M \}$ to a submanifold $\M \subset \R^ D$ stand at the intersection of a variety of domains; they feature preeminently in computational geometry \cite{Chazal_CS_Lieutier_OGarticle}, statistics on manifolds \cite{niyogi2008finding, aamari2019estimating, aamari2023optimal} and topological data analysis \cite{edelsbrunner2022computational}, and their study frequently involves tools from classical differential geometry as well \cite{yomdin1981local, mather1983distance}.
In this article, we combine notions and methods from computational geometry and transversality theorems \textit{à la} Thom to understand the behaviour of the critical points of the distance function to a generic submanifold of $\R^D$, and we show that they behave very nicely. We also explore some consequences in terms of sampling of the submanifold and Morse-like behaviour of $d_\M$.

\medskip

\textbf{The distance function and its critical points:} 
In 2004, A. Lieutier \cite{lieutier2004any} proposed a generalized notion of gradient for the distance function $d_\S$ to a compact set $\S\subset \R^D$. Let $x\in \R^D$ and $\pi_\S(x)$ be the set of all projections of $x$ on $\S$,  that is 
\begin{equation}
   \forall y\in \S,\quad  y\in \pi_\S(x) \Longleftrightarrow d_\S(x) = d(x,y).
\end{equation}
 The generalized gradient of $d_\S$ at $x\not\in \S$ is given by
\begin{equation}
    \nabla d_\S(x) = \frac{x-m(\pi_\S(x))}{d_\S(x)},
\end{equation}
where $m(\sigma)$ is the center of the smallest enclosing ball of a non-empty bounded set $\sigma$. This is illustrated in Figure \ref{fig:def_generalized_gradient}. The gradient is extended by $0$ on $\S$, so that $\nabla d_\S$ is defined on $\R^D$, and with this definition the norm $\|\nabla d_{S} \|$ of the generalized gradient is lower semicontinuous (see \cite{lieutier2004any}).
The \textit{medial axis} $\Med (\S)$ is the set of points $z\in \R^ D$ such that $\pi_\S(z)$ is of cardinality at least $2$, i.e.~the points that do not have a unique projection on $\S$. Equivalently, $z\in \Med(\S)$ if and only if $\|\nabla d_{S}(z) \| < 1$ and $z\not \in \S$.

\begin{figure}
    \centering
    \includegraphics[width=0.6\textwidth]{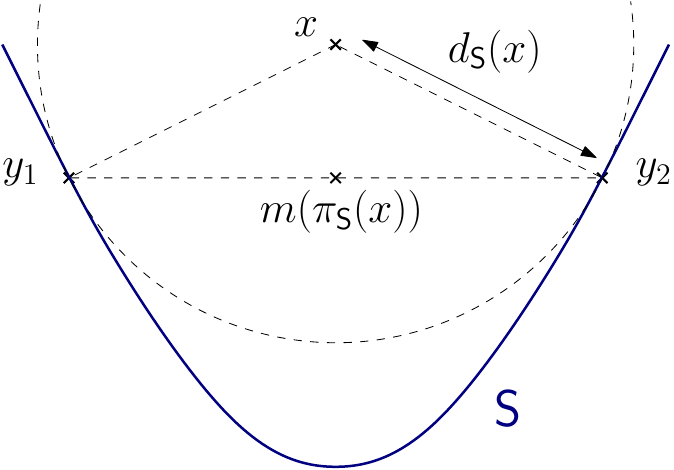}
    \caption{A point $x\in \Med(\S)$ with set of projections $\pi_{\S}(x)=\{y_1,y_2\}$.}
    \label{fig:def_generalized_gradient}
\end{figure}

A \textit{critical point} for the distance function is a point $z\in \R^D\backslash \S$ with $\nabla d_\S(z)=0$, in which case $ d_\S(z)$ is called a \textit{singular value}; we let $Z(\S)$ be the set of critical points for the distance function to $\S$, which we simply call critical points of $\S$. These will be our main object of study.
This definition is generalized as follows: for $\mu \in [0,1]$, one defines the set of \textit{$\mu$-critical} points as
\begin{equation}
    Z_\mu(\S) := \{z\in \R^D\backslash \S : \; \|\nabla d_\S(z)\|\leq \mu\}.
\end{equation} 

The generalized gradient and the associated notions of criticality have many uses in computational geometry and data analysis. Among other applications, they lead to Morse-theoretic statements --   it is for example shown in \cite{GroveCriticalPoints} that two offsets  $\S^{a} = \{x\in\R^D : \; d_\S(x)\leq a\}$ and $\S^b$ are isotopic whenever the segment $[a,b]$  does not contain any critical values of the distance function or $0$.
In topological data analysis, the family of offsets $(\S^t)_{t\geq 0}$ allows one to compute the  \v Cech persistence diagram of $\S$ \cite{chazal2014persistence}, which is routinely used in computations to analyze the topology of the set $\S$, with numerous applications in machine learning (see e.g. \cite{turkes2022effectiveness} or the review \cite{otter2017roadmap} and references therein).
The set of critical values of the distance function corresponds exactly to the coordinates of the points in the \v Cech persistence diagram, and understanding their behavior in different contexts is a central research topic.
Among other uses, the generalized gradient is also employed to build homotopies between an open bounded set and its medial axis \cite{lieutier2004any}, to give guarantees on the homology groups of an approximation of a given shape \cite{chazal2005weak}, or to control the convexity defect function of a set \cite{attali2011vietoris}.

Despite its importance, the set of critical points $Z(\S) $ is still poorly understood.
This is partly because without further assumptions, the set $Z(\S) $  can be almost arbitrarily wild: consider any compact set $\A\subset \R^D$, and let $\S:=\A\times \{-1\} \cup \A\times \{1\} \subset \R^{D+1}$. Then $Z(\S)\cap (\R^D\times [-1/2,1/2]) = \left(\A \cup Z(\A) \right)\times \{0\}$, which is extremely irregular for well-chosen\footnote{One can for example let $A$ be a Cantor set.
} sets $A$.
Another difficulty is the lack of stability of $Z(\S)$ with respect to small perturbations of $\S$, even under strong regularity assumptions on $\S$. Indeed, the map $\S \mapsto Z(\S)$ is not continuous for the Hausdorff distance $d_H$, as is illustrated in Figure \ref{fig:Z_not_continuous}.
This lack of stability can be particularly disastrous in practical situations, where the shape of interest $\S$ (e.g. a submanifold or a  polyhedron) is often only accessible through a finite (possibly noisy) sample $\A$. The set $Z(\A)$ can then be very different from $Z(\S)$, which can lead, in turn, to a poor estimation of the features of interest of $\S$.

\begin{figure}
    \centering
    \includegraphics[width=0.7\textwidth]{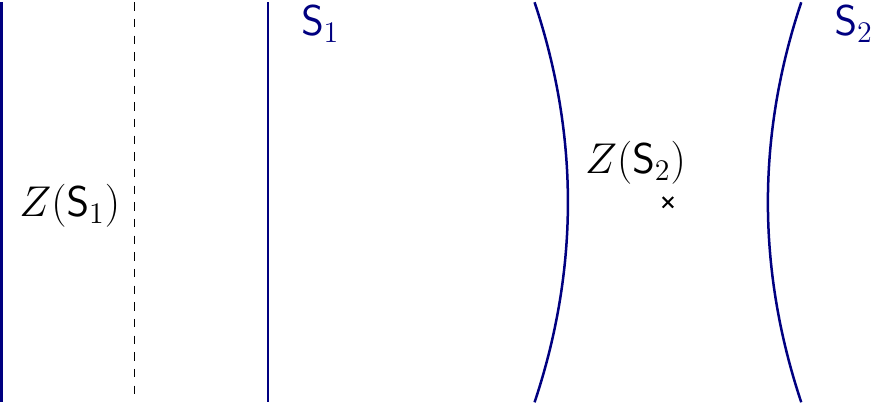}
    \caption{The Hausdorff distance between $\S_1$ and $\S_2$ can be made arbitrarily small while keeping the Hausdorff distance between $Z(\S_1)$ and $Z(\S_2)$ constant.}
    \label{fig:Z_not_continuous}
\end{figure}


The $\mu$-critical points are more stable: as shown by 
F. Chazal, D. Cohen-Steiner and A. Lieutier  in \cite{Chazal_CS_Lieutier_OGarticle}, if $\S$ and $\A$ are two compact sets with $d_H(\S,\A)\leq \eps$ and if $z\in\R^D$, then
\begin{equation}\label{eq:sampling_OG_theorem}
   \text{$z\in Z_\mu(\A) \implies$ $z$ is at distance at most $2\sqrt{\eps d_\S(z)}$ from $Z_{\mu'}(\S)$ for $\mu'= \mu+2\sqrt{\eps/ d_\S(z)}$.  }
\end{equation}
Nonetheless, this remains unsatisfactory when one is interested in actual critical points only.

The main goal of this article is to show that unlike what can be expected in general, the set of critical points
of a \textit{generic} compact $C^2$ submanifold $\M\subset \R^D$ is well-behaved, and remains stable with respect to taking a dense enough sample of $\M$.
Furthermore, both these properties are stable with respect to small $C^2$ perturbations of $\M$.
\medskip

\textbf{Regularity conditions:}
To formalize these statements, let us first clarify what we mean by ``well-behaved".
Here are four conditions that we want our submanifolds to satisfy:

\begin{enumerate}[start=1, label={(P\arabic*)}]
    \item For every $z_0\in Z(\M)$, the projections $\pi_\M(z_0)$ are the vertices of a non-degenerate simplex of $\R^D$. In particular, $z_0$ has at most  $D+1$ projections. Moreover, the point $z_0$ belongs to the relative interior
    of the convex hull of $\pi_\M(z_0)$.\label{P1}
    \item The set $Z(\M)$ is finite. \label{P2}
    \item \label{P3} For every $z_0\in Z(\M)$ and every $x_0\in \pi_\M(z_0)$, the sphere $S(z_0,d_\M(z_0))$ is non-osculating $\M$ at $x_0$, in the sense that there exist $\delta >0$ and $\alpha>0$ such that for all $y\in \M \cap B(x_0,\delta)$,
        \begin{equation}\label{eq:P3}
            \|y-z_0\|^2 \geq   \|x_0-z_0\|^2+\alpha\|y-x_0\|^2.
        \end{equation} 
    \item There exist constants $C>0$ and $\mu_0\in (0,1)$ such that for every $\mu\in [0,\mu_0)$, the set $Z_\mu(\M)$ is included in a tubular neighborhood of size $C\mu$ of $Z(\M)$, that is every point of $Z_\mu(\M)$ is at distance less than $C\mu$ from $Z(\M)$. \label{P4}
    \end{enumerate}

Properties \ref{P1} and \ref{P2} are quite self-explanatory; they control the discreteness of the set of the critical points and of their projections, and the genericity of their relative positions. Property \ref{P3} accepts many equivalent formulations, as shown in Section \ref{sec:osculation}, and is essentially a property of the curvature of $\M$ at the projection $x_0$. It is crucial to the stability of the projections of the critical points, as will become apparent later.
Finally, Property \ref{P4} controls the position of the $\mu$-critical points relative to the critical points.  It is equivalent to a local condition on the critical points and their projections, as shown in Section \ref{sec:BSP}; somewhat surprisingly, it also plays a role in the stability of the critical points themselves, as can be seen in Section \ref{sec:C2_stability}. 

\medskip

\textbf{Genericity:}
We also need definitions that appropriately capture the idea of genericity for a manifold.
Consider an abstract $C^r$ manifold $M$ for some $r\in \N\cup \{\infty\}$, and let $\Emb^k(M,\R^D)$ be the set of $C^k$ embeddings from $M$ to $\R^D$. For $0\leq l\leq k$, we can endow $\Emb^k(M,\R^D)$ with the Whitney $C^l$-topology, see \Cref{sec:notations_definitions}.
Intuitively, two embeddings $i,i'\rightarrow \R ^D$ are close for the Whitney $C^l$-topology if their first $l$ differentials (including their ``0"th differentials, i.e.~their values) are uniformly close over $M$ when expressed in the coordinates of a fixed, finite atlas of $M$.
In what follows, we are interested in showing that certain properties are such that for any $C^k$ compact manifold $M$, the set of embeddings $i $ such that $i(M)$ verifies the property is dense and open in $\Emb^k(M,\R^D)$ for the Whitney $C^l$-topology (for some $l\leq k \in \N$).
Openness captures the idea that the property should be stable with respect to small perturbations; density captures the idea that most submanifolds should satisfy the condition, as any submanifold can be made to satisfy it with an arbitrarily small modification.
The conjunction of openness and density further reinforces this idea; by contrast, density alone can result in counter-intuitive situations, as $\Q$ is dense in $\R$, yet few would claim that most real numbers are rational.\footnote{This nuance can be particularly important for practical applications, as by nature computers can only manipulate rational numbers: one could imagine some algorithm that is guaranteed to perform well on a dense set of irrational inputs, yet fails for any rational input.}

\medskip

\textbf{The Genericity and Stability Theorems:}
We can now formulate our first main result, i.e.~that generic $C^2$ submanifolds have well-behaved critical points and $\mu$-critical points in the sense defined above.

\begin{thm}[The Genericity Theorem]\label{thm:generic}
    Let $M$ be a compact $C^k$ manifold for some $k\in \N\cup\{\infty\}$ with $k\geq 2$. Then the set of $C^k$ embeddings $i:M\to \R^D$ such that $i(M)$ satisfies Conditions \Pall\ is dense for the Whitney $C^k$-topology and open for the Whitney $C^2$-topology in $\Emb^k(M,\R^D)$.
\end{thm}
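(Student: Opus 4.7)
My plan is to decouple the two assertions: I would establish density in the Whitney $C^k$-topology by a multi-jet transversality argument in the spirit of Thom, and establish openness in the Whitney $C^2$-topology by local implicit-function-theorem arguments anchored at the (finitely many, by (P2)) critical points. It seems convenient to treat (P1), (P2) and (P3) jointly as a single transversality condition at the level of $2$-jets, and then to bootstrap (P4) from the other three via the local reformulation promised in Section~\ref{sec:BSP}.

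\emph{Density.} For each $k \in \{2,\ldots,D+1\}$ I would work in the multi-jet bundle $J^2_k(M,\R^D)$ of $k$-tuples of $2$-jets based at distinct points of $M$, together with an extra $\R^D$ factor for a candidate center $z$. The ``critical configuration'' locus is cut out by the closed conditions $z - i(p_j) \perp di(T_{p_j}M)$ for each $j$ and $\|z-i(p_1)\| = \cdots = \|z-i(p_k)\|$, intersected with the open condition $z \in \Conv(i(p_1),\ldots,i(p_k))$. A dimension count shows that the expected dimension of this locus is $0$ for every admissible $k$, and that the bad strata — a degenerate simplex, a boundary projection in that simplex, $k \geq D+2$ coincident projections, or a focal-point configuration (the shape operator of $i(M)$ along $(z-i(p_j))/\|z-i(p_j)\|$ having $1/\|z-i(p_j)\|$ as an eigenvalue, a genuine $2$-jet condition, which is why the density statement is framed in the $C^k$ topology with $k\geq 2$) — all have strictly positive codimension. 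Thom's multi-jet transversality theorem then yields density of the set of embeddings satisfying (P1)--(P3), and density of (P4) follows by applying the same machinery to its local reformulation from Section~\ref{sec:BSP}.

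\emph{Openness in $C^2$.} Fix $i_0 \in \Emb^k(M,\R^D)$ such that $\M_0 := i_0(M)$ satisfies \Pall. By (P2), $Z(\M_0) = \{z_1,\ldots,z_N\}$ is finite. At each $z_\ell$ with projections $x_{\ell,1},\ldots,x_{\ell,k_\ell}$, I would encode ``being a critical configuration'' as a system of equations on the unknowns $(p_1,\ldots,p_{k_\ell},z) \in M^{k_\ell}\times \R^D$ parametrised by the embedding $i$. The non-degeneracy from (P1) combined with the non-osculation from (P3) exactly ensures that the differential of this system with respect to $(p_1,\ldots,p_{k_\ell},z)$ is invertible, so the implicit function theorem produces a unique nearby critical configuration for every $i$ that is $C^2$-close to $i_0$; openness of the non-degeneracy, interior, and non-osculation conditions then guarantees that (P1) and (P3) persist. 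To exclude new critical points appearing elsewhere, I would use (P4) together with the Chazal--Cohen-Steiner--Lieutier bound~\eqref{eq:sampling_OG_theorem} applied to the pair $(i(M),\M_0)$: any new critical point would be $\mu$-critical for $\M_0$ with $\mu\to 0$ as the perturbation shrinks, and (P4) confines such points to a neighborhood of $Z(\M_0)$, which is exactly the region already covered by the implicit function theorem. Openness of (P4) itself again follows via its local reformulation.

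\emph{Main obstacle.} The most delicate property is (P4): it is the only one that is a priori non-local and quantitative, so it admits no direct transversality treatment, and both its density and its openness must route through the local reformulation announced in Section~\ref{sec:BSP}. I would expect that reformulation to cast (P4) as a {\L}ojasiewicz-type inequality forced by the same non-degeneracy assumptions underlying (P1)--(P3), after which (P4) inherits their genericity. The tightest interaction between the four conditions arises in the openness argument, where the $C^0$-stability of $\mu$-critical points provided by~\eqref{eq:sampling_OG_theorem} must be glued to the local $C^2$-stability of critical points precisely through (P4); this gluing is, in my estimation, the core technical step of the proof.
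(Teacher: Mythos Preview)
Your density plan is essentially the paper's: multijet transversality for \ref{P1}--\ref{P3}, with \ref{P4} handled through the local Big Simplex reformulation of Section~\ref{sec:BSP}. The substantive divergence, and a genuine gap, is in the openness argument.

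You assert that ``the non-degeneracy from \ref{P1} combined with the non-osculation from \ref{P3} exactly ensures that the differential of this system with respect to $(p_1,\ldots,p_{k_\ell},z)$ is invertible.'' This is false: \ref{P4} is needed as well. The paper's Example~\ref{ex:disappearing} is a direct counterexample. The curve $\M\subset\R^2$ given locally by $y=\pm(1+x^3)$ satisfies \ref{P1}, \ref{P2}, \ref{P3} at its unique critical point $(0,0)$, yet fails \ref{P4}; under the $C^\infty$-small perturbation $y=\pm(1+ax+x^3)$ the critical point \emph{disappears entirely}. No implicit-function-theorem argument can track a point that ceases to exist, so your Jacobian must be singular there---and a direct computation confirms it. Geometrically, the orthogonality and equidistance equations cut out only the $(D{-}s{+}1)$-dimensional core medial axis of \Cref{lem:core_medial_axis_is_C2}; isolating $z_0$ within it requires that the Hessian of $d_\M$ restricted to that manifold be nondegenerate, and this is precisely \ref{P4} via \ref{BSP} (see the proof of the surjectivity claim in Section~\ref{sec:C2_stability}). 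So \ref{P4} is not merely a quantitative add-on to be bootstrapped at the end: it is the nondegeneracy hypothesis your IFT needs.

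The paper's openness proof accordingly looks rather different from your sketch. After confining $Z(\M')$ near $Z(\M)$ via the Chazal--Cohen-Steiner--Lieutier bound (your step too), it establishes \ref{P3}, \ref{P1}, and \ref{BSP} for $\M'$ directly, uses \ref{BSP} for \emph{injectivity} of $Z(\M')\to Z(\M)$, and obtains \emph{surjectivity} by an indirect route through Min-type Morse theory and bottleneck stability of persistence diagrams. Your IFT approach can be repaired by feeding \ref{P4} into the nondegeneracy hypothesis, but even then a separate argument is needed that the IFT solution is a \emph{global} critical point (the $i'(p_j')$ are the closest points on all of $\M'$, not just local minima of distance), which is where the paper invokes the persistence machinery.
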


\begin{remark}
As the Whitney $C^{l_1}$-topology is finer than the Whitney $C^{l_2}$-topology for any $l_1\geq l_2$, the statement implies that for any $2\leq l\leq k$, the set of appropriate embeddings is dense and open in $\Emb^k(M,\R^D)$ for the Whitney $C^l$-topology.
\end{remark}

\begin{remark}\label{rmk:C1_also_ok}
The density part of the statement easily extends to $C^1$ submanifolds: if $M$ is a compact $C^1$ manifold, then the set of $C^1$ embeddings $i:M\to \R^D$ such that $i(M)$ satisfies Conditions \Pall\ is dense  in $\Emb^1(M,\R^D)$ for the Whitney $C^1$-topology.
We justify this remark in the second part of the proof of  Theorem \ref{thm:generic}, at the end of Section \ref{sec:density}.
\end{remark}

While openness is proved through arguments similar to those used in \cite{Chazal_CS_Lieutier_OGarticle} or \cite{lieutier2004any}, we use a more abstract method for the density part of Theorem \ref{thm:generic}.
Indeed, while it is relatively easy to perturb any local non-generic situation into a generic one, ensuring that the submanifold $i(M)$ satisfies \Pall\ globally is more challenging.
Inspired by Y. Yomdin's work in \cite{yomdin1981local}, we proceed as follows: we define various subsets $W$ of the space of multijets such that embeddings whose associated multijet is transverse to $W$ satisfy good global geometric properties, and we apply variants of Thom's transversality theorem to show that the set of transverse embeddings is dense in the appropriate topology (all these notions are properly introduced in Section \ref{sec:density}).

Various examples suggest that the openness part of the statement cannot be easily extended to $C^1$ submanifolds; in fact, it is even surprising that the theorem  holds for $C^2$ manifolds, as most straightforward arguments seem to force us to resort to the Whitney $C^3$-topology (see e.g., Remark \ref{rmk:openness_alternative_proof}).
Showing that the property is true for the $C^2$-topology greatly increases the difficulty of the proof.
On the other hand, this rather convoluted proof yields as a byproduct a stronger stability result than the openness part of the Genericity Theorem \ref{thm:generic}, which we call the $C^2$ Stability Theorem.
Let $\M$ be a submanifold that satisfies Properties \Pall; then the theorem states that not only the properties themselves, but the numbers and positions of the critical points of $\M$ and of their projections are stable with respect to small $C^2$ perturbations, in the sense detailed below.

\begin{restatable}[The $C^2$ Stability Theorem]{thm}{stabilitythm}\label{thm:C2_stab}
Let $i\in \Emb^2(M,\R^D)$ be an embedding such that $\M=i(M)$ satisfies \Pall. Then, for all $\eps>0$ small enough, there exists a neighborhood $\cU$ of $i$ in $\Emb^2(M,\R^D)$ equipped with the Whitney $C^2$-topology  such that for every $i'\in \cU$, with $\M'=i'(M)$:
    \begin{enumerate}
        \item $\M'$ satisfies  \Pall. 
        \item There is a bijection $\Psi: Z(\M) \rightarrow Z(\M')$.
        \item Let $z_0\in Z(\M)$; then $\Psi(z_0)\in B(z_0,\eps)$.
        \item Let $z_0\in Z(\M)$ with  $\pi_{\M}(z_0)=\{x_1,\dots,x_s\}$. Then $\pi_{\M'}(\Psi(z_0))$ is included in the union of balls of radius $\eps$ centered at the points $x_j$, with exactly one point in each ball. 
    \end{enumerate}
\end{restatable}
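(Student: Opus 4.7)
By (P2), $Z(\M)$ is finite with minimal pairwise distance $3\eta>0$. For $i'$ in a small $C^2$-neighborhood $\cU$ of $i$, the Hausdorff distance $d_H(\M, \M')\le \eps_0$ is arbitrarily small. Any $z\in Z(\M')$ is $0$-critical for $\M'$, so the stability estimate (\ref{eq:sampling_OG_theorem}) applied with $\A=\M'$ shows that $z$ lies within $O(\sqrt{\eps_0})$ of $Z_{\mu'}(\M)$ for some $\mu' = O(\sqrt{\eps_0})$; combined with (P4) for $\M$, which confines $Z_{\mu'}(\M)$ to a $C\mu'$-tube around $Z(\M)$, this places every $z\in Z(\M')$ within $O(\sqrt{\eps_0})$ of $Z(\M)$. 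After shrinking $\cU$, each $z\in Z(\M')$ lies in a unique ball $B(z_0,\eta)$ with $z_0\in Z(\M)$, reducing the construction of the bijection to a local analysis near each $z_0$.

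\textbf{Local projections and criticality equation.} Fix $z_0\in Z(\M)$ with $\pi_\M(z_0) = \{x_1,\dots,x_s\}$. By (P3), the Hessian $H_j$ of $y\mapsto \tfrac12\|y-z_0\|^2$ restricted to $\M$ at $x_j$ is positive definite. The implicit function theorem applied to the first-order condition $\langle y-z, \tau\rangle = 0$ ($\tau\in T_y\M'$) provides, for $(z,i')$ near $(z_0,i)$, a unique $C^1$ local minimizer $x_j'(z,i')\in \M'\cap B(x_j,\delta)$ with $x_j'(z_0,i)=x_j$, continuous in $i'$ for the $C^2$-topology. A compactness bound on $\M\setminus \bigsqcup_j B(x_j,\delta)$ (where the distance to $z_0$ strictly exceeds $d_\M(z_0)$), extended to $\M'$ by $C^0$-closeness, yields $\pi_{\M'}(z) \subset \{x_1'(z,i'),\dots,x_s'(z,i')\}$. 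By (P1), $z_0$ lies in the \emph{relative interior} of the non-degenerate simplex $\{x_j\}$, hence at positive distance from every proper face; by continuity, a critical $z$ of $\M'$ close to $z_0$ must have $\pi_{\M'}(z)$ equal to the \emph{full} set $\{x_j'(z,i')\}$, for otherwise $z$ would need to lie in the convex hull of a proper face. Criticality then reduces to the $D$-dimensional equation
\[
G(z,i') \;:=\; z - c(x_1'(z,i'),\dots,x_s'(z,i')) \;=\; 0,
\]
where $c$ denotes the smooth circumcenter map on non-degenerate simplices.

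\textbf{The main obstacle: invertibility of $\partial_z G(z_0, i)$.} This is the crux of the proof. Differentiating the critical-point equation for $x_j(z)$ yields an explicit formula for $D_z x_j(z_0)$ in terms of the Jacobian of a local parametrization of $\M$ at $x_j$ and of the positive-definite Hessian $H_j$. Combined with the translation equivariance $\sum_j \partial_{x_j}c = I$ (itself immediate from $c$ being equivariant under translations of the vertices), this gives
\[
\partial_z G(z_0,i) \;=\; \sum_{j=1}^s \partial_{x_j} c \cdot \bigl(I - D_z x_j(z_0)\bigr).
\]
Invertibility requires a careful combination of the non-degeneracy of the simplex (P1), which governs how the $\partial_{x_j} c$ span the tangential directions of $\operatorname{aff}\{x_j\}$, with the quantitative bound from (P3) on $H_j$ controlling the normal contributions of $I - D_z x_j(z_0)$. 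Executing this cleanly in the $C^2$-topology (rather than the $C^3$-topology alluded to in the paper's introduction) is the main technical hurdle. Once established, the IFT yields a unique zero $\Psi(z_0,i')\in B(z_0,\eta)$ of $G(\cdot, i')$ for $i'\in \cU$, producing the bijection $\Psi: Z(\M)\to Z(\M')$ and conclusions~2--4.

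\textbf{Stability of \Pall\ for $\M'$.} Properties (P1), (P2), (P3) for $\M'$ transfer from $\M$ by continuity: the perturbed simplex $\{x_j'(\Psi(z_0),i')\}$ remains non-degenerate with $\Psi(z_0)$ in its relative interior (P1); $Z(\M')$ is in bijection with the finite set $Z(\M)$ (P2); the Hessians depend continuously on $i'$ in the $C^2$-topology (P3). The remaining point is (P4) for $\M'$: near each $\Psi(z_0)$, the IFT analysis yields a local bi-Lipschitz equivalence between $\nabla d_{\M'}(z)$ and $z - \Psi(z_0)$, giving the linear bound $d(z, Z(\M'))\le C'\mu$; globally, the localization argument from the first paragraph, reapplied with arbitrary small $\mu$ in place of $\mu = 0$, confines every $\mu$-critical point of $\M'$ to such a neighborhood, completing (P4) for $\M'$.
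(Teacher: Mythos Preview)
Your overall architecture is reasonable and close in spirit to the paper's, but there is a genuine gap at the step you yourself flag as the crux: the invertibility of $\partial_z G(z_0,i)$.

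You assert that invertibility follows from ``the non-degeneracy of the simplex (P1)'' together with ``the quantitative bound from (P3) on $H_j$''. This is false: (P1), (P2), (P3) together do \emph{not} imply that $\partial_z G(z_0,i)$ is invertible. The paper's Example~\ref{ex:disappearing} (the curve equal to the graphs of $\pm(1+x^3)$ near the origin) satisfies (P1)--(P3), yet the critical point at the origin disappears under an arbitrarily small $C^\infty$ perturbation, which is exactly the failure of your implicit function theorem step. Concretely, for $z=(h,0)$ one computes $p_1(z)\approx(h,1)$, $p_2(z)\approx(h,-1)$ to first order, so the circumcenter is $(h,0)$ and $G((h,0),i)=0$ to first order: the direction $(1,0)\in E^\perp$ lies in the kernel of $\partial_z G(z_0,i)$. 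The missing ingredient is precisely (P4): the paper shows (Proposition~\ref{prop:big_simplex_is_enough} and Corollary~\ref{cor:BSP_equivalent_BI_non_degenerate}) that under (P1)--(P3), condition (P4) is \emph{equivalent} to the non-degeneracy on $E^\perp$ of the quadratic form $B$ built from the $(dp_j)_{z_0}$, and this non-degeneracy is what makes $\partial_z G(z_0,i)$ invertible in the $E^\perp$ directions (invertibility in the $E$ directions does follow from (P1)). So your argument can be repaired, but only by invoking (P4) at exactly the point where you currently invoke (P1)+(P3); the difficulty is not a $C^2$-vs-$C^3$ regularity issue. Your last paragraph, deriving (P4) for $\M'$ from a ``bi-Lipschitz equivalence between $\nabla d_{\M'}(z)$ and $z-\Psi(z_0)$'', is also too quick: $G$ agrees with $\nabla d_{\M'}$ only on the locus $I(z)=[s]$, and you need Lemma~\ref{lem:mu_critical_implies_perpendicular} (or its analogue) to reduce to that locus.

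For comparison, the paper's own proof does \emph{not} use an implicit function theorem. Instead, it establishes injectivity of the natural map $\phi:Z(\M')\to Z(\M)$ via the Big Simplex Property (the volume lower bound $\Vol_s(\Delta'(h))\ge L\|h\|$, uniform over $i'\in\cU$), and surjectivity by a topological argument: each $z_0\in Z(\M)$ is shown to be a nondegenerate Min-type critical point (this is where (P4) enters, via the nondegeneracy of the Hessian of $d_\M$ restricted to the core medial axis), hence contributes a coordinate to the \v Cech persistence diagram of $\M$, and bottleneck stability forces $\M'$ to have at least as many critical points. Your IFT route, once corrected to use (P4) for invertibility on $E^\perp$, would give a more self-contained argument that avoids the Min-type Morse theory and persistence machinery; but as written, the attribution of invertibility to (P1)+(P3) alone is the fatal gap.
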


\begin{remark}
    We only ask that $\eps$ be small enough to ensure that the balls of point 4. do not intersect. 
\end{remark}

The $C^2$ Stability Theorem shows that Properties \Pall, which we know to be generic in the sense of Theorem \ref{thm:generic}, are desirable not only for themselves, but also for the stability of the critical points and of their projections with respect to $C^2$ perturbations that they guarantee - various examples throughout this paper illustrate how removing some of the properties can lead to somewhat pathological instabilities.

\medskip

\textbf{Sampling theory on generic submanifolds:}
The following theorem states that Properties \Pall\ also induce another type of stability, this time with respect to taking an $\eps$-dense subset $\A$ of the submanifold $\M$ - a typical example would be for $\A$ to be some point cloud sampled on $\M$.



\begin{restatable}[Stability Theorem for Subsets]{thm}{subsetthm}\label{thm:crit_points}
    Let $\M \subset \R^D$ be a compact $C^2$ submanifold. There exist positive constants $C_1,C_2,C_3,C_4, C_5$ and $\eps_0$ such that the following holds. Let $\A\subset \M$ be a set with $d_H(\A,\M)\leq \eps \leq \eps_0$.
    \begin{enumerate}
        \item For every critical point $z\in Z(\A)$, exactly one of these two possibilities is true: either $z$ is very close to $\M$, that is $d_\M(z) \leq C_1\eps^2$, or it is close to a $\mu$-critical point of $\M$, that is $d_{Z_\mu(\M)}(z)\leq C_2\eps$ for $\mu = C_3\eps$.
        \item  If $\M$ satisfies \ref{P4}, then, in the second case, the critical point $z\in Z(\A)$ is close to $Z(\M)$, that is $d_{Z(\M)}(z)\leq C_4\eps$. \label{it:usingP4}
        \item If $\M$ additionally satisfies conditions \ref{P1}, \ref{P2} and \ref{P3} then, in the second case, the set of projections $\pi_\A(z)$ of the critical point $z\in Z(\A)$ is included in a neighborhood of size $C_5\eps$ of $\pi_\M(z')$ for some $z'\in Z(\M)$ such that $d(z,z')\leq C_4\eps$. 
    \end{enumerate}
\end{restatable}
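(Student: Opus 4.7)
The plan is to apply the sampling stability theorem \eqref{eq:sampling_OG_theorem} of Chazal, Cohen-Steiner, and Lieutier as a starting point, and then refine its $O(\sqrt{\eps})$-rates to $O(\eps)$-rates by exploiting that $\A \subset \M$ (rather than merely $d_H(\A,\M) \leq \eps$) together with the $C^2$ regularity of $\M$. The central improvement comes from a local graph parametrization: for $p \in \pi_\M(z)$, writing $\M$ as a $C^2$ graph over the tangent space $T_p\M$ and using $\eps$-density of $\A$ yields the sharp quadratic estimate $d_\A(z)^2 - d_\M(z)^2 = O(\eps^2)$, strictly better than the generic bound $d_\A(z) - d_\M(z) \leq \eps$ coming only from the Hausdorff hypothesis.

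For Part 1, fix $z \in Z(\A)$ and write $\pi_\A(z) = \{x_1,\dots,x_k\}$. For each $x_i$, I combine the lower bound $\|z - y\|^2 \geq d_\M(z)^2$ valid for all $y \in \M$ with a second-order Taylor expansion of $\|z - \cdot\|^2$ along $\M$ at $x_i$ to show that the tangential component $u_i$ of $z - x_i$ in $T_{x_i}\M$ satisfies $|u_i| = O(\eps)$; consequently each $x_i$ is $O(\eps)$-close to a local minimizer $p_i$ of $\|z - \cdot\|$ on $\M$. I then split on $d_\M(z)$: if $d_\M(z) \leq C_1\eps^2$, the first alternative holds directly. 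Otherwise, the critical equation $\sum_i \lambda_i (z - x_i) = 0$ yields, upon substituting $p_i$ for $x_i$, the bound $\bigl\| \sum_i \lambda_i (z - p_i) \bigr\| = O(\eps)$, and a continuity/perturbation argument produces a nearby $z' \in Z_\mu(\M)$ with $\mu = C_3\eps$ and $\|z - z'\| \leq C_2\eps$, giving the second alternative.

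Part 2 follows immediately from \ref{P4}: since $Z_{C_3\eps}(\M) \subset N_{CC_3\eps}(Z(\M))$ for $\eps$ small enough, combining with the $C_2\eps$ bound of Part 1 and the triangle inequality yields $d_{Z(\M)}(z) \leq C_4\eps$. For Part 3, I use \ref{P1}--\ref{P3}: the non-degenerate simplex condition \ref{P1}, finiteness \ref{P2}, and non-osculation \ref{P3} together ensure, via stability arguments analogous to those developed for the $C^2$-Stability Theorem \ref{thm:C2_stab}, that for any $\tilde z$ in a small neighborhood of $z' \in Z(\M)$, the local near-minimizers of $\|\tilde z - \cdot\|$ on $\M$ cluster within $O(\|\tilde z - z'\|)$ of the vertices of $\pi_\M(z')$. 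Applied to $\tilde z = z$ and combined with the $O(\eps)$-closeness of each $x_i$ to such a local minimizer, this gives $\pi_\A(z) \subset N_{C_5\eps}(\pi_\M(z'))$.

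The main obstacle is the upgrade from $O(\sqrt{\eps})$ to $O(\eps)$ in Part 1. The quadratic estimate on $d_\A(z)^2 - d_\M(z)^2$ is the essential input, but converting the resulting approximate criticality identity into an actual $\mu$-critical point of $\M$ nearby requires a careful perturbation argument, particularly delicate near the medial axis of $\M$, where the set $\pi_\M(\tilde z)$ can change discontinuously under small displacements of $\tilde z$.
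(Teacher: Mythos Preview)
Your plan for Parts 2 and 3 is broadly aligned with the paper (Part 2 is indeed immediate from \ref{P4}; Part 3 in the paper is a direct application of \Cref{lem:elementary_metric} and the Lipschitz local projection \Cref{lem:lipschitz_projection} under \ref{P3}, rather than an appeal to the machinery of \Cref{thm:C2_stab}). Part 1, however, has two genuine gaps.

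First, you are missing the dichotomy. The paper invokes the critical values separation theorem of Chazal--Cohen-Steiner--Lieutier to obtain that every $z\in Z(\A)$ satisfies either $d_\A(z)\le 4\eps$ or $d_\A(z)\ge \tau(\M)-3\eps$. The first case is then handled by a separate geometric fact (center of a small enclosing ball of points on a manifold of reach $\tau$) yielding $d_\M(z)=O(\eps^2/\tau)$; the second case provides the crucial lower bound $d_\M(z)\ge \tau(\M)/2$. Your split ``$d_\M(z)\le C_1\eps^2$ or not'' is a tautology on one side and gives no lower bound on the other: if $d_\M(z)$ were of order $\eps$, every subsequent rate degrades to $\mu=O(1)$.

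Second, and more seriously, the ``continuity/perturbation argument'' you invoke to produce $z'\in Z_\mu(\M)$ is precisely the non-trivial step, and what you write is not an argument. The bound $\bigl\|\sum_i\lambda_i(z-p_i)\bigr\|=O(\eps)$ does not control $\|\nabla d_\M\|$ at $z$ or at any nearby point: the weights $\lambda_i$ come from the $\A$-criticality and are unrelated to the center of the smallest enclosing ball of $\pi_\M(\cdot)$ that defines $\nabla d_\M$; the local minimizers $p_i$ need not lie in $\pi_\M(z)$; and, as you yourself flag, $\pi_\M$ jumps across the medial axis. The paper's device is a gradient-flow argument (\Cref{lem:sampling_improved}): follow the flow of $\nabla d_\M$ from $z$ for arc-length $\eps$ to reach $w$; combine the quadratic estimate $d_\A(z)^2\le d_\M(z)^2+\eps^2\bigl(1+\tfrac{R}{\tau(\M)}\bigr)$ (obtained via Federer's reach inequality, which is where $\A\subset\M$ is used) with $d_\M(w)^2\le d_\A(w)^2\le d_\A(z)^2+\eps^2$; then the mean-value theorem on $t\mapsto d_\M(C(t))$ produces a point $z'$ on the flow curve with $\|\nabla d_\M(z')\|\le \tfrac{\eps}{r}\bigl(1+\tfrac{R}{2\tau(\M)}\bigr)$, where $r=\tau(\M)/2$ is the lower bound supplied by the separation step. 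This flow-plus-mean-value idea is the missing ingredient in your proposal.
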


This is to be compared to the previously available guarantees for general compact sets with positive reach (see definition in Section \ref{sec:notations_definitions}): let $\S \subset \R^D$ be a compact set with positive reach $\tau(\S)$, and let $\A \subset \R^D$ be such that $d_H(\A,\S)\leq \eps$.
Then Chazal, Cohen-Steiner and Lieutier \cite{Chazal_CS_Lieutier_OGarticle} show that all critical points of $\A$ are either at distance $4\eps$ from $\S$, or are at distance at least $\tau(\S)-3\eps$ from $\S$. Hence, assuming that $\eps< \tau(\S)/7$, there are no critical values of $d_\A$ in the interval  $(4\eps, \tau(\S)/2)$. In that case, \Cref{eq:sampling_OG_theorem} implies that the critical points of $\A$ that are ``far from $\S$'' are only $O(\sqrt{\eps})$-close to $\mu$-critical points for $\mu=O(\sqrt{\eps})$. 

Hence the added hypotheses on $\M$ yield several improvements with respect to the general case. Firstly,\footnote{This first point was well-known folklore, frequently mentioned by F. Chazal among others.} the critical points that are close to $\M$ are at distance $O(\eps^2)$ from $\M$ instead of $O(\eps)$. 
Secondly, the stability with respect to $\mu$-critical points is also improved, going from $O(\sqrt{\eps})$ to $O(\eps)$ for both the distance to $Z_\mu(\M)$ and the value of $\mu$. Note that these two improvements are solely due to the set $\M$ being a submanifold and to $\A$ being a subset of $\M$, and hold without any of the conditions \Pall. Thirdly and finally, Properties \Pall\ allow us to get the $O(\eps)$-stability of both critical points and their projections.


As a corollary to \Cref{thm:crit_points}, we get a bound on the number of critical points that a finite approximation (i.e.~a point cloud) $\A$ of $\M$ can have.
We say that a finite subset $\A\subset \M$  is a \textit{$(\delta,\eps)$-sampling of $\M$} if $d_H(\A,\M)\leq \eps$ and $\min_{x\neq y\in \A} \|x-y\|\geq \delta$. For $\delta,\eps>0$, the $\delta$-packing number $N(\delta,\eps)$ of $\M$ is  the maximal cardinality of a $(\delta,+\infty)$-sampling of a ball $B(x,\eps)\cap \M$ centered at a point $x\in \M$.

\begin{cor}\label{cor:deterministic_sample}
     Let $\M \subset \R^D$ be a  compact $C^2$ submanifold of dimension $m$ satisfying \Pall. Consider the constants $C_4$ and $C_5$ from \Cref{thm:crit_points}. There exists $\eps_1>0$ such that if  $\A$ is a $(\delta,\eps)$-sampling of $\M$ for some $0<\delta \leq \eps\leq \eps_1$, and if
     $z\in Z(\M)$ and  $s$ is the cardinality of the set of projections $\pi_\M(z)$, then the number of critical points in $Z(\A)$ at distance less than $C_4\eps$ from $z$ is bounded by 
     $ 2^{N(\delta,C_5\eps){s}}$.
     Furthermore 
     \[N(\delta,C_5\eps) \leq (C_mC_5\eps/\delta)^{m}\]
     for some $C_m>0$ that depends on $m$.
\end{cor}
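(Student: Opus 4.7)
The plan is to use Theorem \ref{thm:crit_points} to localize the projections of the critical points of $\A$ lying near $z$, and then count the admissible projection sets via the packing number. The key observation is that once each such projection set is constrained to lie in $O(\eps)$-balls around the points $x_j \in \pi_\M(z)$, the count of critical points becomes purely combinatorial.

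First I would verify that every $z^\ast \in Z(\A)$ with $d(z, z^\ast) \leq C_2\eps$ falls into the second case of Theorem \ref{thm:crit_points}.1, namely ``close to a $\mu$-critical point'' rather than ``close to $\M$''. Since $Z(\M)$ is finite by \ref{P2}, set $\delta_{\min} \defeq \min_{w \in Z(\M)} d_\M(w) > 0$ and choose $\eps_1$ small enough that $\delta_{\min} - C_2\eps_1 > C_1\eps_1^2$. The triangle inequality gives $d_\M(z^\ast) \geq d_\M(z) - d(z, z^\ast) > C_1\eps^2$, ruling out the first case. Parts 2 and 3 of Theorem \ref{thm:crit_points} then furnish some $z' \in Z(\M)$ with $d(z^\ast, z') \leq C_4\eps$ and $\pi_\A(z^\ast) \subset \bigcup_{y \in \pi_\M(z')} B(y, C_5\eps)$. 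Since $d(z, z') \leq (C_2 + C_4)\eps$, shrinking $\eps_1$ further below $\min\{d(w, w') : w \neq w' \in Z(\M)\}/(2(C_2 + C_4))$ forces $z' = z$.

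Writing $\pi_\M(z) = \{x_1, \dots, x_s\}$ and $S \defeq \A \cap \bigcup_{j=1}^s B(x_j, C_5\eps)$, the previous step shows $\pi_\A(z^\ast) \subset S$ for each relevant $z^\ast$. A critical point $z^\ast \in Z(\A)$ coincides with the unique center of the smallest enclosing ball of $\pi_\A(z^\ast)$, so the map $z^\ast \mapsto \pi_\A(z^\ast)$ is injective, and the number of admissible $z^\ast$ is at most $2^{|S|}$. Since $\A \subset \M$ is $\delta$-separated, each $\A \cap B(x_j, C_5\eps)$ is a $\delta$-separated subset of the ball $B(x_j, C_5\eps) \cap \M$ centered at $x_j \in \M$, hence of cardinality at most $N(\delta, C_5\eps)$ by definition of the packing number. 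Summing over $j$ yields $|S| \leq s\, N(\delta, C_5\eps)$, giving the bound $2^{s\, N(\delta, C_5\eps)}$.

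Finally, for the packing estimate, $\M$ being a compact $C^2$ submanifold has positive reach, so for $r$ smaller than a fixed fraction of this reach and uniformly in $x \in \M$, the patch $B(x, r) \cap \M$ is bi-Lipschitz equivalent (with absolute constants) to a Euclidean ball of radius $r$ in $\R^m$. A standard volume argument then gives $N(\delta, r) \leq (C_m r / \delta)^m$: disjoint intrinsic balls of radius $\delta/2$ around the points of a $\delta$-separated set fit inside a ball of radius $r + \delta/2 \leq 2r$, so their count is bounded by the volume ratio. The main obstacle is essentially bookkeeping: collecting all the constraints on $\eps_1$ needed to simultaneously rule out the first case of Theorem \ref{thm:crit_points}, identify $z' = z$, and validate the bi-Lipschitz model at scale $C_5\eps$. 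All of these require $\eps_1$ small only in terms of quantities depending on $\M$ alone, so a single choice of $\eps_1$ works.
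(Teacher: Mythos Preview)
Your proof is correct and follows essentially the same approach as the paper: use \Cref{thm:crit_points} to confine $\pi_\A(z^\ast)$ to the union of the balls $B(x_j,C_5\eps)$, then bound the number of critical points by the number of admissible projection sets, and finish with a volume/packing argument. Your version is in fact more careful than the paper's in two respects: you explicitly rule out the first alternative of \Cref{thm:crit_points}.1 and you explicitly force $z'=z$ by separating the finitely many points of $Z(\M)$, whereas the paper leaves these implicit. Your counting step (injectivity of $z^\ast\mapsto\pi_\A(z^\ast)$ followed by $2^{|S|}$) is also cleaner than the paper's, which passes through the map $(y_1,\dots,y_s)\mapsto m(\{y_1,\dots,y_s\})$; both reach the same bound $2^{sN(\delta,C_5\eps)}$.
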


The practical applicability of \Cref{cor:deterministic_sample} relies on the existence of $(\delta,\eps)$-samplings with $\delta$ of the same order as $\eps$. The farthest point sampling algorithm \cite{aamari2018stability} shows that such sets always exist: starting from a finite subset $\A_0\subset \M$ with $d_H(\A_0,\M)\leq \eps$, it outputs a subset $\A_1\subset \A_0$ that is an $(\eps,2\eps)$-sampling of $\M$. For such a subset $\A_1$, \Cref{cor:deterministic_sample} ensures that the number of critical points at distance at least $O(\eps^2)$ from $\M$ is at most $K2^{(D+1)(2C_mC_5)^m}$, where $K$ is the cardinality of $Z(\M)$. In particular, this quantity stays bounded as $\eps$ goes to $0$.

\medskip

\textbf{Morse-like properties of the distance function to a generic submanifold:} 
Properties~\Pall\ and the Genericity and $C^2$ Stability Theorems were primarily motivated by considerations on the regularity and stability of the critical points of $\M$ and of their projections.
However, Properties \Pall\ also happen to enforce a Morse-like behaviour on the evolution of the topology of the offsets $\M^a = \{x\in\R^D : \; d_\M(x)\leq a\}$.
Indeed, Properties \Pall\ are exactly those needed for the critical points of the distance function $d_\M$ to be \textit{Min-type non-degenerate critical points} as defined by V. Gershkovich and H. Rubinstein in \cite{gershkovich1997morse}; if they are satisfied, it is easy to show that the restriction of $d_\M$ to $\R^D\backslash \M$ is a \textit{topological Morse function}, a natural generalization of the usual definition of (smooth) Morse functions that was first defined in \cite{morse_topologically_1959}.
A topological Morse function $f$ is one such that for every point $x$ of its domain $X$, there is a local homeomorphic change of coordinates $\phi: (U,0) \rightarrow (V,x)$ with $U \subset \R^D, V \subset X$ such that either $f\circ \phi(y_1,\ldots,y_d) = f(x) + y_d$, in which case $x$ is a \textit{topological regular point} of $x$, or $f\circ \phi(y_1,\ldots,y_d) = f(x) - \sum_{i=1}^\lambda y_i^2 + \sum_{i=\lambda + 1}^d y_i^2$ for some $\lambda\in\{0,\ldots,d\}$, in which case $x$ is a \textit{non-degenerate topological critical point of index $\lambda$} of $f$ (more details can be found in Section \ref{sec:Morse}). 
In particular, topological Morse functions satisfy the same handle attachment property as classical Morse functions.
As a result, the following theorem holds.
\begin{restatable}[Morse-Like Theorem for the Distance Function]{thm}{Morsethm}\label{thm:Morse}
  Let $\M \subset \R^D$ be a  compact $C^2$ submanifold satisfying \Pall. 
  Then the restriction $d_\M:\R^D\backslash \M \rightarrow \R$ of the distance function to $\M$ is a topological Morse function, whose topological critical points are exactly $Z(\M)$.
  It satisfies the two following properties:
  \par \textbf{(Isotopy Lemma)} Let $0\leq a<b$ be such that $d_\M^{-1}[a,b]$ contains no critical point. Then $\M^a$ is a deformation retract of $\M^b$.
 \par \textbf{(Handle Attachment Lemma)} 
    Let $c>0$ and $0<\epsilon <c$ be such that $d_\M^{-1}[c-\eps,c+\eps]$ contains no critical point except for $z_1,\ldots,z_k \in d_\M^{-1}(c)$, where $z_j$ is of index $i_j$ as a topological critical point.
    Then $\M^{c+\eps}$ is homotopically equivalent to $\M^{c-\eps}$ with cells $D^{i_1},\ldots,D^{i_k}$ of dimension $i_1,\ldots,i_k$ attached,
    i.e.
    \[\M^{c+\eps}\simeq \M^{c-\eps}\cup D^{i_1}\cup\ldots \cup D^{i_k}.\]
\end{restatable}
Similar observations have been made in the case of surfaces in $\R^3$ in \cite{song2023generalized}.
Note that without being restricted, $d_\M :\R^D \rightarrow\R$ cannot be a topological Morse function, as the points of $\M$ themselves are degenerate topological critical points.
Note also that $\M$ does not need to satisfy Conditions \Pall\ for the Isotopy Lemma to hold;
as mentioned above, it was shown in \cite{GroveCriticalPoints} to be the case for any compact set when $a>0$, and the same arguments easily extend to the case $a\geq 0$ when $\M$ is any submanifold.
We nonetheless include this fact in the statement of the theorem to further stress the connection to usual Morse theory.

Thus, Properties \Pall\ grant us control over the changes of topology of the offsets of a generic submanifold; this has interesting consequences regarding the \v Cech persistence diagram of $\M$ (see e.g. \cite{chazal2016structure}, \cite{carlsson2021topological}), a key object in topological data analysis, which will be further explored in future work.

\subsection{Related work}
Given a compact set $\A$, both the medial axis $\mathrm{Med}(\A)$ and the skeleton $\mathrm{Sk}(\A)$ (the set of centers of maximal balls not intersecting $\A$) are well studied in the literature \cite{giblin2000symmetry, giblin2004formal, lieutier2004any, chazal2004stability}. The two concepts are tightly linked, as $\mathrm{Med}(\A) \subset\mathrm{Sk}(\A) \subset \overline{\mathrm{Med}(\A)}$, the latter inclusion being an equality in all ``tame'' cases. We refer to \cite{attali2009stability} for a review of the (in)stability properties of the medial axis. 
Regarding specifically the issue of the stability of the critical and $\mu$-critical points with respect to sampling, our main inspiration was \cite{Chazal_CS_Lieutier_OGarticle}, which considers the case of general compact sets.

The genericity of transversality is another key theme of this article; the first transversality theorems for jets date back to Thom's \cite{thom_transversality}, later extended to multijets by J. Mather in \cite{MATHER1970b}.
Early work on the application of those theorems to the distance function in Euclidean space include \cite{LooijengaThesis} and \cite{Wall1977GeometricPO}, which eventually led to the study of the properties of the skeleton $\mathrm{Sk}(\M)$ of a generic manifold $\M$ by Yomdin \cite{yomdin1981local}, who proves among other results that the set of embeddings $i$ of a manifold $M$ such that for all $z\in \mathrm{Sk}(i(M))$,  the projection set $\pi_{i(M)}(z)$ is a non-degenerate simplex, is dense (and in fact residual), then by Mather \cite{mather1983distance}. Later, J. Damon and E. Gasparovic gave in a series of papers (see \cite{damon1997generic, DamonGasparovic} and references therein)  a thorough analysis of the geometry of the skeleton set for generic sets in different contexts.
Very recently, A. Song, K.M. Yim and A. Monod \cite{song2023generalized} studied the set of critical points of a generic submanifold, though only for surfaces $\S$ embedded into $\R^3$ and  only in terms of density (without openness and stability), with a greater focus on Morse-like properties of the distance function.

\subsection{Outline }
We recall a few definitions and prove an elementary lemma in \Cref{sec:notations_definitions}, before studying in detail the non-oscularity condition \ref{P3} in \Cref{sec:osculation}; in particular, we rephrase it as a curvature condition.
In \Cref{sec:projections}, we prove some properties of the projection on a submanifold $\M$ wherever it is defined and in particular under some non-oscularity assumptions; we are also interested in its stability with respect to perturbations of $\M$.
Later in \Cref{sec:proof_subsample}, we use some of our findings to prove the Stability Theorem for Subsets \Cref{thm:crit_points}.
In \Cref{sec:BSP}, we introduce a condition that we call the Big Simplex Property \ref{BSP} that is equivalent to \ref{P4}, but has the advantage of being  phrased in terms of the local behaviour of $\M$ around the projections of its critical points.
This helps us prove \Cref{thm:Morse} in \Cref{sec:Morse} using the theory of Min-type functions developed in \cite{gershkovich1997morse}, as well as  the $C^2$ Stability Theorem \ref{thm:C2_stab} in \Cref{sec:C2_stability}, which implies in particular the openness of the conjunction of Conditions \Pall.
Finally, the last section is dedicated to proving the density part of the statement of the Genericity Theorem \ref{thm:generic}; it starts with some exposition, where we introduce variants of Thom's transversality theorem and some necessary notions regarding multijets, before moving on to the proof proper.

Throughout the paper, we always assume that the dimension $m$ of the manifold $M$ is greater than or equal to $1$, and that the dimension $D$ of the ambient Euclidean space $\R^D$ is greater than or equal to $2$, as otherwise all statements in the Introduction are trivially true.

\section{Notations, definitions and preliminaries}\label{sec:notations_definitions}

For $s\in \N$, we let $[s]$ be the set $\{1,\dots,s\}$. 
We let $B(x,r)$ be the open ball centered at $x$ of radius $r$ and $\overline B(x,r)$ be its closure; we write $S(x,r)$ for the sphere centered at $x$ of radius $r$. For a set $\S\subset \R^D$, the \textit{reach of $\S$} is defined as
$$\tau(\S) := \sup \{r\in \R:\; \forall x \in \R^D\text{ s.t. }d_\S(x)<r, \;\exists !y\in \S \text{ s.t. } d(x,y) = d_\S(x)\}.$$
Note that if $\S$ has positive reach, then it is necessarily closed. Sets with  positive reach are quite regular in many ways; they were extensively studied by H. Federer in \cite{federer1959curvature}. 

Throughout this article, we consider a fixed compact $C^k$ abstract $m$-dimensional manifold $M$ for some $k\in \{2,3,\dots,\infty\}$ and $m\geq 1$,\footnote{The different theorems stated in the introduction can easily be shown to be true for $m=0$.} together  with a fixed finite atlas $\cA=(\phi_j,U_j)_{j\in J}$. For $1\leq l \leq k$, we consider the set $C^l(M,\R^D)$ of functions of class $C^l$ from $M$ to $\R^D$, and for $1\leq p\leq l$, if $f\in C^l(M,\R^D)$, we let
\[ \|f\|_{C^p}:=\max_{0\leq r\leq p}\max_{j\in J} \sup_{u\in U_j}\op{d^r (f\circ \phi_j)_u}\]
be the $C^p$-norm of $f$, where $d^r g_u$ is the differential of order $r$ of a function $g$ at the point $u$. We can always assume this quantity to be finite, up to restricting each chart to a slightly smaller subset of its domain. This norm defines a topology on  $C^l(M,\R^D)$, called the \textit{Whitney $C^p$-topology}. This topology is independent of the choice of the atlas, and the Whitney $C^{p_2}$-topology is finer than the Whitney $C^{p_1}$-topology on $C^l(M,\R^D)$ for any $p_1\leq p_2\leq l$. If $k=\infty$, the union of all the Whitney $C^p$-topologies on $C^\infty(M,\R^D)$ for $p\geq 1$ defines the Whitney $C^\infty$-topology (i.e.~the set of open sets of the Whitney $C^\infty$-topology is the union over $p\geq 1$ of the sets of open sets of the Whitney $C^p$-topologies).
For $1\leq l \leq k$, we let $\Emb^l(M,\R^D)$ be the subset of $C^l(M,\R^D)$  consisting of all $C^l$ embeddings (immersions that are homeomorphic to their image). It is an open set for the Whitney $C^1$-topology (and hence for all Whitney $C^p$-topologies for $1\leq p\leq l$), as shown in \cite[Prop 5.3]{michor1980manifolds}.

Another, more intrinsic definition (based on jets) of the Whitney $C^p$-topology  is given in \Cref{subsection:introductionJets}; in particular, it extends to the case where the domain of the mappings is another $C^k$ manifold $N$, rather than the Euclidean space $\R^D$. See also \cite{golubitsky1974stable} for a more complete presentation.

If $i\in \Emb^l(M,\R^D)$ for $l\geq 2$, then $\M=i(M)$ is a compact $C^2$ submanifold of $\R^D$. 
For $x\in \M$, we let $T_x \M$ be the tangent space of $\M$ at $x$, and $T_x \M^\bot$ the normal space, that are endowed with the norm induced by the ambient Euclidean space. When there is no possible ambiguity, we let $\pi_x:\R^D \rightarrow T_x \M$ be the orthogonal projection on $T_x \M$ and $\pi_x^\bot : \R^D \rightarrow T_x \M^\bot$ be the orthogonal projection on $T_x \M^\bot$. For $\eta \in T_x\M^\bot$, we let $W_{x,\eta}:T_x \M\to T_x \M$ be the shape operator in the normal direction $\eta$.  This is a self-adjoint operator such that  $\dotp{W_{x,\eta}(v),v}$ is the normal curvature in the normal direction $\eta\in T_x \M^\bot$ along the tangent vector $v\in T_x \M$, see \cite[Chapter 8]{lee2018introduction}.


Finally, we will make frequent use of the following elementary lemma.

\begin{lemma}\label{lem:elementary_metric}
Let $\S$ be any compact set. 
Let $t_0>0$ and let $z_0\in Z(\S)$. Then, there exists  $r_0>0$ such that  all the points in $B(z_0,d_\S(z_0) +r_0)\cap \S$ are at distance less than $t_0$ from the projections  $\pi_\S(z_0)$. 
\end{lemma}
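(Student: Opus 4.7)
The plan is to prove this by a standard compactness argument by contradiction, which does not actually require $z_0$ to be critical (only that $\S$ be compact).

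First I would assume for contradiction that no such $r_0$ exists. Then for every $n \in \N^*$, I can pick a point $x_n \in \S \cap B(z_0, d_\S(z_0) + 1/n)$ with $d(x_n, \pi_\S(z_0)) \geq t_0$. Since $\S$ is compact and the sequence $(x_n)$ stays in $\S$, I can extract a subsequence $(x_{n_k})$ converging to some $x^\infty \in \S$.

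Next I would identify $x^\infty$ as a projection of $z_0$. Passing to the limit in $\|x_{n_k} - z_0\| < d_\S(z_0) + 1/n_k$ gives $\|x^\infty - z_0\| \leq d_\S(z_0)$, and the reverse inequality holds by definition of $d_\S$. Hence $x^\infty \in \pi_\S(z_0)$. On the other hand, passing to the limit in $d(x_{n_k}, \pi_\S(z_0)) \geq t_0$ (which is fine since $\pi_\S(z_0)$ is a fixed set and the distance-to-a-set function is continuous) yields $d(x^\infty, \pi_\S(z_0)) \geq t_0 > 0$, contradicting $x^\infty \in \pi_\S(z_0)$.

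There is essentially no obstacle here: the lemma is a soft compactness statement, and the hypothesis $z_0 \in Z(\S)$ plays no special role beyond ensuring that $\pi_\S(z_0)$ is a meaningful object (it is in any case a nonempty compact subset of $\S$ by compactness of $\S$ and lower semicontinuity of the distance). If desired, one could phrase the argument without contradiction by setting $r_0 \defeq \inf\{\|x - z_0\| - d_\S(z_0) : x \in \S,\; d(x, \pi_\S(z_0)) \geq t_0\}$ and noting that this infimum is attained (and hence strictly positive) by compactness of the set $\{x \in \S : d(x, \pi_\S(z_0)) \geq t_0\}$, together with the fact that any minimizer cannot lie in $\pi_\S(z_0)$.
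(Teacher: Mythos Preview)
Your proof is correct and is essentially identical to the paper's own argument: both proceed by contradiction, extract a convergent subsequence of points $x_n\in\S$ with $\|x_n-z_0\|<d_\S(z_0)+r_n$ and $d(x_n,\pi_\S(z_0))\geq t_0$, and observe that the limit must lie in $\pi_\S(z_0)$, yielding a contradiction. Your additional remarks (that criticality of $z_0$ is not needed, and the alternative direct formulation via an infimum) are correct bonuses.
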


\begin{proof}
   Assume that the statement is false. Then, there exist  sequences $r_n\to 0$ and $x_n\in \S$, such that $\|x_n-z_0\|< d_\S(z_0) + r_n$, but $x_n$ is at distance at least $t_0$ from $\pi_\S(z_0)$. By compactness, we can assume without loss of generality that $x_n\to x\in \S$. But  by continuity, we have $\|x-z\|\leq d_\S(z_0)$, so that $x\in \pi_\S(z_0)$, contradicting the condition that all the points $x_n$ are at distance at least $t_0$ from $\pi_\S(z_0)$. 
\end{proof}


\section{Osculation}\label{sec:osculation}
We fix an embedding $i\in \Emb^2(M,\R^D)$, and consider the compact $C^2$ submanifold $\M=i(M)$.
In this section, we give several equivalent formulations of the condition of non-osculation from Property \ref{P3}.
Some of the material presented here is already well-known; as the proofs are short, we nonetheless include them for the sake of completeness.

If $z_0\in \R^D\backslash \M$ and $x_0\in\pi_\M(z_0)$, then the sphere of radius $d_\M(z_0)$ centered at $z_0$ is tangent to the manifold $\M$ at the point $x_0$. As the inside of this sphere does not intersect $\M$, the curvature at $x_0$ in  the normal direction $z_0-x_0$ has to be smaller than or equal to the one of the sphere along any tangent vector of $T_{x_0}\M$. Two different situations can arise: either the curvature of the manifold is strictly smaller than the one of the sphere (as will be shown later, this is generically the case whenever $z_0$ is a critical point of $\M$), or the curvature of the manifold in the normal direction $\eta:=z_0-x_0$   and in at least one of the directions of $T_{x_0}\M$ is equal to the sphere's, that is there exists $v\in T_x \M$ with $\dotp{W_{x_0,\eta}(v),v}=\|v\|^2$. In that case, we say that the sphere of radius $d_\M(z_0)$ centered at $z_0$ is \textit{osculating} the manifold $\M$ at $x_0$. Both situations are illustrated in Figure \ref{fig:illustration_osculation}.

\begin{figure}
    \centering
    \includegraphics[width=0.6\textwidth]{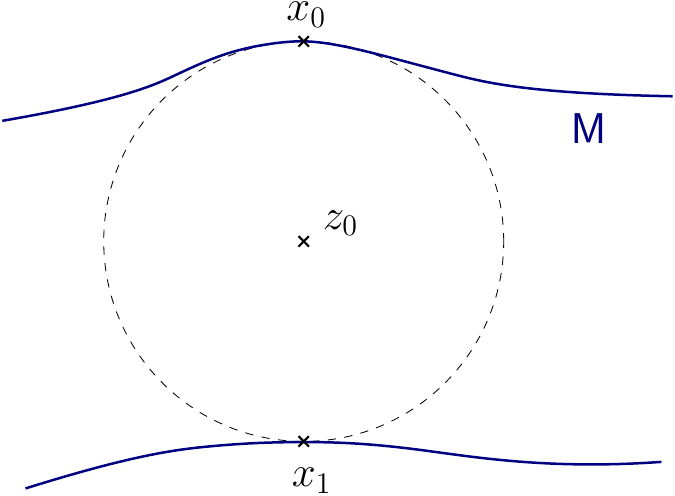}
    \caption{The sphere centered at $z_0$ of radius $d_{\M}(z_0)$ is osculating $\M$ at $x_0$, but not at $x_1$.}
    \label{fig:illustration_osculation}
\end{figure}

In the following lemma, we show that this condition is equivalent to a purely metric one, and that due to the $C^2$ regularity  of $\M$, this metric condition is equivalent to a local version of itself.
Finally, though the non-osculation of $S(z_0, d_\M(z_0))$ at $x_0$ only depends on $\M$, and not on the embedding $i$ such that $\M = i(M)$, it will be convenient later on to have a formulation of the condition in terms of $i$, which is also provided by the lemma.

\begin{lemma}[Osculation Characterization Lemma]\label{lem:characterization_osculation}
    Let $z_0\in \R^D\backslash \M$, $x_0\in \pi_\M(z_0)$ and define $\eta:= z_0-x_0$. Then the sphere of radius $d_\M(z_0)$ centered at $z_0$ not osculating the submanifold $\M$ at $x_0$ is equivalent to the existence of $\alpha>0$ such that one of the following equivalent conditions holds:
    \begin{enumerate}[start=1, label={(\roman*)}]
        \item \label{it:weing} The largest eigenvalue $\lambda_{\max}(W_{x_0,\eta})$ of the shape operator $W_{x_0,\eta}:T_{x_0}\M\to T_{x_0}\M$ at $x_0$ in the direction $\eta$ satisfies $\lambda_{\max}(W_{x_0,\eta})< 1-\alpha$. 
        \item  \label{it:osc_chart} Let $m_0\in M$ be such that $i(m_0)=x_0$, with $(\phi,U)$ a chart of $M$ around $m_0$ with $\phi(u_0)=m_0$. 
        Then, the quadratic form
       \begin{equation*}
         \begin{split}
         \R^m  &\longrightarrow  \R\\
    v&\longmapsto (1-\alpha)\|d(i\circ \phi)_{u_0}(v)\|^2 - \dotp{\eta,d^2( i\circ \phi)_{u_0}(v,v)}
        \end{split}
       \end{equation*}
        is  positive definite.   
        \item \label{it:pointwise_osc} There exists $\alpha'>\alpha$ and $\delta>0$ such that, if $\M_0:=\M\cap \overline B(x_0,\delta)$, then for any $y\in \M_0$,
        \begin{equation}
            \|y-z_0\|^2 \geq \|x_0-z_0\|^2+\alpha'\|y-x_0\|^2 .
        \end{equation} 
        \item \label{it:local_osc} There exists $\alpha'>\alpha$ and $\delta,\delta'>0$ such that, if $\M_0:=\M\cap \overline B(x_0,\delta)$, then for any $z\in B(z_0,\delta')$, $x\in \pi_{\M_0}(z)$ and $y\in \M_0$, it holds that
        \begin{equation}\label{eq:P3_loc}
             \|y-z\|^2 \geq \|x-z\|^2+\alpha'\|y-x\|^2 .
        \end{equation}
    \end{enumerate}
\end{lemma}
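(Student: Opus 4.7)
The plan is to anchor the argument on a second-order Taylor expansion, reduce the equivalences to a short cycle, and concentrate the real work into the single implication $(i)\Rightarrow (iv)$. Writing $\eta := z_0 - x_0$, the relation $\eta \perp T_{x_0}\M$ (which follows from $x_0 \in \pi_\M(z_0)$) gives $df_{u_0} = 0$ for $f(u) := \|i\circ\phi(u) - z_0\|^2$, and a direct computation yields
\[
f(u_0 + v) - f(u_0) = \|d(i\circ\phi)_{u_0}(v)\|^2 - \dotp{\eta, d^2(i\circ\phi)_{u_0}(v, v)} + o(\|v\|^2).
\]
Since $\eta$ is normal, the tangential part of $d^2(i\circ\phi)_{u_0}(v, v)$ is killed by pairing with $\eta$; the normal part is the second fundamental form evaluated at $\tilde v := d(i\circ\phi)_{u_0}(v)$, so $\dotp{\eta, d^2(i\circ\phi)_{u_0}(v, v)} = \dotp{W_{x_0, \eta}(\tilde v), \tilde v}$. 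As $d(i\circ\phi)_{u_0}$ is a linear isomorphism onto $T_{x_0}\M$, positive-definiteness of the form in $(ii)$ is equivalent to $\lambda_{\max}(W_{x_0, \eta}) < 1 - \alpha$, giving $(i)\Leftrightarrow (ii)$. Non-osculation is by definition $\lambda_{\max}(W_{x_0, \eta}) < 1$, and compactness of the unit sphere in $T_{x_0}\M$ promotes this strict inequality to the existence of some $\alpha > 0$ as in $(i)$.

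On the easy side of the cycle, $(iv)\Rightarrow (iii)$ follows at once by setting $z = z_0$ in $(iv)$ and noting that $x_0 \in \pi_\M(z_0) \cap \M_0 = \pi_{\M_0}(z_0)$. For $(iii)\Rightarrow (i)$, I would take an arbitrary $v \in \R^m \setminus \{0\}$, form the curve $y_t := i\circ\phi(u_0 + tv) \in \M_0$ for $t$ small, apply $(iii)$ to $y_t$, divide by $t^2$ and let $t\to 0$; the Taylor expansion above identifies the limit as $\|\tilde v\|^2 - \dotp{W_{x_0, \eta}(\tilde v), \tilde v} \ge \alpha' \|\tilde v\|^2$ (using $\|y_t - x_0\|^2/t^2 \to \|\tilde v\|^2$), so $\lambda_{\max}(W_{x_0, \eta}) \le 1 - \alpha'$, and $(i)$ follows.

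The main obstacle is $(i)\Rightarrow (iv)$, where the inequality must hold for every $y \in \M_0$ and every $x \in \pi_{\M_0}(z)$ with $z$ close to $z_0$, not merely in a Taylor regime around $x_0$. First I would observe that under $(i)$ the Hessian of $f$ at $u_0$ is positive-definite, so $u_0$ is a strict local minimum and $x_0$ is an isolated point of $\pi_\M(z_0)$; shrinking $\delta$ so that $\pi_\M(z_0) \cap \overline B(x_0, \delta) = \{x_0\}$ forces $\pi_{\M_0}(z_0) = \{x_0\}$ (because $d_{\M_0}(z_0) = d_\M(z_0)$), and by continuity of $d_{\M_0}$ any $x \in \pi_{\M_0}(z)$ with $z$ close to $z_0$ lies close to $x_0$, in particular in the interior of $\M_0$, so that $z - x \perp T_x\M$. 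Next I would split $y \in \M_0$ into a near zone $\{\|y - x\| \le \delta_2\}$ and a far zone. In the near zone, continuity of $(x, \eta') \mapsto W_{x, \eta'}$ combined with $(i)$ gives a neighborhood of $(x_0, \eta)$ on which $\lambda_{\max}(W_{x, \eta'}) < 1 - \alpha/2$, and a Taylor expansion of $\|y - z\|^2 - \|x - z\|^2$ at $x$ analogous to the one above but with $\eta' := z - x$ in place of $\eta$ yields $\|y - z\|^2 - \|x - z\|^2 \ge (\alpha/4)\|y - x\|^2$, the remainder being controlled uniformly by the $C^2$ regularity of $i$. In the far zone, a compactness argument produces $c > 0$ with $\|y - z\|^2 - \|x - z\|^2 \ge c$ uniformly on $\{(y, x, z) : y \in \M_0,\ \|y - x\|\ge \delta_2,\ x \in \pi_{\M_0}(z),\ z \in \overline B(z_0, \delta')\}$ for $\delta'$ small: otherwise one extracts $z_n \to z_0$, $x_n \to x_0$ and $y_n \to \bar y \in \M_0$ with $\|\bar y - x_0\| \ge \delta_2$ and $\|\bar y - z_0\| = \|x_0 - z_0\|$, contradicting $\pi_{\M_0}(z_0) = \{x_0\}$. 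Since $\|y - x\| \le 2\delta$ in the far zone, this rewrites as $\|y - z\|^2 - \|x - z\|^2 \ge (c/(4\delta^2))\|y - x\|^2$, and taking $\alpha' := \min(\alpha/4, c/(4\delta^2))$ yields $(iv)$.
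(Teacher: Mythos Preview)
Your cycle $(i)\Leftrightarrow(ii)$, $(iv)\Rightarrow(iii)\Rightarrow(i)\Rightarrow(iv)$ closes correctly, and the route you take for the hard step $(i)\Rightarrow(iv)$ is genuinely different from the paper's. You argue directly via a near/far decomposition on $\|y-x\|$: the near zone is handled by continuity of $(x,\eta')\mapsto W_{x,\eta'}$ together with a uniform second-order Taylor remainder, and the far zone by a compactness extraction. The paper instead postpones $(iii)\Rightarrow(iv)$ to a later section, where it proves an abstract lemma on the stability of isolated minima of parameterised $C^2$ functions with positive-definite Hessian, upgrades it to a $C^2$-stability statement for the local projection (Proposition~\ref{prop:C2_stability_projection}), and only then reads off $(iv)$ as Corollary~\ref{cor:non_osculation_pointwise_implies_local}. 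The paper's detour pays for itself because that stability proposition is reused throughout Section~\ref{sec:C2_stability}; your argument is more elementary and self-contained but does not by itself give stability under $C^2$ perturbations of $i$.

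There is, however, a genuine slip in your constants. Condition $(iv)$ requires $\alpha'>\alpha$, yet your final $\alpha'=\min(\alpha/4,\,c/(4\delta^2))$ is strictly smaller than $\alpha$. This matters: the lemma asserts equivalence of $(i)$--$(iv)$ for each fixed $\alpha$ (this is how the ``degree of non-osculation'' is invoked later, e.g.\ in Lemma~\ref{lem:lipschitz_projection}), not merely the equivalence of the four existential statements. The near zone is easy to repair: since $\lambda_{\max}(W_{x_0,\eta})<1-\alpha$ is strict, pick $\alpha_1\in(\alpha,\,1-\lambda_{\max})$, use continuity to get $\lambda_{\max}(W_{x,z-x})<1-\alpha_1$ nearby, and the Taylor expansion then delivers any constant in $(\alpha,\alpha_1)$. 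The far zone is the real issue: your compactness argument only produces \emph{some} $c>0$, with no relation to $\alpha$. One clean fix is to first prove $(i)\Rightarrow(iii)$ directly (just reverse your Taylor argument for $(iii)\Rightarrow(i)$), obtaining $\|y-z_0\|^2-\|x_0-z_0\|^2\ge\alpha'_0\|y-x_0\|^2$ on all of $\M_0$ with $\alpha'_0>\alpha$, and then perturb: for $z$ near $z_0$ and $x\in\pi_{\M_0}(z)$ near $x_0$, both sides change by $O(\|z-z_0\|+\|x-x_0\|)$, so on the set $\|y-x\|\ge\delta_2$ the inequality persists with a constant still exceeding $\alpha$ once $\delta'$ is taken small enough.
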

When the sphere of radius $d_\M(z_0)$ centered at $z_0$ does not osculate $\M$ at $x_0$, we call a number $\alpha>0$ that satisfies the conditions of Lemma \ref{lem:characterization_osculation} \textit{a degree of non-osculation at $x_0$ of the sphere $S(z_0,d_\M(z_0))$}.

\begin{proof}
    The equivalence between the sphere being non-osculating and the existence of some $\alpha  >0$ that satisfies condition \ref{it:weing} is clear. The equivalence between  \ref{it:weing} and \ref{it:osc_chart} follows from the fact that, if $w=d(i\circ \phi)_{u_0}(v)$, then 
    \begin{equation}\label{eq:weingarten_chart}
            \dotp{W_{x_0,\eta}(w),w} = \dotp{\eta, d^2(i\circ \phi)_{u_0}(v,v)},
    \end{equation}
    as $\eta\in T_{x_0}\M^\top$ (see \cite[Chapter 8]{lee2018introduction}).
    
    Let us now show the equivalence between \ref{it:osc_chart} and \ref{it:pointwise_osc}. 
    Observe that for any $\alpha'>0$ and any $y\in\R^D$,
   $$\| y -z_0\|^2 - \alpha' \|y-x_0\|^2 - \|\eta\|^2 = (1-\alpha')\|y-x_0\|^2 - 2\dotp{\eta,y-x_0}.$$
    Let $i,\phi$ and $u_0$ be as in the statement of condition \ref{it:osc_chart}; then condition \ref{it:pointwise_osc} is equivalent to the existence of some $\alpha'>\alpha$ such that
    $$(1-\alpha')\|i\circ \phi(u)-x_0\|^2 - 2\dotp{\eta,i\circ \phi(u)-x_0}>0$$
    for any $u\neq u_0$ close enough to $u_0$.
    But, as $i\circ \phi$ is $C^2$ and $\mathrm{Im}(d(i\circ \phi)_{u_0})=T_{i\circ\phi(u_0)}\M$ is orthogonal to $\eta$, it holds that
    \begin{align*}
           &(1-\alpha')\|i\circ \phi(u)-x_0\|^2 - 2\dotp{\eta,i\circ \phi(u)-x_0}  \\
   &\qquad =(1-\alpha')\|d(i\circ \phi)_{u_0}(u-u_0)\|^2 - \dotp{\eta,d^2(i\circ \phi)_{u_0}(u-u_0,u-u_0)} + o(\|u-u_0\|^2).
    \end{align*}
    Hence, the existence of some $\alpha'>\alpha$ such that $(1-\alpha')\|d(i\circ \phi)_{u_0}(v)\|^2 - \dotp{\eta,d^2(i\circ \phi)_{u_0}(v,v)} + o(\|v\|^2)$ is strictly positive for $v\neq 0$ small enough is equivalent to the form
    $$ v\longmapsto (1-\alpha)\|d(i\circ \phi)_{u_0}(v)\|^2 - \dotp{\eta,d^2(i\circ \phi)_{u_0}(v,v)}$$
    being  positive definite (due to $d(i\circ \phi)_{u_0}$ being an isomorphism). This shows that conditions \ref{it:osc_chart} and \ref{it:pointwise_osc} are equivalent.

Condition \ref{it:local_osc} obviously implies condition \ref{it:pointwise_osc}.
It remains to show that \ref{it:pointwise_osc} implies \ref{it:local_osc}. We postpone the proof to  Section \ref{sec:projections}, in Corollary \ref{cor:non_osculation_pointwise_implies_local}.
\end{proof}

In the next section, we show, among other things, that the non-oscularity with respect to $\M$ of the sphere $S(z_0,d_\M(z_0))$ at $x_0\in \pi_\M(z_0)$ gives nice regularity and stability properties to the projection map around $z_0$ on a neighborhood of $x_0$.


\section{Regularity and stability of projections}\label{sec:projections}

Let as earlier $i\in \Emb(M,\R^D)$ be a $C^2$ embedding of the abstract manifold $M$, and $\M:=i(M)$ be the resulting submanifold.
In this section, we prove various statements regarding the  projections on $\M$ or on some subsets of $\M$, particularly under some assumptions of non-osculation. These results will be instrumental later in the article.

\subsection{Lipschitz continuity and local unicity of the projection}

The following lemma  states that if the sphere $S(z_0,d_\M(z_0))$ is not osculating $\M$ at $x_0\in \pi_\M(z_0)$, then one can define a Lipschitz projection function from a neighborhood of $z_0$ to a neighborhood of $x_0$ in $\M$.

\begin{lemma}\label{lem:lipschitz_projection}
 Let $z_0\in \R^D\backslash \M$ and $x_0\in \pi_\M(z_0)$. Assume that the sphere $S(z_0,d_\M(z_0))$ does not osculate $\M$ at $x_0$, with $\alpha>0$ a degree of non-osculation. Let $\delta,\delta'>0$ be the constants given in \Cref{lem:characterization_osculation}.\ref{it:local_osc}. If $\M_0=\M\cap \overline B(x_0,\delta)$ and $z\in B(z_0,\delta')$, then $\pi_{\M_0}(z)$ contains exactly one element $x$, which satisfies 
    \begin{equation}
    \|x-x_0\|\leq \frac{1}{\alpha}\|z-z_0\|.
\end{equation}
\end{lemma}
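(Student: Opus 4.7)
The plan is to exploit the local non-osculation inequality \ref{it:local_osc} of \Cref{lem:characterization_osculation} twice — once centered at $z_0$ with projection $x_0$, and once centered at $z$ with the claimed projection $x$ — and combine them by an algebraic identity that produces a scalar product one can bound by Cauchy–Schwarz.

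First I would handle existence of a projection: $\M_0 = \M \cap \overline B(x_0,\delta)$ is compact, so $\pi_{\M_0}(z)$ is nonempty for every $z$. For uniqueness, suppose $x, x' \in \pi_{\M_0}(z)$; then plugging $y=x'$ into the inequality from \ref{it:local_osc} and using $\|x'-z\| = \|x-z\|$ forces $\|x'-x\|^2 \leq 0$, hence $x=x'$. Along the way I need the minor but important observation that $x_0$ is itself a projection of $z_0$ onto the \emph{restricted} set $\M_0$ (since $x_0 \in \M_0$ realizes the distance to the larger set $\M$), which legitimises applying \ref{it:local_osc} at $(z_0,x_0)$.

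For the Lipschitz estimate, I would apply \ref{it:local_osc} twice:
\begin{align*}
\|x-z_0\|^2 &\geq \|x_0-z_0\|^2 + \alpha'\|x-x_0\|^2, \\
\|x_0-z\|^2 &\geq \|x-z\|^2 + \alpha'\|x_0-x\|^2,
\end{align*}
where the first uses $(z_0,x_0)$ with $y=x$ and the second uses $(z,x)$ with $y=x_0$. Adding them and expanding $\|x-z_0\|^2$ and $\|x_0-z\|^2$ around $z$ and $z_0$ respectively, the squared-norm contributions telescope, leaving
\[
2\langle x-x_0,\; z-z_0\rangle \;\geq\; 2\alpha'\,\|x-x_0\|^2.
\]
Cauchy–Schwarz then gives $\|x-x_0\| \leq \tfrac{1}{\alpha'}\|z-z_0\| \leq \tfrac{1}{\alpha}\|z-z_0\|$ since $\alpha'>\alpha$.

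I do not expect a serious obstacle: the main subtlety is checking that \ref{it:local_osc} is applicable at $(z_0,x_0)$ as well as at $(z,x)$, which reduces to verifying that $x_0 \in \pi_{\M_0}(z_0)$ and $z_0 \in B(z_0,\delta')$. Once both applications are justified, the argument is the clean algebraic cancellation described above.
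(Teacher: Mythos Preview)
Your proof is correct and follows essentially the same approach as the paper: both apply condition \ref{it:local_osc} once at $(z_0,x_0)$ with $y=x$ and once at $(z,x)$ with $y=x_0$, then combine to obtain $2\alpha'\|x-x_0\|^2 \leq 2\langle x-x_0, z-z_0\rangle$ and conclude by Cauchy--Schwarz. The only cosmetic difference is that you add the two inequalities first and then expand, whereas the paper expands each separately before combining; your version is slightly more symmetric but the substance is identical.
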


\begin{proof}
Let $\delta,\delta'>0$ and $\alpha'>\alpha$ be the constants given in \Cref{lem:characterization_osculation}.\ref{it:local_osc}.
Let $x\in \pi_{\M_0}(z)$. \Cref{lem:characterization_osculation}.\ref{it:local_osc} implies that every point $y\in \M_0$ different from $x$ satisfies $\|y-z\|>\|x-z\|$. This proves that  $\pi_{\M_0}(z)$ contains exactly one element $x$. Let us prove the second claim. Assume that $x\neq x_0$, for otherwise the claim is trivial. According to \eqref{eq:P3_loc} with $y=x_0$,
    \begin{align*}
    &\|x_0-z\|^2 \geq  \|x-z\|^2 + \alpha'\|x_0-x\|^2\\
    &\geq   \|x-x_0\|^2 + \|x_0-z\|^2 +  2\dotp{x-x_0,x_0-z}+ \alpha'\|x_0-x\|^2 \\
    &\geq   \|x-x_0\|^2 + \|x_0-z\|^2 + 2\dotp{x-x_0,x_0-z_0} + 2\dotp{x-x_0,z_0-z}
   + \alpha'\|x_0-x\|^2,
\end{align*}
that is, using Cauchy-Schwartz inequality
\begin{equation}\label{eq:prop_osc_step1}
    (1+\alpha')\|x-x_0\|^2 \leq -2\dotp{x-x_0,x_0-z_0}+ 2\|x-x_0\| \|z_0-z\|.
\end{equation}
Also, we may use \eqref{eq:P3_loc} with $y=x$ and $x=x_0$ to obtain
\begin{align*}
     \alpha'\|x_0-x\|^2 + \|x_0-z_0\|^2 &\leq \|x-z_0\|^2\\
    \alpha'\|x_0-x\|^2 +  \|x_0-z_0\|^2&\leq  \|x-x_0\|^2 +\|x_0-z_0\|^2 + 2\dotp{x-x_0,x_0-z_0}.
\end{align*}
that is
\begin{equation}\label{eq:prop_osc_step2}
  -2\dotp{x-x_0,x_0-z_0}< (1-\alpha')\|x_0-x\|^2.
\end{equation}
Putting together \eqref{eq:prop_osc_step1} and \eqref{eq:prop_osc_step2} yields
\[ 2\alpha'\|x-x_0\|^2 \leq 2\|x-x_0\| \|z_0-z\|.\]
Dividing by $2\alpha'\|x-x_0\|$ and using that $\alpha'>\alpha$ give the result.
\end{proof}

\subsection{Differential of the projection}

The following proposition, which can be found e.g. in \cite{all_about_projection}, describes the differential of the projection on $\M$ wherever it is well-defined.
\begin{proposition}\label{prop:diff_projection}
Let $\overline \Med(\M)$ be the closure of the medial axis of $\M$, and let $z\in \R^D \backslash (\M \cup \overline \Med(\M) )$. Then the projection on $\M$ uniquely defines a $C^1$ map $p$ on a neighborhood of $z$, and the differential of $p$ in $z$ is given by 
\[ dp_z = \left(\mathrm{id}_{T_{p(z)}\M} - W_{p(z),z-p(z)}\right)^{-1}\circ \pi_{p(z)}:\R^D \rightarrow T_{p(z)}\M,\]
where we recall that $\pi_{p(z)}:\R^D \rightarrow T_{p(z)}\M$ is the orthogonal projection onto $T_{p(z)}\M$.
\end{proposition}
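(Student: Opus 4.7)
The plan is to apply the implicit function theorem to the orthogonality condition that characterizes the projection on $\M$. Set $x_0 = p(z)$ and $\eta = z - x_0$, pick a chart $(\phi, U)$ of $M$ around $i^{-1}(x_0)$ with $\phi(u_0) = i^{-1}(x_0)$, and let $\psi := i \circ \phi$ and $A := d\psi_{u_0}$, which I regard as a linear isomorphism $\R^m \to T_{x_0}\M$ (with transpose $A^T : T_{x_0}\M \to \R^m$ also an isomorphism). A point $\psi(u)$ is the projection of $z'$ on a neighborhood of $x_0$ in $\M$ exactly when $\psi(u) - z' \perp \Im(d\psi_u)$, so I would define $F : \R^D \times U \to \R^m$ by $F_j(z', u) = \langle \psi(u) - z', d\psi_u(e_j)\rangle$, which satisfies $F(z, u_0) = 0$.

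The next step is to compute the two partial differentials of $F$ at $(z, u_0)$. Noting that for $v \in \R^D$ one has $\langle v, A e_j\rangle = \langle \pi_{x_0}(v), A e_j\rangle$ (since $A e_j \in T_{x_0}\M$), a direct differentiation gives
\[\partial_{z'} F(z, u_0) = -A^T \circ \pi_{x_0}, \qquad \partial_u F(z, u_0)(v) = \bigl(\langle A v, A e_j\rangle - \langle \eta, d^2 \psi_{u_0}(v, e_j)\rangle\bigr)_{j=1}^m.\]
The polarized form of identity \eqref{eq:weingarten_chart}, valid because $\eta \perp T_{x_0}\M$, rewrites the second as $\partial_u F(z, u_0) = A^T \circ (\id_{T_{x_0}\M} - W_{x_0, \eta}) \circ A$.

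The main obstacle is to verify that $\id - W_{x_0, \eta}$ is invertible on $T_{x_0}\M$, which is the non-degeneracy hypothesis demanded by the implicit function theorem. This is precisely where the assumption $z \notin \overline{\Med}(\M)$ is used: non-invertibility would mean the sphere $S(z, \|\eta\|)$ osculates $\M$ at $x_0$, i.e.~$z$ is a focal point of $\M$, and a standard perturbation argument along an eigenvector of $W_{x_0, \eta}$ associated with the eigenvalue $1$ then produces arbitrarily nearby points with multiple projections, placing $z \in \overline{\Med}(\M)$. I would import this classical implication from \cite{all_about_projection} rather than reprove it.

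Granting invertibility, the implicit function theorem yields a unique $C^1$ map $\tilde p$ on a neighborhood $V$ of $z$ with $F(z', \tilde p(z')) \equiv 0$ and $\tilde p(z) = u_0$; shrinking $V$ so that it avoids $\overline{\Med}(\M)$ guarantees uniqueness of nearby projections, so $p(z') = \psi(\tilde p(z'))$ on $V$ and $p$ is $C^1$ at $z$. Implicit differentiation then gives
\[dp_z = A \circ \bigl(A^T (\id_{T_{x_0}\M} - W_{x_0, \eta}) A\bigr)^{-1} \circ A^T \circ \pi_{x_0},\]
and since $A$ and $A^T$ are isomorphisms between $\R^m$ and $T_{x_0}\M$, the inverse factors as $A^{-1} (\id - W_{x_0, \eta})^{-1} (A^T)^{-1}$, collapsing the whole expression to $dp_z = (\id_{T_{x_0}\M} - W_{x_0, \eta})^{-1} \circ \pi_{x_0}$, as claimed.
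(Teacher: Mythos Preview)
Your argument is correct. The paper does not actually prove this proposition: it simply records the formula and cites \cite{all_about_projection} as a reference, so there is no ``paper's own proof'' to compare against. Your implicit-function-theorem approach is the standard one and goes through cleanly; the identification of $\partial_u F(z,u_0)$ with $A^T(\id - W_{x_0,\eta})A$ via the polarized form of \eqref{eq:weingarten_chart} is exactly right, and the cancellation of the $A$, $A^T$ factors at the end is valid because $A$ and $A^T|_{T_{x_0}\M}$ are mutually inverse isomorphisms between $\R^m$ and $T_{x_0}\M$ up to the Gram correction, which drops out. The one step you defer to the literature---that an eigenvalue $1$ of $W_{x_0,\eta}$ forces $z\in\overline{\Med}(\M)$---is indeed classical (focal points are limits of points with non-unique closest point), so citing it is appropriate. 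Note, incidentally, that the paper revisits essentially your computation in the proof of \Cref{lem:diff_projection_in_charts}, where the chart-level formula is spelled out for later use with jets.
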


The following lemma serves two purposes: it shows that under simple conditions, the projection at $z_0$ on the image of the germ of a function $f:M\rightarrow \R^D$ (rather than on the image of some embedding $i:M\rightarrow \R^D$) and the differential of that projection are well-defined, and in that context it expresses the  formula for the differential of the projection given in Proposition \ref{prop:diff_projection} in terms of coordinates of charts of $M$.\footnote{Such an expression must be available somewhere in the literature, but we could not find it.} This will be needed when working with jets in Section \ref{sec:density}; sadly, it involves some rather tedious verifications.
\begin{lemma}\label{lem:diff_projection_in_charts}
Let $m_0\in M$ and $x_0,z_0\in \R^D$ and let $f\in C^2(M,\R^D)$ be some function (not necessarily an embedding) that maps $m_0$ to $x_0$.
Furthermore, suppose that
$df_{m_0}$ has full rank and that $\Im(df_{m_0}) \perp z_0-x_0$. Let $\phi : (V,u_0) \rightarrow (M, m_0)$ be any chart of $M$ around $m_0$. 
If the quadratic form
       \begin{equation*}
         \begin{split}
         \R^m  &\longrightarrow  \R\\
    v&\longmapsto \|d(f\circ \phi)_{u_0}(v)\|^2 - \dotp{z_0-x_0,d^2(f\circ \phi)_{u_0}(v,v)}
        \end{split}
       \end{equation*}
        is  positive definite,
then there exist $\delta>0$ and a neighborhood $U\subset M$ of $m_0$ such that
\begin{itemize}
    \item $f|_U : U \rightarrow \R^D$ is an embedding, 
    \item each $z\in B(z_0,\delta)$ admits a unique projection on $\overline{f(U)}$ and that unique projection belongs to $f(U)$, and
    \item this unique projection defines a $C^1$ map $p:B(z_0,\delta) \to f(U)$.
\end{itemize}

In that case, let as above  $\phi : (V,{u_0}) \rightarrow (M, m_0)$ be any chart of $M$ around $m_0$. For any $v\in \Im(df_{m_0})$, let $\Lambda(v) \in \Im(df_{m_0})$ be the only vector in $\Im(df_{m_0})$ such that 
$$\dotp{z_0-x_0, d^2 (f\circ \phi)_{u_0}(d(f\circ \phi)_{u_0}^{-1}(v),d(f\circ \phi)_{u_0}^{-1}(w)) } = \dotp{\Lambda(v),w} $$
for any $w\in \Im(df_{m_0})$.
This defines a linear endomorphism  $\Lambda:  \Im(df_{m_0}) \rightarrow  \Im(df_{m_0})$ that is independent from the choice of the chart $\phi$, and
\begin{equation*}
    \begin{split}
Q: \Im(df_{m_0})& \longrightarrow  \Im(df_{m_0})\\
v &\longmapsto v - \Lambda(v)
    \end{split}
\end{equation*}
is a linear isomorphism.

The differential at $z_0$ of the map $p$ is then equal to
\[ d p_{z_0} = Q^{-1}\circ  \pi_{ \Im(df_{m_0})} : \R^D \rightarrow  \Im(df_{m_0}), \]
where $ \pi_{\Im(df_{m_0})} : \R^D \rightarrow \Im(df_{m_0})$ is the orthogonal projection.
\end{lemma}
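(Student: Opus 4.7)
The plan is to address the three groups of claims separately, with the orthogonality hypothesis $z_0 - x_0 \perp \Im(df_{m_0})$ playing a decisive role in each.

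\textbf{Existence of $p$.} Since $df_{m_0}$ has full rank, $f$ restricts to an embedding on some open neighborhood $U_0$ of $m_0$. Introduce $F(u, z) = \tfrac{1}{2}\|f(\phi(u)) - z\|^2$. The orthogonality assumption gives $\nabla_u F(u_0, z_0) = d(f\circ\phi)_{u_0}^T(x_0 - z_0) = 0$, and a direct computation shows that the Hessian $d^2_u F(u_0, z_0)(v,w)$ is exactly the polarization of the positive-definite quadratic form from the hypothesis. Shrinking $U_0$ to a small convex chart neighborhood $U$ of $m_0$ keeps the Hessian uniformly positive definite on $\overline U \times B(z_0, \delta)$ for some $\delta > 0$, so $F(\cdot, z)$ is strictly convex on $U$ for each such $z$. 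The implicit function theorem then produces a $C^1$ map $z \mapsto u(z) \in U$ solving $\nabla_u F(u(z), z) = 0$; it is the unique critical point and hence the unique minimizer over $\overline U$, and further shrinking $\delta$ keeps $u(z)$ interior. Setting $p(z) = f(\phi(u(z)))$ yields the required projection.

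\textbf{Chart-invariance of $\Lambda$ and invertibility of $Q$.} The bilinear form
\[ (v, w) \longmapsto \dotp{z_0 - x_0, d^2(f\circ\phi)_{u_0}(d(f\circ\phi)_{u_0}^{-1}(v), d(f\circ\phi)_{u_0}^{-1}(w))} \]
on $\Im(df_{m_0})$ is symmetric by equality of mixed partials, so it defines a unique self-adjoint $\Lambda$. For another chart $\psi$ with transition $h = \phi^{-1} \circ \psi$, the chain rule produces the extra term $d(f\circ\phi)_{u_0}(d^2 h_{u'_0}(\cdot, \cdot))$ in $d^2(f\circ\psi)_{u'_0}$, which vanishes against $z_0 - x_0$ by the orthogonality hypothesis; the factor $dh_{u'_0}$ appearing in the remaining piece is absorbed by the compensating change in $d(f\circ\psi)_{u'_0}^{-1} = dh_{u'_0}^{-1}\circ d(f\circ\phi)_{u_0}^{-1}$, giving chart-invariance. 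Positive definiteness of the hypothesis quadratic form then reads $\dotp{(\mathrm{id} - \Lambda)(v), v} > 0$ for every nonzero $v \in \Im(df_{m_0})$, so $Q$ is positive-definite self-adjoint and thus an isomorphism.

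\textbf{Formula for $dp_{z_0}$.} Set $A := d(f\circ\phi)_{u_0} : \R^m \to \R^D$ and let $H \in \R^{m \times m}$ be the symmetric matrix with entries $H_{ij} = \dotp{z_0 - x_0, d^2(f\circ\phi)_{u_0}(e_i, e_j)}$. The Hessian computation above yields $d^2_u F(u_0, z_0) = A^T A - H$, and implicit differentiation of $\nabla_u F(u(z), z) = 0$ at $z_0$ gives $du_{z_0} = (A^T A - H)^{-1} A^T$, so
\[ dp_{z_0} = A\,du_{z_0} = A(A^T A - H)^{-1} A^T.\]
Unpacking the definition of $\Lambda$ one gets $A^T \Lambda(A\xi) = H\xi$, equivalently $Q \circ A = A \circ (\mathrm{id} - (A^T A)^{-1} H)$ on $\R^m$. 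Combining this with the classical formula $\pi_{\Im(df_{m_0})} = A(A^T A)^{-1} A^T$ gives
\[ Q^{-1} \circ \pi_{\Im(df_{m_0})} = A\,(\mathrm{id} - (A^T A)^{-1} H)^{-1}(A^T A)^{-1} A^T = A(A^T A - H)^{-1} A^T,\]
which matches $dp_{z_0}$, completing the proof.

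The main obstacle is the chart-invariance in the second step: one must keep careful track of the non-symmetric chain-rule term $d(f\circ\phi)_{u_0}(d^2 h_{u'_0}(\cdot, \cdot))$ and verify that $z_0 - x_0 \perp \Im(df_{m_0})$ is precisely the property that makes it drop out. A secondary source of errors is the juggling of the three isomorphic copies of the tangent space ($T_{m_0} M$, the chart space $\R^m$, and $\Im(df_{m_0}) \subset \R^D$) when identifying the coordinate formula $A(A^T A - H)^{-1} A^T$ with the intrinsic expression $Q^{-1} \circ \pi_{\Im(df_{m_0})}$.
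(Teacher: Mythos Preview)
Your proof is correct, and it takes a genuinely different route from the paper's. The paper proceeds geometrically: it identifies $\Lambda$ with the shape operator $W_{x_0,z_0-x_0}$ of the local submanifold $f(U)$ by an explicit computation with do~Carmo's second fundamental form, and then reads off both the invertibility of $Q=\id-\Lambda$ and the formula $dp_{z_0}=(\id-W)^{-1}\circ\pi$ from the cited black-box \Cref{prop:diff_projection}. Your argument is instead purely analytic: you obtain $p$ directly from the implicit function theorem applied to $\nabla_u F=0$, get $dp_{z_0}=A(A^TA-H)^{-1}A^T$ by implicit differentiation, prove chart-invariance of $\Lambda$ by a direct chain-rule computation (the extra term $d(f\circ\phi)_{u_0}(d^2h_{u_0'}(\cdot,\cdot))$ lies in $\Im(df_{m_0})$ and is killed by $z_0-x_0$), and then match the matrix expression with $Q^{-1}\circ\pi_{\Im(df_{m_0})}$ via the identity $Q\circ A=A\circ(\id-(A^TA)^{-1}H)$. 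Your approach is more self-contained and avoids both the detour through the second fundamental form and the external reference for the projection differential; the paper's approach has the conceptual payoff of exhibiting $\Lambda$ as the shape operator, which is what makes the link with the osculation condition transparent elsewhere in the article.
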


\begin{remark}
 As shown in the proof below, the linear map $\Lambda$ is simply the shape operator at $x_0$ in normal direction $z_0-x_0$ of the submanifold $f(U)$.  
\end{remark}

\begin{proof}
 The fact that $f|_U:U\rightarrow \R^D$ is an embedding for a small enough neighborhood $U$ of $m_0$ is simply a consequence of the full rank of $df_{m_0}$. Let us choose such a neighborhood.

 If the form  $ v\mapsto \|d(f\circ \phi)_{u_0}(v)\|^2 - \dotp{z_0-x_0,d^2(f\circ \phi)_{u_0}(v,v)}$ is positive definite for a given chart $\phi : (V,{u_0}) \rightarrow (M, m_0)$ of $M$ around $m_0$ (we can assume that $\phi(V)\subset U$), it is so for any such chart (as $\Im(df_{m_0}) \perp z_0-x_0$). In that case, consider the function
 $g:V \rightarrow \R, u \mapsto \|f\circ\phi(u) - z_0\|^2$. Its first differential at ${u_0}$ is $0$, as $\Im(df_{m_0}) \perp z_0-x_0$, and its second differential is 
 $$(v_1,v_2)\mapsto 2(\dotp{d(f\circ \phi)_{u_0}(v_1),d(f\circ \phi)_{u_0}(v_2)}- \dotp{z_0-x_0,d^2(f\circ \phi)_{u_0}(v_1,v_2)}).$$
 By hypothesis, the associated quadratic form is positive definite, hence ${u_0}$ is a strict local minimum of $g$, and $x_0$ is a strict local minimum of the distance function to $z_0$ restricted to $f(U)$. By making the neighborhood $U$ of $m_0$ in $M$ smaller, we can  thus assume that $x_0$ is the unique projection of $z_0$ on $\overline{f(U)}$. 

Furthermore, we have shown that the sphere $S(z_0,d_{\overline{f(U)}}(z_0))$ does not osculate $\overline{f(U)}$ at $x_0$ 
(our definition of osculation naturally extends to this case), using characterization \ref{it:osc_chart} from the Osculation Lemma \ref{lem:characterization_osculation}.
 If $\overline{f(U)}$ were a submanifold (without boundary), we  could now use the Lipschitz continuity of the projection from \Cref{lem:lipschitz_projection} to conclude that there exists $\delta>0$ such that each $z\in B(z_0,\delta)$ admits a unique projection $p(z)$ on $\overline{f(U)}$ that belongs to $f(U)$ (possibly by making $U$ yet smaller). This would also mean that $B(z_0,\delta)$ would be included in the interior of the complement of the medial axis of $\overline{f(U)}$, hence the projection map $p:B(z_0,\delta) \rightarrow f(U)$ would be $C^1$ with its differential given by \Cref{prop:diff_projection}.
 One can see that the argument still works in the same way despite the boundary.
 Let $\delta$ (and $U$) satisfy the conditions above.

The only part of the proof left is to adapt the formula for the differential of the projection from Proposition \ref{prop:diff_projection} to express $d p_{z_0}$ in the coordinates of $f\circ \phi|_V$.
Let us show that $\Lambda$ is indeed the shape operator at $x_0$ in normal direction $z_0-x_0$ of $f(U)$, as claimed in the remark above.
In  \cite[p. 126]{do1992riemannian}, a bilinear map $B: \Im(df_{m_0})\times \Im(df_{m_0}) \rightarrow \Im(df_{m_0})^\perp$ is defined as follows: for $w_2,w_2\in \Im(df_{m_0}) = T_{x_0}f(U)$, let $W_1, W_2 : f(U)\supset \tilde{U} \rightarrow Tf(U)$ be vector fields on the submanifold $f(U)$ defined around $x_0$ such that $W_1(x_0) = w_1$ and $W_2(x_0)=w_2$, and let $\overline{W_1}$ and $\overline{W_2}$ be local extensions of $W_1$ and $W_2$ to $\R^D$.
Then
$$B(w_1,w_2) = \pi_{T_{x_0}f(U)^\perp}(d(\overline{W_1})_{x_0}(\overline{W_2}(x_0))), $$
where $ \pi_{T_{x_0}f(U)^\perp}$ is the orthogonal projection on $T_{x_0}f(U)^\perp$. The same reference shows that $B(w_1,w_2)$ does not depend on the choice of $W_1,W_2,\overline{W_1},\overline{W_2}$ and is symmetric.

Let us express $B$ in terms of the embedding $f|_U$. We can assume without loss of generality that $ T_{x_0}f(U) = \R^m\times \{0\} \subset \R^D$ and $x_0 = 0$. Then the projection $\pi: \R^m \times \R^{D-m} \rightarrow \R^m$ restricted to $f(U)$ is a diffeomorphism around $0$, hence $\pi\circ f : U\rightarrow \R^m $  is locally the inverse of a chart $\psi : (\tilde{V},0) \rightarrow (M, m_0)$  of $M$ around $m_0$. Around $0\in \R^D$, the submanifold $f(U)$ is parameterized as the graph of the map $f\circ \psi : \R^m \rightarrow \R^D$.

Now let $w_1,w_2\in T_{x_0}f(U)$. We define a vector field $W_i$ for $i=1,2$ as follows:
\begin{equation*}
    \begin{split}
\overline{W_i}: \R^D = \R^m \times \R^{D-m}& \longrightarrow    \R^D\\
(x,y) &\longmapsto d(f\circ \psi)_x( (d(f\circ \psi)_0)^{-1}(w_i)).
    \end{split}
\end{equation*}
Then the vector fields $\overline{W_i}$ are such that their restrictions to $f(U)$ take values in the tangent bundle of $f(U)$, and such that $\overline{W_i}(0)=w_i$. By definition,
\begin{align*}
B(w_1,w_2) = \pi_{T_{0}f(U)^\perp}(d(\overline{W_1})_{0}(\overline{W_2}(0))) =  \pi_{T_{0}f(U)^\perp}( d^2(f\circ \psi)_0((d(f\circ \psi)_0)^{-1}(w_1),w_2)) \\
= \pi_{T_{0}f(U)^\perp}( d^2(f\circ \psi)_0((d(f\circ \psi)_0)^{-1}(w_1),(d(f\circ \psi)_0)^{-1}(w_2))) 
\end{align*}
where the last equality is true because $d(f\circ \psi)_0$ is the inclusion $\R^m \rightarrow \R^m \subset \R^D$.
In Definition 2.2 of \cite[Chapter 6]{do1992riemannian}, the shape operator at $x_0 = 0$ in normal direction $z_0-x_0$ of the submanifold $f(U)$ is defined as the linear operator $W_{0,z_0-x_0} : T_0f(U) \rightarrow T_0f(U)$ such that 
$$\dotp{W_{0,z_0-x_0}(w_1),w_2} = \dotp{ z_0-x_0,B(w_1,w_2)}$$
for any $w_1,w_2 \in T_0f(U)$, i.e. such that 
$$\dotp{W_{0,z_0-x_0}(w_1),w_2} = \dotp{ z_0-x_0, d^2(f\circ \psi)_0((d(f\circ \psi)_0)^{-1}(w_1),(d(f\circ \psi)_0)^{-1}(w_2))}.$$
This is precisely the definition that we gave of the operator $\Lambda$, hence $\Lambda = W_{0,z_0-x_0}$ is the shape operator in normal direction $z_0-x_0$ at $x_0$. The fact that $\Lambda$ does not depend on the choice of the chart $\psi$ can then be deduced  either from the fact that the shape operator does not depend on the choice of the vector fields $\overline{W_i}$ (as shown in \cite{do1992riemannian}), or by direct computation using the expression $\dotp{ z_0-x_0, d^2(f\circ \psi)_0((d(f\circ \psi)_0)^{-1}(w_1),(d(f\circ \psi)_0)^{-1}(w_2))}$.

Proposition \ref{prop:diff_projection} then states the  invertibility of $Q = \id - \Lambda$ and the fact that $d p_{z_0} = Q^{-1}\circ  \pi_{ \Im(df_{m_0})} $, which concludes the proof.
\end{proof}

\subsection{\texorpdfstring{$C^2$}{C2} stability of the projection}

In this subsection, we show 
that a non-osculatory projection on $i(M)$ behaves nicely locally, and that this is stable with respect to small $C^2$ perturbations of the embedding $i$. In particular, we finish the proof of \Cref{lem:characterization_osculation}.
We first prove an almost trivial lemma. 

\begin{lemma}\label{lem:abstract_minimum_unicity}
 Let $V\subset \R^m$ be an open set, let $\Lambda$ be a topological space and let $F:V\times \Lambda \rightarrow  \R$ be a continuous function. For $\lambda \in \Lambda$, write $F_\lambda : v \mapsto F(v, \lambda)$ and assume that the second differential \[ V\times \Lambda \rightarrow \mathrm{Bil}(\R^m,\R^m,\R),\ (v,\lambda)\mapsto d^2(F_\lambda)_v\]
 of $F$ with respect to $v\in V$ exists and is continuous with respect to $(v,\lambda) \in V\times \Lambda$. Assume further that the map $\Lambda \rightarrow C^0(V,\R), \lambda \mapsto F_\lambda$ is continuous with respect to the $\|\cdot\|_\infty$ norm, that $d^2(F_{\lambda_0})_{v_0}$ is positive definite for some $v_0\in V$, $\lambda_0\in \Lambda$ and that  $v_0$ is a global minimum for $F_{\lambda_0}$. 
 Then there exist open neighborhoods $ U_{\R^m} \subset V$ of $v_0$ and  $U_\Lambda \subset \Lambda$ of $\lambda_0$ such that $\overline{U_{\R^m}} \subset V$, that for any $\lambda \in U_\Lambda$ the function  $F_\lambda$ admits exactly one minimum $v_\lambda$ on  $ \overline{U_{\R^m}}$ with $v_\lambda \in U_{\R^m}$ and  that $d^2(F_\lambda)_v$ is positive definite for any $v\in  U_{\R^m}, \lambda \in U_\Lambda$.
\end{lemma}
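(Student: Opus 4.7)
The plan is to combine three standard ingredients: continuity of the Hessian gives positive definiteness on a compact neighborhood, positive definiteness gives strict convexity and hence uniqueness of the minimum on a closed ball, and uniform convergence $F_\lambda \to F_{\lambda_0}$ on this closed ball keeps the minimum in the interior.

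\textbf{Step 1 (positive definiteness neighborhood).} Since $V$ is open, pick $r_0 > 0$ with $\overline{B}(v_0, r_0) \subset V$. The set of positive definite symmetric bilinear forms on $\R^m$ is open in the space of bilinear forms, and by hypothesis $(v,\lambda) \mapsto d^2(F_\lambda)_v$ is continuous from $V \times \Lambda$ into that space. Since $d^2(F_{\lambda_0})_{v_0}$ is positive definite, for each $v \in \overline{B}(v_0, r_0)$ we can find a neighborhood of $(v,\lambda_0)$ on which the Hessian stays positive definite; by compactness of $\overline{B}(v_0, r_0)$ (and shrinking $r_0$ to some $r \le r_0$ if necessary) we obtain an open neighborhood $U_\Lambda^1$ of $\lambda_0$ such that $d^2(F_\lambda)_v$ is positive definite for every $(v,\lambda) \in \overline{B}(v_0,r) \times U_\Lambda^1$. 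Set $U_{\R^m} \defeq B(v_0, r)$; note that $\overline{U_{\R^m}} \subset V$ and the positive definiteness claim of the lemma already holds on $U_{\R^m} \times U_\Lambda^1$.

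\textbf{Step 2 (unique minimum on the closed ball).} For each fixed $\lambda \in U_\Lambda^1$, the function $F_\lambda$ has positive definite Hessian on the convex set $\overline{B}(v_0,r)$, so it is strictly convex there. By compactness $F_\lambda$ attains its minimum on $\overline{B}(v_0,r)$, and by strict convexity this minimum is unique, yielding a well-defined $v_\lambda \in \overline{B}(v_0,r)$.

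\textbf{Step 3 (the minimum lies in the open ball).} This is the main point to argue. At $\lambda = \lambda_0$, the global minimizer $v_0 \in V$ is in particular a minimizer over $\overline{B}(v_0, r)$, and strict convexity of $F_{\lambda_0}$ forces $F_{\lambda_0}(v) > F_{\lambda_0}(v_0)$ for every $v \in \overline{B}(v_0,r) \setminus \{v_0\}$; in particular, by compactness of $\partial B(v_0,r)$, there exists $\eps > 0$ with
\[
F_{\lambda_0}(v) \geq F_{\lambda_0}(v_0) + \eps \quad \text{for all } v \in \partial B(v_0, r).
\]
The assumed continuity of $\lambda \mapsto F_\lambda$ into $(C^0(V,\R), \|\cdot\|_\infty)$ yields a neighborhood $U_\Lambda^2$ of $\lambda_0$ with $\sup_{v \in \overline{B}(v_0,r)} |F_\lambda(v) - F_{\lambda_0}(v)| < \eps/3$ for all $\lambda \in U_\Lambda^2$. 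For such $\lambda$ and any $v \in \partial B(v_0,r)$,
\[
F_\lambda(v) > F_{\lambda_0}(v) - \eps/3 \geq F_{\lambda_0}(v_0) + 2\eps/3 > F_\lambda(v_0) + \eps/3,
\]
so $v_\lambda$ cannot lie on $\partial B(v_0,r)$; hence $v_\lambda \in B(v_0,r) = U_{\R^m}$. Setting $U_\Lambda \defeq U_\Lambda^1 \cap U_\Lambda^2$ completes the proof.

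The only genuine subtlety is Step 3: strict convexity alone does not prevent the minimum from escaping to the boundary as $\lambda$ varies, which is why we need both a strict gap $\eps > 0$ on $\partial B(v_0,r)$ at $\lambda_0$ (from strict convexity plus compactness) and uniform control of $F_\lambda$ near $F_{\lambda_0}$ on the compact closed ball. Everything else is a direct application of continuity and strict convexity.
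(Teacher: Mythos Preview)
Your proof is correct and follows essentially the same approach as the paper: both use continuity of the Hessian to get positive definiteness (hence strict convexity) on a product neighborhood, then a gap-on-the-boundary argument combined with uniform convergence $F_\lambda\to F_{\lambda_0}$ to trap the minimizer in the interior. The only cosmetic difference is that the paper uses a pair of nested open sets $U_1\subset U_2$ where you use the boundary sphere $\partial B(v_0,r)$; your Step~1 compactness phrasing is slightly garbled (you only know positive definiteness at the single point $(v_0,\lambda_0)$, not on the whole ball), but your parenthetical ``shrinking $r_0$'' is exactly the right fix and the conclusion stands.
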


\begin{proof}
  By assumption the second differential $d (v,\lambda)\mapsto d^2(F_\lambda)_v$ is continuous in $(v,\lambda)\in V\times \Lambda$, and being definite positive is an open property, hence there exists an open neighborhood of $(v_0,\lambda_0)$ on which $d^2(F_\lambda)_v$ is positive definite. By restricting it, we can assume that it is of the shape $  U_{\R^m}\times U_\Lambda\subset V\times \Lambda$ and that $ U_{\R^m}$ is a convex subset of $\R^m$.  For any fixed $\lambda \in U_\Lambda$, the second differential $d^2(F_\lambda)_v$ in $v$ of the function $F_\lambda :  U_{\R^m} \rightarrow \R^m$ is positive definite, hence $F_\lambda$ is strictly convex and admits at most one minimum $v_\lambda$ on the convex set $ U_{\R^m}$.

  As $v_0$ is a minimum for $F_{\lambda_0}$ and $d^2(F_{\lambda_0})_{v_0}$ is positive definite, there exist open neighborhoods $U_1,U_2$ of $v_0$ and $\beta>0$ such that $U_1 \subset U_2  \subset \overline{U_2} \subset  U_{\R^m} $, that $U_2$ is convex and that $F_{\lambda_0}(v_0)< \min \{F_{\lambda_0}(x) :\ x\in  \overline{U_2}\backslash U_1\} - \beta$. By choosing $U_\Lambda$ small enough that $\|F_\lambda - F_{\lambda_0}\|_\infty < \beta/4$ for any $\lambda \in U_\Lambda$, we have that $F_{\lambda}(v_0)< \min \{F_{\lambda}(x) :\ x\in \overline{U_2}\backslash U_1\} - \beta/2$ for such a $\lambda \in U_\Lambda$. Thus $F_\lambda$ admits a minimum $v_\lambda$ on $\overline{U_2}$ and $v_\lambda$ must belong to $U_1 \subset U_2$. As $U_2$ is convex, the argument used above applies again and this minimum must be unique. By redefining $U_{\R^m}$ as its subset $U_2$, we conclude that $F_\lambda$ does admit exactly one minimum $v_\lambda$ on  $ \overline{U_{\R^m}}$ with $v_\lambda \in U_{\R^m}$.
\end{proof}

Now we specialize the lemma above to the special case of interest to us.

\begin{proposition}[$C^2$ stability  of the projection]\label{prop:C2_stability_projection}
    Let $i_0\in \Emb^2(M,\R^D)$ and $z_0\in \R^D$. Let $x_0= i_0(m_0)$ be a projection of $z_0$ on $i_0(M)$ such that $S(z_0, d_{i_0(M)}(z_0))$ is non-osculating $i_0(M)$ at $x_0$. Then there exist $\alpha>0$ and neighborhoods $m_0\in U_M\subset M$, $z_0 \in U_{\R^D} \subset \R^D$ and $i_0\in U_{\Emb}\subset \Emb^2(M,\R^D)$ such that for any $i\in  U_{\Emb}$, any $z\in U_{\R^D}$ has a unique projection $p_i(z)$ on $\overline{i (U_M)}$, that $p_i(z)$ belongs to $i (U_M)$, and that the sphere $S(z,d_{i  (U_M)}(z))$ does not osculate $i(U_M)$ at $p_i(z)$ with $\alpha$ a degree of non-osculation. 
    
    In particular, there exist $\delta,\delta'>0$ such that for any $i\in U_{\Emb}$ and $z\in U_{\R^d}$, if $x=p_i(z)$ and $\M=i(M)$, then the function $p_i:B(z,\delta')\to \M \cap \overline B(x,\delta)=\M_0$ is the orthogonal projection on $\M_0$, which is $C^1$ and $\alpha^{-1}$-Lipschitz continuous.
\end{proposition}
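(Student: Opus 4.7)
The plan is to reduce the statement to an application of the abstract minimization Lemma \ref{lem:abstract_minimum_unicity}, applied to the family of squared distance functions parameterized by the pair $(z,i)$. Fix a chart $\phi:V\to M$ around $m_0$ with $\phi(u_0)=m_0$ and $V\subset \R^m$ open with compact closure in the chart domain, and define
\[F: V\times (\R^D\times \Emb^2(M,\R^D))\to \R,\quad (u,z,i)\mapsto \|i\circ\phi(u)-z\|^2,\]
with parameter space $\Lambda=\R^D\times \Emb^2(M,\R^D)$ and base point $\lambda_0=(z_0,i_0)$. A direct computation using $\dotp{z_0-x_0,d(i_0\circ\phi)_{u_0}(v)}=0$ gives
\[ d_u^2 F(u_0,\lambda_0)(v,v) = 2\|d(i_0\circ\phi)_{u_0}(v)\|^2 - 2\dotp{z_0-x_0,d^2(i_0\circ\phi)_{u_0}(v,v)},\]
which is positive definite by characterization \ref{it:osc_chart} of the Osculation Lemma \ref{lem:characterization_osculation} applied to the non-osculation hypothesis. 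Continuity of $F$ and $d_u^2 F$ in $(u,\lambda)$ follows from the Whitney $C^2$-topology together with the precompactness of $V$, and $u_0$ is a global minimum of $F_{\lambda_0}$ on $V$ because $x_0\in \pi_{i_0(M)}(z_0)$.

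Applying \Cref{lem:abstract_minimum_unicity} yields open neighborhoods $U_{\R^m}\subset V$ of $u_0$ and $U_{\R^D}\times U_{\Emb}$ of $\lambda_0$ such that for each $(z,i)$ in the latter, $F(\cdot,z,i)$ admits a unique minimum $v_{z,i}$ on $\overline{U_{\R^m}}$, with $v_{z,i}\in U_{\R^m}$, and $d_u^2 F$ stays positive definite throughout. Setting $U_M:=\phi(U_{\R^m})$ and $p_i(z):=i\circ\phi(v_{z,i})$, uniqueness of $v_{z,i}$ translates directly to $p_i(z)$ being the unique projection of $z$ on $\overline{i(U_M)}$, and it belongs to $i(U_M)$. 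Positive definiteness of $d_u^2 F$ at $(v_{z,i},z,i)$, via characterization \ref{it:osc_chart} of \Cref{lem:characterization_osculation}, gives non-osculation of $S(z,d_{i(U_M)}(z))$ at $p_i(z)$. A uniform degree of non-osculation $\alpha>0$ is then obtained by shrinking the neighborhoods and invoking continuity of the smallest eigenvalue of the form $v\mapsto \|d(i\circ\phi)_u(v)\|^2 - \dotp{z-p_i(z),d^2(i\circ\phi)_u(v,v)}$ on the resulting compact closure.

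For the second part, fix $i\in U_{\Emb}$, $z\in U_{\R^D}$, $x=p_i(z)$ and $\M=i(M)$. First shrink $\delta$ uniformly so that $\overline B(x,\delta)\cap \M\subset i(U_M)$; this is possible because $i(U_M)$ is relatively open in $\M$ and a uniform positive lower bound on the distance from $p_i(U_{\R^D})$ to $\M\setminus i(U_M)$ is obtained by compactness and $C^1$-closeness of $i$ to $i_0$ on $\overline V$. Then choose $\delta'>0$ small enough that $p_i(B(z,\delta'))\subset \overline B(x,\delta)$ (using the Lipschitz bound proved next); this implies that the orthogonal projection onto $\M_0=\M\cap\overline B(x,\delta)$ coincides on $B(z,\delta')$ with the restriction of $p_i$. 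The $\alpha^{-1}$-Lipschitz estimate is obtained as in \Cref{lem:lipschitz_projection}, using uniform second-order Taylor bounds on $F$ (from the Whitney $C^2$-topology on the compact $\overline V$) to furnish a uniform version of the inequality \ref{it:local_osc} of \Cref{lem:characterization_osculation}. Finally, the $C^1$ regularity of $p_i$ follows from the implicit function theorem applied to $\nabla_u F(u,z,i)=0$ at $(v_{z,i},z,i)$, using invertibility of $d_u^2 F$.

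The main obstacle is ensuring that the constants $\alpha,\delta,\delta'$ can be taken uniformly in $(i,z)\in U_{\Emb}\times U_{\R^D}$; this uniformity is exactly what the Whitney $C^2$-topology affords, via uniform closeness of $i$ and its first two differentials to those of $i_0$ on the compact closure of the fixed chart.
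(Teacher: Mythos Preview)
Your proof is correct and follows essentially the same approach as the paper: both reduce to applying \Cref{lem:abstract_minimum_unicity} to the family of squared distance functions $(u,z,i)\mapsto \|i\circ\phi(u)-z\|^2$, obtain the unique local projection and uniform non-osculation from positive definiteness of the second differential, and then derive the Lipschitz and $C^1$ properties. The only minor differences are cosmetic (e.g.\ you invoke the implicit function theorem for $C^1$ regularity where the paper simply cites the smoothness of the orthogonal projection, and the paper obtains $\delta=\delta'/\alpha$ by shrinking $U_{\R^D}$ rather than arguing via a uniform distance bound from $p_i(U_{\R^D})$ to $\M\setminus i(U_M)$).
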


\begin{proof}
Let $\phi :V \rightarrow M$ be a chart of $M$ with $\phi(u_0) = m_0$, and consider the function
 \[ F:V \times \R^D \times  \Emb^2(M,\R^D),\ (u,z,i) \mapsto \|i\circ\phi(u)-z\|^2- \alpha\|i\circ\phi(u)-i(m_0)\|^2.\]
 Its second differential with respect to $u \in V$ is the map $$ V \times \R^D \times \Emb^2(M,\R^D)  \rightarrow \text{Bil}(\R^m,\R^m,\R) $$
 that sends  $(u,z,i)$ to the bilinear map
 \[ d^2(F_{(z,i)})_u : (v_1,v_2)\mapsto 2(\dotp{d(i\circ \phi)_u(v_1), d(i\circ \phi)_u(v_2)} + \dotp{i\circ \phi(u)-z, d^2(i\circ \phi)_u(v_1,v_2)}).\]
Both $F $ and $d^2(F_{(z,i)})_u$ are continuous with respect to  $(u,z,i)$ (with $\Emb^2(M,\R^D) $ equipped with the Whitney $C^2$-topology).
Using Characterization \ref{it:osc_chart} from the Osculation Lemma \ref{lem:characterization_osculation}, we see that if $i\circ \phi(u)\in i(M)$ is a projection of $z$ on $i(M)$, the sphere $S(z,d_{i(M)}(z))$ not osculating $i(M)$ at $i\circ \phi(u)$ 
is equivalent to $d^2(F_{(z,i)})_u$ being positive definite.

Hence we can apply Lemma \ref{lem:abstract_minimum_unicity} to $F$ and $(u_0,z_0,i_0)$ (with $\Lambda = \R^D \times\Emb^2(M,\R^D)  $) to get that there exist neighborhoods $u_0\in U_V\subset V$, $z_0 \in U_{\R^D} \subset \R^D$ and $i_0\in U_{\Emb}\subset \Emb^2(M,\R^D)$ such that for any $(z,i)\in  U_{\R^D}\times U_{\Emb}$, the function $F_{(z,i)}:u\rightarrow \|i\circ\phi(u)-z\|^2$ admits exactly one minimum on $\overline{U_V}$ and that this minimum belongs to $U_V$. 
Define $U_M := \phi(U_V)$. For a given $i\in  U_{\Emb}$, this property is equivalent to the projection $ U_{\R^D}$ on $\overline{i(U_M)} $ being uniquely defined and its image being contained in $i(U_M)$. Let us call $p_i$ this projection.
Lemma \ref{lem:abstract_minimum_unicity} also guarantees that if  $i\in  U_{\Emb}$, $z\in U_{\R^D}$ and if we write  $x:=p_i(z)$ and $u := (i\circ\phi)^{-1}(x) \in U_V$, then 
 \[d^2(F_{(z,i)})_u : (v_1,v_2)\mapsto 2(\dotp{d(i\circ \phi)_u(v_1), d(i\circ \phi)_u(v_2)} + \dotp{i\circ \phi(u)-z, d^2(i\circ \phi)_u(v_1,v_2)})\]
is positive definite. By possibly further restricting $ U_{\Emb}, U_{\R^D}$ and $ U_V$, we can assume that there exists $c>0$ such that the smallest eigenvalue of  $d^2(F_{(z,i)})_u$ is greater than $c$ for any such $i,z,u$ and that the operator norm of $d(i\circ \phi)$ is bounded on $U_\Emb$. Hence there exists $\alpha>0$ such that the form $v \mapsto (1-\alpha)\|d(i\circ \phi)_{u}(v)\|^2 - \dotp{i\circ \phi(u)-z,d^2( i\circ \phi)_{u}(v,v)}$ is definite positive for any such $i,z,u$.
Thus Characterization \ref{it:osc_chart} from the Osculation Lemma \ref{lem:characterization_osculation} guarantees that the sphere $S(z,d_{i  (U_M)}(z))$ does not osculate $i(U_M)$ at $p_i(z)$, with $\alpha$ a degree of non-osculation. 
This proves the first part of the proposition.

For each $i\in U_{\Emb}$, the map $p_i$ is the orthogonal projection from $U_{\R^D}$ to $\overline{i(U_M)}$. The non-osculation of each sphere $S(z,d_{i  (U_M)}(z))$ at   $p_i(z)$ for all $z\in U_{\R^D}$ entails that $p_i$ is $\alpha^{-1}$-Lipschitz continuous, as the proof of \Cref{lem:lipschitz_projection} shows. Besides, $p_i$  is $C^1$ as the orthogonal projection on the manifold $i(U_M)$. Consider $\delta'>0$ and $U\subset U_{\R^D}$ such that $B(z,\delta')\subset U_{\R^D}$ for all $z\in U$. Let $\delta=\delta'/\alpha$. Then, for all $z \in U$, the function $p_i$ is defined on $B(z,\delta')$ and takes its values in $B(p_i(z),\delta)\cap i(U_M)\subset \overline B(p_i(z),\delta)\cap i(M)$.  We replace $U_{\R^D}$ by $U$ to conclude.
\end{proof}

As a corollary, we get the final part of the proof of the Osculation Characterization Lemma.
\begin{cor}\label{cor:non_osculation_pointwise_implies_local}
Characterization \ref{it:pointwise_osc} from  Lemma \ref{lem:characterization_osculation} implies Characterization \ref{it:local_osc}.
\end{cor}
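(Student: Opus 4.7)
The plan is to deduce (iv) from (iii) by combining the first part of Proposition~\ref{prop:C2_stability_projection}, whose proof relies only on characterization (ii) (via Lemma~\ref{lem:abstract_minimum_unicity}) and not on Lemma~\ref{lem:lipschitz_projection} or on (iv), so that no circularity occurs, with a compactness and Taylor-expansion argument.

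First I would fix some $\tilde\alpha \in (\alpha, \alpha')$; by the already-established equivalence (ii)~$\Leftrightarrow$~(iii), the quadratic form $v \mapsto (1-\tilde\alpha)\|d(i\circ\phi)_{u_0}(v)\|^2 - \dotp{z_0 - x_0, d^2(i\circ\phi)_{u_0}(v,v)}$ is strictly positive definite, and by continuity of $d^2(i\circ\phi)$ its analogue at $(u',z)$ close to $(u_0,z_0)$ remains uniformly positive definite. The non-circular first part of Proposition~\ref{prop:C2_stability_projection} then produces neighborhoods $U_M \ni m_0$ (with $i(U_M) \subset \M_0$, up to shrinking) and $U_{\R^D} \ni z_0$ such that every $z \in U_{\R^D}$ admits a unique projection $p(z) \in i(U_M)$ on $\overline{i(U_M)}$, with $p(z) \to x_0$ as $z \to z_0$. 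I would then verify that $\pi_{\M_0}(z) = \{p(z)\}$ for $z$ close enough to $z_0$: points $y \in \M_0 \setminus i(U_M)$ are bounded away from $x_0$ by some $\eta > 0$, so by (iii) one has $\|y - z_0\|^2 \geq \|x_0 - z_0\|^2 + \alpha'\eta^2$, a strict gap that survives continuous perturbation of $z$.

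The heart of the argument is a proof by contradiction. Assuming (iv) fails for every $\delta' > 0$, extract sequences $z_n \to z_0$, $x_n := \pi_{\M_0}(z_n) \to x_0$ (by the preceding step), and $y_n \in \M_0 \setminus \{x_n\}$ with $y_n \to y_\infty \in \M_0$ and $\|y_n - z_n\|^2 < \|x_n - z_n\|^2 + \tilde\alpha \|y_n - x_n\|^2$. If $y_\infty \neq x_0$, passing to the limit and combining with (iii) applied at $y_\infty$ yields $(\alpha' - \tilde\alpha)\|y_\infty - x_0\|^2 \leq 0$, contradicting $\tilde\alpha < \alpha'$. If $y_\infty = x_0$, write $x_n = i\circ\phi(u'_n)$ and $y_n = i\circ\phi(v_n)$, use $z_n - x_n \perp T_{x_n}\M$ together with the identity $\|y_n - z_n\|^2 - \|x_n - z_n\|^2 = \|y_n - x_n\|^2 - 2\dotp{y_n - x_n, z_n - x_n}$ and a $C^2$ Taylor expansion of $i \circ \phi$ at $u'_n$ to rewrite the left-hand side of the failed inequality as
\begin{equation*}
Q_n(v_n - u'_n) + o(\|v_n - u'_n\|^2),
\end{equation*}
where $Q_n$ is the quadratic form of the previous paragraph evaluated at $(u'_n, z_n)$, hence uniformly positive definite for $n$ large; this beats the remainder and contradicts the strict inequality assumed above.

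The hard part will be ensuring that the Taylor remainder in the second case is uniform as the expansion point $u'_n$ varies with $n$; this rests on the $C^2$ regularity of $i \circ \phi$ over a compact neighborhood of $u_0$ (via the integral form of the Taylor remainder) and is the step where the full strength of the $C^2$ hypothesis is actually used.
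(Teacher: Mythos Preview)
Your proposal is correct and follows essentially the same route as the paper: invoke the first part of Proposition~\ref{prop:C2_stability_projection} (which rests only on characterization~\ref{it:osc_chart} via Lemma~\ref{lem:abstract_minimum_unicity}, so no circularity), then use the resulting uniform positive-definiteness at nearby base points to conclude. The paper compresses your two-case contradiction into the single sentence ``any $z\in U_{\R^D}$ and $x$ its unique local projection on $\M_0$ satisfy \ref{it:pointwise_osc}, which proves \ref{it:local_osc}''; your Case~2 Taylor argument is exactly what makes that sentence rigorous, and your Case~1 together with the gap argument for $y\in\M_0\setminus i(U_M)$ handles the passage from the local neighborhood $i(U_M)$ to the fixed set $\M_0$ that the paper leaves implicit.
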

\begin{proof}
Let $z_0, x_0$ and the embedding $i$ from the statement of Lemma \ref{lem:characterization_osculation} and assume that they satisfy  Characterization \ref{it:pointwise_osc}.
We already know that Characterization \ref{it:pointwise_osc} is equivalent to Characterization \ref{it:osc_chart}, which is the one used in the first part of Proposition \ref{prop:C2_stability_projection} above (hence there is no circularity in our argument). Thus the proposition can be applied to $z_0, x_0$ and  $i$ to get the existence of a neighborhood $U_{\R^D}$ of $z_0$ and a neighborhood $\M_0$ of $x_0$ in $i(M)$ such that any $z\in U_{\R^D}$ and $x$ its unique local projection on $\M_0$ satisfy \ref{it:pointwise_osc}. This, in turn, proves that $z_0, x_0$ and  $i$ verify Condition \ref{it:local_osc}.
\end{proof}

\subsection{Multiple local projections}
This subsection is dedicated to an easy but handy lemma that helps us manage multiple local projections on a neighborhood of a critical point.

 Assume that the $C^2$ submanifold $\M$ satisfies \ref{P1}, \ref{P2} and \ref{P3}, and let $z_0\in Z(\M)$ be a critical point with projections $ \pi_\M(z_0)=\{x_1,\ldots,x_s\}$, for some $s\in \{2,\ldots, D+1\}$. As condition \ref{P3} is satisfied, the sphere $S(z_0,d_\M(z_0))$ does not osculate $\M$ at any of the $x_i$.
Using Lemma \ref{lem:lipschitz_projection}, we can pick radii $\delta$ and $\delta'$ so that  there exist local projections $p_j:B(z_0,\delta')\to \M_j= \overline B(x_j,\delta)\cap \M$ for all $j\in [s]$. Moreover, by making $\delta'$ smaller, we can ensure using the Lipschitz continuity of the projections that the image of $p_i$ is included in the relative interior of $\M_i$. Hence, these functions are equal to the projection on a $C^2$ manifold, and they are in particular of regularity $C^1$, with known expressions for their gradients, as stated in Proposition \ref{prop:diff_projection}. By \ref{P2}, the number of critical points is finite, so that the constants $\delta$ and $\delta'$ can be chosen independently of $z_0\in Z(\M)$. We summarize these properties in the next lemma.
\begin{lemma}\label{lem:local_projections_well_defined}
Let $\M\subset\R^D$ be a $C^2$ compact submanifold satisfying \ref{P1}, \ref{P2} and \ref{P3}. 
There exist $\delta', \delta >0$ such that for any $z_0\in Z(\M)$ with $\pi_\M(z_0)=\{x_1,\ldots,x_s\} $, there are $C^1$ functions $p_i: B(z_0,\delta') \rightarrow \M_i =\overline B(x_i,\delta)\cap \M$ for $i=1\dots,s$ such that  for any $z\in B(z_0,\delta')$, we have $\pi_{\M_i}(z)=\{p_i(z)\} \subset B(x_i,\delta)\cap \M$ and $\pi_\M(z) \subset \{p_1(z),\ldots,p_s(z)\}$. Furthermore, if $p_i(z)=x$ and $\eta=z-x$, then $(dp_i)_z = (\mathrm{id} - W_{x,\eta})^{-1} \circ \pi_x$.
\end{lemma}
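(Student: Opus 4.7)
The plan is to piece together the ingredients already developed earlier in the paper, doing the work one critical point at a time and then using the finiteness of $Z(\M)$ to get uniform constants.

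Fix $z_0 \in Z(\M)$ with $\pi_\M(z_0) = \{x_1,\dots,x_s\}$. By \ref{P3}, for each $j\in[s]$ the sphere $S(z_0, d_\M(z_0))$ does not osculate $\M$ at $x_j$. Applying Lemma \ref{lem:lipschitz_projection} at each $x_j$ yields constants $\delta_j, \delta'_j >0$ and a well-defined, Lipschitz map $p_j : B(z_0,\delta'_j) \to \M_j := \overline B(x_j,\delta_j)\cap \M$ giving the unique local projection on $\M_j$, with $p_j(z_0)=x_j$. By Proposition \ref{prop:diff_projection}, each $p_j$ is $C^1$ on a neighborhood where it avoids the medial axis of $\M_j$, and, writing $x=p_j(z)$ and $\eta=z-x$, its differential is $(dp_j)_z = (\id - W_{x,\eta})^{-1}\circ \pi_x$. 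Taking $\delta_{z_0}:=\min_j \delta_j$ and, by \ref{P1}, choosing $\delta_{z_0}$ small enough that the closed balls $\overline B(x_j,\delta_{z_0})$ are pairwise disjoint, one then uses the Lipschitz continuity of each $p_j$ to shrink $\delta'_{z_0}\leq \min_j \delta'_j$ so that the image of every $p_j$ is contained in the relative interior of $\M_j$.

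The next step is to show that for $z\in B(z_0,\delta'_{z_0})$ (possibly after further shrinking $\delta'_{z_0}$), one has $\pi_\M(z) \subset \bigcup_{j=1}^s \M_j$, which forces $\pi_\M(z) \subset \{p_1(z),\dots,p_s(z)\}$ by local uniqueness of each $p_j$. For this I apply Lemma \ref{lem:elementary_metric} with $\S=\M$ and threshold $t_0:=\delta_{z_0}$: there exists $r_0>0$ such that every $y\in \M$ with $\|y-z_0\| < d_\M(z_0)+r_0$ lies within $\delta_{z_0}$ of some $x_j$, i.e.\ in $\M_j$. Now for any $z\in B(z_0,\delta'_{z_0})$ and any $y\in \pi_\M(z)$, the triangle inequality gives
\begin{equation*}
\|y-z_0\| \leq \|y-z\|+\|z-z_0\| = d_\M(z) + \|z-z_0\| \leq d_\M(z_0) + 2\|z-z_0\|,
\end{equation*}
since $d_\M$ is $1$-Lipschitz. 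Hence taking $\delta'_{z_0} < r_0/2$ ensures $y\in \bigcup_j \M_j$, as required.

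Finally, to remove the dependence on $z_0$, invoke \ref{P2}: the set $Z(\M)$ is finite, so setting $\delta := \min_{z_0 \in Z(\M)} \delta_{z_0}$ and $\delta' := \min_{z_0 \in Z(\M)} \delta'_{z_0}$ yields constants that work uniformly for every critical point. The anticipated subtlety is the second step, namely verifying $\pi_\M(z) \subset \{p_1(z),\dots,p_s(z)\}$: this is the only assertion in the lemma not already made explicit in the paragraph preceding it, and it crucially relies on the elementary compactness argument of Lemma \ref{lem:elementary_metric} combined with \ref{P1} to separate the $x_j$. Everything else is a direct packaging of Lemmas \ref{lem:lipschitz_projection}, Proposition \ref{prop:diff_projection}, and the finiteness property \ref{P2}.
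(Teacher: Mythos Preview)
Your proposal is correct and follows essentially the same route as the paper: the discussion preceding the lemma already explains how \ref{P3} with Lemma~\ref{lem:lipschitz_projection} gives the local projections, how Lipschitz continuity lets one shrink $\delta'$ so the image lands in the relative interior (whence Proposition~\ref{prop:diff_projection} applies), and how \ref{P2} yields uniform constants; the paper's proof then isolates exactly the inclusion $\pi_\M(z)\subset\{p_1(z),\dots,p_s(z)\}$ as the remaining point and proves it via the same triangle-inequality estimate and Lemma~\ref{lem:elementary_metric} that you use. The only cosmetic difference is that you invoke the $1$-Lipschitz property of $d_\M$ where the paper writes $\|y-z\|\leq \|x_0-z\|$ directly, and you cite \ref{P1} for the disjointness of the balls $\overline B(x_j,\delta)$ where mere distinctness of the $x_j$ would suffice.
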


\begin{proof}
The only point that remains to be proven is  the  inclusion $\pi_\M(z) \subset \{p_1(z),\ldots,p_s(z)\}$. Let $z\in B(z_0,\delta') $ and  $y\in \pi_\M(z)$. Then,
\[ \|y-z_0\|\leq \|y-z\|+\|z-z_0\| \leq \|x_0-z\| +\|z-z_0\|  \leq \|x_0-z_0\|+2\|z-z_0\|.\]
Hence,  $y\in B(z_0,d_{\M}(z_0)+2\delta')$. By \Cref{lem:elementary_metric}, if $\delta'$ is small enough, this implies that $y$ is in one of the sets $M_1,\dots,M_s$, implying that $y\in \pi_{\M_i}(z)$ for some $i=1,\dots,s$.
\end{proof}

\section{Sampling theory for generic manifolds}\label{sec:proof_subsample}

In this section, we prove the Stability Theorem for Subsets (\Cref{thm:crit_points}), which can be seen as a stronger analogue to the results for compact sets from \cite{Chazal_CS_Lieutier_OGarticle} in the special case of well-behaved manifolds sampled without noise.
We restate the  theorem for the reader's convenience.

\subsetthm*

\begin{proof}[Proof of \Cref{thm:crit_points}]
    Let $\A \subset \M$ be a subset with $d_H(\A,\M)\leq \eps$, and let $\eps < \tau(\M)/8$.
\begin{enumerate}[wide, labelwidth=!, labelindent=0pt]
    \item According to the critical values separation theorem \cite[Theorem 4.4]{Chazal_CS_Lieutier_OGarticle}, the distance function $d_\A$ has no critical values in the interval $(4\eps,\tau(\M)-3\eps)$. 
    This proves that  all critical points in $Z(\A)$ are either at distance less than $4\eps$ from $\A$, or at distance at least $\tau(\M)-3\eps$ from $\A$. 
Let us focus on the first case. Let $z\in Z(\A)$ be a point with $d_\A(z) \leq 4\eps$. By definition of critical points,  $z$ is the center of the smallest enclosing ball of $\pi_\A(z) \subset \A$, and this ball is at most of radius $4\eps$. According to \cite[Lemma 12]{attali2013vietoris}, this implies that
\begin{equation}
    d_\M(z) \leq \tau(\M) \p{1 - \sqrt{1-\frac{16\eps^2}{\tau(\M)^2}}} \leq \frac{8\eps^2}{\tau(\M)}\p{1+ \frac{16\eps^2}{\tau(\M)^2}} \leq \frac{10\eps^2}{\tau(\M)},
\end{equation}
where we use the condition $\eps<\tau(\M)/8$ and the inequality $\sqrt{1-u^2} \geq 1-u/2-u^2/2$ for $u\in [0,1]$. This yields the first alternative in \Cref{thm:crit_points}.

We now consider the second case, where $d_\A(z)>\tau(\M)-3\eps$. In that case,  the inequality $\eps\leq \tau(\M)/8$ implies that $d_\M(z)\geq d_\A(z)-\eps \geq \tau(\M)-4\eps >\tau(\M)/2$.  
We require a lemma, which is a variant of  Lemma 3.3 from \cite{Chazal_CS_Lieutier_OGarticle}  suited to manifolds.

\begin{lemma}\label{lem:sampling_improved}
Let $\M\subset \R^D$ be a compact $C^2$ submanifold. 
    Let $\A\subset \M$ be a subset with $d_H(\A,\M)\leq \eps$, and let $z\in Z(\A)$ with $0<r<d_\M(z)< R$. Then, there exists a $\mu$-critical point $z'\in Z_\mu(\M)$, with $\|z-z'\|\leq \eps$ and 
    \begin{equation}
        \mu \leq \frac{\eps}{r} \p{1+\frac{R}{2\tau(\M)}}.
    \end{equation}
\end{lemma}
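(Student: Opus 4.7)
The plan is to follow the strategy of Lemma 3.3 in \cite{Chazal_CS_Lieutier_OGarticle} (which underlies the general estimate \eqref{eq:sampling_OG_theorem}) and to exploit the inclusion $\A\subset\M$ to replace the generic $O(\sqrt{\eps})$ loss by the sharper $O(\eps)$ bound. Since $z\in Z(\A)$ is the center of the smallest enclosing ball of $\pi_\A(z)$, we may write $z=\sum_i\lambda_i a_i$ as a convex combination with $\{a_1,\ldots,a_k\}=\pi_\A(z)\subset\M$, $\lambda_i\geq 0$, $\sum_i\lambda_i=1$. Write $d:=d_\M(z)$ and $D:=d_\A(z)$; from $\A\subset\M$ we have $d\leq D$, and applying the Hausdorff bound to any $y\in\pi_\M(z)$ yields $D\leq d+\eps$.

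The first step is to manufacture, from the data $(\lambda_i,a_i)$, auxiliary nearest-point projections on $\M$. Setting $q_i:=z+(d/D)(a_i-z)$ places $q_i$ on the sphere $S(z,d)$ with $\|q_i-a_i\|=D-d\leq\eps$; since $a_i\in\M$, one has $d_\M(q_i)\leq\eps<\tau(\M)$, so $y_i:=\pi_\M(q_i)\in\M$ is well-defined and $\|y_i-q_i\|\leq\eps$. The natural candidate is then $z':=\sum_i\lambda_i y_i$, which lies within $\eps$ of $z$: indeed, $\sum_i\lambda_i q_i=z$ (a direct consequence of $\sum_i\lambda_i a_i=z$), hence $z-z'=\sum_i\lambda_i(q_i-y_i)$ has norm at most $\eps$. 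This handles the spatial bound $\|z-z'\|\leq\eps$.

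The core of the proof, and its main obstacle, is to show $\|\nabla d_\M(z')\|\leq(\eps/r)(1+R/(2\tau(\M)))$. Using the identification $\|\nabla d_\M(z')\|=d(z',\Conv(\pi_\M(z')))/d_\M(z')$ (valid because $\pi_\M(z')$ sits on a sphere centered at $z'$, so the smallest enclosing ball center equals the projection of $z'$ onto the convex hull) and the fact that $z'\in\Conv(\{y_i\})$ by construction, one is reduced to producing, for each $y_i$, a true projection $\tilde y_i\in\pi_\M(z')$ with $\|y_i-\tilde y_i\|\leq(1+R/(2\tau(\M)))\eps$; then $m_0:=\sum_i\lambda_i\tilde y_i\in\Conv(\pi_\M(z'))$ satisfies $\|z'-m_0\|\leq\max_i\|y_i-\tilde y_i\|$, and division by $d_\M(z')\geq r$ yields the bound on $\mu$. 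The factor $1+R/(2\tau)$ should arise from the quadratic (reach-controlled) expansion of $\|\cdot-z'\|$ along $\M$ near a projection, made quantitative by the Weingarten operator appearing in Proposition~\ref{prop:diff_projection}: the effective contraction rate of $\pi_\M$ is governed by $d_\M(z')/\tau\leq R/\tau$. The delicate points are that the $y_i$'s produced by our construction need not coincide with elements of $\pi_\M(z')$ and that the identification of the $\tilde y_i$ must be carried out without Properties \ref{P1}--\ref{P3}; a likely remedy is to refine $z'$ by a further $O(\eps)$-perturbation aligned with the normal directions of $\M$ at the $y_i$, which, via the reach estimate, simultaneously makes the $\tilde y_i$ visible and controls the loss incurred. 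This bookkeeping is where the bulk of the technical work lies.
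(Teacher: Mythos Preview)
Your proposal is incomplete, and the gap you yourself flag is genuine and not easily repaired along the lines you suggest. Having built $z'=\sum_i\lambda_i y_i$ with $y_i=\pi_\M(q_i)$, you need elements $\tilde y_i\in\pi_\M(z')$ close to the $y_i$; but nothing in your construction forces $\pi_\M(z')$ to contain points near \emph{each} $y_i$. Indeed, $z'$ could very well have a unique nearest point on $\M$, or nearest points clustered near only some of the $y_i$. The ``further $O(\eps)$-perturbation aligned with the normal directions'' you propose does not obviously help: to make several prescribed points of $\M$ equidistant from a perturbed center is a system of constraints with no a priori solution, and even if one existed there is no reason the resulting $z'$ would have those points as \emph{global} nearest points. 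Without Properties \ref{P1}--\ref{P3} (which the lemma explicitly does not assume) there is no local-projection machinery to fall back on.

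The paper takes a completely different route that sidesteps this obstacle: it follows the generalized gradient flow of $d_\M$ from $z$ for arc-length time $\eps$, reaching a point $w$ with $\|z-w\|\leq\eps$, and extracts $z'$ on the flow curve via the mean-value inequality $\|\nabla d_\M(z')\|\leq\bigl(d_\M(w)-d_\M(z)\bigr)/\eps$. The numerator is then bounded by writing $d_\M(w)^2-d_\M(z)^2\leq d_\A(w)^2-d_\M(z)^2\leq d_\A(z)^2+\eps^2-d_\M(z)^2$ (using that $z$ is critical for $d_\A$), and controlling $d_\A(z)^2-d_\M(z)^2$ via the reach: for $x\in\pi_\M(z)$ and $y\in\A$ with $\|x-y\|\leq\eps$, one has $\langle z-x,x-y\rangle=\langle z-x,\pi_x^\perp(x-y)\rangle$ (since $z-x\perp T_x\M$) and Federer's estimate $\|\pi_x^\perp(x-y)\|\leq\|x-y\|^2/(2\tau(\M))$ produces the $R/(2\tau(\M))$ term. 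This is exactly the improvement over the $O(\sqrt{\eps})$ bound that the inclusion $\A\subset\M$ buys, and it requires no control whatsoever over $\pi_\M(z')$.
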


\begin{proof}
    We adapt the proof of Lemma 3.3 in \cite{Chazal_CS_Lieutier_OGarticle}. We follow the gradient flow of $\nabla d_\M$ starting at $z\in Z(\A)$ along a trajectory parameterized by arc length. If the gradient flow reaches a critical point of $\M$ before time $\eps$, the result holds. Otherwise, let $w$ be the point reached by the gradient flow after time $\eps$. We have
\begin{equation}
    d_\M(w)-d_\M(z) = \int_0^\eps \|\nabla d_\M(C(t))\|\dd t,
\end{equation}
where $t\mapsto C(t)$ is the flow curve. Hence, there exists some point $z'$ along this curve with 
\[ \|\nabla d_\M(z')\| \leq \frac{d_\M(w)-d_\M(z)}{\eps}. \]
Furthermore, it holds that $\|z-w\|\leq \eps$. Let us show that $z'$ is a $\mu$-critical point of $\M$ for $\mu = \frac{\eps}{r}\p{1+\frac{R}{2\tau(\M)}}$. As $z\in Z(\A)$, we have according to \cite[Lemma 3.2]{Chazal_CS_Lieutier_OGarticle}
\begin{equation}\label{eq:bound_dSw}
    d_\M(w)^2 \leq d_\A(w)^2 \leq d_\A(z)^2 + \|z-w\|^2 \leq d_\A(z)^2+\eps^2.
\end{equation}
Let $x\in \pi_\M(z)$ and $y\in \pi_\A(x)$, whence $\|x-y\|\leq \eps$.  
We have
\begin{align}
    d_\A(z)^2 &\leq \|z-y\|^2 = \|z-x\|^2 + \|x-y\|^2 + 2\dotp{z-x,x-y} \nonumber \\
    &\leq d_{\M}(z)^2 + \eps^2 +  2\dotp{z-x,\pi_{x}^\perp(x-y)}, \label{eq:bound_dAz}
\end{align}
where we use that $z-x$ is orthogonal to $T_x \M$. According to \cite{federer1959curvature}, $\|\pi_{x}^\perp(x-y)\|\leq \frac{\|x-y\|^2}{2\tau(\M)}$. Hence, \eqref{eq:bound_dSw} and \eqref{eq:bound_dAz} yield that
\begin{align*}
  \|\nabla d_\M(z')\| &\leq \frac{d_\M(w)-d_\M(z)}{\eps} = \frac{d_\M(w)^2 -d_\M(z)^2}{\eps(d_\M(w)+d_\M(z))}  \\
  &\leq \frac{ 2 \eps^2 + \eps^2 \frac{R}{\tau(\M)}}{2\eps r} = \frac{\eps}{r} \p{1 + \frac{R}{2\tau(\M)}}.
\end{align*}
This concludes the proof.
\end{proof}

Remark that we always have the trivial bound $d_\M(z)\leq \mathrm{diam}(\M)$ for $z\in Z(\A)$. Hence, the  lemma implies that for $z\in Z(A)$ with $d_\M(z)>\tau(\M)/2$, there exists a $\mu$-critical point $z'\in Z_\mu(\M)$ with $\|z-z'\|\leq \eps$ and $\mu =C_2\eps$ for some constant $C_2$ depending on $\M$, which completes the proof of the first point.
\item This is a straightforward consequence of the first item and \ref{P4} - we only need to make $\eps_0>0$ small enough that $C_2\eps_0 < \mu_0$ (where $\mu_0$ is from the statement of \ref{P4}).

\item 

 Let $\delta$ and $\delta'$ be radii such that the statement of \Cref{lem:characterization_osculation}.\ref{it:local_osc} holds uniformly for all the  critical points $z_0\in Z(\M)$ and $x_0\in\pi_{\M}(z_0)$ (these are in finite number, as $\M$ satisfies \ref{P1} and \ref{P2}). Let $z\in Z(\A)$ be a critical point close to some critical point $z_0\in Z(\M)$, with $\|z-z_0\|\leq C_4\eps$ (where $C_4$ is the constant from \Cref{thm:crit_points}.\ref{it:usingP4}). Let $x\in \pi_\A(z)$. Note that $\|x-z_0\| \leq  \|x-z\| + \|z-z_0\| \leq \|x_0-z\| + \|z-z_0\|\leq d_\M(z_0) + 2\|z-z_0\|$. Hence, according to \Cref{lem:elementary_metric}, for $\|z-z_0\|$ small enough, the point $x$ is at distance less than $\delta$ from some vertex $x_0\in \pi_\M(z_0)$ (and this uniformly for each critical point $z_0$).  According to \Cref{lem:lipschitz_projection}, there exists a unique projection $x_1$ of $z$ on $\M_0=\M\cap \overline B(x_0,\delta)$, and $\|x_1-x_0\|\leq \|z-z_0\|/\alpha$. In particular, for $\|z-z_0\|$ small enough, $\|x_1-x_0\|<\delta$. Hence, $x_1$ belongs to the relative interior of $\M_0$ and the vector $z-x_1$ is orthogonal to $T_{x_1}\M$.

As $d_H(\A,\M)\leq \eps$, there exists $y\in \A$ with $\|x_1-y\|\leq \eps$.   We write thanks to \cite[Theorem 4.18]{federer1959curvature}
 \begin{align*}
     \|z-x\|^2 = d_\A(z)^2 &\leq \|z-y\|^2 = \|z-x_1\|^2 + \|x_1-y\|^2 + 2 \dotp{z-x_1,\pi_{x_1}^\perp(x_1-y)}\\
     &\leq d_{\M_0}(z)^2 + \eps^2 \p{1+\frac{d_{\M_0}(z)}{\tau(\M)}}.
 \end{align*}
Note that $1+d_{\M_0}(z)/\tau(\M)$ can be roughly upper bounded by 
\[ 1+\frac{ \diam(\M)}{\tau(\M)} \eqdef C'.\]  
 Hence, $x\in B(z,\sqrt{d_{\M_0}(z)^2+ C'\eps^2})$. As $x\in \M_0$, this implies by \ref{P3} that
 \[ \|x_1-z\|^2+C'\eps^2 \geq  \|x-z\|^2 \geq \|x_1-z\|^2 + \alpha\|x-x_1\|^2,\]
so that $\|x-x_1\|\leq \sqrt{C'/\alpha}\eps$. As we have $\|x_1-x_0\|\leq \frac{1}\alpha \|z-z_0\|$, we obtain
\[
     \|x-x_0\|\leq \|x-x_1\| + \|x_1-x_0\|\leq \sqrt{C'/\alpha}\eps+ \frac{1}\alpha C_4\eps.\qedhere
\]
\end{enumerate}
\end{proof}

\begin{remark}
    The reader will have noticed that the proof does not make use of all of Condition \ref{P1}, but rather only of the property that each critical point has only finitely many projections.
\end{remark}

Finally, we prove \Cref{cor:deterministic_sample}.
\begin{proof}[Proof of \Cref{cor:deterministic_sample}]
Consider the constants $C_4, C_5,\eps_0>0$ from Theorem \ref{thm:crit_points}, and let $\eps_1 = \min(\tau(\M)/8, \eps_0/C_5)$.
Let $\A$ be a $(\delta,\eps)$-sampling of $\M$ for some $0<\delta\leq\eps\leq \eps_1$.     Let $z\in Z(\M)$ with $\pi_\M(z)=\{x_1,\dots,x_s\}$. According to \Cref{thm:crit_points}, if  $z'\in Z(\A)$ is at distance less than $C_4\eps$ from $z$, then $\pi_\A(z) \subset \bigcup_{i=1}^s B(x_i,C_5\eps)$. Hence, the map
\[ 
 \begin{split}
(\A \cap B(x_1,C_5\eps)) \times \cdots \times (\A \cap B(x_s,C_5\eps)) & \longrightarrow    \R^D\\
(y_1,\dots,y_s) &\longmapsto m(\{y_1,\dots,y_s\})
    \end{split} 
    \]
contains all the points of $Z(\A)$ at distance less than $C_4\eps$ from $z$ in its image. As the cardinality of $\A \cap B(x_i,C_5\eps)$ is at most $N(\delta,C_5\eps)$, the total number of such points is $2^{N(\delta,C_5\eps)s}$. 

Let us now prove the second bound. Let $x\in \M$ and let $\{y_1,\dots,y_K\}$ be a maximal $(\delta,+\infty)$-sampling of $B(x,\eps)\cap \M$. Then, the balls $B(y_i,\delta/2)$ are pairwise disjoint for $i=1,\dots,K$. According to \cite[Proposition 8.7]{aamari2018stability}, as $\delta/2\leq 2\eps\leq 2\eps_1\leq \tau/4$, it holds that
\[  \Vol(B(y_i,\delta/2)\cap \M)\geq c_m\delta^m \text{ and } \Vol(B(x,2\eps)\cap \M)\leq C_m\eps^m\]
for two positive constants $c_m,C_m$. Hence, as $\bigsqcup_{i=1}^KB(y_i,\delta/2)\cap \M\subset  B(x,2\eps)\cap \M$, it holds that
\[ c_mK\delta^m \leq C_m\eps^m\]
that is $N(\delta,\eps)\leq \frac{C_m}{c_m}\p{\frac{\eps}{\delta}}^m$. 
\end{proof}

\section{The \texorpdfstring{$\mu$}{mu}-critical points of \texorpdfstring{$\M$}{M} and the Big Simplex Property }\label{sec:BSP}

Let as above $M$ be a $C^k$ compact manifold of dimension $m\geq 1$ for some $k\geq 2$, and let $i\in \Emb^2(M,\R^D)$ and $\M = i(M)$.
In this section, we concern ourselves with the position of the $\mu$-critical points of $\M$. In particular, whenever $\M$  satisfies \ref{P1}, \ref{P2} and \ref{P3}, we define a purely local condition on the critical points of $\M$ and their projections,  called the Big Simplex Property, that turns out to be equivalent to the apparently more global condition \ref{P4}, yet is easier to manipulate in terms of the embedding $i$. This will help us show that \ref{P4} is verified for a dense subset of the space $\Emb^2(M,\R^D)$ of embeddings later in Section \ref{sec:density}, using the language of jets.

\medskip


Let us start with some preliminary observations. 
Consider the compact $C^2$ submanifold $\M\subset \R^D$  
 and let us define the function
\begin{equation*}
    \begin{split}
\eta_\M: [0,1] & \longrightarrow [0,+\infty]\\
\mu &\longmapsto  \sup\{r :\ \exists x \in \R^D \text{ s.t. } \|\nabla d_\M(x)\|\leq \mu \text{ and } d(Z(\M), x) = r\},
    \end{split}
\end{equation*}
the supremum of the distances to $\M$ at which a $\mu$-critical point can be found.
If $\M$ is of diameter $\text{diam}(\M)>0$ and $x\in \R^D$ is at distance $L+\text{diam}(\M)$ from $Z(\M)\subset \Conv (\M)$, then it is at distance at least $L$ from $\M$ and the norm of the generalized gradient $\nabla d_\M(x)$ is lower-bounded by $\frac{\sqrt{L^2 - \frac{1}{2}\text{diam}(\M)^2}}{L}$. Hence, $\eta_M$ is  finite for $\mu\in [0,1)$, and for such a $\mu$  the supremum of the definition is a maximum  due to the lower semicontinuity of the norm of the gradient. 
On the other hand, we always have $\eta_\M(1) =+  \infty$. 
We are more interested in $\eta_\M(\mu)$ for small values of $\mu$, and we see that  $\lim_{\mu\stackrel{>} {\rightarrow} 0}\eta_\M(\mu) = 0$. Indeed, assume that it is not the case: there exists $\eps >0$  and a sequence $(x_n)_n \subset\R^D$ of points such that $\lim_{n\rightarrow \infty}\|\nabla d_\M(x_n)\|=0$ and such that $d(Z(\M), x_n)\geq \eps$ for all $n$. But thanks to the same argument as above, the sequence $(x_n)_n$ is bounded, hence a subsequence that converges to some point $x\not\in Z(\M)$ can be extracted, which yields a contradiction with the lower semicontinuity of the norm of the generalized gradient.

Property \ref{P4} states that there exist $C = C(\M)$ and $\mu_0 \in (0,1)$ such that for any $0\mu\in [0,\mu_0)$ we have $\eta_\M(\mu)\leq C\mu$, and will be our main object of interest in this section. Note that Property \ref{P4} is equivalent to the apparently stronger Property \ref{P4'}:
    \begin{enumerate}[start=4, label={(P\arabic*')}]

    \item For any $\mu_0\in (0,1)$, there exists a constant $C>0$ such that for every $\mu\in [0,\mu_0)$, the set $Z_\mu(\M)$ is included in a tubular neighborhood of size $C\mu$ of $Z(\M)$, that is every point of $Z_\mu(\M)$ is at distance less than $C\mu$ from $Z(\M)$.\label{P4'}
  
    \end{enumerate}

Indeed, let $\mu_0>0$ and $C>0$ be given to us by  \ref{P4}, and let $\mu_0'>0$ be as in the statement of \ref{P4'}. We have shown above that $\eta_\M(\mu_0')<\infty $. Let $z$ be a $\mu$-critical point of $\M$; if $\mu<\mu_0$, then $d(z,Z(\M))<C\mu$ using \ref{P4}, and if $\mu \in [\mu_0,\mu_0')$, then $d(z,Z(\M))\leq \eta_\M(\mu_0') = \mu_0 \frac{\eta_\M(\mu_0')}{\mu_0}\leq \mu \frac{\eta_\M(\mu_0')}{\mu_0}$, hence in all cases $d(z,Z(\M))\leq \mu\left( C + \frac{\eta_\M(\mu_0')}{\mu_0}\right)$ and $\M$ satisfies \ref{P4'}.

\subsection{Position of the \texorpdfstring{$\mu$}{mu}-critical points}\label{subsec:position_mu_critical_points}

As a first step to reformulating Condition \ref{P4}, we need to  understand the behavior of the medial axis $\mathrm{Med}(\M)$ around $z_0\in Z(\M)$. 
Assume  that $\M$  satisfies properties \ref{P1},\ref{P2} and \ref{P3}, and let $z_0\in Z(\M)$ with  projections $\{x_1,\ldots,x_s\} = \pi_\M(z_0)$, for some $s\in \{2,\ldots, D+1\}$. Let 
\[E =\mathrm{Vec}(\{x_{j_1}-x_{j_2}:\ j_1,j_2 \in [s]\}).\]
As  $\M$ satisfies \ref{P1}, the dimension of $E$ is $s-1$. Let us also write $\pi_{E} : \R^D\rightarrow E$ and $\pi_{E}^\perp:\R^D \rightarrow E^\perp$ the perpendicular projections. We will show that for $\mu_0$ small enough, the tangent space of the set $Z_{\mu_0}(\M) = \{z \in \R^D:\ \|\nabla d_\M(z)\|\leq \mu_0\}$ at $z_0$ (in the sense of Federer \cite{federer1959curvature}) is equal to the orthogonal $E^\perp$ of the vector space $E$. 
Namely, we show that every point $z$ of the medial axis close enough to $z_0$ are such that $\|\pi_E(z-z_0)\|$ is of order $\|z-z_0\|^2$, while for every direction $h\in E^\perp$, there is a point of $Z_{\mu_0}(\M)$ at distance of order $\|h\|^2$ from $z_0+h$.
This is illustrated in Figure \ref{fig:position_medial_axis}.

Let  $p_j:B(z_0,\delta')\to B(x_j,\delta)\cap \M$ for $j\in [s]$ be the local projections from Lemma \ref{lem:local_projections_well_defined} (for some $\delta',\delta>0$), which we know to be Lipschitz continous.  We define 
\begin{equation}
    \begin{split}
I: B(z_0,\delta') & \longrightarrow \mathcal{P}([s])\\
z &\longmapsto \{j\in [s]:\ p_j(z)\in \pi_{\M}(z)\}.
    \end{split}
\end{equation}
In particular, $\pi_\M(z) = \{p_j(z):\ j\in I(z)\}$. We also introduce the notation $m_I(z) = m(\{p_j(z):\ j\in I\})$ for $I\subset [s]$ and $z\in B(z_0,\delta')$, where we recall that $m(\sigma)$ denotes the center of the smallest enclosing ball of the set $\sigma$. 

\begin{figure}
    \centering
    \includegraphics{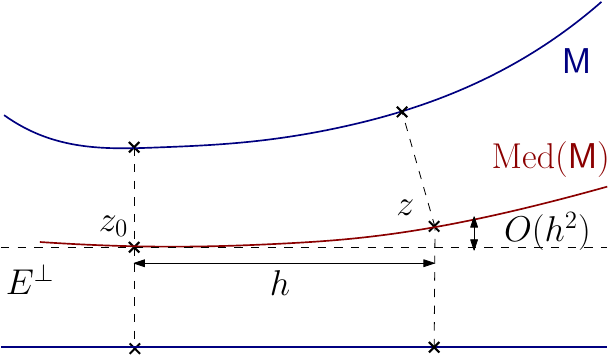}
    \caption{ Using the notations of Subsection \ref{subsec:position_mu_critical_points}, for $\mu_0>0$ small enough, the tangent space of $Z_{\mu_0}(\M)$ at the critical point $z_0\in Z(\M)$ is equal to $E^\perp$, and the points of $Z_{\mu_0}(\M)$ that are at a small distance $h$ from $z_0$ must be at distance $O(h^2)$ from the affine space $z_0 + E^\perp$.}
    \label{fig:position_medial_axis}
\end{figure}

For $I \subset [s]$, let 
\begin{equation}
    \begin{split}
g_I: B(z_0,\delta') & \longrightarrow \R\\
z &\longmapsto \frac{\|z-m_I(z)\|}{d_\M(z)}.
    \end{split}
\end{equation}
Note that $\|\nabla d_\M(z)\|=g_{I(z)}(z)$ for any $z\in B(z_0,\delta')$. 

The following Lemma states that $g_{[s]}$ and  $g_I$ (for any $I\subset [s]$) are Lipschitz continuous and  H\"older continuous respectively around $z_0$; it also yields some control over their Lipschitz and H\"older constants, which will be needed later in Section \ref{sec:C2_stability} where we let the embedding $i: M\rightarrow \R^D$ vary.

\begin{lemma}\label{lem:g_I}
    There exist constants $\delta'', \ell_1, \ell_2>0$ that depend only, and  continuously, on the Lipschitz constants of the projections $p_j:B(z_0,\delta')\to B(x_j,\delta)\cap \M$ and on the distance between $z_0$ and the boundary of the simplex with vertices $\{x_1,\dots,x_s\}$
    such that the function $g_{[s]}: B(z_0,\min(\delta',\delta''))  \longrightarrow \R$ is $\ell_1$-Lipschitz continuous and that for any $I\subset [s]$, 
    the function $g_I: B(z_0,\min(\delta',\delta''))  \longrightarrow \R$ is $(1/2)$-H\"older continuous with H\"older constant $\ell_2$.
\end{lemma}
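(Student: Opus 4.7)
The plan is to reduce the regularity of $g_I$ to that of the Chebyshev-center map $z\mapsto m_I(z)$, then prove a strong-convexity inequality for smallest enclosing balls that yields $(1/2)$-H\"older stability of $m_I$ in general, and finally use Property \ref{P1} to upgrade this to Lipschitz stability when $I=[s]$. As a preliminary step, I would pick $\delta''\leq d_\M(z_0)/4$, so that $d_\M(z)\geq d_\M(z_0)/2$ on $B(z_0,\delta'')$; since $d_\M$ is $1$-Lipschitz and $\|z-m_I(z)\|\leq d_\M(z)$ is uniformly bounded, the quotient structure $g_I=\|z-m_I(z)\|/d_\M(z)$ reduces the task to bounding the modulus of continuity of $z\mapsto m_I(z)$ itself.

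The main estimate is a uniform strong-convexity inequality for the smallest enclosing ball. Setting $r_I(z):=\max_{j\in I}\|p_j(z)-m_I(z)\|$, the KKT conditions for the Chebyshev problem express $m_I(z)=\sum_{j\in A}\lambda_j p_j(z)$ as a convex combination of the active points $A=\{j\in I:\|p_j(z)-m_I(z)\|=r_I(z)\}$. Expanding $\|p_j(z)-c\|^2$ to second order around $c=m_I(z)$ and weighting by the $\lambda_j$'s makes the cross-terms vanish, yielding
\[\max_{j\in I}\|p_j(z)-c\|^2\;\geq\;r_I(z)^2+\|c-m_I(z)\|^2\qquad\text{for all }c\in\R^D.\]
Applying this at $z'$ with $c=m_I(z)$, and using $\max_j\|p_j(z')-m_I(z)\|\leq r_I(z)+L\|z-z'\|$ together with $r_I(z')\geq r_I(z)-L\|z-z'\|$ (where $L$ is the maximum of the Lipschitz constants of the local projections $p_j$ from \Cref{lem:lipschitz_projection}), a difference-of-squares computation yields $\|m_I(z)-m_I(z')\|^2\leq 4 r_I(z)L\|z-z'\|$. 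Since $r_I(z)\leq d_\M(z_0)+\delta''$ is bounded, this proves the $(1/2)$-H\"older continuity of $g_I$, with constant $\ell_2$ depending continuously on $L$.

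For the Lipschitz claim on $g_{[s]}$, Property \ref{P1} enters to remove the square root. Since $z_0$ lies in the relative interior of the non-degenerate simplex $\Conv(x_1,\dots,x_s)$ at a definite distance $d>0$ from its boundary, every projection $x_j$ is active at $z_0$, with barycentric coordinates bounded below in terms of $d$. By continuity of the $p_j$'s, on a small enough ball the circumcenter of $\{p_1(z),\dots,p_s(z)\}$ in their affine span (still a non-degenerate simplex by \ref{P1}) remains in the relative interior of their convex hull and therefore coincides with $m_{[s]}(z)$. This circumcenter is the unique solution of a linear system whose matrix is the Gram matrix of $(p_j(z)-p_1(z))_{j\geq 2}$, whose smallest eigenvalue is bounded below in terms of $d$. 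Cramer's rule combined with the Lipschitzness of the $p_j$'s then gives that $m_{[s]}$, and hence $g_{[s]}$, is Lipschitz near $z_0$ with constant $\ell_1$ depending continuously on $L$ and $d$.

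The delicate step is the strong-convexity identity itself: one must identify the active set correctly and exploit the KKT/convex-combination structure to cancel the first-order terms. Once that is in place, the H\"older estimate is short, and the upgrade for $I=[s]$ essentially amounts to the observation that \ref{P1} forces the Chebyshev ball to be the smooth circumscribed sphere of a non-degenerate simplex. The other point to watch, important for using this lemma in \Cref{sec:C2_stability}, is to track the continuous dependence of every intermediate constant (lower bound on $d_\M$, upper bound on $r_I$, smallest eigenvalue of the Gram matrix) on the stated data, so that $\delta''$, $\ell_1$ and $\ell_2$ remain stable under small $C^2$ perturbations of the embedding $i$.
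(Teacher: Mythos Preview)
Your proof is correct and follows the same overall skeleton as the paper: reduce to regularity of $z\mapsto m_I(z)$ via a lower bound on $d_\M$, get the Lipschitz statement for $I=[s]$ by identifying $m_{[s]}(z)$ with the circumcenter of a non-degenerate simplex (using \ref{P1}) and invoking smooth dependence of the circumcenter on the vertices, and handle general $I$ via $(1/2)$-H\"older continuity of the Chebyshev center. The one genuine difference is in this last step: the paper simply cites the known fact that $A\mapsto m(A)$ is locally $(1/2)$-H\"older with respect to the Hausdorff distance \cite[Lemma 16]{attali2013vietoris}, whereas you rederive it from scratch via the strong-convexity identity $\max_{j\in I}\|p_j(z)-c\|^2\geq r_I(z)^2+\|c-m_I(z)\|^2$ coming from the KKT representation of $m_I(z)$. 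Your route is slightly longer but self-contained and makes the constant $\ell_2$ explicitly traceable to $L$ and $d_\M(z_0)$, which is exactly the continuous dependence the lemma statement asks for; the paper's citation-based argument is terser but leaves that dependence implicit.
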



\begin{proof}
The distance $d_\M(z_0)$ is lower bounded by the distance between $z_0$ and the boundary of the simplex with vertices $\{x_1,\dots,x_s\}$ and the function $z\mapsto d_\M(z)$ is also lower bounded and Lipschitz continuous on a ball of radius $d_\M(z_0)/2$ around $z_0$.
We know from Lemma \ref{lem:lipschitz_projection} that the projections $p_j:B(z_0,\delta')\to B(x_j,\delta)\cap \M$ are Lipschitz continuous.
As the function $A\mapsto m(A)$ is locally $1/2$-H\"older continuous with respect to the Hausdorff distance \cite[Lemma 16]{attali2013vietoris}, the statement regarding H\"older continuity follows.

By hypothesis, $z_0$ belongs to the interior of the simplex spanned by $\{x_1,\dots,x_s\}$. Hence, $m_{[s]}(z)$ also belongs to the interior of the simplex spanned by $\{p_1(z),\dots,p_s(z)\}$ for $z$ close enough to $z_0$ (depending  on the Lipschitz constants of the $p_j$s and how far  $z_0$ is from the boundary of the simplex spanned by $\{x_1,\dots,x_s\}$). When this is the case, the function $m_{[s]}(z)$ is actually equal to the center of the circumsphere of the points $p_1(z),\dots,p_s(z)$.
When restricted to the simplices whose circumsphere has its center at distance at least $c>0$ from their boundary, the coordinates of the center of the circumsphere is a Lipschitz function of the coordinates of the vertices for a Lipschitz constant that depends on $c$.
Thus $m_{[s]}(z)$ is Lipschitz continuous with respect to the coordinates of the points $p_1(z),\dots,p_s(z)$, hence with respect to those of $z$, when $z$ is close enough to $z_0$, and so is the function $g_{[s]}$ as a composition of Lipschitz maps, with the definition of ``close" and the Lipschitz constant depending on the Lipschitz constants of the $p_j$s and how far  $z_0$ is from the boundary of the simplex spanned by $\{x_1,\dots,x_s\}$.
\end{proof}


As another step towards understanding the local behavior of the $\mu$-critical points around $z_0$, the next lemma states that $\mu$-critical points (for $\mu$ small enough) that are close to $z_0$ must belong to $z_0 + E^\perp$ as a first approximation.

\begin{lemma}\label{lem:mu_critical_implies_perpendicular}
There exist $\mu_0,\delta_0,C>0$ such that for every $\mu_0$-critical point $z\in B(z_0,\delta_0)$, we have  $I(z)=[s]$ and
\[ \|\pi_{E}(z-z_0)\|\leq C\|z-z_0\|^2.\]
\end{lemma}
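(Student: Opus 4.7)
The plan is to split the argument into two stages: first, force $I(z) = [s]$ by quantitative non-degeneracy of $g_I$ for $I \subsetneq [s]$; second, recognize the locus $\{I(z) = [s]\}$ as a smooth submanifold $S$ whose tangent space at $z_0$ is $E^\perp$, and deduce the quadratic bound from a $C^2$ graph representation of $S$.

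\textbf{Step 1 (forcing $I(z)=[s]$).} For any $I \subsetneq [s]$, $m_I(z_0)$ is the center of the smallest enclosing ball of $\{x_j : j \in I\}$, which lies in the affine span of a proper face of the non-degenerate simplex with vertices $x_1,\ldots,x_s$. Since $z_0$ belongs to the \emph{relative interior} of that simplex by \ref{P1}, and the $x_j$'s are affinely independent, $z_0$ cannot lie in the affine span of any proper face. Hence $m_I(z_0) \neq z_0$ and $g_I(z_0) > 0$. Let $\mu_1 := \tfrac{1}{2}\min_{I \subsetneq [s]} g_I(z_0) > 0$. By the $(1/2)$-H\"older continuity of $g_I$ from Lemma \ref{lem:g_I}, there exists $\delta_0 > 0$ such that $g_I(z) \geq \mu_1$ for every $z \in B(z_0, \delta_0)$ and every $I \subsetneq [s]$. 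Taking $\mu_0 < \mu_1$, any $\mu_0$-critical point $z \in B(z_0,\delta_0)$ satisfies $g_{I(z)}(z) = \|\nabla d_\M(z)\| \le \mu_0 < \mu_1$, ruling out $I(z) \subsetneq [s]$.

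\textbf{Step 2 (the equidistance manifold $S$).} Define on a neighborhood of $z_0$ the map $F = (f_2,\ldots,f_s)$ with $f_j(z) := \|z - p_j(z)\|^2 - \|z - p_1(z)\|^2$. Each $z \mapsto \|z-p_j(z)\|^2$ is the squared distance to the $C^2$ submanifold $\M_j$, which is $C^2$ on the complement of $\M_j$ (its gradient equals $2(z - p_j(z))$, which is $C^1$ because $p_j$ is $C^1$ by Proposition \ref{prop:diff_projection}). Using $(z-p_j(z)) \perp \mathrm{Im}(dp_j)$, one computes
\begin{equation*}
df_j|_z(h) = 2\langle p_1(z) - p_j(z),\, h\rangle,\qquad \text{so}\qquad df_j|_{z_0}(h) = 2\langle x_1 - x_j,\, h\rangle.
\end{equation*}
The family $\{x_1 - x_j\}_{j=2}^s$ is a basis of $E$ by \ref{P1}, so $dF|_{z_0}$ has rank $s-1$ and $\ker(dF|_{z_0}) = E^\perp$. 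The implicit function theorem produces a $C^2$ submanifold $S := F^{-1}(0)$ of dimension $D - (s-1)$ in a neighborhood of $z_0$ with $T_{z_0} S = E^\perp$. If $I(z) = [s]$, all $p_j(z)$ are global projections of $z$ on $\M$, so they are equidistant from $z$, meaning $f_j(z) = 0$ for every $j$; thus $z \in S$.

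\textbf{Step 3 (quadratic bound via graph parametrization).} Writing $S$ locally as a $C^2$ graph over its tangent space at $z_0$, there exists a $C^2$ map $\phi : E^\perp \to E$ defined near $0$, with $\phi(0)=0$ and $d\phi_0 = 0$, such that $S$ agrees near $z_0$ with $\{z_0 + h + \phi(h) : h \in E^\perp,\ \|h\|\text{ small}\}$. Taylor's theorem for $\phi$ yields $\|\phi(h)\| \le C\|h\|^2$ on a neighborhood of $0$. For any $\mu_0$-critical $z \in B(z_0,\delta_0)$, Steps 1--2 give $z \in S$, so $\pi_E(z - z_0) = \phi(\pi_{E^\perp}(z - z_0))$ and
\begin{equation*}
\|\pi_E(z - z_0)\| \le C\,\|\pi_{E^\perp}(z - z_0)\|^2 \le C\,\|z - z_0\|^2,
\end{equation*}
possibly after shrinking $\delta_0$ to stay inside the chart of $\phi$.

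The only delicate point is ensuring full $C^2$ regularity of $F$ from a mere $C^2$ assumption on $\M$: the argument hinges on the identity $\nabla_z \|z-p_j(z)\|^2 = 2(z-p_j(z))$, which transfers all the regularity of the $C^1$ projection $p_j$ into $C^2$ regularity of the squared distance function, and is then exactly what is needed to produce a $C^2$ graph $\phi$ and the quadratic (rather than merely $o(\|h\|)$) remainder.
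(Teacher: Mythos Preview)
Your proof is correct. Step 1 coincides with the paper's argument. For the quadratic bound, however, you take a genuinely different route: the paper argues by direct expansion, writing $\|z-p_j(z)\|^2 = \|z_0-x_j\|^2 + 2\langle z_0-x_j,\pi_E(h)\rangle + R_j(h)$ and bounding the remainder $R_j(h)$ using Federer's estimate $|\langle z_0-x_j, x_j-p_j(z)\rangle| \leq \frac{d_\M(z_0)\|x_j-p_j(z)\|^2}{2\tau(\M)}$ together with Lipschitz continuity of $p_j$; equating $\|z-p_{j_1}(z)\|^2 = \|z-p_{j_2}(z)\|^2$ then yields $|\langle x_{j_1}-x_{j_2},\pi_E(h)\rangle| \leq C\|h\|^2$ and hence the bound since the $x_{j_1}-x_{j_2}$ span $E$. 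Your approach instead recognizes the equidistance locus $S=F^{-1}(0)$ as a $C^2$ submanifold with tangent $E^\perp$ at $z_0$ and reads the quadratic bound off a $C^2$ graph chart --- this is precisely the content of the paper's \emph{next} lemma (\Cref{lem:core_medial_axis_is_C2}), which you have effectively anticipated and folded in. Your route is more conceptual and avoids the explicit remainder bookkeeping; the paper's direct computation has the advantage that its constants visibly depend only on $\tau(\M)$, $\diam(\M)$, the Lipschitz constants of the $p_j$, and the non-degeneracy of the simplex, which is exploited later in \Cref{sec:C2_stability} (see \Cref{claim:perpendicular_uniform}) where these constants must be made uniform over a $C^2$-neighborhood of embeddings.
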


\begin{proof}
Let $\delta, \delta'>0$ and $p_j:B(z_0,\delta')\rightarrow  B(x_j,\delta)\cap \M$ for $j=1,\ldots, s$ be as in \Cref{lem:local_projections_well_defined}, and such that the functions $g_I$ are H\"older continuous (\Cref{lem:g_I}). 
Note that $g_I(z_0)>0$ for all $I\subsetneq [s]$.  Hence, it holds that if $I\subsetneq [s]$, then
\[
g_I(z)> \frac 12\min_{I\subsetneq [s]} g_I(z_0) \eqdef \mu_0
\]
for $\|z-z_0\|\leq \delta_1$ for some $\delta'>\delta_1>0$. If  $z\in B(z_0,\delta_1)$ is $\mu_0$-critical, then 
\[ \|\nabla d_{\M}(z)\| = \frac{\|z-m_{I(z)}(z)\| }{d_{\M}(z)}  =g_{I(z)}(z) \leq \mu_0,\]
hence $I(z)=[s]$. This proves the first point.

Let us now prove the second statement. Let $z\in B(z_0,\delta_1)$ be a $\mu_0$-critical point and $h=z-z_0$. 
For any $j\in [s]$,
\begin{align*}
    \|z-p_j(z)\|^2 &= \|(z_0-x_j) - (p_j(z)-h-x_j)\|^2 \\
    & = \|z_0 -x_j\|^2 +2 \dotp{z_0 -x_j,h+x_j-p_j(z)} + \| h +x_j-p_j(z)\|^2.
\end{align*}
As $z_0\in\Conv (\{x_j:\ j\in [s]\})$, we have $z_0-x_j \in E$. This implies that $\dotp{z_0 -x_j, h} = \dotp{z_0 -x_j, \pi_{E}(h)}$. Also, as $p_j$ is Lipschitz continuous, it holds that $\|p_j(z)-x_j\|\leq L h$ for some constant $L>0$. Furthermore, as $z_0-x_j\in T_{x_j} M^\bot$ and $p_j(z)\in  B(x_j,\delta)\cap \M$, according to \cite[Theorem 4.8.7]{federer1959curvature},
\[ |\dotp{z_0-x_j, x_j-p_j(z)}| =|\dotp{z_0-x_j, \pi_{x_j}^\bot(x_j-p_j(z))}| \leq \frac{d_{\M_j}(z) \|x_j-p_j(z)\|^2}{2\tau(\M)}\leq C_1\|h\|^2 \]
 for some constant $C_1$ depending on $\tau(\M)$,  $d_\M(z_0)$ and $L$. 
 In total, we have shown that
\begin{equation}
      \|z-p_j(z)\|^2= \|z_0 -x_j\|^2 +2 \dotp{z_0 -x_j,\pi_{E}(h)} +R_j(h),
\end{equation}
where $|R_j(h)|\leq C_2 \|h\|^2$ for some positive constant $C_2$. 
Now, by definition $\|z-p_{j_1}(z)\|^2 = \|z-p_{j_2}(z)\|^2$ for any $1\leq j_1,j_2\leq s$ (and $s$ is at least $2$). Hence,
\[ \|z_0 -x_{j_1}\|^2 +2 \dotp{z_0 -x_{j_1}, \pi_{E}(h)} + R_{j_1}(h) = \|z_0 -x_{j_2}\|^2 +2 \dotp{z_0 -x_{j_2}, \pi_{E}(h)} + R_{j_2}(h),\]
so
\[  |\dotp{x_{j_1} -x_{j_2}, \pi_{E}(h)}|\leq |R_{j_1}(h)+R_{j_2}(h)|.\]
As $E$ is spanned by the vectors $x_{j_1} -x_{j_2}$, this implies that $\|\pi_{E}(h)\|\leq C_4h^2$ for some $C_4$ that depends on the geometry of the nondegenerate simplex spanned by the points $x_j$.
\end{proof}

The geometry of the points close to the critical point $z_0$ that belong to the most central stratum of the medial axis, i.e. those such that $I(z) = [s]$, can be more precisely described:
\begin{lemma}\label{lem:core_medial_axis_is_C2}
There exists $\delta>0$ such that the local projections $p_j:B(z_0,\delta)\to B(x_j,\delta')\cap \M$ for $j\in [s]$ from Lemma \ref{lem:local_projections_well_defined} are well-defined and that the \textit{core medial axis} of $\M$ around $z_0$ 
\[M_c(z_0,\M):= \{z \in  B(z_0,\delta)  :\; I(z) = [s]\}\] 
is a $(D-s+1)$-dimensional $C^2$ submanifold with tangent space $T_{z_0}M_c(z_0,\M) = E^\perp$ in $z_0$.
\end{lemma}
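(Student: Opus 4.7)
My strategy is to realize $M_c(z_0,\M)$ as the zero set of a submersion defined from the local projections. Using Lemma~\ref{lem:local_projections_well_defined}, pick $\delta>0$ small enough that the local projections $p_j:B(z_0,\delta)\to B(x_j,\delta')\cap \M$ are well-defined, $C^1$, and satisfy $\pi_\M(z)\subset \{p_1(z),\ldots,p_s(z)\}$ for every $z\in B(z_0,\delta)$. Define
\[
F:B(z_0,\delta)\longrightarrow \R^{s-1},\qquad F(z)=\bigl(\|z-p_1(z)\|^2-\|z-p_j(z)\|^2\bigr)_{j=2,\ldots,s}.
\]
I claim that, after possibly shrinking $\delta$, $M_c(z_0,\M)=F^{-1}(0)$. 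If $I(z)=[s]$, then every $p_j(z)$ achieves $d_\M(z)$, so the components of $F(z)$ all vanish. Conversely, if $F(z)=0$, the common value $\|z-p_j(z)\|$ must equal $\min_j\|z-p_j(z)\|=d_\M(z)$ (using the inclusion $\pi_\M(z)\subset \{p_1(z),\ldots,p_s(z)\}$), so every $p_j(z)$ lies in $\pi_\M(z)$.

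Next I compute $dF|_{z_0}$. By the chain rule,
\[
d(\|z-p_j(z)\|^2)_{z_0}(h)=2\dotp{z_0-x_j,\,h-dp_j|_{z_0}(h)}.
\]
Since $z_0-x_j\in T_{x_j}\M^\perp$ and $\Im(dp_j|_{z_0})\subset T_{x_j}\M$ by Proposition~\ref{prop:diff_projection}, the term involving $dp_j$ drops out, leaving $h\mapsto 2\dotp{z_0-x_j,h}$. Hence the $j$-th row of $dF|_{z_0}$ is $2(x_j-x_1)$. By Property~\ref{P1}, the points $x_1,\ldots,x_s$ form a non-degenerate simplex, so these $s-1$ vectors form a basis of $E$. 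Thus $dF|_{z_0}$ is surjective onto $\R^{s-1}$ with kernel $E^\perp$, and the implicit function theorem yields that $M_c(z_0,\M)=F^{-1}(0)$ is, near $z_0$, a submanifold of dimension $D-s+1$ with tangent space $E^\perp$ at $z_0$.

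The main subtlety is the claimed $C^2$ regularity: since $\M$ is only $C^2$, the projections $p_j$ are only $C^1$, and a priori $F$ inherits only $C^1$ regularity. The key observation is that each component of $F$ is a difference of squared distances, and although $p_j$ is only $C^1$, the function $z\mapsto \|z-p_j(z)\|^2$ equals the squared distance to the compact piece $\M_j=\overline{B(x_j,\delta')}\cap\M$, which is $C^2$ on the complement of its medial axis (outside that axis its gradient is the $C^1$ map $z\mapsto 2(z-p_j(z))$). Since $z_0$, hence all nearby $z$, lies outside the medial axis of each $\M_j$ by choice of $\delta$, the map $F$ is in fact $C^2$, and the implicit function theorem delivers a $C^2$ submanifold. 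This last regularity upgrade is the step I expect to require the most care to state cleanly; everything else is a routine application of the implicit function theorem once the characterization $M_c=F^{-1}(0)$ is established.
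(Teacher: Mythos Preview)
Your proof is correct and essentially identical to the paper's: both realize $M_c(z_0,\M)$ as $F^{-1}(0)$ for $F(z)=(\|z-p_1(z)\|^2-\|z-p_j(z)\|^2)_{j=2,\dots,s}$, compute $dF_{z_0}$ via the orthogonality $z_0-x_j\perp \Im(dp_j)_{z_0}$ to obtain rows $2(x_j-x_1)$, and upgrade $F$ to $C^2$ by noting that the gradient of $z\mapsto\|z-p_j(z)\|^2$ is the $C^1$ map $z\mapsto 2(z-p_j(z))$. Your explicit verification that $F^{-1}(0)=M_c$ using $\pi_\M(z)\subset\{p_1(z),\dots,p_s(z)\}$ is in fact slightly more careful than the paper, which treats this as definitional.
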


\begin{proof}
Let $\delta, \delta'>0$ and $p_j:B(z_0,\delta)\rightarrow  B(x_j,\delta')\cap \M = \M_j$ for $j=1,\ldots, s$ be as in \Cref{lem:local_projections_well_defined}.
When restricted to $B(z_0,\delta)$, the function $z\mapsto d(z,\M_j)^2$ is equal to $\phi_j :B(z_0,\delta) \rightarrow \R, z \mapsto \| z - p_j(z)\|^2$.
We know that the map $p_j$ is $C^1$, hence so is $\phi_j$.
Its differential is
\[(d\phi_j)_z : h \mapsto 2\dotp{z-p_j(z), h - d(p_j)_z(h)}  = 2\dotp{z-p_j(z), h}  \]
for $h \in \R^ D$, as $\Im (d(p_j)_z(h)) \subset T_{p_j(z)}\M $ and thus $ \Im (d(p_j)_z(h))\perp z-p_j(z)$.
This shows that $\phi_j$ is in fact $C^2$.

By definition, the core medial axis $M_c(z_0,\M)$ is defined by the $s-1$ equations
\[ \phi_1(z) - \phi_j(z) = 0 \qquad j=2,\ldots, s\]
on $B(z_0,\delta)$.
The differential of the function $z \mapsto \phi_1(z) - \phi_j(z)$ is $h \mapsto 2\dotp{p_j(z)-p_1(z), h}$, and the $s-1$ vectors $p_j(z)-p_1(z)$ form a linearly independent family for $z_0 = z$, as $\M$ satisfies \ref{P1}. By continuity, we can make $\delta>0$ smaller to ensure that  this remains true for all $z\in B(z_0,\delta)$.
Then $M_c(z_0,\M)$ is the zero set of the submersion $F:B(z_0,\delta) \rightarrow \R^{s-1},z\mapsto (\phi_1(z) - \phi_j(z))_{j\in [s]}$, and the kernel of the differential $d_{z_0}F$ is the perpendicular of $\mathrm{Vec}(\{p_j(z_0)-p_1(z_0) :\ j=2,\ldots,s\}) = E$, i.e. $E^\perp$. This is   enough to conclude.
\end{proof}

We now show that $\mu_0$-critical points can be found by moving from the critical point $z_0$ in any orthogonal directions $h\in E^\perp$, up to a quadratic error of order $\|h\|^2$; this is essentially a consequence of the previous lemma.

\begin{lemma}[Existence of $\mu$-critical points]\label{lem:existence_of_mu_critical_points}
For any $\mu_0\in (0,1)$, there exist $\delta_1,C>0$ such that for any $h\in E^\perp$ with $\|h\|<\delta_1$, there exists a $\mu_0$-critical point $z $ of $\M$ with $\pi_{E^\perp}(z-z_0) =h $, $\|\pi_{E}(z-z_0)\|\leq C \| h\|^2 $ and such that $I(z)= [s]$.
\end{lemma}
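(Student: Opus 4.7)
The plan is to use the $C^2$ structure of the core medial axis $M_c(z_0,\M)$ from \Cref{lem:core_medial_axis_is_C2} to parametrize it locally as a graph over $E^\perp$, and then to invoke the Lipschitz continuity of $g_{[s]}$ from \Cref{lem:g_I} to guarantee that the points produced this way are actually $\mu_0$-critical.

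More precisely, first I would apply \Cref{lem:core_medial_axis_is_C2} to obtain $\delta>0$ such that $M_c(z_0,\M)\cap B(z_0,\delta)$ is a $C^2$ submanifold of dimension $D-s+1$ with tangent space $E^\perp$ at $z_0$. Standard manifold theory then produces a neighborhood $V \subset E^\perp$ of $0$ and a $C^2$ map $\psi: V \to E$ with $\psi(0)=0$ and $d\psi_0 = 0$ such that, up to possibly shrinking $\delta$, the translated graph
\begin{equation*}
\{z_0 + h + \psi(h) : h \in V\}
\end{equation*}
coincides with $M_c(z_0,\M)\cap B(z_0,\delta)$. Since $\psi$ is $C^2$ with $\psi(0)=0$ and vanishing differential at $0$, Taylor's formula yields a constant $C>0$ such that $\|\psi(h)\| \leq C\|h\|^2$ for all $h\in E^\perp$ with $\|h\|\leq \delta_1'$, for some $\delta_1' > 0$.

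Next, given $h\in E^\perp$ with $\|h\|<\delta_1'$ small enough that $z:=z_0+h+\psi(h)$ lies in $B(z_0,\min(\delta,\delta'))$, by construction $z\in M_c(z_0,\M)$, so $I(z) = [s]$, and the decomposition $z-z_0 = h + \psi(h)$ (with $h\in E^\perp$ and $\psi(h)\in E$) gives $\pi_{E^\perp}(z-z_0)=h$ and $\|\pi_E(z-z_0)\|=\|\psi(h)\|\leq C\|h\|^2$.

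It remains to check that $z$ is $\mu_0$-critical. Since $I(z)=[s]$, we have $\|\nabla d_\M(z)\| = g_{[s]}(z)$, and $g_{[s]}(z_0) = \|\nabla d_\M(z_0)\| = 0$ because $z_0\in Z(\M)$. By \Cref{lem:g_I}, $g_{[s]}$ is $\ell_1$-Lipschitz continuous on $B(z_0,\min(\delta',\delta''))$, so
\begin{equation*}
g_{[s]}(z) \leq \ell_1\|z-z_0\| \leq \ell_1(\|h\|+C\|h\|^2).
\end{equation*}
Choosing $\delta_1 \in (0,\delta_1']$ small enough that $\ell_1(\delta_1 + C\delta_1^2) \leq \mu_0$ and that $B(z_0,\delta_1+C\delta_1^2)$ lies inside the relevant neighborhoods (and avoids $\M$, which is possible since $d_\M(z_0)>0$) guarantees $\|\nabla d_\M(z)\|\leq \mu_0$, concluding the proof. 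No step is particularly delicate here; the main content is already packaged in \Cref{lem:core_medial_axis_is_C2} and \Cref{lem:g_I}, and our task is just to combine them.
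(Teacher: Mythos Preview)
Your proof is correct and follows essentially the same approach as the paper: both use \Cref{lem:core_medial_axis_is_C2} to locally parametrize the core medial axis over $E^\perp$ (the paper phrases this via the inverse of the orthogonal projection onto $E^\perp$, which is exactly your graph map $h\mapsto z_0+h+\psi(h)$), and then invoke the Lipschitz continuity of $g_{[s]}$ from \Cref{lem:g_I} together with $g_{[s]}(z_0)=0$ to ensure $\mu_0$-criticality for $\|h\|$ small.
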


\begin{proof}
This is a direct consequence of \Cref{lem:core_medial_axis_is_C2}. Indeed, as $T_{z_0}M_c(z_0,M)=E^\perp$, the orthogonal projection $\psi: z\in M_c(z_0,M)\to \pi_{E^\perp}(z-z_0)\in E^\perp$ is a local diffeomorphism at $z_0$. Its inverse $\psi^{-1}$ is defined on a small ball around $0$ in $E^\perp$. If $h$ is in this ball, then $z=\psi^{-1}(h)$ is such that $\pi_{E^\perp}(z-z_0)=h$. Furthermore, 
\[ \|\pi_E(z-z_0)\| = \|z-z_0-\pi_{E^\perp}(z-z_0)\| = \| \psi^{-1}(h)-\psi^{-1}(0) -h\|\leq C\|h\|^2\]
for some constant $C$ as $\psi^{-1}$ is of class $C^2$ and  $d\psi^{-1}_0$ is equal to the identity on $E^\perp$. The point $z$ is $\mu$-critical for $\mu=\|\nabla d_\M(z)\|$. As $I(z)=[s]$, we have $\|\nabla d_\M(z)\|=g_{[s]}(z)$, which is a Lipschitz-continuous function on a neighborhood of $z_0$ according to \Cref{lem:g_I}. As $\|\nabla d_\M(z_0)\| = 0$,  $\mu$ will be smaller than $\mu_0$ for any $\|h\|<\delta_1$ if $\delta_1$ is small enough..
\end{proof}

\subsection{The Big Simplex Property}\label{subsec:BSP}
In the following proposition, we translate the global condition \ref{P4} into a more practical and purely local (around a critical point $z_0\in Z(\M)$) condition, the \textit{Big Simplex Property}, that can be expressed in terms of the volumes of the simplices formed by the points $z$ close to $z_0$  and their projections $p_j(z)$ for $j\in [s]$, under the assumption that $z-z_0$ is perpendicular to $E = \mathrm{Vec}(\{x_{j_1}-x_{j_2}:\ 1\leq j_1,j_2 \leq s\})$. The proof makes direct use of the two previous lemmas shown in \Cref{subsec:position_mu_critical_points}.

\begin{proposition}[Big Simplex Property]\label{prop:big_simplex_is_enough}
Let $\M \subset \R^D$ be a compact $C^2$ submanifold that satisfies conditions \ref{P1}, \ref{P2}, \ref{P3}.
Then the following ``Big Simplex Property'':

\begin{enumerate}[start=1, label={(BSP)}]
\item \label{BSP} For any critical point $z_0\in Z(\M)$, there are constants $\delta',\delta,L>0$ 
such that if we write $\pi_\M(z_0) = \{x_1,\ldots,x_s\}$, the local projections $p_j:B(z_0,\delta')\to B(x_j,\delta)\cap \M$ from Lemma \ref{lem:local_projections_well_defined} are well-defined and  the following holds.   For   $h\in E^\perp$ with $\|h\|\leq \delta'$, we let $\Delta(h)$ be the $s$-simplex with vertices $z=z_0+h$ and $p_j(z)$ for $1\leq j \leq s$. Then,
the $s$-volume of $\Delta(h)$ satisfies
\begin{equation}\label{eq:inequality_BSP}
\Vol_{s}(\Delta(h))\geq L \|h\|.
\end{equation}
\end{enumerate}
is equivalent to Property \ref{P4} introduced earlier:
\begin{enumerate}[start=4, label={(P\arabic*)}]
\item There exist constants $C>0$ and $\mu_0\in (0,1)$ such that for every $\mu\in [0,\mu_0)$, the set $Z_\mu(\M)$ is included in a tubular neighborhood of size $C\mu$ of $Z(\M)$, that is every point of $Z_\mu(\M)$ is at distance less than $C\mu$ from $Z(\M)$.
\end{enumerate}
\end{proposition}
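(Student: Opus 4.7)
The plan is to extract a geometric identity for the gradient on the core medial axis $M_c(z_0,\M) = \{z : I(z) = [s]\}$, use it to convert \ref{BSP} into a quantitative comparison between $\|\nabla d_\M\|$ and $\|z-z_0\|$, and then combine the local estimates over the finitely many critical points via \ref{P2}. The decisive observation is that on $M_c(z_0,\M)$, the equidistance of $z$ from $p_1(z),\ldots,p_s(z)$ forces $z$ to lie on the affine normal to $\mathrm{aff}(p_1(z),\ldots,p_s(z))$ through their circumcenter; consequently $m_{[s]}(z)$ is the orthogonal projection of $z$ onto this affine hull, and
$$\|z - m_{[s]}(z)\| \;=\; d(z,\mathrm{aff}(p_1(z),\ldots,p_s(z))) \;=:\; H(z),$$
which is exactly the height of the simplex with apex $z$ and base $\{p_1(z),\ldots,p_s(z)\}$. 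Since $\Vol_s = \tfrac{1}{s}H\cdot\Vol_{s-1}(\text{base})$ and the base $(s{-}1)$-volume is continuous and bounded below near $z_0$ by \ref{P1}, we obtain the key comparison $\|\nabla d_\M(z)\| = H(z)/d_\M(z) \asymp \Vol_s(\text{simplex at }z)$, uniformly for $z$ on the core medial axis near $z_0$, up to multiplicative constants. This is the bridge that converts the intrinsic gradient into the extrinsic volume appearing in \ref{BSP}.

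For \ref{BSP} $\Rightarrow$ \ref{P4}: a global-to-local reduction using \ref{P2} together with the observation $\lim_{\mu\to 0^+}\eta_\M(\mu)=0$ established at the start of this section makes it suffice to prove, at each $z_0\in Z(\M)$, that $\mu$-critical points in a small ball $B(z_0,\delta_0)$ satisfy $\|z-z_0\|\leq C\mu$. Let $z$ be such a point with $\mu$ small. Lemma~\ref{lem:mu_critical_implies_perpendicular} yields $I(z)=[s]$ and $\|\pi_E(z-z_0)\|\leq C_1\|z-z_0\|^2$. Set $h_\perp=\pi_{E^\perp}(z-z_0)$; then $\|h_\perp\|\geq\|z-z_0\|/2$ for $\|z-z_0\|$ small, and the apex $z$ of the simplex at $z$ differs from the apex $z_0+h_\perp$ of $\Delta(h_\perp)$ by $O(\|z-z_0\|^2)$. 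Lipschitz continuity of the local projections $p_j$ gives the same $O(\|z-z_0\|^2)$ bound on the base vertices, and since the simplex $s$-volume is a Lipschitz function of the vertex positions on a bounded region, the two simplices' $s$-volumes differ by $O(\|z-z_0\|^2)$. Applying \ref{BSP} to $h_\perp$ gives $\Vol_s(\Delta(h_\perp))\geq L\|h_\perp\|\geq L\|z-z_0\|/2$, so the simplex at $z$ has $s$-volume at least $L\|z-z_0\|/4$. The key identity above then yields $\|\nabla d_\M(z)\|\geq L''\|z-z_0\|$, hence $\|z-z_0\|\leq\mu/L''$.

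For \ref{P4} $\Rightarrow$ \ref{BSP}: argue by contradiction. Suppose \ref{BSP} fails at some $z_0$, so there is a sequence $h_n\in E^\perp$ with $\|h_n\|\to 0$ and $\Vol_s(\Delta(h_n))/\|h_n\|\to 0$. Apply Lemma~\ref{lem:existence_of_mu_critical_points} with any fixed $\mu_0\in(0,1)$ to produce points $z_n$ on the core medial axis with $\pi_{E^\perp}(z_n-z_0)=h_n$ and $\|\pi_E(z_n-z_0)\|\leq C\|h_n\|^2$. A symmetric Lipschitz perturbation estimate shows that the simplex at $z_n$ has $s$-volume at most $\Vol_s(\Delta(h_n))+O(\|h_n\|^2)=o(\|h_n\|)$, so by the key identity $\|\nabla d_\M(z_n)\|=o(\|h_n\|)$ while $\|z_n-z_0\|\geq\|h_n\|$, giving $\|z_n-z_0\|/\|\nabla d_\M(z_n)\|\to\infty$. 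Since $Z(\M)$ is finite by \ref{P2} and $z_n\to z_0$, we have $d(z_n,Z(\M))=\|z_n-z_0\|$ eventually, which contradicts \ref{P4}. The main conceptual obstacle — and decisive ingredient — is precisely the equidistance-based identity $\|z - m_{[s]}(z)\|=H(z)$ on the core medial axis, which leverages all three hypotheses \ref{P1}, \ref{P2}, \ref{P3} (for the local projections to exist, to be Lipschitz, and to form a non-degenerate base); the quadratic control $\|\pi_E(z-z_0)\|=O(\|z-z_0\|^2)$ from Lemma~\ref{lem:mu_critical_implies_perpendicular} is the technical device that keeps the perturbation error from swamping the linear lower bound in both directions.
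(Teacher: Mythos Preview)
Your proof is correct and follows essentially the same strategy as the paper's: both directions hinge on the volume--height formula $\Vol_s = \tfrac{1}{s}H\cdot\Vol_{s-1}(\text{base})$, the identification of $\|z-m_{[s]}(z)\|$ with the height $H(z)$ on the core medial axis, Lemma~\ref{lem:mu_critical_implies_perpendicular} for the quadratic bound on $\pi_E(z-z_0)$, and Lemma~\ref{lem:existence_of_mu_critical_points} to produce core-medial-axis points along $E^\perp$. The only organizational difference is that you bridge between the point $z_0+h_\perp$ (where \ref{BSP} is stated) and the $\mu$-critical point $z$ (where the gradient lives) by comparing the two simplices directly via Lipschitz continuity of the $s$-volume in the vertices, whereas the paper makes the same transition via the Lipschitz continuity of $g_{[s]}$ from Lemma~\ref{lem:g_I}; these are interchangeable and neither is materially simpler.
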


Before proving the equivalence between \ref{BSP} and \ref{P4}, we provide an example of a simple manifold $\M$ \emph{not} satisfying either conditions (albeit satisfying \ref{P1}, \ref{P2} and \ref{P3}). See also Figure \ref{fig:counterexample_P4} for an illustration.

\begin{example}\label{ex:counterexample_P4}
Consider the  $C^2$ submanifold $\M\subset \R^2$ that is equal on $[-1,1]\times [-2,2]$ to the union of the graph of the functions $x\mapsto 1+x^3$ and $x\mapsto -1-x^3$, see \Cref{fig:counterexample_P4}. 
The distance function to $\M$ admits a single critical point $z_0=(0,0)$ whose projections define a non-degenerate simplex, and the sphere $S(z_0,1)$ is non-osculating $\M$ at each projection. On the other hand, for each $x>0$ the projections of the point $p(x):=(x+3x^2+3x^5,0)$ are $(x, 1+x^3)$ and $(x,-1-x^3)$, hence the norm of the generalized gradient of $d_\M$ at $p(x)$ is
$$\|\nabla d_\M(p(x))  \| = \frac{d(p(x),(x,0))}{d(p(x),(x,1+x^3))} =\frac{3x^2 + O(x^5)}{1 + O(x^3)} = 3x^2 + o(x^2),$$
while the distance from $p(x)$ to $z_0$ is $x+o(x)$. This shows that $\M$ does not satisfy condition \ref{P4}. It is easy to modify this example so that $\M$ be compact, yet the behaviour of the $\mu$-critical points around $z_0$ remain the same.

\begin{figure}
    \centering
    \includegraphics[width=0.7\textwidth]{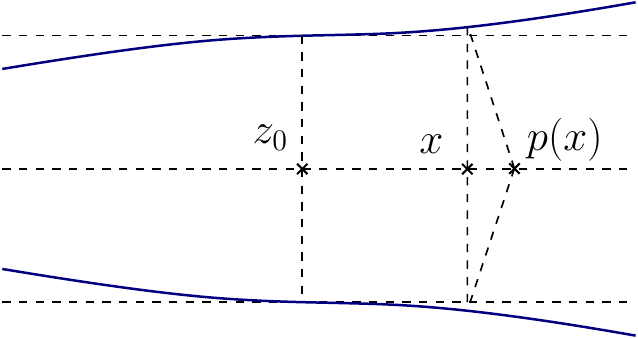}
    \vspace{0.3cm}
    \caption{The point $p(x)$ satisfies $m(\pi_{\M}(p(x)))=x$, with $\|\nabla d_\M(p(x))  \|=3x^2 + o(x^2) \ll \|x\|$ for small values of $x$. Hence, $p(x)$ is a $\mu$-critical point with $\mu= 3x^2+ o(x^2)$ but at distance from $z_0$ greater than $x\gg \mu$  (when $x$ is small). }
    \label{fig:counterexample_P4}
\end{figure}

\end{example}

Let us now prove \Cref{prop:big_simplex_is_enough}.
\begin{proof}

Let $z_0\in Z(\M)$ be a critical point with projections $\pi_\M(z_0) = \{x_1,\ldots,x_s\}$, and let $\delta',\delta, \mu_0>0$ be small enough that $p_j: B(z_0,\delta') \rightarrow B(x_j,\delta)\cap \M$ is as in Lemma \ref{lem:local_projections_well_defined} for $j=1,\ldots,s$ and that the conclusions of Lemma \ref{lem:mu_critical_implies_perpendicular} stand, i.e.~if $z\in B(z_0,\delta')$ is $\mu_0$-critical, then $I(z)=[s]$ and 
\[ \|\pi_{E}(z-z_0)\|\leq C\|z-z_0\|^2\]
for some $C>0$ that depends on $z_0$. Further assume that the conclusion of \Cref{lem:g_I} hold, that is the function $g_{[s]}$ is $\ell$-Lipschitz continuous.  
Let $z\in B(z_0,\delta')$ be $\mu_0$-critical. Then
\begin{equation}\label{eq:reduce_to_perp}
   |\|\nabla d_\M(z)\|-g_{[s]}(z_0+ h)| =   |g_{[s]}(z) - g_{[s]}(z_0+ h)|\leq \ell C_0\|z-z_0\|^2.
\end{equation}
Now for $h\in E^\perp$ with $\|h\|\leq \delta'$, let $\tilde{\Delta}(h)$ be the $(s-1)$-simplex whose vertices are the points $p_j(z_0+h)$ for $j\in [s]$, and let $\Delta(h)$ be the $s$-simplex obtained by adding the vertex $z_0+h$ to those of $\tilde\Delta (h)$. Let $\tilde E(h)$ be the affine space spanned by $\tilde \Delta(h)$.
The $s$-volume of $\Delta(h)$ can be computed as 
\[ \mathrm{Vol}_{s}(\Delta(h)) = \frac{1}{s} \mathrm{Vol}_{s-1}(\tilde{\Delta}(h))d(z_0+h,\tilde E(h)). \]
Note that according to \ref{P1}, when $h=0$, the projection of $z_0+h$ on $\tilde E(h)$ belongs to the interior of the simplex $\tilde\Delta(h)$. By continuity of the projections $p_j$, this property is still valid when $h$ is small enough. Hence, for $h$ small enough,
\begin{equation}\label{eq:volume_delta_h}
    \mathrm{Vol}_{s}(\Delta(h)) = \frac{1}{s} \mathrm{Vol}_{s-1}(\tilde{\Delta}(h))d(z_0+h,\tilde \Delta(h)). 
\end{equation} 

\textbf{(BSP)$\implies$(P4)}: Assume  that \ref{BSP} is satisfied. Then, $d(z_0+h,\tilde \Delta(h)) \leq \|z_0+h-m_{[s]}(z_0+h)\| = d_\M(z_0+h)g_{[s]}(z_0+h)$. Furthermore, the volume of $\mathrm{Vol}_{s-1}(\tilde{\Delta}(h))$ can be crudely bounded by $(\diam(M))^{s-1}$.
Hence, if $z$ is a $\mu_0$-critical point close enough to $z_0$ that $h=\pi_{E^\perp}(z-z_0)$ satisfies the condition \ref{BSP}, we obtain that
\[ L\|h\|\leq \mathrm{Vol}_{s}(\Delta(h))  \leq \frac{(\diam(M))^{s-1}}{s} d_\M(z_0+h)g_{[s]}(z_0+h) \leq \frac{(\diam(M))^{s}}{s} g_{[s]}(z_0+h). \]
Hence, \eqref{eq:reduce_to_perp} yields
\begin{align*}
    \|\nabla d_\M(z)\| \geq \frac{s}{(\diam(M))^s} L\|h\| - \ell C_0\|z-z_0\|^2.
\end{align*}
\Cref{lem:mu_critical_implies_perpendicular} also implies that $\|h\|\geq \|z-z_0\|/2$ if $\|z-z_0\|$ is small enough. Hence, we obtain for $z\in B(z_0,\delta')$ and $\delta'$ small enough
\[  \|\nabla d_\M(z)\|  \geq \|z-z_0\| \p{\frac{sL}{2(\diam(M))^s}- \delta'\ell C_0}=\frac{\|z-z_0\|}{C_1},\]
where $C_1>0$ for $\delta'$ small enough. In particular, if $z$ is a $\mu$-critical point for $z\in B(z_0,\delta')$ with $\mu<\mu_0$, then $\|z-z_0\|\leq C_1\mu$.

We have shown that \ref{BSP} implies \ref{P4} "locally". To conclude ``globally", we proceed as follows : the constants $\mu_0, \delta'$ and $C_1$ depend on the chosen critical point $z_0$. As $\M$ satisfies property \ref{P2}, it admits only finitely many critical points. Let $\overline{\mu}_0$,  $\overline{\delta}'$ and $\overline{C}_1$ be the minima over the critical points of the associated constants $\mu_0$, $\delta'$ and $C_1$ respectively. 
Then any $\overline{\mu}_0$-critical $z\in \R^D$ that is at distance less than $\overline{\delta}'$ from a critical point $z_0$ of $\M$ verifies
\begin{equation}\label{eq:bound_for_small_mu}
   \overline{C}_1 \|\nabla d_\M(z)\|   \geq    \|z-z_0\| \geq  d(z,Z(\M)).
\end{equation}
Now remember that we have shown at the start of this section that $\eta_{\M}(r)= \sup\{r :\ \exists x \in \R^D \text{ s.t. } \|\nabla d_\M(x)\|\leq \mu \text{ and } d(Z(\M), x) = r\}$ goes to $0$ as $\mu\rightarrow 0$. This means that there exists $0<\mu_1<\overline{\mu}_0$ such that any $\mu_1$-critical point $z$ must be at distance less than $\overline{\delta}'$ from $Z(\M)$, where \eqref{eq:bound_for_small_mu} applies. This allows us to conclude.
\medskip

\textbf{(P4)$\implies$(BSP):} Assume now that \ref{P4} is satisfied with constants $C$ and $\mu_0$, and consider again $z_0\in Z(\M)$ with the associated spaces $E$ and $E^\perp$. Let $h\in E^\perp$ be small enough  that \Cref{lem:existence_of_mu_critical_points} applies: there exists a $\mu_0$ critical point $z$ of $\M$ with $\pi_{E^\perp}(z-z_0)=h$, $I(z)=[s]$ and $\|\pi_E(z-z_0)\|\leq C_2 \|h\|^2$ for some constant $C_2>0$ that is independent from the choice of $h$. Condition \ref{P4} ensures that
\begin{equation}\label{eq:implication_P4}
   C \|\nabla d_\M(z)\|\geq \|z-z_0\| \geq \|h\|.
\end{equation} 
 Let $\tilde \Delta(z-z_0)$ be the simplex with vertices $p_j(z)$ for $j\in [s]$ (assuming that $h$ is small enough for the projections $p_j(z)$ to be well-defined). 
By continuity, as $\tilde \Delta(0) $ is a nondegenerate simplex (since $\M$ satisfies \ref{P1}), the volume $\mathrm{Vol}_{s-1}(\tilde{\Delta}(h))$ is lower bounded for $h$ small enough. 
As the projections $\pi_\M(z)$ coincide with the vertices of $\tilde \Delta(z-z_0)$, we also have that $d(z,\tilde \Delta(z-z_0))= \|z-m_{[s]}(z)\|$ (as noted in \cite[Section 2.1]{Chazal_CS_Lieutier_OGarticle}).
Thus, starting from Equation \eqref{eq:volume_delta_h} and for $h$ small enough,
\begin{align*}
   \Vol_s(\Delta(h))&= \frac{1}{s} \mathrm{Vol}_{s-1}(\tilde{\Delta}(h))d(z_0+h,\tilde \Delta(h))  \\
   &\geq C_3 (d(z,\tilde \Delta(z-z_0)) - \|z_0+h-z\| - d_H(\tilde \Delta(h),\tilde \Delta(z-z_0))) \\
   &\geq C_3(\|z-m_{[s]}(z)\| - C_2\|h\|^2 - \max_{j\in [s]}\|p_j(z)-p_j(z_0+h)\|) \\
   &\geq C_3(d_\M(z) \|\nabla d_\M(z)\| - C_2\|h\|^2 - K C_2\|h\|^2) \\
   &\geq C_3(C_4 \|h\| - C_2\|h\|^2 - K C_2\|h\|^2)
\end{align*}
where we use the $K$-Lipschitz continuity of the $p_j$s on a neighborhood of $z_0$ for some constant $K>0$, Inequality \eqref{eq:implication_P4}, and the fact that $d_\M(z)$ is lower bounded on a neighborhood of $z_0$. Hence, for $\|h\|$ smaller than some constant $\delta''$, this quantity is larger than $\frac{C_3C_4}{2}\|h\|=L\|h\|$. This proves that \ref{BSP} holds at any given point $z_0\in \M$, concluding the proof.
\end{proof}

We choose to translate \ref{P4} into the Big Simplex Property because the volume of the simplex $\Delta(h)$ (in the notations of the proof of Proposition \ref{prop:big_simplex_is_enough} above) can be easily expressed in terms of the first derivative of the projections $p_i$, which themselves depend on the shape operator of $\M$ around the points $x_i \in \pi_\M(z_0)$; this is formally stated in the Lemma \ref{lem:formula_simplex_volume} further below. This will be used for both showing that the condition is open in \Cref{sec:C2_stability} (along with Conditions \ref{P1}, \ref{P2} and \ref{P3}) and that it is dense in \Cref{sec:density}.

\subsection{Computing the volume of the simplex}
We compute an approximation of the volume of the simplex $\Delta(h)$ introduced in Subsection \ref{subsec:BSP} expressed using the differentials of the local projections $p_i: B(z_0,\delta') \rightarrow B(x_i,\delta)\cap \M$, where $\M$ is as above a $C^2$ compact manifold satisfying \ref{P1}, \ref{P2} and \ref{P3}. This allows us to characterize \ref{BSP} (and hence \ref{P4}) purely locally, in terms of the critical points $z_0$, their projections $x_i$ and the curvature of $\M$ at the points $x_i$ which determines the differentials of the maps $p_i$.
\begin{lemma}\label{lem:formula_simplex_volume}
    Let $z_0\in Z(\M)$ be a critical point of $\M$ with projections $\pi_{\M}(z_0)=\{x_1,\ldots,x_s\}$. Let $\delta',\delta>0$ be small enough that the projections $p_i: B(z_0,\delta') \rightarrow B(x_i,\delta)\cap \M$ from Lemma \ref{lem:local_projections_well_defined} are well-defined. For $h\in E^\perp$ small enough, let as before $\Delta(h)$ be the  $s$-simplex with vertices $z_0+h$ and $p_j(z_0+h)$ for $j\in [s]$. Let $X=[x_1-z_0,\dots,x_s-z_0]\in \R^{D\times s}$, and let $R_i(h)$ be the matrix obtaining by replacing the $i$th column of $X$ by $r_i(h) = p_i(z_0+h)-x_i-h$. Define likewise $A_i(h)$, where we replace the $i$th column of $X$ by  $(dp_i)_{z_0}(h)-h$. Then, for any $h \in E^\perp$ with $ \|h\|< \delta' $ it holds that
\begin{equation*}
\begin{split}
\Vol _{s}(\Delta(h))^2 &= \frac{1}{(s!)^2}\sum_{i,j =1}^{s}  \det(R_{i}(h)^\top R_{j}(h))+ O(\|h\|^3) \\
&= \frac{1}{(s!)^2} B(h)+ o(\|h\|^2),
\end{split}
\end{equation*}
where $B$ is the quadratic form given by
\begin{equation}\label{eq:first_form}
 B(h) =  \sum_{i,j =1}^{s}  \det(A_{i}(h)^\top A_{j}(h)).
 \end{equation}
 Furthermore, the constant in the term $O(h^3)$ depends only on $s$, $\diam(M)$, and the Lipschitz constants of the projections $p_j$, $j\in [s]$.
\end{lemma}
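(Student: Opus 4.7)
The plan is to compute $\Vol_s(\Delta(h))^2$ via the Gram determinant of the $s$ edge vectors emanating from $z_0+h$, expand everything as a polynomial in $h$, and exploit the rank deficiency caused by $z_0 \in \Conv(\{x_1, \ldots, x_s\})$ to identify the leading order.

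First I would set $v_j(h) := p_j(z_0+h) - (z_0+h) = (x_j - z_0) + r_j(h)$, so that the matrix $M(h) := [v_1(h), \ldots, v_s(h)] \in \R^{D\times s}$ decomposes as $M(h) = X + R(h)$, where $R(h) = [r_1(h), \ldots, r_s(h)]$. The classical Gram identity gives $\Vol_s(\Delta(h))^2 = (s!)^{-2}\det(M(h)^\top M(h))$, and the Cauchy--Binet formula rewrites this as $(s!)^{-2}\sum_{|I|=s} \det(M(h)_I)^2$, summed over $s$-subsets $I\subset[D]$. I would then expand each $\det(X_I + R(h)_I)$ by column multilinearity. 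Because $z_0$ lies in the relative interior of $\Conv(\{x_1,\dots,x_s\})$ by \ref{P1}, the columns $X_j = x_j - z_0$ satisfy a nontrivial affine dependence and $\mathrm{rank}(X)=s-1$, so $\det(X_I)=0$ for every $I$. The terms of the expansion replacing exactly one column of $X$ by an $r_j$ combine to give $\sum_{j=1}^s \det((R_j(h))_I)$, while the terms replacing two or more columns are each bounded by a product of the form $C_s\,\diam(\M)^{s-k}L^k\|h\|^k$ with $k\geq 2$, where $L$ bounds $\|r_j(h)\|/\|h\|$ through the Lipschitz constants of the local projections. Squaring, noting that the leading piece is $O(\|h\|)$ and so the cross term with the $O(\|h\|^2)$ remainder is $O(\|h\|^3)$, then summing over $I$ and applying Cauchy--Binet in reverse to each bilinear pair $(i,j)$, would yield the first claim
\[ \det(M(h)^\top M(h)) = \sum_{i,j=1}^s \det(R_i(h)^\top R_j(h)) + O(\|h\|^3). \]

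For the second equality I would first observe that the map $(r_i,r_j)\mapsto \det(R_i^\top R_j)$ is actually \emph{bilinear} in its two arguments (with all other columns fixed to the $X_k$): Cauchy--Binet rewrites it as $\sum_I \det((R_i)_I)\det((R_j)_I)$, and setting $r_i = 0$ produces a zero column in every $(R_i)_I$, so the whole determinant vanishes, and symmetrically in $r_j$; since the total polynomial degree is at most two, only monomials of degree exactly one in $r_i$ and one in $r_j$ can survive. Combined with the $C^1$ Taylor expansion $r_i(h) = (dp_i)_{z_0}(h) - h + o(\|h\|)$, available from \Cref{lem:local_projections_well_defined}, bilinearity immediately gives $\det(R_i(h)^\top R_j(h)) = \det(A_i(h)^\top A_j(h)) + o(\|h\|^2)$. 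Applied now to the linear-in-$h$ input $a_i(h):=(dp_i)_{z_0}(h)-h$, the same bilinearity shows that $B(h) = \sum_{i,j}\det(A_i(h)^\top A_j(h))$ is a quadratic form in $h$, and summing finishes the second equality.

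The main bookkeeping obstacle will be verifying the uniformity of the $O(\|h\|^3)$ constant: one must track, across the Cauchy--Binet sum, that the $\geq 2$ perturbed-column contributions aggregate into a bound depending only on $s$, $\diam(\M)$ and the Lipschitz constants of the $p_j$ (the ambient $D$ being fixed). The rest is formal multilinear algebra, with no conceptual subtlety beyond the rank-$(s{-}1)$ observation that removes the leading constant term of the determinant expansion.
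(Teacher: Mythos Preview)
Your proposal is correct and follows essentially the same approach as the paper: both compute the Gram determinant $\det(V^\top V)$ of the edge vectors $v_j=(x_j-z_0)+r_j(h)$, expand by column multilinearity, and exploit that $z_0\in\Conv(x_1,\dots,x_s)$ forces $\mathrm{rank}(X)=s-1$ to kill the zeroth- and first-order terms. The only cosmetic difference is that you pass through Cauchy--Binet (writing $\det(M^\top M)=\sum_I\det(M_I)^2$, expanding each minor, squaring, and reassembling), whereas the paper expands $\det(V^\top V)$ directly in the columns of $V$ and then in the rows of $V^\top$; the bookkeeping and the control of the $O(\|h\|^3)$ constant are identical.
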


Controlling the constant in the term $O(h^3)$ will be needed in Section \ref{sec:C2_stability}, once we let the embedding of the manifold into $\R^D$ vary.

\begin{proof}
Let $h \in E^\perp$ be such that $ \|h\|< \delta' $  and let
\[ v_j= p_j(z_0+h)-(z_0+h)=(x_j-z_0) +r_j(h) =(x_j-z_0)  + (d p_j)_{z_0}(h)-h + o(\|h\|)\] 
for $j\in [s]$. Let $L(h)$ be the $l\times l$ matrix with $L_{i,j}(h)=\dotp{v_i,v_j}$. Then,
\[ \Vol_s(\Delta(h)) = \frac{1}{s!} \det \left(L(h)\right)^{\frac{1}{2}}.\]
Define the $D\times s$ matrices $V=[v_1,\dots,v_s]$, $X=[x_1-z_0,\dots,x_s-z_0]$. For $\tau\subset [s]$, let $R_{\tau}(h)$ be the matrix obtained from $X$ by replacing the $i$th column of $X$ by $r_i(h)$  for all $i\in \tau$. We write $R_i(h)=R_{\{i\}}(h)$ and $R_{i,j}(h)=R_{\{i,j\}}(h)$.  
By multilinearity of the determinant,  as the columns of the matrix $V^\top V$ are of the form $V^\top (x_i-z_0) + V^\top r_i(h)$, it holds that
\begin{align*}
    \det(L(h)) &= \sum_{\tau\subset [s]} \det(V^\top R_{\tau}(h)) \\
    &= \det(V^\top X) + \sum_{j=1}^s \det(V^\top R_{j}(h)) + \sum_{i\neq j}\det(V^\top R_{i,j}(h))+ O(\|h\|^3).
\end{align*}
Similarly, we write for any matrix $M$ of size $D\times l$,
\begin{align*}
    \det(V^\top M) = \det(X^\top M) + \sum_{i=1}^s \det( R_{i}(h)^\top M) + \sum_{i\neq j}\det( R_{i,j}(h)^\top M)+ O(\|h\|^3).
\end{align*}
Hence,
\begin{align*}
    \det(L(h)) &= \det(X^\top X) + 2\sum_{i=1}^s \det(R_{i}(h)^\top X) + 2\sum_{i\neq j}\det(R_{i,j}(h)^\top X) \\
    &\quad + \sum_{i,j=1}^l \det(R_{i}(h)^\top R_{j}(h)) + O(\|h\|^3).
\end{align*}
Any matrix of the form $X^\top M$ is non-invertible as $X$ is not of full rank. Hence,
\begin{align*}
    \det(L(h))=\sum_{i,j=1}^l \det(R_{i}(h)^\top R_{j}(h)) + O(\|h\|^3).
\end{align*}
Note that the $O(\|h\|^3)$ can actually be made explicit, and is given as a sum of terms of the form $\det(R_\tau(h)^\top R_{\tau'}(h))$ where $|\tau|+|\tau'|\geq 3$. 
As the $p_j$s are $\ell$-Lipschitz continuous for some $\ell>0$, we have $\|r_j(h)\|\leq (\ell+1)\|h\|$ for $j\in [s]$. As $\|x_j-z_0\| \leq \diam(\M)$, we can use the multilinearity of the determinant to show that $|\det(R_\tau(h)^\top R_{\tau'}(h))|$ is smaller than 
\[ C\diam(\M)^{s-|\tau|}((\ell+1)\|h\|)^{|\tau|}\diam(\M)^{s-|\tau'|}((\ell+1)\|h\|)^{|\tau'|},\]
where $C$ is the maximal determinant of a $s\times s$ matrix of dot products of vectors of $\R^D$ of norm smaller than $1$. This is of order $\|h\|^3$, with a constant depending only on $\diam(\M)$, $s$ and $\ell$.

It remains to prove the second equality in \Cref{lem:formula_simplex_volume}, which follows from the equality $R_{i}(h) = A_{i}(h)+o(\|h\|)$ and that the $i$th columns of $A_i(h)$ and $R_i(h)$ are of order $O(\|h\|)$.
\end{proof}

We get the following characterization of \ref{BSP} as a simple corollary of \Cref{lem:formula_simplex_volume}:

\begin{cor}\label{cor:BSP_equivalent_BI_non_degenerate}
 Let $\M$ be a $C^2$ compact manifold satisfying \ref{P1}, \ref{P2} and \ref{P3}. Then $\M$ satisfies \ref{P4} if and only if for any $z_0\in Z(\M)$ the associated quadratic form $B:E^\perp\rightarrow\R$ defined in \Cref{lem:formula_simplex_volume} is non-degenerate.
\end{cor}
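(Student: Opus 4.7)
By Proposition \ref{prop:big_simplex_is_enough}, property \ref{P4} is equivalent to \ref{BSP}, so the task reduces to showing that \ref{BSP} holds at a given critical point $z_0\in Z(\M)$ if and only if the associated quadratic form $B:E^\perp\to\R$ is non-degenerate. Since $Z(\M)$ is finite by \ref{P2}, fixing $z_0$ is harmless. The plan is to squeeze $B(h)$ between $\Vol_s(\Delta(h))^2$ and itself using Lemma \ref{lem:formula_simplex_volume}, exploit the fact that $\Vol_s(\Delta(h))^2 \geq 0$ to conclude that $B$ is positive semi-definite, and then observe that on the finite-dimensional space $E^\perp$ a positive semi-definite quadratic form is non-degenerate precisely when it is positive definite, i.e.~bounded below by $c\|h\|^2$ for some $c>0$.

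Before running this argument I would first check that $B$ really is a genuine quadratic form, with no constant or linear part. Expanding each summand $\det(A_i(h)^\top A_j(h))$ by multilinearity shows it is a polynomial of degree at most $2$ in $h$ whose constant term vanishes (because $A_i(0)$ has a zero column, making $A_i(0)^\top A_j(0)$ singular). The absence of a linear part then follows from the inequality $B(h)\geq -o(\|h\|^2)$, which itself is an immediate consequence of $\Vol_s(\Delta(h))^2\geq 0$ and Lemma \ref{lem:formula_simplex_volume}: applying that inequality to both $h$ and $-h$ and dividing by $\|h\|$ forces any linear component to vanish. The same inequality, applied to $h=tv$ for a unit vector $v\in E^\perp$ and $t\to 0^+$, also gives $B(v)\geq 0$, i.e.~$B$ is positive semi-definite.

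For the implication from non-degeneracy of $B$ to \ref{BSP}, positive semi-definiteness together with non-degeneracy forces positive definiteness, so by compactness of the unit sphere in $E^\perp$ there exists $c>0$ with $B(h)\geq c\|h\|^2$ for all $h\in E^\perp$. Lemma \ref{lem:formula_simplex_volume} then yields
\[
\Vol_s(\Delta(h))^2 = \frac{1}{(s!)^2}\bigl(B(h) + o(\|h\|^2)\bigr) \geq \frac{c\|h\|^2}{2(s!)^2}
\]
for all $h\in E^\perp$ of sufficiently small norm, and taking square roots gives \ref{BSP} with $L=\sqrt{c}/(s!\sqrt{2})$.

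Conversely, if \ref{BSP} holds with constant $L$, then for $\|h\|$ small enough we have $\Vol_s(\Delta(h))^2\geq L^2\|h\|^2$, so Lemma \ref{lem:formula_simplex_volume} gives $B(h)\geq (s!)^2L^2\|h\|^2 - o(\|h\|^2)$. Applied along $h=tv$ with $v\in E^\perp$ a unit vector, dividing by $t^2$ and letting $t\to 0^+$ yields $B(v)\geq (s!L)^2>0$, so $B$ is positive definite and hence non-degenerate. There is no substantial obstacle here: once Proposition \ref{prop:big_simplex_is_enough} and Lemma \ref{lem:formula_simplex_volume} are available, the only point that merits attention is the homogeneity of $B$, which as noted follows directly from the same non-negativity argument.
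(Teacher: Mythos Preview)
Your proof is correct and follows the same route as the paper's: reduce to \ref{BSP} via Proposition~\ref{prop:big_simplex_is_enough}, then use the expansion $\Vol_s(\Delta(h))^2=\frac{1}{(s!)^2}B(h)+o(\|h\|^2)$ from Lemma~\ref{lem:formula_simplex_volume} to conclude. The paper's own argument is a two-line sketch that leaves implicit the step you make explicit, namely that $B$ is positive semi-definite (from $\Vol_s(\Delta(h))^2\geq 0$), so that ``non-degenerate'' coincides with ``positive definite'' and hence with the existence of a uniform lower bound $B(h)\geq c\|h\|^2$; your version spells this out cleanly and is a welcome clarification rather than a different approach.
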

\begin{proof}
Property \ref{BSP} is satisfied if and only if for any $z_0\in Z(\M)$, there exist $L,\delta'>0$ such that for any $h\in E^\perp$ with $\|h\|\leq \delta'$, the $s$-volume of the associated $s$-simplex $\Delta(h)$ satisfies 
$$\Vol_{s}(\Delta(h))\geq L \|h\|.$$
As we have shown in \Cref{lem:formula_simplex_volume} that
$\Vol _{s}(\Delta(h)) = \frac{1}{s!}\sqrt{B(h) + o(\|h\|^2)}$,
this is true if and only if $B:E^\perp\rightarrow\R$  is non-degenerate for any such $z_0$. As \ref{BSP} is equivalent to \ref{P4} according to \Cref{prop:big_simplex_is_enough}, the conclusion holds.
\end{proof}

\section{Topological Morse theory for distance functions to generic submanifolds
}
\label{sec:Morse}

Let $\M \subset \R^D$ be a $C^2$ compact submanifold.
In this section, we show that Conditions \Pall\ on $\M$ lead to Morse-like properties for the distance function $d_\M$ to $\M$, as stated in Theorem \ref{thm:Morse}.
A similar exposition can be found in \cite{song2023generalized} in the case of hypersurfaces. 

Let us first define \textit{topological Morse functions}, which were initially introduced in \cite{morse_topologically_1959}.
   Let $U\subset \R^d$ be an open set and let $f:U\to \R$ be a continuous function. 
A point $z\in U$ is said to be a \textit{topological regular point of $f$} if there is a homeomorphism $\phi:V_1\to V_2$ between open neighborhoods $V_1$ of $0$ in $\R^d$ and $V_2$ of $U$ in $\R^d$ with $\phi(0)=z$ and such that for all $x=(x_1,\dots,x_d)\in V_1$, 
        \begin{equation}
            f\circ \phi(x) = f(z) + x_d.
        \end{equation}
Conversely, a point $z\in U$ is said to be a \textit{topological critical point of $f$} if it is not a topological regular point of $f$.
Finally, a point $z\in U$ is said to be a \textit{non-degenerate topological critical point of $f$ of index $i$} if there exist an integer $0\leq i \leq d$ and a homeomorphism $\phi:V_1\to U_2$ between open neighborhoods $V_1$ of $0$ in $\R^d$ and $V_2$ of $U$ in $\R^d$ with $\phi(0)=z$ such that for all $x=(x_1,\dots,x_d)\in V_1$, 
        \begin{equation}
            f\circ \phi(x) = f(z) - \sum_{j=1}^i x_j^2 + \sum_{j=i+1}^d x_j^2.
        \end{equation}
The function $f$ is said to be a \textit{topological Morse function} if all its topological critical points are non-degenerate.

In what follows, we let ``critical points'' refer to the gradient-based definition that we have been using up till now, i.e. those $x\in \R^D\backslash\M$ such that $\nabla d_\M(x) =0$, and ``topological critical points'' to the definition from the previous paragraph.
The first lemma of this section states that non-critical points are also topologically non-critical.
\begin{lemma}\label{lem:non_critical_points_are_topo_regular}
 Let $x\in \R^D \backslash \M$.
 If $x\not \in Z(\M)$, then $x$ is a topological regular point for $d_\M$.
\end{lemma}
\begin{proof}
One can use similar arguments as those in the proof of Lemma 1.4 from \cite{Cheeger_critical_points} or Proposition 1.7 from \cite{GroveCriticalPoints}. 
Briefly summarized, there exists a small open neighborhood $U\subset \R^D$ of $x$, a constant $C>0$ and a unit vector  $X\in \R^D$ such that the integral flow $\Psi_X$ of the constant vector field associated to $X$ on $U$ satisfies 
\begin{equation}
    \label{eq:flow_increases_value}
    d_\M(\Psi_X(z,t_2)) - d_\M(\Psi_X(z,t_1)) \geq C(t_2-t_1)
\end{equation}
 for all $z\in U$ and $t_1,t_2$ close enough to $0$.
Up to a rotation and an affine transformation (to simplify notations), we can assume that $x=0\in\R^D$ and $X = (0,\ldots,0,1)\in\R^D$, and let $\pi :\R^D \rightarrow \R^{D-1} $ be the orthogonal projection on the first $D-1$ coordinates.
Then for every $y\in \R^{D-1}$ and every $r\in \R$ close enough to $0$,  Equation \eqref{eq:flow_increases_value} ensures that there exists a unique $t(y,r)\in \R$ such that $d_\M((y,t(y,r))) = d_\M(x) + r $, and that $(y,r)\mapsto t(y,r)$ is continuous.
This allows us to define the desired local change of coordinates $\phi :(y,r) \mapsto (y,t(y,r))$, whose inverse is $z \mapsto  (\pi(z),d_\M(z) - d_\M(x))$. 
\end{proof}
Note that the lemma does not assume that $\M$ satisfies Properties \Pall.

To handle critical points, we will need the machinery of \textit{Min-type functions} from \cite{gershkovich1997morse}.
Given $x\in \R^D$ and two real-valued functions $f,g$ each defined on some neighborhood of $x$, we write $f \simeq_x g$ if the germs of $f$ and $g$ at $x$ are equal.
Let $U\subset \R^D$ be an open set and $f:U \rightarrow \R$ be $C^k$ for some $k\geq 1$.
We say that $f$ is $C^k$ Min-type at $x\in U$ if there exist real-valued $C^k$ functions $\alpha_1,\ldots,\alpha_n$ defined on some neighborhood of $x$ such that
\[ f \simeq_x \min\{\alpha_1,\ldots, \alpha_n\}.\]
Furthermore, assume that there exists such a family $\{\alpha_1,\ldots,\alpha_n\}$ that also satisfies the following properties:
\begin{enumerate}
    \item $\{\alpha_1,\ldots, \alpha_n\}$ is a representation of $f$ at $x$ of minimal size; in other words, if a family of  $C^k$ functions $\beta_1,\ldots,\beta_m$ is such that $f\simeq_x \min\{\beta_1,\ldots, \beta_m\}$, then $m\geq n$.
    \item The gradients $\nabla_x \alpha_1, \ldots, \nabla_x\alpha_n \in \R^D$ are the vertices of a non-degenerate simplex.
    \item $0\in \Conv(\nabla_x \alpha_1, \ldots, \nabla_x\alpha_n )$.
    \item The restriction of $f$ to the locally defined $C^k$ submanifold $\{\alpha_1 = \ldots = \alpha_n\}$ is a Morse function around $x$.
    \item Any subset of cardinality $n-1$ of $\{\nabla_x \alpha_1, \ldots, \nabla_x\alpha_n\}$ is linearly independent in $\R^D$.
\end{enumerate}
Then $x$ is said to be a \textit{non-degenerate Min-type critical point} of $f$ (notions of degenerate Min-type critical points and Min-type regular points also exist, but do not serve our current purpose).
Theorem 1 from Subsection 3.2 of \cite{gershkovich1997morse} states if $f$ is $C^k$ Min-type (for any $k\geq 1$) at some $x\in \R^D$, and if $x$ is a non-degenerate Min-type critical point of $f$, then it is also a non-degenerate topological critical point of $f$.
Hence, to prove the following proposition, we only have to show that if $\M$ satisfies \Pall\ and $z_0\in Z(\M)$, then the distance function $d_\M$ is $C^1$ Min-type at $z_0$ and satisfies the five conditions listed above.
\begin{lemma}\label{lem:critical_points_are_topo_non_degenerate}
Assume that $\M$ satisfies Properties \Pall, and let $z_0\in Z(\M)$. Then $z_0$ is a non-degenerate topological critical point of $d_\M$.
\end{lemma}
\begin{proof}
Let us first show that $d_\M$ is $C^1$ Min-type at $z_0$. Let $\pi_\M(z_0) =\{x_1,\ldots,x_s\}$; we have shown in Lemma \ref{lem:local_projections_well_defined} that there exists (disjoint) neighborhoods $U$ of $z_0$ in $\R^D$ and $\M_1,\ldots, \M_s$ of $x_1,\ldots,x_s$ in $\M$ and $C^1$ maps $p_i : U\rightarrow \M_i$ (for $i\in [s]$) such that for any $z\in U$, we have 
$\pi_\M(z) \subset \{p_1(z),\ldots,p_s(z)\}$.
This means in particular that 
\[d_\M(z) = \min \{d(z,p_1(z)),\ldots, d(z,p_s(z)) \} \]
for $z\in U$.
It was also shown in the proof of Lemma \ref{lem:core_medial_axis_is_C2} that the functions $d_{\M_i}:z\mapsto d(z,p_i(z)) $ were $C^2$, hence $d_\M$ is $C^2$ Min-type at $z_0$ (in fact, it is $C^k$ Min-type, where $k$ is the regularity of $\M$).

Let us now show that $z_0$ is a non-degenerate Min-type critical point of $d_\M$.
The fact that $d_{\M_1},\ldots,d_{\M_s}$ is a minimal representation of $d_\M$ is stated in Lemma 8 of \cite{song2023generalized}.
As the gradient of $d_{\M_i}$ at $z_0$ is equal to $\frac{z_0 - x_i}{\|z_0 - x_i\|} =(z_0 - x_i)d_\M(z_0)^{-1} $, Conditions 2. and 3. from the definition are guaranteed by \ref{P1}.
So is Condition 5., as $z_0$ belonging to the relative interior of $\Conv(x_1,\ldots,x_s)$ is equivalent to $0$ belonging to the relative interior of $\Conv(x_1-z_0,\ldots,x_s-z_0) = - d_\M(z_0)\Conv(\nabla_{z_0} d_{\M_1},\ldots, \nabla_{z_0} d_{\M_s})$, which is itself equivalent to $0$ not belonging to any strict face of the simplex $\Conv(\nabla_{z_0} d_{\M_1},\ldots, \nabla_{z_0} d_{\M_s} )$, which is finally equivalent to the linear independence of any strict subset of $\{\nabla_{z_0} d_{\M_1},\ldots, \nabla_{z_0} d_{\M_s} \}$.

Furthermore, let us remark that the function $g$ given by the restriction of $d_{\M}$ to the core medial axis $M_c(z_0,\M) = \{d_{\M_1} = \ldots = d_{\M_s} \}$ described in Lemma \ref{lem:core_medial_axis_is_C2} is $C^2$ (it is equal to the $C^2$ function $d_{\M_1}$ on the core medial axis). Besides, $g$ has a nondegenerate Hessian at $z_0$. Indeed, for $z\in M_c(z_0,\M)$ and any $j\in [s]$, the gradient $\nabla g(z)$ is given by the projection on $T_z M_c(z_0,\M)$ of $z-p_j(z)$, that is $\nabla g(z)=\frac{\pi_{T_z M_c(z_0,\M)}(z-p_j(z))}{d_\M(z)}$. 
    As the center $m_z = m(\{p_j(z):\ j\in [s]\})$ of the smallest enclosing ball of the projections $p_1(z),\dots,p_s(z)$ is written as a convex combination of the projections, we also have
    \[ \nabla g(z) = \frac{\pi_{T_z M_c(z_0,\M)}(z-m_z)}{d_\M(z)}.\]
    But if $z$ is close enough to $z_0$, then $m_z$ is the orthogonal projection of $z$ onto the affine space $\mathrm{Aff}(p_1(z),\ldots,p_s(z))$ spanned by the points $p_j(z)$.
    Indeed, this is the case if and only if $z - m_z$ is perpendicular to $\mathrm{Aff}(p_1(z),\ldots,p_s(z))$,
    which is equivalent to $\{p_1(z),\ldots,p_s(z) \} \subset \left (m_z + (z-m_z)^\bot \right) \cap \overline{B}(z,d_\M(z))$,
    which is itself equivalent to the boundary of the smallest enclosing ball of the set $\{p_1(z), \ldots, p_s(z)\}$ (whose center is $m_z$) being equal to its circumsphere.
    But the boundary of the smallest enclosing ball of a set $\{y_1,\ldots,y_s\}$ and its circumsphere coincide if and only if the center of the circumsphere belongs to the convex hull of the set. This property is satisfied for $x_1,\ldots,x_s$ (as $z_0$ is critical), and as $x_1,\ldots,x_s$ form a non-degenerate simplex in $\R^D$, this remains true for $y_1,\ldots,y_s$ close enough to $x_1,\ldots,x_s$, hence for $p_1(z), \ldots, p_s(z)$ with $z$ close enough to $z_0$.   
    As $M_c(z_0,M)$ is defined as the set of points equidistant to all the sets $M_j$s, the vector space spanned by the vectors $\{p_j(z)-p_1(z):\ j\in[s]\}$ is actually the orthogonal of the tangent space  $T_z M_c(z_0,\M)$. Hence, $m_z$ being the orthogonal projection of $z$ onto the affine space spanned by the points $p_j(z)$ means that $z-m_z \in  T_z M_c(z_0,\M)$, and 
    \[ \nabla g(z) =\frac{z-m_z}{d_\M(z)}. \]
According to \ref{P4},  which is the key ingredient here, when $z$ is close enough to $z_0$, the norm of $\nabla g(z)$ satisfies an inequality of the type $\|\nabla g(z)\|\geq c\|z-z_0\|$ (as $\|\nabla g(z)\|$ is also the norm of the generalized gradient of the unrestricted distance function $d_\M$, which means that $z$ is $\|\nabla g(z)\|$-critical). In particular, the Hessian of $g$ at $z_0$ is non degenerate, and Condition 4. is satisfied.


As mentioned before the statement of the lemma, applying Theorem 1 from Subsection 3.2 of \cite{gershkovich1997morse} is now enough to conclude.
\end{proof}

\begin{remark}
The critical points of $d_\M$ can be more finely characterised as follows: assume as in the lemma that $\M$ satisfies Properties \Pall, and let $z_0\in Z(\M)$ with $\pi_\M(z_0) = \{x_1,\ldots,x_s\}$. As shown in the proof, the restriction of $d_\M$ to the core medial axis $M_c(z_0,\M) = \{d_{\M_1} = \ldots = d_{\M_s} \}$ on a neighborhood of $z_0$ is a $C^2$ Morse function, and $z_0$ is a critical point of this function. Let $i(z_0,d_\M|_{M_c(z_0,\M)})$ be its index (with respect to the restricted function).
Then Theorem 1  from Subsection 3.2 of \cite{gershkovich1997morse} also states that the index $i$ of $z_0$ as a topological critical point of the (unrestricted) distance function $d_\M$ is 
\[ i = s-1 +  i(z_0,d_\M|_{M_c(z_0,\M)}).\]
The two integers $(s,i(z_0,d_\M|_{M_c(z_0,\M)})$ provide a more precise description of the geometry of $d_\M$ around $z_0$ than its index alone does.
\end{remark}

We can now prove Theorem \ref{thm:Morse}, which we restate here for the reader's convenience.
\Morsethm*
\begin{proof}
The two previous lemmas have shown that all points $z\in \R^D\backslash \M$ are non-degenerate topological critical points for $d_\M$ if $z\in Z(\M)$, and topological regular points otherwise.
This makes the restriction of $d_\M$ to $\R^D\backslash \M $ a topological Morse function.
As noted in the Introduction, the Isotopy Lemma for $a >0$ was proved in \cite{GroveCriticalPoints} for the distance function to any compact set; it trivially extends to the case $a = 0$ in the case of submanifolds.
The Handle Attachment Lemma proceeds from arguments that are similar to those used for smooth Morse functions --see \cite[Theorem 5]{song2023generalized} for a detailed proof.
\end{proof}

\section{The \texorpdfstring{$C^2$}{C2} stability theorem }\label{sec:C2_stability}

Let as usual $M$ be a $C^k$ compact manifold for some $k\geq 2$.
In this section, we show that the set of all embeddings $i\in \Emb(M,\R^D)$ such that $i(M)$ satisfies Conditions \ref{P1}, \ref{P2}, \ref{P3} and \ref{P4} is open for the Whitney $C^2$-topology, or in other words that the conjunction of the four conditions is stable with respect to small $C^2$ perturbations.
This proves the ``open" part of the Genericity Theorem \ref{thm:generic}.
We also show that for an embedding $i$ that satisfies  \Pall, small $C^2$ perturbations of $i$ leave the critical points of $i(M)$ and their projections stable.
The $C^2$ Stability Theorem below, whose proof makes many of the more technical results of the previous sections necessary, is the combination of these statements.

\stabilitythm*

Before proving \Cref{thm:C2_stab}, we give three (counter)examples showcasing degenerate behaviors when \Pall\ is not satisfied.

\begin{example}\label{ex:P1_P2_P4_not_open}
It is interesting to note that while the conjunction of \ref{P1}, \ref{P2}, \ref{P3} and \ref{P4} defines an open condition on the set of embeddings $\Emb^2(M,\R^D)$, this is e.g. not true of \ref{P1},\ref{P2} and \ref{P4} without \ref{P3}, as showcased by the following example: consider the curve $\M$ illustrated on the left of Figure \ref{fig:P1_P2_P4_not_open}. The right part is parameterized around $x= (0,1)$ as $\theta \mapsto (1+\theta^4) (\cos(\theta),\sin(\theta))$ for $\theta \in ]-\frac{\pi}{4}, \frac{\pi}{4}[$, and there is a critical point at $z_0 = (0,0)$ with $\pi_\M(z_0) = \{x, x_1, x_2\}$. The curve $\M $ satisfies \ref{P1}, \ref{P2} and \ref{P4}, but not \ref{P3} due to its curvature around $x$. Consider now any $C^\infty$ function $f:\R \rightarrow [0,+\infty)$ such that $f(\theta) = \theta^2$ for any $\theta \in ]-\frac{\pi}{8},\frac{\pi}{8}[$, $f(\theta)\leq \theta^2$ for any $\theta\in \R$ and $f(\theta) = 0$ for any $\theta$ with $|\theta|\geq\frac{\pi}{4}$. For $a>0$, consider the modified curve $\M_a$ shown on the right of Figure \ref{fig:P1_P2_P4_not_open} and obtained by leaving $\M$ untouched, except around $x$ where it is parameterized by 
$$\theta \mapsto (1+\theta^4 -af(\theta)) (\cos(\theta),\sin(\theta))$$
for $\theta \in ]-\frac{\pi}{4}, \frac{\pi}{4}[$.  Let $i_0:M\rightarrow \R^2$ be some embedding whose image is $\M$; then for any $k\geq 0$, there is a constant $C_k>0$ and an embedding $i_a:M\rightarrow \R^2$ whose image is  $\M_a$ and at distance less than $C_ka$ in norm $\|-\|_{C^k}$ from $i_0$. Yet for any $a>0$ small enough, there is a critical point $z_a$ of $\M_a$ close to $z_0$ with four projections (which is more than $D+1 = 3$), two of them resulting from a ``splitting" of $x$, hence $\M_a$ does not satisfy \ref{P1}.
\end{example}

\begin{example}\label{ex:P1''_not_satisfied}
Consider (some compact version of) the curve $\M$ shown on the left of Figure \ref{fig:P1''_not_satisfied}. While Conditions \ref{P2}, \ref{P3}, \ref{P4} and the first part of Condition \ref{P1} are satisfied, the second part of Condition \ref{P1} is not, as the critical point $z_0$ belongs to the boundary of the simplex whose vertices are its projections $\pi_\M(z_0) = \{x_1,x_2,x_3\}$. 
A small perturbation results in the curve $\M'$ shown on the right of the figure, where $z_0$ has split into two critical points (one with three projections and one with only two), illustrating how \ref{P1} is necessary to the stability of $Z(\M)$.
\end{example}

\begin{figure}
    \centering
    \includegraphics[width=0.8\textwidth]{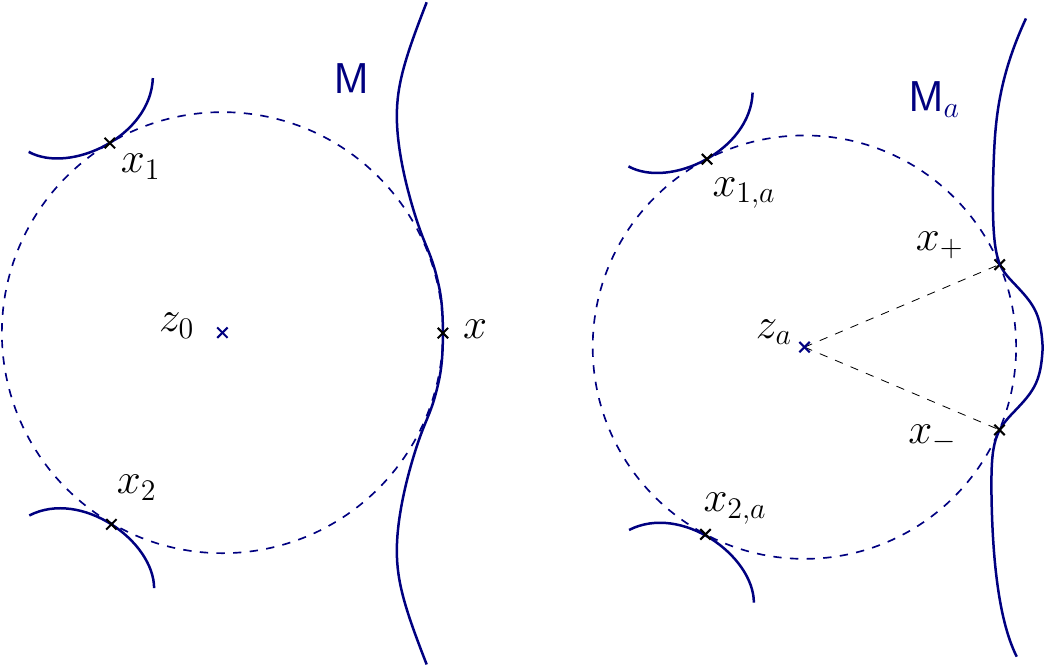}
    \caption{Illustration of Example \ref{ex:P1_P2_P4_not_open}: the curve $\M$ on the left satisfies Conditions \ref{P1}, \ref{P2} and \ref{P4}, but not \ref{P3}. An arbitrarily small modification results in the curve $\M_a$ on the right, which does not fulfill Condition \ref{P1}.}
    \label{fig:P1_P2_P4_not_open}
\end{figure}

\begin{figure}
    \centering
   \includegraphics[width=0.9\textwidth]{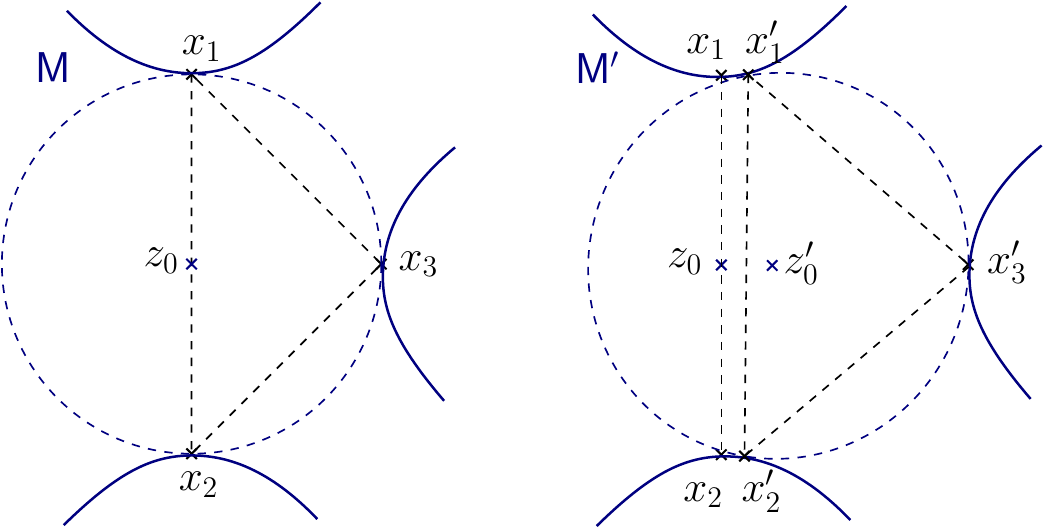}
   \caption{Illustration of Example \ref{ex:P1''_not_satisfied}: the critical point $z_0\in Z(\M)$ splits into two critical points of the perturbed curve $\M'$.}
   \label{fig:P1''_not_satisfied}
\end{figure}


\begin{example}\label{ex:disappearing}
    Critical points may not be stable when Condition \ref{P4} is not satisfied. Consider the curve $\M\subset \R^2$ given in \Cref{ex:counterexample_P4}, that is equal on $\Omega=[-1,1]\times [-2,2]$ to the union of the graph of the functions $f_0:x\mapsto 1+x^3$ and $-f_0:x\mapsto -1-x^3$. For $a>0$, consider the curve $\M_a$ given by the union of the graph of $f_a:x\mapsto 1+ax+x^3$ and $-f_a$ on $\Omega$ (and modified outside $\Omega$ so that $\M_a$ is close to $\M$ in the $C^2$-Whitney topology). 
    The point $(0,0)$ is a critical point of $\M$, but the curve $\M_a$ has no critical points in $B((0,0),1)$. Indeed, if  $z$ was such a point, it would have a strictly greater $x$ coordinate than all of its projections, due to the orthogonality of the projection and to the strict monotonicity of $f_a$; hence $z$ would not belong to the convex hull of its projections, leading to a contradiction.
    Hence, the critical point $(0,0)\in Z(\M)$ disappears in the perturbed submanifold $\M_a$, and the existence of a bijection $\Psi$ between critical points as in \Cref{thm:C2_stab} does not hold on a neighborhood of $\M$.
\end{example}

The remainder of this section is dedicated to proving \Cref{thm:C2_stab}. We decompose the proof in a sequence of claims.

Let $\eps>0$, and let us assume that $\eps$ is small enough that the balls of radius $\eps$ around the critical points $z_0\in Z(\M)$ and around their projections are all pairwise disjoint.

\begin{claim}\label{claim:beginning of the claim journey}
    For any $\delta>0$, we can make a neighborhood $\cU \subset \Emb^2(M,\R^D)$ of $i$ small enough that if $i'\in \cU$,  the critical points of $\M'$ are $\delta$-close to those of $\M$. 
\end{claim}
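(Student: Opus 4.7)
\medskip
\textbf{Proof plan.} I would argue by contradiction: assume there exist $\delta>0$, a sequence $i_n\to i$ in the Whitney $C^2$-topology, and critical points $z_n\in Z(\M_n)$, with $\M_n\defeq i_n(M)$, such that $d(z_n,Z(\M))\geq \delta$ for every $n$. The goal is then to show that, on the contrary, $z_n$ must accumulate on $Z(\M)$, yielding the desired contradiction.

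Since $i_n\to i$ uniformly on the compact set $M$, the images $\M_n$ converge to $\M$ in Hausdorff distance, say with $\eps_n\defeq d_H(\M_n,\M)\to 0$, and the $z_n$ are contained in a fixed bounded set (each $z_n\in\Conv(\M_n)$, which is uniformly bounded). After extracting a subsequence, we may assume $z_n\to z_\infty\in\R^D$.

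The key technical point, which I expect to be the main obstacle, is to prove that $d_\M(z_\infty)>0$, i.e.~that critical points of $\M_n$ cannot approach the limit manifold. Suppose for contradiction that $r_n\defeq d_{\M_n}(z_n)\to 0$. As each $z_n$ is critical with $z_n\notin \M_n$, the set $\pi_{\M_n}(z_n)$ contains at least two distinct points $a_n,b_n$, both at distance $r_n$ from $z_n$. Working in a chart from the fixed finite atlas of $M$ around the preimage of $a_n$, the uniform $C^2$-bound on $i_n$ coming from $i_n\to i$ yields a decomposition
\[ b_n-a_n = T_n+R_n,\qquad T_n\in T_{a_n}\M_n,\qquad \|R_n\|\leq K\|b_n-a_n\|^2,\]
for some constant $K$ independent of $n$. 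Since $z_n-a_n\perp T_{a_n}\M_n$, this gives $|\langle z_n-a_n,b_n-a_n\rangle|=|\langle z_n-a_n,R_n\rangle|\leq Kr_n\|b_n-a_n\|^2$. On the other hand, the identity $\|z_n-b_n\|^2=\|z_n-a_n\|^2=r_n^2$ forces $\langle z_n-a_n,b_n-a_n\rangle=\tfrac{1}{2}\|b_n-a_n\|^2$. Combining and dividing by $\|b_n-a_n\|^2>0$ yields $\tfrac{1}{2}\leq Kr_n$, contradicting $r_n\to 0$.

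With $d_\M(z_\infty)>0$ established, there is a constant $c>0$ with $d_\M(z_n)\geq c$ for $n$ large. Applying the Chazal--Cohen-Steiner--Lieutier stability estimate \eqref{eq:sampling_OG_theorem} with $\S=\M$, $\A=\M_n$, and $\mu=0$ shows that $z_n$ lies within distance $2\sqrt{\eps_n\, d_\M(z_n)}$ of $Z_{\mu_n}(\M)$, where $\mu_n\defeq 2\sqrt{\eps_n/d_\M(z_n)}\to 0$. By Property \ref{P4} (equivalently, its strengthening \ref{P4'} valid for any $\mu_0\in(0,1)$), the set $Z_{\mu_n}(\M)$ lies in a $C\mu_n$-tubular neighborhood of $Z(\M)$ for $n$ large. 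Therefore
\[ d(z_n,Z(\M))\leq 2\sqrt{\eps_n\,d_\M(z_n)} + C\mu_n \;\xrightarrow[n\to\infty]{}\; 0,\]
contradicting $d(z_n,Z(\M))\geq \delta$ and completing the proof.
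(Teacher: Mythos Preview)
Your proof is correct. The overall architecture---pass to a convergent subsequence, rule out $z_\infty\in\M$, then apply the Chazal--Cohen-Steiner--Lieutier stability bound---matches the paper's strategy, but two points of execution differ.

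First, to show the critical points stay away from the manifold, you give an ad hoc Taylor-expansion argument that amounts to a uniform lower bound on $\tau(\M_n)$. The paper instead directly cites Federer's $C^2$-stability of the reach \cite[Theorem 4.19]{federer1959curvature} to get $\tau(\M')>\tau(\M)/2$, and then uses that any critical point of $\M'$ lies at distance at least $\tau(\M')$ from $\M'$. Your hands-on approach is fine but re-proves a known fact.

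Second, and more substantively, you invoke Property~\ref{P4} in the final step to pass from $Z_{\mu_n}(\M)$ to $Z(\M)$. The paper deliberately avoids this: it uses only the elementary observation (established at the start of \Cref{sec:BSP}, via lower semicontinuity of $\|\nabla d_\M\|$ and a compactness argument) that $\eta_\M(\mu)\to 0$ as $\mu\to 0$, which holds for \emph{any} compact submanifold without assuming \Pall. The paper explicitly flags this (``Note that the proof does not make use of Properties \Pall''), because later claims rely on this one being hypothesis-free. Your use of \ref{P4} is not wrong---it is part of the standing hypothesis of \Cref{thm:C2_stab}---but it is stronger than needed and obscures the fact that this preliminary step is robust.
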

\begin{proof}
 Indeed, let $\tau(\M)>0$ be the reach of $\M$. If $\|i-i'\|_{C^2}$ is small enough, then $\tau(\M') > \tau(\M)/2$ (see \cite[Theorem 4.19]{federer1959curvature}) and Theorem 3.4 from \cite{Chazal_CS_Lieutier_OGarticle} states that each critical point $z_0' \in Z(\M')$ (which is necessarily at distance at least $\tau(\M')$ from $\M'$) is at distance at most $C_1\sqrt{d_H(\M,\M')}\leq  C_1\sqrt{\|i-i'\|_{C^2}}$ from a $\mu$-critical point of $\M$, with $ \mu = C_2 \sqrt{d_H(\M,\M')}\leq  C_2\sqrt{\|i-i'\|_{C^2}}$ for some constants $C_1, C_2>0$ that depend on $\M$.
As observed at the start of Section \ref{sec:BSP}, there exists $\mu_0>0$ such that any $\mu_0$-critical point of $\M$ is at distance at most $\delta/2$ from $Z(\M)$. Hence if $\|i-i'\|_{C^2}$ is small enough, any critical point of $\M'$ is at distance at most $\delta/2$ of a $\mu_0$-critical point of $\M$, which is itself at distance at most $\delta/2$ from a critical point of $\M$, which proves our claim. Note that the proof does not make use of Properties \Pall.
\end{proof}

Now let $\cU$ be a neighborhood of $i$ small enough that for  $i' \in \cU$ each point $z_0' \in Z(\M')$ is $\eps$-close to some $z_0\in Z(\M)$; then for  $i'\in \cU$ we can define a map $\phi: Z(\M') \rightarrow Z(\M)$ which associates to each critical point $z_0'$ the (unique) critical point $z_0 \in Z(\M)$ at distance less than $\eps$. We make that assumption for the reminder of the proof. We consider a fixed $z_0\in \M$, for which we write $\pi_\M(z_0)=\{x_1,\dots,x_s\}$ for some $2\leq s\leq D+1$, and  a point $z'_0\in Z(\M')$ such that $\phi(z'_0)=z_0$.

\begin{claim}
    Let $0<\delta<\eps$. We can make $\cU$ small enough that 
    each projection of $z_0'$ must be $\delta$-close to one of the $x_j$s.
\end{claim}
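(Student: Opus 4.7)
My plan is to approximate any projection $y' \in \pi_{\M'}(z_0')$ by a nearby point $y \in \M$, show that $y$ lies in a thin spherical shell around $z_0$ of outer radius only slightly larger than $d_\M(z_0)$, and then invoke \Cref{lem:elementary_metric} to conclude that $y$, hence $y'$, is within $\delta$ of some $x_j$. Crucially, the projections of $z_0$ form a \emph{fixed} finite set, so the elementary lemma already suffices and none of \Pall\ beyond what was used in the previous claim is needed.

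Concretely, I would first apply \Cref{lem:elementary_metric} to $\S=\M$ and $z_0$ with $t_0 := \delta/2$ to extract a radius $r_0>0$ such that every point of $\M \cap B(z_0, d_\M(z_0)+r_0)$ is at distance less than $\delta/2$ from $\{x_1,\ldots,x_s\}$. I then set $\gamma' := \min(r_0/5,\delta/2)$ and shrink $\cU$ so that every $i' \in \cU$ satisfies $\|i-i'\|_{C^0} \leq \gamma'$ (which entails $d_H(\M,\M') \leq \gamma'$) and, by the previous claim applied with $\delta$ replaced by $\gamma'$, $\|z_0'-z_0\| \leq \gamma'$.

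Given any $y' \in \pi_{\M'}(z_0')$, pick $y \in \M$ with $\|y-y'\| \leq \gamma'$ and $y'' \in \M'$ with $\|y''-x_1\| \leq \gamma'$. The latter gives
\[ d_{\M'}(z_0') \leq \|z_0'-y''\| \leq \|z_0'-z_0\| + \|z_0-x_1\| + \|x_1-y''\| \leq d_\M(z_0) + 2\gamma', \]
and combining with two further applications of the triangle inequality,
\[ \|y-z_0\| \leq \|y-y'\| + \|y'-z_0'\| + \|z_0'-z_0\| \leq d_{\M'}(z_0') + 2\gamma' \leq d_\M(z_0) + 4\gamma' < d_\M(z_0) + r_0. \]
Hence $y \in \M \cap B(z_0, d_\M(z_0)+r_0)$, so \Cref{lem:elementary_metric} furnishes some $x_j$ with $\|y-x_j\| < \delta/2$, from which $\|y'-x_j\| \leq \|y'-y\| + \|y-x_j\| \leq \gamma' + \delta/2 \leq \delta$, as required.

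There is no genuine obstacle here: the argument is pure bookkeeping with the triangle inequality together with one invocation of \Cref{lem:elementary_metric}. The only subtlety is the order of quantifiers — the constant $r_0$ (and hence the target size $\gamma'$ of $\|i-i'\|_{C^0}$) must be fixed \emph{before} shrinking $\cU$, so that the perturbation can be chosen small enough to fit inside the shell produced by the elementary lemma.
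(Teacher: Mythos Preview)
Your proposal is correct and follows essentially the same route as the paper: approximate a projection $y'\in\pi_{\M'}(z_0')$ by some $y\in\M$, bound $\|y-z_0\|$ by $d_\M(z_0)$ plus a small error via the triangle inequality, and invoke \Cref{lem:elementary_metric}. Your version is simply more explicit with the constants (fixing $r_0$ first, then setting $\gamma'=\min(r_0/5,\delta/2)$), whereas the paper phrases the same estimate more tersely and then remarks that the argument must be repeated over the finitely many $z_0\in Z(\M)$ using \ref{P2}.
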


\begin{proof}
  Let $i'\in \cU$, $z_0'\in Z(\M')$ and $x'\in \pi_{\M'}(z_0')$. There exists a point $y\in \M$ at distance less than $d_H(\M,\M')$ from $x'$. This point is such that 
\begin{align*}
 \|\phi(z_0')-y\| \leq d_H(\M,\M') + \|\phi(z_0')-x'\| \leq  d_H(\M,\M') + \|\phi(z_0')-z_0'\| + \|z_0'-x'\| 
 \\ =  d_H(\M,\M') + \|\phi(z_0')-z_0'\| +  d_{\M'}(z_0') \leq  2d_H(\M,\M') + 2\|\phi(z_0')-z_0'\|+ d_\M(\phi(z_0')).  
\end{align*}
Hence, $y\in B(\phi(z_0'), 2d_H(\M,\M') + 2\|\phi(z_0')-z_0'\|+ d_\M(\phi(z_0')))\cap \M$. The Hausdorff distance $d_H(\M,\M')$ and the distance between each $z_0' \in Z(\M')$ and the corresponding $\phi(z_0')\in Z(\M)$ can be made arbitrarily small (as shown above) by making $\cU$ small enough; hence, using \Cref{lem:elementary_metric}, we see that $y$ can be made as close as we want to $ \pi_{\M}(\phi(z_0'))$ (this is true even without $\M$ satisfying Conditions \Pall). Hence, $x'$ can also be made as close as we want to $ \pi_{\M}(\phi(z_0'))$. By applying the same reasoning to each of the finitely many critical points of $\M$ (here we use \ref{P2}, and implicitly \ref{P1} to assert that $\pi_{\M}(\phi(z_0'))$ is a finite set), we prove the claim.
  \end{proof}
  
In the remainder of the proof, we assume that $\cU$ is small enough that for any $i'\in \cU$, each projection of each critical point $z_0'$ of $\M'$ is $\eps$-close to some projection of $\phi(z_0')$.

\begin{claim}\label{claim:def_pj}
   We can make $\cU$ small enough for there to be constants $\delta,\delta',\alpha>0$ such that the following holds for any $i'\in \cU$ and any $z_0'\in Z(\M')$: the point $z'_0$   has at most a single projection $x'_j$ that is $\eps$-close to any given projection $x_j$ of $\phi(z_0')$. Furthermore, the sphere $S(z_0',d_{\M'}(z_0'))$ does not osculate $\M'$ at $x'_j$, and there exist local orthogonal projections $p'_j:B(z'_0,\delta')\to \M'_j=\overline B(x'_j,\delta)\cap \M'$ given by \Cref{lem:local_projections_well_defined} that are $\alpha^{-1}$-Lipschitz continuous. In particular, $\M'$ satisfies \ref{P3}. 
\end{claim}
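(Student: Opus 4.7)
The plan is to apply the $C^2$ stability of non-osculatory projections (\Cref{prop:C2_stability_projection}) at each pair $(z_0, x_j)$ with $z_0\in Z(\M)$ and $x_j\in\pi_\M(z_0)$, and then to take a common refinement of the resulting finitely many neighborhoods. Finiteness is guaranteed by \ref{P1} and \ref{P2}.

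First, I would fix one such pair and let $m_j\in M$ with $i(m_j)=x_j$. By \ref{P3} and the Osculation Characterization Lemma \ref{lem:characterization_osculation}, the sphere $S(z_0,d_\M(z_0))$ does not osculate $\M$ at $x_j$. Then \Cref{prop:C2_stability_projection} supplies a degree $\alpha_j>0$, open neighborhoods $m_j\in U_M^j\subset M$, $z_0\in U_{\R^D}^j\subset\R^D$, $i\in U_\Emb^j\subset\Emb^2(M,\R^D)$, and radii $\delta_j,\delta'_j>0$ such that for every $i'\in U_\Emb^j$ and every $z\in U_{\R^D}^j$, the set $\overline{i'(U_M^j)}$ admits a unique projection $p'_j(z)$ of $z$, this projection lies in $i'(U_M^j)$, the sphere $S(z,d_{i'(U_M^j)}(z))$ does not osculate $i'(U_M^j)$ at $p'_j(z)$ with degree at least $\alpha_j$, and the restriction of $p'_j$ to $B(z,\delta'_j)$ is $\alpha_j^{-1}$-Lipschitz with image contained in $\overline B(p'_j(z),\delta_j)\cap \M'$.

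Second, I would set $\alpha=\min_j\alpha_j$, $\delta=\min_j\delta_j$, $\delta'=\min_j\delta'_j$, and replace $\cU$ by the intersection of $\cU$ with all $U_\Emb^j$ (each $z_0\in Z(\M)$ sitting inside its own $U_{\R^D}^j$ thanks to \Cref{claim:beginning of the claim journey}). To match a given $x'\in\pi_{\M'}(z_0')$ with one of the local projections, I would use the previous claim to further shrink $\cU$ so that every such $x'$ is close enough to the corresponding $x_j\in\pi_\M(\phi(z_0'))$ to belong to $i'(U_M^j)$; this is possible because $x_j$ is at positive distance from $\M\setminus i(U_M^j)$ (as $U_M^j$ is an open neighborhood of $m_j$) and because $i'$ is $C^0$-close to $i$. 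Once $x'\in i'(U_M^j)$, the fact that $x'$ is a projection of $z_0'$ on all of $\M'$ forces it to be \emph{the} projection of $z_0'$ on $\overline{i'(U_M^j)}$, so $x'=p'_j(z_0')$. Uniqueness of $p'_j(z_0')$ immediately gives the ``at most one projection close to $x_j$'' part. Since $x'_j\in i'(U_M^j)$ is a global projection, one has $d_{\M'}(z_0')=d_{i'(U_M^j)}(z_0')$, so the two spheres coincide and the local non-osculation inherited from \Cref{prop:C2_stability_projection} is in fact non-osculation of $S(z_0',d_{\M'}(z_0'))$ at $x'_j$ with degree $\alpha$; summing over $j$ yields \ref{P3} for $\M'$. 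Finally, the maps $p'_j$ just constructed are the orthogonal projections onto $\overline B(x'_j,\delta)\cap\M'$ provided by \Cref{lem:local_projections_well_defined} (the two constructions coincide on their common domain by uniqueness under non-osculation), and they are $\alpha^{-1}$-Lipschitz on $B(z_0',\delta')$.

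The main obstacle I anticipate is the matching step: one must ensure that a projection $x'\in\pi_{\M'}(z_0')$ falls inside the chart image $i'(U_M^j)$ rather than in some far-away piece of $\M'$ that happens to drift $\eps$-close to $x_j$ under perturbation. This is a $C^0$ issue that forces a careful coupling between the choice of $\cU$, the size of the abstract neighborhoods $U_M^j$, and the $\eps$ used in the previous claim; once this is in place, the rest of the argument is a clean application of \Cref{prop:C2_stability_projection}.
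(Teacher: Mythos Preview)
Your proposal is correct and follows essentially the same approach as the paper: both apply \Cref{prop:C2_stability_projection} at each of the finitely many pairs $(z_0,x_j)$ (finiteness from \ref{P1} and \ref{P2}), take a common refinement of the resulting neighborhoods and constants, and then use the previous claim to force any projection $x'\in\pi_{\M'}(z_0')$ near $x_j$ to land in the local patch $i'(U_M^j)$, whence uniqueness and non-osculation follow. Your explicit observation that $d_{\M'}(z_0')=d_{i'(U_M^j)}(z_0')$ (so the local non-osculation is global) is a detail the paper leaves implicit.
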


\begin{proof}
    This is a consequence of the $C^2$ stability of the projection proved in Proposition \ref{prop:C2_stability_projection}, and of the crucial hypothesis that $\M$ satisfies \ref{P3}. 
    Indeed, let $z_0\in Z(\M)$, $x_0\in \pi_{\M}(z_0)$ and $m_0\in M$ with $i(m_0) = x_0$. As the sphere $S(z_0,d_\M(z_0))$ is non-osculating at the point $x_j\in \pi_\M(x_j)$, there exist $\alpha>0$ and neighborhoods $U_M\subset M$, $U_{\R^D} \subset \R^D$ and $U_{\Emb}\subset \Emb^2(M,\R^D)$ of $m_0$, $z_0$ and $i$ such that for any $i'\in  U_{\Emb}$, any $z'_0\in U_{\R^D}$ has a unique projection $x'_j=p'_j(z'_0)$ on $\overline{i' (U_M)}$, that $x'_j$ belongs to $i' (U_M)$, and that the sphere $S(z'_0,d_{i'  (U_M)}(z'_0))$ does not osculate $i'(U_M)$ at $x'_j$ with $\alpha$ a degree of non-osculation.  

    Furthermore, if $\M'=i'(M)$, then there exist $\delta,\delta'>0$ such that the function $p'_j:B(z'_0,\delta')\to \M'\cap \overline B(x_j',\delta)=\M'_j$ is the orthogonal projection on $\M'_j$, which is $C^1$ and $\alpha^{-1}$-Lipschitz continuous.
    We can assume that $\delta,\delta'<\eps$.


Let us make $\cU$ small enough that $\cU \subset U_{\Emb}$ and that if $i'\in \cU$, any critical point $z_0'\in Z(\M')$ in $\phi^{-1}(z_0)$ belongs to $U_{\R^D}$ and any projection $x'\in \pi_{\M'}(z_0')$ of $z_0'$ close to $x_j$ is at distance at most $\delta$ from $x_j$. 
Then for such an embedding $i'$, critical point $z_0'$ and projection $x'$, 
we have shown that $x' \in i'(U_M)$. As we know from the conclusions of Proposition \ref{prop:C2_stability_projection} on the $C^2$ stability of the projection that $z_0'$ has a unique projection on $\M'$ in $i'(U_M)$, the projection $x'$ is necessarily the only one to be $\delta$-close to $x_j$ (hence the only one to be $\eps$-close to $x_j$).
We also get that the sphere $S(z_0',d_{i'  (U_M)}(z_0')) = S(z_0',d_{\M}(z_0')) $ does not osculate $i'(U_M)$ at $x'$.
By making $\cU $ small enough for the same reasoning to work for each of the finitely many critical points of $\M$ and their finitely many projections (and by taking the minima of the constants $\delta, \delta'$ and $\alpha$ corresponding to each critical point $z_0$ and each projection $x_j$), we prove the claim.
\end{proof}

For the remainder of the proof, we assume that $\cU$ is small enough for the conclusions of the previous claim to hold. 

\begin{claim}\label{claim:exactly_one_projection}
We can make $\cU$ small enough that 
there is exactly one projection $x'_j\in \pi_{\M'}(z_0')$ that is $\eps$-close to each $x_j$.
\end{claim}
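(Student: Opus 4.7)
The plan is to combine Claim \ref{claim:def_pj}, which gives at most one projection of $z_0'$ near each $x_j$, with Property \ref{P1}, which says $z_0$ lies in the relative interior of the non-degenerate simplex $\Delta_{z_0} := \Conv(\pi_\M(z_0))$. The crucial geometric input is that, because $z_0$ is in the relative interior, the distance from $z_0$ to any proper face of $\Delta_{z_0}$ is bounded below by some $\rho_{z_0}>0$. Invoking \ref{P2} (finiteness of $Z(\M)$), one may set $\rho := \min_{z_0 \in Z(\M)} \rho_{z_0} > 0$.

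Next, I would further shrink $\eps$ if needed so that $\eps < \rho/3$, and then shrink $\cU$ using Claim \ref{claim:beginning of the claim journey} so that every $z_0'\in Z(\M')$ satisfies $\|z_0' - \phi(z_0')\| < \rho/3$. Fix $z_0 \in Z(\M)$ and $z_0'\in \phi^{-1}(z_0)$, and let $J \subseteq [s]$ be the set of indices $j$ for which at least one projection of $z_0'$ lies in $B(x_j,\eps)$. By the way $\cU$ was constructed in the earlier claims, every projection of $z_0'$ lies in $\bigcup_{j\in J} B(x_j, \eps)$.

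The key step exploits that $z_0'\in Z(\M')$ means $\nabla d_{\M'}(z_0')=0$, so $z_0' = m(\pi_{\M'}(z_0'))$, and hence $z_0'$ lies in the convex hull of its own projections (the center of the smallest enclosing ball of a finite set is a convex combination of the points on the sphere). Suppose, for contradiction, that $J\subsetneq [s]$. Then
\[
z_0' \in \Conv(\pi_{\M'}(z_0')) \subset \Conv\Bigl(\bigcup_{j\in J} \overline{B}(x_j, \eps)\Bigr),
\]
so $z_0'$ is within distance $\eps$ of the proper face $\Conv(\{x_j : j\in J\})$ of $\Delta_{z_0}$. Combining with $\|z_0' - z_0\| < \rho/3$ and $\eps < \rho/3$, the triangle inequality yields $d(z_0, \Conv(\{x_j : j\in J\})) < 2\rho/3 < \rho$, contradicting the definition of $\rho$. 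Hence $J=[s]$, and combined with Claim \ref{claim:def_pj} this shows there is exactly one projection of $z_0'$ in each $B(x_j,\eps)$.

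The argument is short, and the only subtle point is to observe that \ref{P1} does all the work via the \emph{strict} interiority of $z_0$ in $\Delta_{z_0}$; without this strict interiority the face-avoidance estimate would fail, which is consistent with Example \ref{ex:P1''_not_satisfied}. Note in particular that shrinking $\eps$ itself (not just $\cU$) is permitted, since the theorem only asserts the existence of $\cU$ for \emph{every $\eps$ small enough}.
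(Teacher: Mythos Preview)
Your proof is correct and takes essentially the same approach as the paper: both arguments use \ref{P1}'s strict interiority of $z_0$ in the simplex $\Conv(\pi_\M(z_0))$ to conclude that if $z_0'$ (which lies in $\Conv(\pi_{\M'}(z_0'))$ and is close to $z_0$) had projections only near a proper subset $\{x_j:j\in J\}$, it would be forced too close to a proper face of the simplex. The paper packages this via the positivity of $\|z_0-m(\{x_j:j\in J\})\|/d_\M(z_0)$ for $J\subsetneq[s]$ and a continuity argument, whereas you use an explicit distance-to-face bound and the triangle inequality; these are equivalent formulations of the same geometric fact.
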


\begin{proof}
Let us consider again $z_0\in Z(\M)$ with $\pi_\M(z_0) = \{x_1,\ldots,x_s\}$.
If $i'\in \cU$ and $z_0' \in \phi^{-1}(z_0) $, we already know that for each $x_j\in \pi_\M(z_0)$ there can be at most one projection $x'_j\in \pi_{\M'}(z_0')$ that is $\eps$-close to $x_j$, and that each projection of $z_0'$ must be  $\eps$-close to one of the $x_j$.

As $\M$ satisfies \ref{P1}, the critical point $z_0$ must belong to the interior of the simplex with vertices $x_1,\ldots,x_s$. In particular,  for any $J\subsetneq [s]$
\[\frac{\|z_0-m(\{x_j:\ j\in J\})\|}{d_\M(z_0)}>0.\]
For $i'\in \cU$ and $z_0' \in \phi^{-1}(z_0) $, let $I(z_0')\subset [s]$ be the set of indices $j$ such that $z_0'$ has a (unique) projection $x_j'$ that is $\eps$-close to $x_j$.
By making $\cU$ small enough, we can  ensure (by continuity) that for any $i'\in \cU$ and any $z_0' \in \phi^{-1}(z_0) $ the following holds: the point $z_0'$ is close enough to $z_0$ and each of its projections $x_j'$ is close enough to $x_j$ that if we let $J\subset I(z_0')$ with $J\subsetneq [s]$, then 
\[\frac{\|z_0'-m(\{x_j':\ j\in J\})\|}{d_{\M'}(z_0')}>0.\]
In particular, as 
\[\nabla d_{\M'}(z_0') = \frac{z_0'-m(\{x_j':\ j\in I(z_0')\})}{d_{\M'}(z_0')},\]
the point $z_0'$ being critical forces $I(z_0')=[s]$. Hence the projections of $z_0'$ are in bijection with those of $z_0$. By applying the same reasoning to each of the finitely $z_0\in Z(\M)$, we prove the claim.
\end{proof}

Again, we assume $\cU$ to be small enough for its conclusions to hold for the remainder of the proof.

\begin{claim}\label{claim:P1}
    We can make $\cU$ small enough that $\M'$ satisfies \ref{P1}.
\end{claim}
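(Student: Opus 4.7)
The plan is to use the preceding claims to reduce \ref{P1} for $\M'$ to three open conditions on the configuration of the critical point and its projections, and then argue that each is stable under small $C^2$ perturbations.

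First, by Claims \ref{claim:beginning of the claim journey}--\ref{claim:exactly_one_projection}, once $\cU$ is small enough we have a well-defined map $\phi:Z(\M')\to Z(\M)$, and for every $z_0'\in Z(\M')$ mapping to $z_0\in Z(\M)$ with $\pi_\M(z_0)=\{x_1,\dots,x_s\}$, the projection set $\pi_{\M'}(z_0')$ consists of exactly $s$ points $\{x_1',\dots,x_s'\}$ with $\|x_j-x_j'\|\leq\eps$ for each $j$. In particular $|\pi_{\M'}(z_0')|=s\leq D+1$, so the first half of the cardinality requirement in \ref{P1} holds for free.

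Next I would verify non-degeneracy of the simplex spanned by $\{x_1',\dots,x_s'\}$. Since $\M$ satisfies \ref{P1}, the Gram matrix $G(z_0)$ of the family $(x_j-x_1)_{j\geq 2}$ has non-zero determinant. The map sending a configuration of $s$ points to the determinant of the associated Gram matrix is continuous, so making $\eps$ small enough (uniformly over the finitely many $z_0\in Z(\M)$, using \ref{P2}) keeps $\det G(z_0')$ bounded away from zero; thus $\{x_1',\dots,x_s'\}$ spans a non-degenerate simplex $\Delta'$.

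Finally I would check that $z_0'$ lies in the relative interior of $\Delta'$. Because $z_0'$ is a critical point, $z_0'=m(\{x_1',\dots,x_s'\})$ lies in the affine hull of its projections (the center of the smallest enclosing ball always does), so we can compute its barycentric coordinates $(\lambda_1',\dots,\lambda_s')$ in $\Delta'$. These coordinates are obtained by solving a linear system whose matrix (vertices written in some affine frame) has determinant bounded away from $0$ by the previous step; hence they depend continuously on the vertices and on $z_0'$. Since $\M$ satisfies \ref{P1}, the corresponding barycentric coordinates $(\lambda_1,\dots,\lambda_s)$ of $z_0$ in the original simplex are all strictly positive, so taking $\cU$ small enough (uniformly in $z_0\in Z(\M)$, again via \ref{P2}) forces $\lambda_j'>0$ for every $j$, which is exactly the interior condition of \ref{P1}.

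The only mildly delicate point is ensuring that the three continuity arguments can be made uniform across all $z_0\in Z(\M)$ and all $z_0'\in\phi^{-1}(z_0)$; but since $Z(\M)$ is finite by \ref{P2} and the bijection between $\pi_\M(z_0)$ and $\pi_{\M'}(z_0')$ from Claim \ref{claim:exactly_one_projection} identifies the relevant quantities, it suffices to take the minimum of the admissible thresholds over the finitely many critical points of $\M$.
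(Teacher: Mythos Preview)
Your proposal is correct and follows essentially the same approach as the paper: both rely on the previous claims to bring $z_0'$ and its projections $x_j'$ arbitrarily close to $z_0$ and the $x_j$, and then argue that non-degeneracy of the simplex and membership of $z_0'$ in its relative interior are open conditions in these coordinates, made uniform by the finiteness from \ref{P2}. The paper simply asserts that ``these properties are open in the coordinates of $z_0$ and its projections (under the constraint that $z_0$ necessarily belongs to the simplex, which is satisfied for any critical point)'', whereas you spell this out explicitly via the Gram determinant and barycentric coordinates.
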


\begin{proof}
 This is a simple consequence of the previous claims. As $\M$ satisfies \ref{P1}, we know that for each $z_0\in Z(\M)$, the projections $\pi_\M(z_0)$ are the vertices of a non-degenerate simplex to the relative interior of which $z_0$ belongs.
These properties are open in the coordinates of $z_0$ and its projections (under the constraint that $z_0$ necessarily belongs to the simplex, which is satisfied for any critical point).
We have shown that by making $\cU$ small enough, we can make any critical point $z_0'\in \M'$  arbitrarily close to a corresponding critical point $\phi(z_0') \in \M$ and the projections of $z_0'$  arbitrarily close to the corresponding projections of $\phi(z_0')$ (with which they are in bijection) if $i'\in \cU$.
As there are only finitely many critical points of $\M$ with finitely many projections, this is enough to conclude that any $\M'$ satisfies \ref{P1} for $i'\in \cU$ and $\cU$ small enough, which we assume to be the case for the remainder of the proof.
   \end{proof}
\medskip

For $z_0\in Z(\M)$ with $\pi_\M(z_0) = \{x_1,\ldots,x_s\}$, let $E=\mathrm{Vec}(\{x_j-x_k:\ j,k\in [s]\})$ and  $E'=\mathrm{Vec}(\{x_j'-x_k':\ j,k\in [s]\})$. For $I\subset [s]$, recall the functions $m_I$ and $g_I$ from \Cref{sec:BSP}, and define the functions $m'_I(z)=m(\{p'_j(z):\ j\in I\}$ and $g'_I(z)= \frac{\|z-m'_I(z)\|}{d_{\M'}(z)}$ implicitly associated to some $i'\in \cU$.
They are defined for $z\in B(z'_0,\delta')$, where $\delta'$ is the constant of \Cref{claim:def_pj}. As the projections $p'_j$ are Lipschitz continuous for a uniform constant $\alpha^{-1}$ over $i'\in \cU$ and as the distance between $z_0'$ and the boundary of the simplex with vertices $\pi_{\M'}(z_0')$ can be uniformly controlled over $i'\in \cU$ (as in the proof of the previous claim), \cref{lem:g_I} implies that the functions $g'_I$ are $(1/2)$-H\"older continuous and $g'_{[s]}$ is Lipschitz continuous
on a small ball $B(z'_0,\delta'')$, where $\delta''$, the Lipschitz constant of $g'_{[s]}$ and  the H\"older  constants of the $g'_I$ are uniform over $i'\in \cU$.

Remark also that the various constants in \Cref{lem:mu_critical_implies_perpendicular} depend only on the reach and the diameter of $\M$, on the Lipschitz continuity of the $p_j$s, the H\"older constants of the $g_I$s and the Lipschitz constant of $g_{[s]}$, on the level of non-degeneracy of the simplex with vertices $x_1,\dots,x_s$ (e.g., measured through its determinant) and on the distance between $z_0 $ and the boundary of this simplex.
As the local projections $p'_j$s are all $\alpha^{-1}$-Lipschitz continuous and defined on $B(z'_0,\delta')$ for some $\alpha$ and $\delta'$ uniform over $i'\in\cU$, by making the non-degeneracy of the simplices and the distances of the critical points to their boundaries uniform over $i'\in\cU$ in the conclusion of \Cref{claim:P1}, and using the remark on the H\"older constant of the previous paragraph, the constants of \Cref{lem:mu_critical_implies_perpendicular} can be made uniform over $i'\in \cU$ as well,  thence the following claim holds.

\begin{claim}\label{claim:perpendicular_uniform}
    We can make $\cU$ small enough  that there exist constants $\delta_1,\mu_0,C>0$ such that for all $i'\in \cU$ and $z_0' \in Z(\M')$, if $z\in B(z'_0,\delta_1)$ is $\mu_0$-critical (for $\M'$), then $\|\pi_{E'}(z-z'_0)\|\leq C\|z-z'_0\|^2$.
\end{claim}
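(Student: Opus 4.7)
The plan is to verify that every constant appearing in the proof of \Cref{lem:mu_critical_implies_perpendicular} can be chosen uniformly over $i'\in \cU$ and $z_0' \in Z(\M')$; taking the minimum over the finitely many critical points $z_0 \in Z(\M)$ (permissible by \ref{P2}) will then furnish global constants. As already observed in the paragraph preceding the claim, those constants depend only on the reach and diameter of the submanifold, the Lipschitz constants of the local projections $p_j'$, the H\"older constants of the $g'_I$, the Lipschitz constant of $g'_{[s]}$, the non-degeneracy of the simplex $\Conv(\pi_{\M'}(z_0'))$, and the distance from $z_0'$ to the boundary of that simplex. The plan is therefore to secure a uniform lower or upper bound on each.

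First I would collect these uniform bounds. \Cref{claim:def_pj} provides a common radius $\delta'>0$ and a common Lipschitz constant $\alpha^{-1}$ for the local projections $p_j':B(z_0',\delta')\to \overline B(x_j',\delta)\cap \M'$. Shrinking $\cU$, \cite[Theorem 4.19]{federer1959curvature} yields $\tau(\M')\geq \tau(\M)/2$, and continuity in $i'$ (combined with \Cref{claim:exactly_one_projection} and \Cref{claim:P1}) ensures that $\Conv(\pi_{\M'}(z_0'))$ remains uniformly non-degenerate, that $z_0'$ is uniformly far from its boundary, and that $d_{\M'}(z_0')$ is uniformly bounded below. Feeding these estimates into \Cref{lem:g_I} then yields uniform constants $\delta''>0$, $\ell_1,\ell_2>0$ such that on $B(z_0',\delta'')$, the function $g'_{[s]}$ is $\ell_1$-Lipschitz and each $g'_I$ is $(1/2)$-H\"older with constant $\ell_2$.

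With these ingredients in place, the two steps of \Cref{lem:mu_critical_implies_perpendicular} go through uniformly. For the first step, the values $g'_I(z_0')$ with $I\subsetneq [s]$ are continuous functions of the vertices $x_j'$ and of $d_{\M'}(z_0')$ and so stay uniformly bounded below by some $2\mu_0>0$; H\"older continuity of $g'_I$ then supplies a uniform $\delta_1\in (0,\delta'')$ forcing $I(z)=[s]$ for any $\mu_0$-critical $z \in B(z_0',\delta_1)$. For the second step, expanding $\|z-p_j'(z)\|^2$ around $z_0'$ produces a quadratic remainder whose size is controlled by \cite[Theorem 4.8.7]{federer1959curvature} in terms of $\tau(\M')$, $\diam(\M')$ and the Lipschitz constant $\alpha^{-1}$, all uniform over $\cU$. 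The main subtlety is the final step: solving $\langle x_{j_1}'-x_{j_2}',\pi_{E'}(h)\rangle = O(\|h\|^2)$ for $\pi_{E'}(h)$ requires inverting the Gram matrix of the edge vectors, and the resulting constant $C$ is controlled by the condition number of that matrix. This is precisely what the uniform non-degeneracy of the simplex secured above provides. Once this is checked, taking the minima of $\delta_1,\mu_0$ and the maximum of $C$ over the finite set $Z(\M)$ closes the claim.
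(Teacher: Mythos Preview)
Your proposal is correct and follows the same approach as the paper. The paper's own justification for this claim is exactly the observation (made in the paragraph immediately preceding the claim) that every constant in the proof of \Cref{lem:mu_critical_implies_perpendicular} depends only on quantities already shown to be uniform over $\cU$ via \Cref{claim:def_pj}, \Cref{claim:P1} and \Cref{lem:g_I}; you have simply unpacked that paragraph in greater detail, including the two-step breakdown and the role of the simplex non-degeneracy in the final inversion.
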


We now turn to proving that Condition \ref{P4}, which we have shown in Proposition \ref{prop:big_simplex_is_enough} to be equivalent under our hypotheses to Condition \ref{BSP}, is open.

\begin{claim}\label{claim:BSP}
We can make $\cU$ small enough that there exist constants $\delta, \delta', L>0$ such that  for all $i'\in \cU$, the submanifold $\M'=i'(M)$ satisfies \ref{BSP} with constants $\delta, \delta', L$ for any $z_0'\in Z(\M')$. In particular, $\M'$ satisfies \ref{P4}. 
\end{claim}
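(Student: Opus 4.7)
The plan is to invoke the algebraic characterization of \ref{BSP} given by \Cref{cor:BSP_equivalent_BI_non_degenerate}: under \ref{P1}, \ref{P2}, \ref{P3} (all of which we have already established for $\M'$ when $\cU$ is small enough, by the previous claims), \ref{P4}/\ref{BSP} is equivalent to the non-degeneracy of the quadratic form $B:E^\perp\to\R$ of \Cref{lem:formula_simplex_volume} at each critical point. Since $\M$ satisfies \ref{P4} by hypothesis, $B$ is non-degenerate at each $z_0\in Z(\M)$, and $Z(\M)$ is finite, so there is a uniform lower bound $\lambda_0>0$ on $|B(h)|/\|h\|^2$ for $h\in E^\perp$. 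My goal is to transfer this to the perturbed forms $B'$ associated to each $z_0'\in Z(\M')$ with $\phi(z_0')=z_0$, uniformly in $i'\in \cU$.

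First I would show continuity of $B'$ in $i'$. Fix $z_0\in Z(\M)$ with projections $\{x_1,\dots,x_s\}$; the local projections $p'_j:B(z'_0,\delta')\to\M'_j$ provided by \Cref{claim:def_pj} are orthogonal projections onto $C^2$ submanifolds, so by \Cref{prop:diff_projection} their differential at $z'_0$ is $(dp'_j)_{z'_0} = (\id - W_{x'_j,z'_0-x'_j})^{-1}\circ \pi_{x'_j}$. The shape operator $W_{x'_j,z'_0-x'_j}$ and orthogonal projection $\pi_{x'_j}$ depend continuously on $(x'_j, z'_0, i')$ with $i'$ in the Whitney $C^2$-topology (since they involve only the first and second derivatives of $i'$ at a preimage of $x'_j$). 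Since $z'_0\to z_0$, $x'_j\to x_j$ and $i'\to i$ as $\cU$ shrinks, we conclude that $(dp'_j)_{z'_0}\to (dp_j)_{z_0}$. Together with $X' = [x'_1-z'_0,\dots,x'_s-z'_0] \to X$, this gives that the matrices $A'_i$ defining $B'$ converge to $A_i$, hence $B'\to B$ uniformly over the finite set $Z(\M)$ (viewed as a coefficient comparison after identifying $E'^\perp$ and $E^\perp$ via the orthogonal projection, which is an isomorphism close to the identity for $\cU$ small enough).

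By shrinking $\cU$ and using that non-degeneracy with a fixed quantitative bound is an open condition on quadratic forms, we get a uniform $\lambda_1>0$ and $\delta_1>0$ such that $B'(h)\geq \lambda_1 \|h\|^2$ for every $i'\in\cU$, every $z'_0\in Z(\M')$ and every $h\in E'^\perp$ with $\|h\|\leq \delta_1$. The second equality of \Cref{lem:formula_simplex_volume} then gives $\Vol_s(\Delta'(h))^2 = \frac{1}{(s!)^2}(B'(h) + R(h))$, where the error term $R(h)$ is controlled by $C_0\|h\|^3$ with $C_0$ depending only on $s$, $\diam(M')$ and the Lipschitz constants of the $p'_j$. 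But $\diam(M')\leq \diam(M)+\eps$ and the Lipschitz constants of the $p'_j$ are uniformly bounded by $\alpha^{-1}$ (by \Cref{claim:def_pj}), so $C_0$ can be chosen independently of $i'\in\cU$ and of $z'_0$.

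Thus, possibly shrinking $\delta_1$ (uniformly) so that $C_0 \|h\|\leq \lambda_1/2$ for $\|h\|\leq \delta_1$, we obtain
\[
\Vol_s(\Delta'(h))^2 \geq \frac{\lambda_1}{2(s!)^2}\|h\|^2
\]
for all $i'\in\cU$, $z'_0\in Z(\M')$ and $h\in E'^\perp\cap B(0,\delta_1)$. Setting $L=\sqrt{\lambda_1/2}/s!$, this is exactly \ref{BSP} with uniform constants $\delta,\delta',L$ (where $\delta,\delta'$ come from \Cref{claim:def_pj}, possibly reduced to match $\delta_1$). The application of \Cref{prop:big_simplex_is_enough} then yields \ref{P4} for every $\M'$ with $i'\in \cU$. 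The main obstacle in executing this plan is the bookkeeping required to make every constant uniform across both $i'\in\cU$ and $z'_0\in Z(\M')$; this is precisely where the previous claims (uniform non-osculation, uniform Lipschitz constants of the $p'_j$, uniform non-degeneracy of the simplices $\{x'_1,\dots,x'_s\}$, and finiteness of $Z(\M)$) do the necessary work.
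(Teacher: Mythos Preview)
Your overall strategy---transfer the non-degeneracy of $B$ to $B'$ via continuity of $(dp'_j)_{z'_0}$ in the $C^2$-topology, then invoke \Cref{lem:formula_simplex_volume} to recover \ref{BSP}---is natural, but the final step has a genuine gap. You write that the second equality of \Cref{lem:formula_simplex_volume} gives $\Vol_s(\Delta'(h))^2=\frac{1}{(s!)^2}\bigl(B'(h)+R(h)\bigr)$ with $|R(h)|\leq C_0\|h\|^3$, where $C_0$ depends only on $s$, $\diam(\M')$ and the Lipschitz constants of the $p'_j$. That misreads the lemma: the controlled $O(\|h\|^3)$ bound applies to the \emph{first} equality (involving the matrices $R'_i(h)$ built from the full increments $r'_i(h)$), not the second. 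Passing from $\sum_{i,j}\det\bigl(R'_i(h)^\top R'_j(h)\bigr)$ to $B'(h)$ introduces an additional error of order $o(\|h\|^2)$, coming from the Taylor remainder $p'_j(z'_0+h)-p'_j(z'_0)-(dp'_j)_{z'_0}(h)$. The rate of that $o(\|h\|^2)$ is governed by the modulus of continuity of $dp'_j$, hence by the modulus of continuity of the \emph{second} derivatives of $i'$---a quantity that is \emph{not} controlled by $\|i'-i\|_{C^2}$. As written, you therefore cannot conclude $\Vol_s(\Delta'(h))^2\geq \frac{\lambda_1}{2(s!)^2}\|h\|^2$ uniformly over $i'\in\cU$.

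The paper's proof sidesteps this by never using $B'$. It sticks to the first equality of \Cref{lem:formula_simplex_volume} and compares the perturbed matrices $R'_j(h')$ (for $h'\in E'^\perp$) not to their linearizations $A'_j(h')$, but to hybrid matrices $\tilde R_j(X',h)$ built from the \emph{unperturbed} increments $r_j(h)=p_j(z_0+h)-x_j-h$ together with the perturbed columns $X'$. The lower bound on $\sum_{i,j}\det\bigl(\tilde R_i^\top\tilde R_j\bigr)$ follows from \ref{BSP} for $\M$ and equicontinuity in $X'$; the comparison $\|r'_j(h')-r_j(h)\|\leq(\beta(\ell+1)+\gamma)\|h'\|$ uses only the continuity of $dp_j$ for the \emph{fixed} embedding $i$ and the smallness of $\op{(dp'_j)_{z'}-(dp_j)_z}$ at nearby points, both of which \emph{are} available in the $C^2$-topology. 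Your argument can be repaired along the same lines---replace the incorrect $O(\|h\|^3)$ by an $O(\eta\|h\|^2)$ bound with $\eta$ made small by shrinking both $\cU$ and $\delta_1$, via a triangle inequality through $dp_j$---but this is exactly the delicate point that makes the claim a $C^2$ (rather than $C^3$) statement; cf.~\Cref{rmk:openness_alternative_proof}.
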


The uniformity of $\delta, \delta'$ and $L$ over $i'\in \cU$ is not needed for $\M'$ to satisfy \ref{BSP} (hence \ref{P4}), but it will prove useful for the following claims.

\begin{proof}
Using the same notations as above, for $i'\in \cU$, let $h'\in E'^\perp$ be small enough that $p_j'(z'_0+h')$ is well-defined for $j\in [s]$. Let $\Delta'(h')$ be the simplex with vertices $z'_0+h'$ and $p'_j(z'_0+h')$ for $j\in [s]$. Define $X'=[x'_1-z'_0,\dots,x'_s-z'_0]$, $r'_j(h')=p'_j(z_0'+h')-x_j'-h'$ and $R_j'(h')$ the matrix obtained from $X'$ by replacing the $j$th column of $X'$ by $r'_j(h')$. According to \Cref{lem:formula_simplex_volume}, the volume of $\Delta'(h')$ satisfies
\[ \Vol_s(\Delta'(h'))^2 \geq  \frac{1}{(s!)^2}\sum_{i,j =1}^{s}  \det(R'_{i}(h')^\top R'_{j}(h'))-C_0\|h'\|^3, \]
where $C_0$ depends on $\diam(\M')$ and the Lipschitz constant $\ell$ of the projections $p'_j$ (and $s$, which is the same for any $i'\in \cU$ and $z_0'\in \phi^{-1}(z_0)$). As those quantities are uniformly bounded over $\cU$, the constant $C_0$ can be chosen independent of $i'\in \cU$. 

Consider the matrices $\tilde R_j(X',h)$, obtained by replacing the $j$th column of $X'$ by $r_j(h)$ (similarly defined as $r_j(h)=p_j(z_0+h)-x_j-h$). Let $\delta_0>0$ be such that the $p_j$s are defined and $\ell$-Lipschitz continuous on $B(z_0,2\delta_0)$. For $\|h\|\leq \delta_0$, define
\[ F(X',h)= \sum_{i,j=1}^s \frac{1}{\|h\|^2}\det(\tilde R_j(X',h)^\top \tilde R_i(X',h))\]
and $F(X')=\inf_{h\in E^\perp, \|h\| \leq \delta_0}  F(X',h)$. As $\|r_j(h)\|/\|h\|$ is bounded by $\ell+1$ for $h\in E^\perp$ with $\|h\|\leq\delta_0$, the family of functions $X'\mapsto F(X',h)$ indexed by $h\in E^\perp$ with $\|h\| \leq \delta_0$ is equicontinuous. This implies in particular that the function $X'\mapsto F(X')$ is continuous.  Let $X=[x_1-z_0,\dots,x_s-z_0]$. As the submanifold $\M$ satisfies Condition \ref{P4} (which is equivalent to Condition \ref{BSP}), \Cref{lem:formula_simplex_volume} ensures that $F(X)>C_1$ for $\delta_0$ small enough and for some constant $C_1>0$.  By continuity, as long as $z'_0$ and  the projections $x'_j$ are close enough to $z_0$ and to the projections $x_j$ respectively, then
\[ \sum_{i,j=1}^s \det(\tilde R_j(X',h)^\top \tilde R_i(X',h)) \geq C_1\|h\|^2\]
for $h\in E^\perp$ with $\|h\|\leq \delta_0$.

Let $h'\in E'^\perp$ and let $h= \pi_{E^\perp}(h')$. 
For any $\beta>0$, we can ensure that $\|h'-h\| \leq \beta \|h'\|$ for any such $h'$ by making the  $x'_j$s close enough to the $x_j$s. Then
\begin{align*}
    \|r_j(h)-r'_j(h')\|&\leq \beta\|h'\| + \|p'_j(z'_0+h')-x'_j - (p_j(z_0+h)-x_j)\| \\
    &\leq  \beta\|h'\| + \|\int_0^1 (d(p'_j)_{z'_0+th'}(h')- d(p_j)_{z_0+th}(h))\dd t\|
\end{align*} 
Note that $p'_j(z'_0+th')$ and $p_j(z_0+th)$ belong respectively to $i'(U_M)$ and $i(U_M)$ for some open set $U_M$ of $M$ that can be made arbitrarily small (by making $\U$ smaller). 
We know from \Cref{prop:diff_projection} that the differential $d(p_j)_z$ of the projection can be expressed in terms of the orthogonal projection onto $T_{p_j(z)}\M$ and of the shape operator of $\M$ at $p_j(z)$.
As a result, for any constant $\gamma>0$, if $\|i'-i\|_{C^2}$, $\delta'>0$ and the set $U_M$ are small enough, then for any $z,z'\in B(z_0,\delta')$ the projections $p'_j(z')$ and $p_j(z)$ are the images by $i$ and $i'$ of points $m,m'\in U_M$ that are sufficiently close for us to have  $\op{d(p'_j)_{z'}-d(p_j)_z}\leq \gamma$.
Assuming that this is the case,
\begin{align*}
    \|r_j(h)-r'_j(h')\|    &\leq  \beta\|h'\| + \|\int_0^1 d(p_j)_{z_0+th}(h-h') \dd t\| + \|\int_0^1 (d(p'_j)_{z'_0+th'}- d(p_j)_{z_0+th})(h')\dd t\| \\
    &\leq \beta \|h'\| + \ell \beta \|h'\| + \gamma \|h'\| = (\beta(\ell+1)+\gamma)\|h'\|.
\end{align*} 
As the determinant is locally Lipschitz continuous, it holds that for some constant $C_2>0$ and under the same hypotheses 
\begin{align*}
    |\det(R_j'(h')^\top R_i'(h'))&-\det(\tilde R_j(X',h)^\top \tilde R_i(X',h))| \\
    &\leq C_2(\|r'_j(h')-r_j(h)\| \|r_i(h)\|+\|r'_i(h')-r_i(h)\| \|r'_j(h')\|) \\
    &\leq 2C_2\ell(\beta(\ell+1)+\gamma) \|h'\|^2.
\end{align*}
Hence for any  $h'\in E'^\perp$ with $\|h'\|\leq \delta_0$ and any $i'\in\cU$,
\begin{align*}
    \Vol_s(\Delta'(h'))^2& \geq  \frac{C_1}{(s!)^2}\|h\|^2 - \frac{1}{(s!)^2}\sum_{i,j =1}^{s}  2C_2\ell(\beta(\ell+1)+\gamma) \|h'\|^2 -C_0\|h'\|^3 \\
&\geq \frac{C_1}{2(s!)^2} \|h'\|^2
\end{align*} 
 if we choose $\cU$ small enough so that $\beta$ and $\gamma$ are small enough. We can then make $\cU$ even smaller so that the above considerations hold simultaneously for all the finitely many critical points $z_0\in Z(\M)$. 
\end{proof}

\begin{claim}
 We can make $\cU$ small enough that the associated map $\phi:Z(\M') \rightarrow Z(\M)$ is injective. In particular, $\M'$ satisfies \ref{P2}.
\end{claim}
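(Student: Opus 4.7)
The plan is to deduce injectivity of $\phi$ from a local uniqueness statement: around each $z_0'\in Z(\M')$ there is a ball $B(z_0',\delta_2)$ of uniform radius containing no other critical point of $\M'$. Once such a uniform $\delta_2$ is in hand, shrinking $\cU$ further to force every critical point of $\M'$ to lie within $\delta_2/3$ of its $\phi$-image (which is possible by Claim \ref{claim:beginning of the claim journey}) makes any two preimages $z_0',z_1'\in \phi^{-1}(z_0)$ satisfy $\|z_0'-z_1'\|\leq 2\delta_2/3<\delta_2$, so that they must coincide. Injectivity of $\phi$ then combines with the finiteness of $Z(\M)$ under \ref{P2} for $\M$ to give \ref{P2} for $\M'$.

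To obtain the local uniqueness with uniform constants, I would revisit the ``(BSP) $\Rightarrow$ (P4)'' direction in the proof of \Cref{prop:big_simplex_is_enough}. That argument shows that at a critical point $z_0$ of a submanifold satisfying \Pall, there exist $\delta',\mu_0,C_1>0$ such that every $\mu_0$-critical point $z\in B(z_0,\delta')$ obeys $\|\nabla d_\M(z)\|\geq \|z-z_0\|/C_1$; in particular no other true critical point lies in $B(z_0,\delta')$. Applied to $\M'$, the same derivation goes through provided its ingredients---the quadratic control $\|\pi_{E'}(z-z_0')\|\leq C\|z-z_0'\|^2$, the Lipschitz constant of $g'_{[s]}$, the \ref{BSP} constant $L$, the diameter of $\M'$, and the non-degeneracy of the projection simplex with its distance to $z_0'$---can all be bounded uniformly in $i'\in\cU$. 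These are precisely the quantities that Claims \ref{claim:def_pj}, \ref{claim:perpendicular_uniform} and \ref{claim:BSP} (together with the H\"older/Lipschitz controls on the $g'_I$ established just after Claim \ref{claim:P1}) have been set up to control uniformly, so after possibly shrinking $\cU$ once more, I would extract constants $\delta_2, \mu_1, C_1 > 0$ that are independent of both $i'\in\cU$ and $z_0'\in Z(\M')$, yielding the desired local uniqueness.

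The main obstacle is ensuring this uniformity simultaneously over all pairs $(i', z_0')$, even though the cardinality $|Z(\M')|$ itself depends on $i'$ and is not a priori bounded (that boundedness is essentially what we are trying to prove). A naive ``minimum over a finite set of critical points'' is therefore not available; instead, the uniformity must come from working locally near each of the finitely many $z_0\in Z(\M)$ and invoking $C^2$-continuity of the shape operator, the local projections $p'_j$, and the volume $\Vol_s(\Delta'(h'))$ as $i'$ varies in a Whitney $C^2$-neighborhood of $i$. All of this groundwork has been laid in the preceding claims, so the remaining work is a careful bookkeeping exercise rather than a genuinely new estimate.
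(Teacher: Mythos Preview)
Your proposal is correct and is essentially the same argument as the paper's: both rerun the (BSP)$\Rightarrow$(P4) mechanism with the uniform constants supplied by Claims \ref{claim:def_pj}, \ref{claim:P1}, \ref{claim:perpendicular_uniform} and \ref{claim:BSP} (perpendicular quadratic control, Lipschitz constant of $g'_{[s]}$, and the BSP constant $L$). The only cosmetic difference is that the paper does not first prove the general bound $\|\nabla d_{\M'}(z)\|\geq \|z-z_0'\|/C_1$ and then specialize; it takes two critical points $z,z'\in\phi^{-1}(z_0)$ directly, bounds $\Vol_s(\Delta'(h))$ from below by $L\|h\|$ via \ref{BSP} and from above by a constant times $\|\pi_{E'}(z-z')\|\leq C\|z-z'\|^2$ via the Lipschitz bound on $g'_{[s]}$ (using $g'_{[s]}(z)=0$), and concludes $\tfrac{L}{2}\|z-z'\|\leq C'\|z-z'\|^2$, hence $z=z'$ when $\cU$ is small enough.
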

\begin{proof}
    Interestingly, the proof of this claim once again makes use of Condition \ref{P4} (see also Example \ref{ex:P1''_not_satisfied} regarding the importance of the second part of Condition \ref{P1}).
    Let $z$ and $z'$ be critical points in $Z(\M')$ with $\phi(z)=\phi(z')=z_0$. Recall from \Cref{claim:beginning of the claim journey} that we can make $\|z-z_0\|$ and $\|z'-z_0\|$ arbitrarily small if needed. Write $\pi_{\M'}(z')=\{x'_1,\dots,x'_s\}$ and let $E'=\mathrm{Vec}(\{x'_j-x'_k:\ j,k\in [s]\})$. According to \Cref{claim:perpendicular_uniform}, whenever $\|z-z'\|\leq \|z-z_0\|+\|z'-z_0\|<\delta_1$, as $z$ is a critical point of $\M'$, we have
    \[ \|\pi_{E'}(z-z')\|\leq C\|z-z'\|^2.\]
    Let $p'_j$ be the local projections defined on $B(z',\delta')$, for some $\delta'$ given by \Cref{claim:def_pj}. Note that $z\in B(z',\delta')$ when $\|z-z'\|$ is made small enough by taking $\cU$ small enough. In particular, we have $\pi_{\M'}(z) = \{p_1'(z),\dots,p_s'(z)\}$ thanks to Claims \ref{claim:def_pj} and \ref{claim:exactly_one_projection}.
    Let $h=\pi_{E'^\perp}(z-z')$ and let $\tilde \Delta'(h)$ be the simplex with vertices $p_1'(z'+h),\dots,p_s'(z'+h)$.
    Let $\Delta'(h)$ be the simplex obtained by adding $z'+h$ to the vertices of $\tilde\Delta'(h)$.   
    As $\M'$ satisfies \ref{BSP}, it also holds that if $\|z-z'\|$ is small enough, then
    \[ \Vol_s(\Delta'(h)) \geq L\|h\|,\]
    where we use the uniformity with respect to $i'\in \cU$ of the constants from \Cref{claim:BSP}.
    Let $m'(h)$ be the center of the smallest enclosing ball of the vertices $p_1'(z'+h),\dots,p_s'(z'+h)$.     Remark that, as $g'_{[s]}(z)=0$ and $g'_{[s]}$ is $\ell$-Lipschitz continuous for a constant $\ell$ that is uniform over all $i'\in \cU$,
    \begin{align*}
        \Vol_s(\Delta'(h)) &\leq \frac{1}{s} \Vol_{s-1}(\tilde\Delta'(h))\|z'+h-m'(h)\| \leq\frac{1}{s} (\diam(\M'))^{s-1}d_{\M'}(z'+h) g'_{[s]}(z'+h)\\
        &\leq \frac{2}{s} (\diam(\M'))^{s} (g'_{[s]}(z'+h)-g'_{[s]}(z)) \\
        &\leq \frac{2}{s} (\diam(\M'))^{s} \ell \|\pi_{E'}(z-z')\|\\
        &\leq \frac{2}{s} (\diam(\M'))^{s} \ell C\|z-z'\|^2.
    \end{align*} 
    As $\|h\|\geq \|z-z'\|/2$ as long as $\|z-z'\|$ is small enough, we obtain that
    \[ \frac{L}{2}\|z-z'\| \leq \frac{2}{s} (\diam(\M'))^{s} \ell C\|z-z'\|^2.\]
    When $\|z-z'\|$ is small enough, this is only possible when $z=z'$. We make $\cU$ small enough so that this argument applies around all critical points $z_0\in Z(\M)$ to conclude regarding the injectivity of $\phi$. 
\end{proof}

The last point of the proof of \Cref{thm:C2_stab} is that the map $\phi: Z(\M')\to Z(\M)$ can also be made  surjective. The map $\Psi$ from the statement of the theorem is then defined as $\phi^ {-1}$.
\begin{claim}
 We can make $\cU$ small enough that for any $i'\in \cU$, the associated map $\phi:Z(\M') \rightarrow Z(\M)$ is surjective.
\end{claim}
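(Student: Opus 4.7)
The plan is to combine the sampling transfer estimate \eqref{eq:sampling_OG_theorem} from the Introduction with the uniform validity of \ref{P4} for $\M'$ established in the previous claim. Since $Z(\M)$ is finite by \ref{P2}, it will suffice to produce, for each fixed $z_0 \in Z(\M)$, a preimage of $z_0$ under $\phi$ whenever $i'$ lies in a sufficiently small neighborhood $\cV_{z_0}$ of $i$, and then intersect the finitely many $\cV_{z_0}$.

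Fix $z_0 \in Z(\M) = Z_0(\M)$, and let $\eps := d_H(\M,\M')$. Since $i' \to i$ in the Whitney $C^2$-topology controls the $C^0$-norm, we have $\eps \to 0$, while $d_{\M'}(z_0) \to d_\M(z_0) > 0$. I would apply \eqref{eq:sampling_OG_theorem} with $\A = \M$, $\S = \M'$, $\mu = 0$, and $z = z_0$, producing a point
\[
w' \in Z_{\mu'}(\M') \quad \text{with} \quad \mu' = 2\sqrt{\eps / d_{\M'}(z_0)} \quad \text{and} \quad \|w' - z_0\| \leq 2\sqrt{\eps\, d_{\M'}(z_0)},
\]
both of order $O(\sqrt{\eps})$ and hence tending to $0$ as $i' \to i$.

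Next, I would invoke the previous claim, which guarantees that $\M'$ satisfies \ref{P4} with constants $C,\mu_0$ that are uniform over $\cU$. Shrinking $\cU$ so that $\mu' < \mu_0$, property \ref{P4} applied to $\M'$ provides some $z_0' \in Z(\M')$ with $\|z_0' - w'\| \leq C\mu'$; the triangle inequality then gives $\|z_0' - z_0\| = O(\sqrt{\eps}) \to 0$. By further shrinking $\cU$ so that $\|z_0' - z_0\|$ falls below half the minimum pairwise distance between distinct points of $Z(\M)$, $z_0$ becomes the unique critical point of $\M$ at that distance from $z_0'$, forcing $\phi(z_0') = z_0$. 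Applying this to each of the finitely many $z_0 \in Z(\M)$ and intersecting the resulting neighborhoods yields surjectivity of $\phi$; combined with the injectivity proven earlier, $\phi$ is a bijection, and we may set $\Psi := \phi^{-1}$. A final shrinkage of $\cU$ guarantees the $\eps$-closeness required by \Cref{thm:C2_stab}.

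At this stage there is no serious new technical obstacle: the argument merely assembles previously established pieces. The crucial input is the uniformity over $\cU$ of the constants in \ref{P4} for $\M'$ coming from the previous claim, which allows us to round the $\mu'$-critical point $w'$ (with $\mu' \to 0$) to a genuine critical point of $\M'$ at a controlled distance---this is precisely what fails without \ref{P4}, as \Cref{ex:disappearing} illustrates.
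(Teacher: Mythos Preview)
Your argument hinges on the assertion that \ref{P4} holds for $\M'$ with constants $C,\mu_0$ \emph{uniform over $\cU$}, but this is not what the earlier claims establish. Claim~\ref{claim:BSP} gives uniform constants for the \emph{local} property \ref{BSP} around each critical point $z_0'\in Z(\M')$; the passage from \ref{BSP} to \ref{P4} in Proposition~\ref{prop:big_simplex_is_enough} requires in addition a \emph{global} threshold $\mu_1$ such that every $\mu_1$-critical point of $\M'$ lies within $\bar\delta'$ of $Z(\M')$, obtained there via the (manifold-specific) fact that $\eta_{\M'}(\mu)\to 0$. This threshold need not be uniform in $i'$. Indeed, suppose for contradiction that $\phi$ missed some $z_0\in Z(\M)$, so that $Z(\M')\cap B(z_0,\eps)=\emptyset$. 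Your own point $w'\in Z_{\mu'}(\M')$, with $\mu'\to 0$ and $w'\to z_0$, would then satisfy $d(w',Z(\M'))\geq d_{\min}/2$ (where $d_{\min}$ is the minimal pairwise distance in $Z(\M)$), forcing either $C_{\M'}\mu'\geq d_{\min}/2$ (so $C_{\M'}\to\infty$) or $\mu'\geq \mu_{0,\M'}$ (so $\mu_{0,\M'}\to 0$). Either way the uniformity fails precisely in the scenario you must exclude: the step ``\ref{P4} for $\M'$ provides $z_0'\in Z(\M')$ with $\|z_0'-w'\|\leq C\mu'$'' presupposes the surjectivity you are proving, and the argument is circular.

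The paper sidesteps this by taking a topological route that uses \ref{P4} only for $\M$, never for $\M'$. It shows via Min-type Morse theory that each $z_0\in Z(\M)$ is a nondegenerate topological critical point of $d_\M$ (the nondegeneracy of the Hessian on the core medial axis being exactly where \ref{P4} for $\M$ enters), and then that the critical points of $d_\M$ are in bijection with the non-zero birth/death coordinates of the \v Cech persistence diagram $\mathrm{dgm}(\M)$. Bottleneck stability of persistence diagrams under the perturbation $d_H(\M,\M')\to 0$ then forces $\mathrm{dgm}(\M')$ to have at least as many such coordinates, hence $|Z(\M')|\geq |Z(\M)|$; combined with the injectivity already proven, $\phi$ is a bijection.
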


\begin{proof}
Let $t>0$ be a critical value for the distance function $d_M$, and let $z_1,\ldots,z_l \in Z(\M)$ be all the critical points with associated critical value $t$.
Then the Handle Attachment Lemma from Theorem \ref{thm:Morse} states that for $\eta>0$ small enough, the set $d_\M^{-1}(-\infty,t+\eta]$ has the homotopy type of $d_\M^{-1}(-\infty,t-\eta]$ with  cells (i.e. closed balls) $e_{1,d_1},\ldots,e_{l,d_l}$ attached along their boundary, where $d_j\in\{1,\ldots,D\}$ is the dimension of $e_{j,d_j}$. Conversely, the Isotopy Lemma from the same theorem ensures that there is no change in homotopy type between 
$d_\M^{-1}(-\infty,a]$ and $d_\M^{-1}(-\infty,b]$ if $d_\M$ has no critical value in the interval $[a,b]$.

Consider now the persistence diagram $\mathrm{dgm}(\M)$ of the sublevel filtration of $d_\M$ \cite{edelsbrunner2022computational}. 
As for smooth Morse functions, the critical points of $d_\M$ are precisely in bijection with the non-zero coordinates of the points in the persistence diagram; though the arguments are similar, we provide a proof for completeness.
First, the Isotopy Lemma ensures that to each non-zero coordinate (either birth or death) of a point in the persistence diagram $\mathrm{dgm}(\M)$ corresponds one critical value of $d_\M$. 
Now, let $t$ be as above some critical value with associated critical points  $z_1,\ldots,z_l$ and let $\eta>0$ be small enough that $d_\M^{-1}(-\infty,t+\eta]$ has the homotopy type of \[d_\M^{-1}(-\infty,t-\eta] \cup \left(e_{1,d_1} \cup \ldots \cup e_{l,d_l} \right),\]
where the  cells $e_{1,d_1},\ldots,e_{l,d_l}$ are attached along their boundaries (and the homotopy equivalence is compatible with the inclusion of $d_\M^{-1}(-\infty,t-\eta] $).

We would like to compute the Mayer-Vietoris sequence of 
$d_\M^{-1}(-\infty,t-\eta]$ and $e_{1,d_1} \cup \ldots \cup e_{l,d_l} $, but their interiors do not cover $d_\M^{-1}(-\infty,t-\eta] \cup \left(e_{1,d_1} \cup \ldots \cup e_{l,d_l} \right)$; we simply replace them by homotopically equivalent sets.
Let $A$ be an open ``thickening'' of $d_\M^{-1}(-\infty,t-\eta] $ in $d_\M^{-1}(-\infty,t-\eta] \cup \left(e_{1,d_1} \cup \ldots \cup e_{l,d_l} \right)$, i.e.  the union of $ d_\M^{-1}(-\infty,t-\eta] $ with a small open neighborhood of the union of the boundaries $\partial e_{j,d_j}$ of the cells $e_{j,d_j}$ (so that $d_\M^{-1}(-\infty,t-\eta] \cup \left(e_{1,d_1} \cup \ldots \cup e_{l,d_l} \right) \backslash A$ is the union of $l$ closed disks of dimension $d_1,\ldots,d_l$ respectively).
Similarly, let $B$ be the union of $l$ open disks $D_1,\ldots,D_l$ of dimension $d_1,\ldots,d_l$ respectively with $D_i \subset e_{i,l_i}$, such that $A\cup B = d_\M^{-1}(-\infty,t-\eta] \cup \left(e_{1,d_1} \cup \ldots \cup e_{l,d_l} \right)$, and $A\cap B$ is homeomorphic to $\bigsqcup_{j=1}^l \partial e_{j,d_j}  = \bigsqcup_{j=1}^l S^{d_j-1}$, where $S^d$ is the $d$-dimensional sphere.

We can now consider the Mayer–Vietoris exact sequence with coefficients $\Z_2 $ associated to the pair $\left( A ,B\right)$. 
As $  A\cap B $ is homotopically equivalent to $\bigsqcup_{j=1}^l \partial e_{j,d_j}  = \bigsqcup_{j=1}^l S^{d_j-1}$, the homology group $H_k( A \cap B )$ is isomorphic to $(\Z_2)^{n_{k+1}}$ for $k\geq 1$, where $n_k$ is the number of $k$-dimensional cells among $e_{1,d_1} , \ldots , e_{l,d_l}$.
Furthermore, we have $H_k(B) = H_k(  e_{1,d_1} \cup \ldots \cup e_{l,d_l} ) =0 $ for $k\geq 1$.

Around $k\geq 1$, the Mayer-Vietoris sequence is isomorphic to
\[
\begin{tikzcd}
  { } \arrow[r] &
 H_{k+1}(A) \arrow[r,"i_{k+1}"] & 
  H_{k+1}(A\cup B )  \arrow[r] & \Z_2^{n_{k+1}} \arrow[r] & 
   H_{k}(A) \arrow[r,"i_{k}"] & 
  H_{k}(A\cup B )  \arrow[r]  & { } 
\end{tikzcd}
\]
Let us identify $H_i(A) \simeq H_i(d_\M^{-1}(-\infty,t-\eta] )$ and $H_i(A\cup B) \simeq H_i(d_\M^{-1}(-\infty,t+\eta] )$;  the dimension $D_{k+1,b}$ of the cokernel of $H_{k+1}(d_\M^{-1}(-\infty,t-\eta]) \xrightarrow{i_{k+1}}
  H_{k+1}(d_\M^{-1}(-\infty,t+\eta] )$ is precisely the number of births of intervals  between $t-\eta$ and $t+\eta$ (hence precisely at $t$) in the persistence module of the filtration.
  Similarly, the dimension $D_{k,d}$ of the kernel of $H_{k}(d_\M^{-1}(-\infty,t-\eta]) \xrightarrow{i_{k}}
  H_{k}(d_\M^{-1}(-\infty,t+\eta] )$ is the number of deaths of intervals at $t$ in the persistence module of the filtration.
  Using the exactness of the sequence, one finds that $n_{k+1} = D_{k+1,b} +D_{k,d}$, meaning that each $(k+1)$-cell among $e_{1,d_1} , \ldots , e_{l,d_l}$ corresponds exactly either to the birth of an interval for the homology of degree $k+1$, or to the death of an interval for the homology of degree $k$ (in particular, a $k$-cell and a $(k+1)$-cell cannot ``cancel each other out''). An almost identical reasoning applies for $k=0$.
  Hence we have shown that the non-zero birth and death values of the intervals of the filtration are in bijection with the critical points of $d_\M$.

Now let $\rho>0$ and consider an embedding $i'$ close enough to $i$ that all the previous claims stand and such that  $\M'=i(M)$ is $\rho$-close to $\M$ for the Hausdorff distance. This means that $\|d_{\M'} - d_{\M}\|_\infty \leq \rho$, and the bottleneck stability theorem \cite{cohen2005stability} ensures that the two persistence diagrams $\mathrm{dgm}(\M)$ and $\mathrm{dgm}(\M')$ associated to the sublevel filtrations of $d_{\M'}$ and $d_{\M}$ are $\rho$-close (for the bottleneck distance). When $\rho$ is small enough, this implies that the points of $\mathrm{dgm}(\M')$ have at least as many non-zero coordinates (counted with multiplicity if several points share a coordinate) as those of $\mathrm{dgm}(\M)$, hence that $d_{\M'}$ has at least as many critical points as $d_{\M}$.
As $\phi:Z(\M')\to Z(\M)$ is injective, it must then be bijective, and in particular surjective. This proves the claim, and thence the theorem.
\end{proof}

\section{Density of Conditions \texorpdfstring{\Pall}{(P1-4)}}\label{sec:density}

Let as usual $M$ be a compact abstract manifold of regularity $C^k$ with $k\geq 2$.
In this section, we show that the set of $C^k$ embeddings $i:M \rightarrow \R^D $ such that $i(M)$ satisfies geometric properties \Pall\ is dense in  $\text{Emb}^k(M,\R^D)$ for the Whitney $C^k$-topology, which completes the proof of the Genericity Theorem \ref{thm:generic}.

It would not be too difficult to show that these geometric properties are ``locally generic"  using elementary methods. However, the difficulty resides in controlling $i(M)$ globally to make sure  that  situations contradicting one of the properties \Pall\ do not emerge anywhere.

To do so, we make repeated use of one of the many variants of R. Thom's transversality theorem (see e.g. \cite{golubitsky1974stable} for more on this theorem and its applications), following the examples of Yomdin \cite{yomdin1981local}, Mather \cite{mather1983distance} or Damon and Gasparovic \cite{DamonGasparovic} (among others) who applied similar methods.

Remember that if $X,Y$ and $Z\subset Y$ are manifolds of regularity at least $C^1$, a mapping $\phi : X \rightarrow Y $ is \textit{transverse} to $Z$ if for each $x\in \phi^{-1}(Z)$ we have $\Im(d\phi_x) + T_{\phi(x)}Z = T_{\phi(x)}Y$.
Informally, most versions of Thom's theorem state that for any such $X, Y, Z$, the set of mappings $\phi : X\rightarrow Y $ such that $\phi$ is transverse to $Z$ is dense (or sometimes residual, or even open and dense) in the set of such mappings; in other words, ``most" mappings are transverse to $Z$. The exact statement depends on the exact set of mappings considered, the topology with which we equip it, and whether or not we consider \textit{(multi)jets}, of which we now give a terse definition.

\subsection{A brief introduction to jets and multijets}\label{subsection:introductionJets}
Jets and multijets are useful tools that help us study and control the values and derivatives of maps between manifolds; these, in turn, determine the geometric properties that are of interest to us when considering the special case of embeddings $i:M\rightarrow\R^D$. As we  make extensive use of multijets in the next subsections, we choose to succinctly introduce them here for completeness; the reader already familiar with the definitions can safely skip this subsection. Our presentation mostly follows \cite{golubitsky1974stable} (see also \cite{kolar2013natural} for more details).

Let $X$ and $Y$ be $C^k$ manifolds (in the cases of interest to us, $X$ will be our compact manifold $M$ and $Y$ will be the euclidean space $ \R^D$), and let $x\in X$ and $y\in Y$. 
For $0\leq r\leq k$, let us consider the set of mappings $ C^r(X,Y)_{x,y} :=\{f \in C^r(X,Y) : \; f(x) = y\}$ and let us define the following relation of equivalence on $C^r(X,Y)_{x,y}$:
\begin{quote}
   Let $U$ be a neighborhood of $x$ in $X$ and $V$ be a neighborhood of $y$ in $Y$, and let $\phi : U\rightarrow \R^{\dim X}, x\mapsto 0$ and $\psi : V\rightarrow \R^{\dim Y}, y\mapsto 0$ be charts of $X$ and $Y$ respectively. Then we say that $f,g \in C^r(X,Y)_{x,y}$ have $r$-th order contact at $x$, which we write as ``$f \sim_r g$ at $x$", if the differentials  $d^l(\psi \circ f \circ \phi^{-1})_0$ and $d^l (\psi \circ g \circ \phi^{-1})_0$ in $0$ of order $l = 0,\ldots, r$ coincide.
\end{quote}
It is easy to see that if the property is verified for a choice of charts $\phi, \psi$ around $x$ and $y$, then it is verified for any such pair of charts; hence, the definition is coherent. Note that the equivalence class of $f\in C^r(X,Y)_{x,y}$ depends only on the germ of $f$ at $x$. 
The \textit{$r$-jets} $J^r(X,Y)_{x,y}$ from $x$ to $y$ are defined as the set $C^r(X,Y)_{x,y}$ quotiented by the equivalence relation ``$\sim_r$ at $x$", and the \textit{$r$-jets} from $X$ to $Y$ are defined as
$$J^r(X,Y) : = \bigcup_{x\in X, y\in Y } J^r(X,Y)_{x,y}.$$
We call $x$ the \textit{source} and $y$ the \textit{target}  of an $r$-jet of $J^r(X,Y)_{x,y}$.
Out of convenience, given an $r$-jet $a\in J^r(X,Y)$, we explicitly specify its source $x$ and target $y$ by writing it as
$$a = (x, y , [f]),$$
where $f\in C^r(X,Y)_{x,y}$ is any representative of the equivalence class $a$.

Let $A_n^r$ be the vector space of polynomials in $n$ variables of degree less than or equal to $r$ with $0$ as their constant term, and let $B^r_{n,m} := \bigoplus_{i=1}^m A_n^r$. It is a real vector space of dimension $m\dim_\R A_n^r = m \left( \binom{n+r}{n}-1\right) $.
Given an open set $U\subset \R^n$ and a $C^r$ map $g:U \rightarrow \R$, define $T^r g:U \rightarrow A_n^r$ as the map that associates to $x\in U$ the terms of degree $1$ to $r$ of the Taylor series of $g$ in $x$.
Now let $U\subset \R^n$ be an open set, and consider a $C^r$ map $f = (f_1,\ldots, f_m):U\rightarrow \R^m $. For any $x\in U$, the polynomials $T^r(f)(x):=(T^rf_1(x),\ldots, T^r f_m(x)) \in B^r_{n,m}$ fully describe the first $r$ derivatives $d^rf_x$ of $f$ in $x$ (excluding the ``$0$-th derivative" $f(x)$). Conversely, for any $P\in B^r_{n,m}$ and any $x\in U$, there exists a $C^r$ map $g:U\rightarrow \R^m $ such that $T^r(g)(x) = P$. In other words, there is a canonical bijection between $B^r_{n,m}$ and the possible values of the first $r$ derivatives of a $C^r$ map $\R^n \supset  U \rightarrow \R^m$ at any point $x\in U$.

Given as before two $C^k$ manifolds $X$ and $Y$, this means that the space of $r$-jets $J^r(X,Y)$ (for $0\leq r\leq k$) can naturally be seen as a $C^{k-r}$ manifold  of dimension $\dim X + \dim Y + \dim B^r_{\dim X, \dim Y} = \dim X +  \dim Y \binom{\dim X+r}{\dim X} $. Indeed, a pair of charts $\phi :U \rightarrow \R ^{\dim X}$ and $\psi: V \rightarrow \R ^{\dim Y}$ of $X$ and $Y$ induces a map
\begin{equation*}
    \begin{split}
       \xi_{\phi,\psi} :\;  \{(x,y,[f])\in J^r(X,Y) : \; x\in U, y\in V\} & \longrightarrow \phi(U) \times \psi(V) \times B^r_{\dim X, \dim Y} \\
         a = (x,y,[f]) & \longmapsto (\phi(x), \psi(y), T^r(\psi\circ f\circ \phi^{-1})(\phi(x))).
    \end{split}
\end{equation*}
In other words, given the $r$-jet $a$,  we simply consider the first $r$ derivatives in the source $x$ of $a$ of one of its representatives $f$  expressed in the system of coordinates induced by the charts $\phi$ and $\psi$. It is easy to show that the family of such maps $\{\xi_{\phi,\psi}\}$ to which an atlas $\{\phi\}$ of $X$ and an atlas $\{\psi\}$ of $Y$ give rise does yield an atlas of $J^r(X,Y)$. In what follows, we always consider $J^r(X,Y)$ with this manifold structure and with the associated topology. We will call charts of  $J^r(X,Y)$ induced in such a way by charts of $X$ and $Y$ \textit{adapted charts} (this is not standard terminology).

Any map $f\in C^k(X,Y)$ induces a canonically defined $C^{k-r}$ map
\begin{equation*}
    \begin{split}
       j^r(f) :\; X & \rightarrow J^r(X,Y)\\
        x & \longmapsto (x, f(x), [f]).
    \end{split}
\end{equation*}
called the \textit{$r$-jet of $f$}, which essentially encodes the first $r$-derivatives of $f$ in all $x\in X$.
It can be shown that the sets
\[ \{f\in C^k(X,Y) : \; j^r(f)(X) \subset U \},\]
where $U$ is any open set of $J^r(X,Y)$, form a basis for the Whitney $C^r$-topology (again, see \cite{golubitsky1974stable} for details) which we introduced in the special case where $X$ is a compact manifold and $Y = \R^D$ in Section \ref{sec:notations_definitions}; in fact, this is how the topology is often defined.

The space of jets and the $r$-jet $j^r(f)$ associated to a mapping $f\in C^k(X,Y)$ can help us monitor the $r$ first derivatives of $f$ in each $x\in X$ separately, but the geometric properties that are of interest to us require that we control the image and derivatives of a mapping in several points simultaneously and with respect to each other; we want embeddings $i:M\rightarrow \R^D$ for which ``there is no $m_1,\ldots,m_s\in M$  such that $i(m_1),\ldots,i(m_s)$ and the (higher) derivatives of $i$ in $m_1,\ldots,m_s$ are in a certain configuration". To that end, we need to consider \textit{multijets}, defined as follows :

Consider the generalized diagonal
\[\Delta^{(s)} =:\{(x_1,\ldots,x_s) \in X^s : \; \text{there exist } 1 \leq i< j \leq s \text{ s.t. } x_i = x_j\} \]
and define $X^{(s)}:= X^s \backslash \Delta^{(s)}$, the product $s$ times of $X$ without repetitions. It is an open subset of $X^s$. Let $\alpha : J^r(X,Y) \rightarrow X$ be the source map, i.e.~$\alpha$ maps $(x,y,[f])$ to its source $x$. One can  consider the product map $\alpha^s = \alpha\times \ldots \times \alpha : J^r(X,Y)^s \rightarrow X^s$.
Then 
$$ J_s^r(X,Y) := (\alpha^s)^{-1}\left(X^{(s)}\right)$$
is the \textit{$s$-fold $r$-jet bundle} from $X$ to $Y$; in other words, the space of $s$-tuples of $r$-jets with pairwise distinct sources. A \textit{multijet bundle} is such a space for any $s\geq 2$.
As $\alpha^{s}$ is continuous, the set of multijets $J_s^r(X,Y)$ is an open set of $J^r(X,Y)^s $ and inherits a $C^{k-r}$ manifold structure as a result (where $X$ and $Y$ are as above $C^k$ manifolds for some $k\geq r$). An obvious but important property of $J_s^r(X,Y)$  is that its set of source points $X^{(s)}$ is not compact, even if $X$ is; this will be the cause of some distress later on.
As for $J^r(X,Y)$, we call charts of $J_s^r(X,Y)$ induced by charts of $X$ and $Y$  \textit{adapted charts}.

In line with our previous notations, we usually write an $s$-fold $r$-jet $a\in J_s^r(X,Y)$ as
$$a = (x_1,\ldots, x_s, y_1,\ldots, y_s, [f_1],\ldots,[f_s]),$$
where $[f_i]$ is a class of equivalence for the relation ``$\sim_r$ at $x_i$".
Just as before, a map $f\in C^k(X,Y)$ induces a multijet $C^{k-r}$ map
\begin{equation*}
    \begin{split}
       j^r_s(f) :\; & X^{(s)}  \longrightarrow  J_s^r(X,Y)\\
        & (x_1,\ldots,x_s) \longmapsto (x_1,\ldots, x_s, f(x_1),\ldots, f(x_s), [f],\ldots,[f])
    \end{split}
\end{equation*}
(which is simply the map $j^r(f)\times \ldots \times j^r(f) : X^{(s)}  \rightarrow  J_s^r(X,Y)$ up to our choice of notations).

\subsection{Two transversality theorems}\label{subsection:transversality_thms}
We are now ready to quote the two statements that will be our main tools in the remainder of the section. The first one is a particularly complete variant of Mather's adaptation of Thom's transversality theorem to multijets (in \cite{MATHER1970b}), which we get from Damon's work. Before that, a few definitions are required.

Remember that a closed subset $F$ of a manifold $X$ is \textit{Whitney stratified} if it is a locally finite disjoint union of smooth submanifolds $F_\alpha$ called \textit{strata}, such that if $F_\alpha \cap \overline{F_\beta} \neq \emptyset$ (where $\overline{F_\beta}$ is the closure of $F_\beta$ in $X$), then $F_\alpha \subset \overline{F_\beta}$, and such that each pair of strata satisfies two technical conditions on their tangent spaces and secant lines, called \textit{Whitney's conditions a) and b)}. 
As such a stratification is usually not unique, one often says that $F$ is \textit{Whitney stratifiable} to say that it admits at least one such stratification. The dimension of a Whitney stratified set is the maximum of the dimensions of its strata (it can be shown that it does not depend on the choice of stratification). 

If $f :X \rightarrow Y$ is a map between two $C^k$ manifolds and $F\subset X$, $G\subset Y$ are closed Whitney stratified sets with strata $\{F_\alpha\}$ and $\{G_\beta\}$, we say that $f$ is \textit{transverse to $G$ on $F$} if for each pair of strata $F_\alpha$ and $G_\beta$, the map $f|_{F_\alpha} : F_\alpha \rightarrow Y$ is transverse to $G_\beta$ in the usual sense.

In what follows, we (and the reader) will be shielded from the full complexity of Whitney stratifications; we only need to know that semialgebraic subsets  of the Euclidean space (whose definition we give later in this paper) are Whitney stratifiable, as was shown by Thom in \cite{Thom1966PropritsDL}. To learn more about Whitney stratifications, read e.g., \cite{MatherRefWhitneyStrat} or \cite{gibson1976topological}.

Finally, remember that a subset of a topological space $B$ is \textit{residual} (in $B$) if it is a countable intersection of subsets of $B$ whose interiors are dense in $B$. If $B$ is a \textit{Baire space}, then its residual subsets are dense.

The theorem below is a special case of Corollary 1.11 from \cite{damon1997generic}.
\begin{thm}[Damon]\label{thm:Damon_Transversality}
Let $s\geq 2$ and $r\in \N$ and let $X,Y$ be $C^k$ manifolds for $k\in \mathbb{N}_{\geq r+1}\cup \{\infty\}$. Assume that $X$ is compact, and let $A\subset X^{(s)}$ and  $W \subset J_s^r(X,Y)$ be closed  Whitney stratified sets in their respective ambient space.
Then 
\[ F_{W,A} := \{f \in C^k(X,Y) : \; j^r_s(f) \text{ is transverse to }W\text{ on }A \text{ in } J_s^r(X,Y)\}\]
is a residual subset of $C^k(X,Y) $ for the Whitney $C^{k}$-topology. Furthermore, if $A$ is compact, then $F_{W,A}$ is also open in the Whitney $C^{r+1}$-topology.

\end{thm}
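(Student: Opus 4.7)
My plan is to follow the classical route to Thom--Mather multijet transversality, adapted to the Whitney stratified setting. The overarching tool is the parametric transversality theorem (Abraham--Robbin) applied to a carefully chosen evaluation map. Since $X$ is compact, $C^k(X,Y)$ endowed with the Whitney $C^k$-topology is a Baire space (a separable Fréchet manifold, in fact), so residual sets are dense and the usual countable-intersection bookkeeping works.

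First I would reduce to the case where both $W$ and $A$ are smooth submanifolds. A Whitney stratification is locally finite, so $W=\bigsqcup_\alpha W_\alpha$ and $A=\bigsqcup_\beta A_\beta$ decompose into at most countably many smooth strata. By definition, transversality of $j^r_s(f)$ to $W$ on $A$ is the conjunction, over all pairs $(\alpha,\beta)$, of transversality of the restriction $j^r_s(f)|_{A_\beta}$ to $W_\alpha$. Writing $F_{W,A}=\bigcap_{\alpha,\beta} F_{\alpha,\beta}$ with the obvious meaning, it suffices to prove each $F_{\alpha,\beta}$ is residual (indeed the countable intersection of residual sets in a Baire space) and then invoke Baire's theorem to conclude density.

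The core step is showing $F_{\alpha,\beta}$ is residual for smooth $W_\alpha\subset J^r_s(X,Y)$ and smooth $A_\beta\subset X^{(s)}$. I would consider the evaluation map
\begin{equation*}
\Phi: C^k(X,Y)\times A_\beta \longrightarrow J^r_s(X,Y),\qquad (f,x_1,\dots,x_s)\longmapsto j^r_s(f)(x_1,\dots,x_s).
\end{equation*}
\textbf{The main obstacle} is to show that $\Phi$ is a submersion, and hence transverse to $W_\alpha$. The crucial observation is that the source points $x_1,\dots,x_s$ are pairwise distinct, so one can pick disjoint coordinate neighborhoods around them. Using bump functions supported in these disjoint neighborhoods, multiplied by arbitrary polynomial expressions of degree $\leq r$ read off in local charts, one can perturb $f$ so as to realize independently at each $x_i$ any prescribed element of $B^r_{\dim X,\dim Y}$. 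These variations span the fiber directions of the multijet bundle at $\Phi(f,x_1,\dots,x_s)$, and combined with the variations obtained by moving $(x_1,\dots,x_s)$ in $A_\beta$ (which hit the horizontal directions), they give $d\Phi$ surjective. Once $\Phi\pitchfork W_\alpha$ is established, the parametric transversality theorem, applied to the (tame) Fréchet parameter space $C^k(X,Y)$, yields that $\{f:\Phi(f,\cdot)\pitchfork W_\alpha\}$ is residual, as desired.

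Finally, for the openness statement when $A$ is compact, I would argue as follows. Fix $f_0\in F_{W,A}$. Then $j^r_s(f_0)(A)$ is compact in $J^r_s(X,Y)$, and by local finiteness of the stratification it meets only finitely many strata $W_{\alpha_1},\dots,W_{\alpha_N}$. Near each intersection point, transversality of $j^r_s(f)|_{A_\beta}$ to $W_{\alpha_i}$ amounts to the non-vanishing of a determinant built from derivatives of $f$ up to order $r$ and one extra tangential derivative along $A_\beta$. This quantity is a continuous function of the $1$-jet of $j^r_s(f)$, i.e.\ of $f$ in the $C^{r+1}$-topology. Whitney's condition (a) guarantees that transversality to $W_{\alpha_i}$ is not destroyed by a jet escaping to the boundary of $W_{\alpha_i}$ (i.e.\ landing in a lower-dimensional adjacent stratum that the jet would automatically be transverse to by dimension count), so compactness of $A$ yields a uniform $C^{r+1}$-neighborhood of $f_0$ on which the transversality conditions persist simultaneously, proving openness.
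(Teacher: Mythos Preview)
Your approach differs fundamentally from the paper's: the paper does not prove this theorem directly but records it as a special case of \cite[Corollary~1.11]{damon1997generic}, spending its ``proof'' on translating Damon's hypotheses into the present notation. Your attempt at a self-contained argument via parametric transversality is the natural route, and most of the outline is sound, but the core step contains a genuine gap.

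The claim that the evaluation map $\Phi: C^k(X,Y)\times A_\beta \to J^r_s(X,Y)$ is a submersion is \emph{false} whenever $\dim A_\beta < \dim X^{(s)}$, i.e.\ whenever $A_\beta$ is a non-open stratum of $A$. The bump-function argument correctly shows that varying $f$ realizes every direction tangent to the fibre of the source projection $\alpha^s:J^r_s(X,Y)\to X^{(s)}$. But since $\alpha^s\circ\Phi(f,\cdot)$ is the inclusion $A_\beta\hookrightarrow X^{(s)}$, the image $\mathrm{Im}(d\Phi_{(f,a)})$ projects under $d\alpha^s$ onto $T_aA_\beta$ only, never onto all of $T_aX^{(s)}$; hence $d\Phi$ is not surjective. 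Worse, $\Phi$ need not even be \emph{transverse} to $W_\alpha$: with $X=S^1$, $Y=\R$, $r=0$, $s=2$, $A=\{(p,q)\}$ a single point and $W=\{(p,q,0,0)\}\subset J^0_2(S^1,\R)$, the map $\Phi$ fails to be transverse to $W$ at every $(f,(p,q))$ with $f(p)=f(q)=0$, even though the residual conclusion $F_{W,A}=\{f:(f(p),f(q))\neq(0,0)\}$ obviously holds. So the parametric transversality theorem in the form you invoke does not apply; handling the ``on $A$'' constraint for non-open strata requires the more refined machinery that Damon supplies.

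That said, the gap disappears when every stratum of $A$ is open in $X^{(s)}$, and in particular when $A=X^{(s)}$ with its trivial stratification---which is the only case the paper actually uses (see the proof of \Cref{thm:genericity_thm_density}, where $A=M^{(s)}$ throughout). So your argument would suffice for the paper's applications even though it does not establish the theorem as stated. In the openness paragraph, the parenthetical about lower strata being handled ``by dimension count'' is not correct (no such count is available for arbitrary $W$), but the surrounding appeal to Whitney condition~(a) and compactness is the right mechanism.
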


\begin{remark}
    Note that being dense in the Whitney $C^{k}$-topology implies being dense in the Whitney $C^{l}$-topology for all $l\leq k$, and being open in the Whitney $C^{r+1}$-topology implies being open in the Whitney $C^{l}$-topology for all $r+1\leq l \leq k$. 
\end{remark}

\begin{proof}
This statement is an altered, simplified and less general version of \cite[Corollary 1.11]{damon1997generic} for increased readability. Nonetheless, should a distrustful reader want to precisely compare the two to check that \Cref{thm:Damon_Transversality} is indeed a direct consequence of Damon's theorem, let us give a few remarks (using his notations).

Firstly, the distinction that he makes between the weak $C^{r+1}$ (which he calls ``regular") and the Whitney $C^{r+1}$ topology is moot as they coincide in our case (because we assume $X$ to be compact).  We also let the set $\mathscr{H}$ from Damon's statement be the set of all mappings $C^k(X,Y)$ (and the map $\Phi$ is simply the identity). As observed in the \textit{Note} below Damon's definition 1.8 in the same article, it is a standard fact that $C^k(X,Y)$ satisfies what he calls ``having smooth image in $J_s^r(X,Y)$ off $\Delta^{(s)}$"; furthermore (still using his notations),  $\prescript{}{s}{\mathscr{H}}^{(r)} = \prescript{}{s}{ C^k(X,Y)}^{(r)}$ is simply equal to $\{j^r_s(f)(x_1,\ldots, x_s) :\; f\in C^k(X,Y), (x_1,\ldots, x_s) \in X^{(s)}\} = J_s^r(X,Y)$, hence the condition that ``$\Phi$ be transverse to $W$ in $J_s^r(X,Y)$" is trivially satisfied.


Another difference is that Damon's theorem is for $C^\infty$ maps from $X$ to $Y$, but as he points out in the remark before his Definition 1.1, the same statement works for $C^k$ maps for $k\geq r+1$.

Finally, Damon's original result only states that $F_{W,A}$ is dense in $C^\infty$ for the Whitney $C^{r+1}$-topology (rather than for the Whitney $C^k$-topology). However, the proof of his theorem shows that we also have $F_{W,A}$ dense in the Whitney $C^{m}$-topology for any $m\in \mathbb{N}$ such that $r+1\leq m\leq k$. This is our statement if $k$ is finite, and if $k = \infty$, the set $F_{W,A}$ being dense in the Whitney $C^m$-topology for all $m\in \mathbb{N}$ implies that $F_{W,A}$ is dense in the Whitney $C^\infty$-topology by its definition.
\end{proof}

We also need another transversality statement that specifically deals with the case of distance functions in Euclidean spaces. For $M$ a manifold and $f\in C^k(M,\R^D)$, let us define 
\begin{equation*}
    \begin{split}
    \rho_f :  M \times \R^D &\longrightarrow  \R\\
    (m,z) &\longmapsto \|f(m)-z\|^2.
    \end{split}
\end{equation*}
If we see $z\in \R^D$ as a parameter and write $\rho_{f,z} := \rho_f(-,z) \in C^k(M,\R^D) $, we can consider the multijet map of $\rho_f$ with respect to $m\in M$ :
\begin{equation*}
    \begin{split}
(j^r_s)_M\rho_f : M^{(s)}\times \R^D & \longrightarrow J^r_s(M,\R)\\
(m_1,\ldots,m_s,z) &\longmapsto (j^r(\rho_{f,z}) (m_1), \dots, j^r(\rho_{f,z}) (m_s)).
    \end{split}
\end{equation*}
In what follows, we will write an $s$-fold $r$-jet $a\in J^r_s(X,Y)$ in the special case where $X = M$ is a compact manifold and $Y = \R$ as
$$a = (m_1,\ldots,m_s,r_1,\ldots, r_s, [f_1], \ldots, [f_s]),$$
with $m_i \in M$ and $r_i \in \R$.
We will say that a subset $F\subset J_s^r(M,\R)$ is invariant under addition if it is invariant under the action of $\R$ defined as 
$$t + (m_1,\ldots,m_s,r_1,\ldots, r_s, [f_1], \ldots, [f_s]) := (m_1,\ldots,m_s,r_1 +t,\ldots, r_s +t, [f_1+t], \ldots, [f_s+t]) $$
for any $t\in \R$.
Then the following statement is true, which is a variant of a theorem by Looijenga \cite{LooijengaThesis}; this version in particular comes (modulo some alterations and simplifications) from Damon's and  Gasparovic's \cite{DamonGasparovic}.
\begin{thm}[Damon, Gasparovic]\label{thm:distance_function_transversality}
Let $s\geq 2$ and $r\in \N$ and let $M$ be a compact $C^k$ manifold for $k\in \mathbb{N}_{\geq r+1}\cup \{\infty\}$. Let $A\subset M^{(s)}\times \R^D$ and  $W \subset J_s^r(M,\R^D)$ be closed  Whitney stratified sets in their respective ambient space. Furthermore, assume that each stratum of $W$ is invariant under addition.
Then 
\[D_{W,A} := \{f \in C^k(M,\R^D) :\; (j^r_s)_M\rho_f \text{ is transverse to }W\text{ on }A \text{ in } J_s^r(M,\R)\}\]
is a residual subset of $C^k(M,\R^D) $ for the Whitney $C^{k}$-topology. Furthermore, if $A$ is compact, then $D_{W,A}$ is also open in the Whitney $C^{r+1}$-topology.
\end{thm}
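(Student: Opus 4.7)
The plan is to adapt the proof of the standard Thom--Mather multijet transversality theorem (Theorem \ref{thm:Damon_Transversality}) to this parametric setting, where the map into $J^r_s(M, \R)$ is not arbitrary but is specifically built from the squared distance function $\rho_f$, together with the additional parameter $z \in \R^D$. By a standard Baire category argument, it suffices to prove the following local statement: cover $A$ by countably many compact subsets $K_n$ and decompose $W$ into its locally finite strata $W_\alpha$; then for each pair $(K_n, W_\alpha)$, the set of $f$ whose $(j^r_s)_M \rho_f$ is transverse to $W_\alpha$ on $K_n$ is open and dense in $C^k(M,\R^D)$ for the Whitney $C^k$-topology.

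For the density, fix a base point $(m_1^0, \ldots, m_s^0, z^0) \in K_n$ and pick disjoint chart neighborhoods $U_i \ni m_i^0$ in $M$ with bump functions $\chi_i$ supported in $U_i$ and equal to $1$ near $m_i^0$. Introduce the finite-dimensional family of perturbations
\begin{equation*}
f_\lambda(m) := f(m) + \sum_{i=1}^s \chi_i(m) \, P_i^\lambda\bigl(\phi_i(m)\bigr),
\end{equation*}
where each $P_i^\lambda$ is an $\R^D$-valued polynomial of degree $\leq r$ in $\R^{\dim M}$, whose coefficients are encoded by a parameter $\lambda \in \Lambda := \R^N$. Define the enlarged map
\begin{equation*}
\Phi : \Lambda \times M^{(s)} \times \R^D \longrightarrow J^r_s(M,\R),\qquad (\lambda, m_1,\ldots,m_s,z) \longmapsto (j^r_s)_M \rho_{f_\lambda}(m_1,\ldots,m_s,z).
\end{equation*}
The parametric (Sard--Thom) transversality theorem then reduces the density step to showing that $\Phi$ is transverse to $W_\alpha$ near the base point.

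This transversality check is the main obstacle. In a chart, perturbing $P_i^\lambda$ at the $k$-th order directly controls the $k$-th derivative of $f_\lambda$ at $m_i$, and hence --- provided $f(m_i^0) - z^0 \neq 0$ and $df_{m_i^0}$ is injective, both of which hold generically on $A$ and whose failure can be confined by refining the stratification --- allows prescribing the full $r$-jet of $\rho_{f_\lambda, z}$ at $m_i$ in orders $1$ through $r$, independently across $i = 1, \ldots, s$. The $0$-th order components, namely the values $\rho_{f_\lambda, z}(m_i) = \|f_\lambda(m_i) - z\|^2$, are coupled because they all depend on the single parameter $z$; however, varying $z$ yields a $D$-dimensional family of shifts that, combined with the $\lambda$-variations of $f_\lambda(m_i)$, sweeps out all relative differences among these values. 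The only remaining one-dimensional obstruction is a common additive constant added simultaneously to every $r_i$ (with a corresponding shift of the representative jets), which is precisely the infinitesimal generator of the $\R$-action by addition. Since each stratum $W_\alpha$ is invariant under this action by hypothesis, this direction lies in $T W_\alpha$, and the surjectivity of $d\Phi$ modulo $T W_\alpha$ follows.

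Finally, when $A$ is compact, openness in the Whitney $C^{r+1}$-topology follows from a standard compactness argument: transversality of $(j^r_s)_M \rho_f$ to $W_\alpha$ on $A$ is characterized by a non-degeneracy condition on the $(r+1)$-jet of $f$ at points of $A$, which persists under sufficiently small $C^{r+1}$-perturbations of $f$ uniformly over the compact set. Assembling the local density statements over the countable covering $\{K_n\}$ and the strata $\{W_\alpha\}$ via the Baire property, we obtain that $D_{W,A}$ is residual in $C^k(M,\R^D)$ for the Whitney $C^k$-topology, and open in the Whitney $C^{r+1}$-topology when $A$ is compact.
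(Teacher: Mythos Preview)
The paper does not give a self-contained proof here: it notes that the statement is a specialization of \cite[Theorem~16.5]{DamonGasparovic} once one checks that the assignment $f\mapsto\rho_f$ is ``transverse to $W$'' in their technical sense, and it defers that check to Looijenga's computation (with Wall's account as an alternative reference). Your skeleton --- cover $A$ by compacta, decompose $W$ into strata, prove open-and-dense locally by a parametric Sard--Thom argument with polynomial perturbations of $f$, and assemble via Baire --- is precisely what underlies those references, so you are reconstructing the cited argument rather than taking a genuinely different route.

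The gap is in the submersion step, where your identification of the obstruction is off. At a base point with all $f(m_i^0)\neq z^0$, varying the constant term $c_i\in\R^D$ of $P_i^\lambda$ moves the value $r_i=\|f_\lambda(m_i)-z\|^2$ by $2\langle f(m_i)-z,\delta c_i\rangle\neq 0$ while leaving the other $r_j$ fixed; combined with the higher-order coefficients (to compensate the induced changes in higher jets) this already makes $d\Phi$ surjective onto the full fibre, with \emph{no} residual ``diagonal obstruction''. The place where addition-invariance is genuinely needed is the degenerate locus $\{f(m_i)=z\}$, which you wave away with ``can be confined by refining the stratification''. At such a point $r_i=0$, and both $\partial_{c_i}r_i$ and $\partial_z r_i$ vanish to first order, so $\partial_{r_i}\notin\mathrm{Im}\,d\Phi$; transversality is rescued exactly because $\sum_j\partial_{r_j}\in T W_\alpha$ while the remaining $\partial_{r_j}$ ($j\neq i$) do lie in $\mathrm{Im}\,d\Phi$. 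Multiply-degenerate points ($f(m_i)=f(m_j)=z$, $i\neq j$) are still not covered by this and force $f$ to be non-injective; one handles them by first perturbing the given $f_0$ to an injective immersion (a residual, open condition in $C^k$) before setting up the finite-dimensional family $f_\lambda$ on a small parameter ball. None of this is what ``refining the stratification'' would accomplish, so as written your density argument has a genuine hole at exactly the computation that distinguishes this theorem from Theorem~\ref{thm:Damon_Transversality}.
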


\begin{proof}
 As for Theorem \ref{thm:Damon_Transversality}, we briefly explain how the statement is a direct adaptation of Theorem 16.5 in \cite{DamonGasparovic}. 

The remarks we made in the proof of \Cref{thm:Damon_Transversality} regarding the weak and Whitney $C^{r+1}$ topologies, the compacity of $M$, the topologies in which $F_{W,A}$ (here $D_{W,A}$) is dense and the distinctions between $C^\infty$ and $C^k$ maps remain valid.

Two new elements are to be commented. First, we let $E = \emptyset$ and $Y = M^{(s)}$ in Damon and Gasparovic's statement. Secondly, similarly to the ``proof" of Theorem \ref{thm:Damon_Transversality}, we let $\mathscr{H}$ be $C^k(M,\R^D)$. Then it remains to show that, in the words of Damon and Gasparovic, the map
\begin{equation*}
    \begin{split}
\rho :C^k(M,\R^D) & \longrightarrow C^k(M\times \R^D, \R^D)\\
f &\longmapsto  \rho_f: (m,z) \mapsto \|f(m)-z\|^2
    \end{split}
\end{equation*}
(which corresponds to $\Psi$ in their statement) is ``transverse to $W$" (on $X$) (this is defined in Definition 16.4 of \cite{DamonGasparovic}) for any  closed  Whitney stratified set $W\subset  J_s^r(M,\R^D)$ whose strata are invariant under addition. This is shown in the same way as in Looijenga's proof of the original statement (in \cite{LooijengaThesis}; one can also read C. Wall's account \cite{Wall1977GeometricPO}, which is easier to find online).
\end{proof}

\subsection{Geometric properties \texorpdfstring{\Pall\ }{(P1-4)} as transversality conditions}\label{subsection:sets_Wi}

Our plan in this subsection is to translate the geometric properties \Pall\ that we introduced earlier in terms of transversality of the multijet map $j^r_s (i)$  of the embedding $i:M\rightarrow \R^D$ to some well-chosen set $W\subset J_s^r(M, \R^D)$ (respectively the transversality of the multijet map  $(j^r_s)_M\rho_i$ to $W\subset J_s^r(M, \R)$); later, in Subsection \ref{subsection:density}, we use Theorems \ref{thm:Damon_Transversality} and \ref{thm:distance_function_transversality} to show that the transverse embeddings are a dense  set among all embeddings. There will be two subcases: either $W$ represents configurations to be absolutely avoided and the intersection of the image of $j^r_s(i)$ (respectively $(j^r_s)_M\rho_i$) with $W$ will be empty due to a dimension argument, or the intersection will be non-empty but the transversality will constrain the geometry of the configurations in the intersection to be well-behaved.
As in the previous subsections, we write  $a\in J^r_s(M,\R)$  as
$a = (m_1,\ldots,m_s,r_1,\ldots, r_s, [f_1], \ldots, [f_s])$
and  $b \in J^r_s(M,\R^D)$ as $b = (m_1,\ldots,m_s,x_1,\ldots, x_s, [g_1], \ldots, [g_s])$ with $m_i \in M$, $r_i \in \R$  and $x_i \in \R^D$.
As usual, we let $M$ be a compact $C^k$ manifold of dimension $m$, but unlike in the remainder of this article, we have to assume in this subsection that $k\geq 3$ rather than $k\geq 2$.
The reason for this is that if $i\in \Emb^k(M,\R^D)$, then $j^r_s(i)$ is of regularity $C^{k-r}$, hence the map $j^r_s(i)$ being transverse to some set $W\subset J_s^r(M,\R^D)$ only makes sense if $r +1\leq k$. In what follows, some of the sets $W\subset J_s^r(M,\R^D)$ which we will consider can only be defined if $r\geq 2$, hence we need $k\geq r+1 = 3$ to ask that $j^2_s(i)$  be transverse to them.
We will see in the next subsection that this subtlety matters not, and that the density part of Theorem \ref{thm:generic} remains true when $M$ is only $C^2$ (and in fact even only $C^1$, as suggested by Remark \ref{rmk:C1_also_ok}).

\subsubsection{Condition \ref{P1}, part 1}
The definition of the first set $W$ and its properties come from Yomdin's \cite{yomdin1981local}. It allows us to handle a weaker version of Condition \ref{P1} that we call \ref{P0}. We take care of the second part of \ref{P1} later in Subsubsection \ref{subsubsec:P1''}.


\begin{definition}
    For any $s,r\in \mathbb{N}$ with $s\geq 2$ and $r\leq k-1$, we define the following set:
\[ W_0^{(s)} := \{a\in J_s^r(M,\R) :\; r_1 = \ldots = r_s \} .\]
\end{definition}

\begin{remark}
   $W_0^{(s)} $ is clearly a submanifold of $ J_s^r(M,\R)$ of codimension $s-1$ (hence in particular a Whitney stratified set) and invariant under addition.
\end{remark}

\begin{lemma}[Yomdin, \cite{yomdin1981local}]\label{lem:P0}
For any $0\leq r \leq k-1$, if $i\in \Emb^k(M,\R^D)$ is such that $(j^r_s)_M\rho_i : M^{(s)}\times \R^D \rightarrow  J_s^r(M,\R) $ is transverse to $W_0^{(s)} $ for $s=2,\ldots,D+2$, then $\M = i(M)$ satisfies condition
\begin{enumerate}[start=0, label={(P\arabic*)}]
    \item For any  critical point $z\in Z(\M)$, the projections $\pi_\M(z)$ are the vertices of a non-degenerate simplex of $\R^D$. In particular, $z$ has at most  $D+1$ projections.
\label{P0}
    \end{enumerate}
\end{lemma}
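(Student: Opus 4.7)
My plan is to translate transversality to $W_0^{(s)}$ at a ``projection configuration'' into a pure linear-algebra condition on the projections, from which (P0) falls out by a dimension count.

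First I would unpack what it means for $z\in Z(\M)$ to have projections $\pi_\M(z)=\{x_1,\ldots,x_s\}$ with $x_j=i(m_j)$. Since each $x_j$ minimizes $\rho_{i,z}=\|i(\cdot)-z\|^2$ on $M$, one has $d(\rho_{i,z})_{m_j}=0$, equivalently $z-x_j \perp T_{x_j}\M=\Im(di_{m_j})$, and $\rho_{i,z}(m_j)=d_\M(z)^2$ for all $j$. The latter shows that $\Psi(m_1,\ldots,m_s,z)\in W_0^{(s)}$, where $\Psi:=(j^r_s)_M\rho_i$, so our hypothesis applies at this point.

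Then I would unpack transversality in coordinates. Near such a configuration, $W_0^{(s)}$ is cut out inside $J^r_s(M,\R)$ by the $s-1$ equations $r_j-r_1=0$ for $j=2,\ldots,s$. Transversality of $\Psi$ to $W_0^{(s)}$ at $(m_1,\ldots,m_s,z)$ is therefore equivalent to surjectivity onto $\R^{s-1}$ of the differential of
\[ (m_1,\ldots,m_s,z)\longmapsto \bigl(\rho_{i,z}(m_j)-\rho_{i,z}(m_1)\bigr)_{j=2}^{s}, \]
which computes to the map $L$ on $\bigoplus_j T_{m_j}M \oplus \R^D$ sending $(v_1,\ldots,v_s,w)$ to
\[ \Bigl(2\langle x_j-z,\,di_{m_j}(v_j)\rangle - 2\langle x_1-z,\,di_{m_1}(v_1)\rangle - 2\langle x_j-x_1,\,w\rangle\Bigr)_{j=2}^{s}. \]

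The heart of the argument—and the step I expect to be the only non-obvious one before seeing the proof—is that at a projection configuration the tangential summands die. Since $z-x_j\perp T_{x_j}\M=\Im(di_{m_j})$, the inner products $\langle x_j-z,di_{m_j}(v_j)\rangle$ vanish for every $v_j$, and likewise for $j=1$; the entire $(v_1,\ldots,v_s)$-dependence of $L$ collapses, leaving $w\mapsto \bigl(-2\langle x_j-x_1,w\rangle\bigr)_{j=2}^{s}$. Transversality at a projection configuration is thus \emph{equivalent} to the linear independence of $\{x_j-x_1\}_{j=2}^{s}\subset \R^D$. Applying this with $s=D+2$ rules out any critical point having $D+2$ or more projections (one would need $D+1$ independent vectors in $\R^D$), so $|\pi_\M(z)|\leq D+1$; applying it again with $s=|\pi_\M(z)|\in\{2,\ldots,D+1\}$ gives that the $s-1$ difference vectors are independent, i.e.\ that $\{x_1,\ldots,x_s\}$ are the vertices of a non-degenerate simplex. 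This is exactly (P0). Once the collapse of the tangential part is observed, the remaining argument is essentially three lines of linear algebra; everything else is bookkeeping.
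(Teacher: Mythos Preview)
Your argument is correct and is precisely the standard computation underlying Yomdin's result; the paper itself does not spell this out, deferring instead to \cite{yomdin1981local}. The key observation---that at a projection configuration the orthogonality $z-x_j\perp T_{x_j}\M$ kills the tangential part of the differential, reducing transversality to linear independence of the $x_j-x_1$---is exactly the right one, and the dimension count for $s=D+2$ together with the case $s=|\pi_\M(z)|$ finishes the job.
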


\begin{proof}
This is proved by Yomdin in \cite{yomdin1981local}. In fact, he proves it for the points $z$ in the cut locus of $i(M)$, which is in general larger than its medial axis.
\end{proof}

\begin{remark}
It is very easy to find manifolds that do not verify condition \ref{P0}; e.g., the center $z$ of a (Euclidean) sphere $S$ of dimension greater than $1$ is such that $\pi_S(z)$ has infinitely many points. Examples where the points of $\pi_{i(M)}(z)$ are in finite numbers but are not affinely independent are equally easy to construct.
\end{remark}
The rest of the subsection is modelled after these first definition, remark and lemma: we will define sets $W$, show that they are Whitney stratified and of the correct dimension, and show that embeddings that are transverse to them have nice geometric properties (proving that such embeddings are dense will come later).

\subsubsection{Conditions \ref{P2} and \ref{P3}}

\begin{definition}\label{def:W2}
For any $s,r\in \mathbb{N}$ with $s\geq 2$ and $1\leq r\leq k$, we let $W_2^{(s)}$ be the set of multijets $(m_1,\ldots,m_s,x_1,\ldots, x_s, [f_1], \ldots, [f_s]) \in J^r_s(M,\R^D)$ such that there exists $z\in \Conv(x_1,\ldots,x_s)$ with $d(x_1,z) = \ldots = d(x_s,z)$ and such that for all $i\in [s]$, $\Im((df_i)_{m_i} : T_{m_i}M \rightarrow T_{x_i}\R^D) \perp (z-x_i)$.
\end{definition}

\begin{remark}
As for $(m_1,\ldots,m_s,x_1,\ldots, x_s, [f_1], \ldots, [f_s]) \in J^r_s(M,\R^D)$ the image of $(df_i)_{m_i}$ in $T_{x_i}\R^D$ does not depend on the choice of representative $f_i$ for $[f_i] \subset C^k(M,\R^D)_{m_i,x_i}$, the set $W_2^{(s)}$ is well-defined.
\end{remark}

The definition of $W_2^{(s)}$ should be understood as follows: let $i:M\rightarrow \R^D$ be an embedding. If $z_0$ is a critical point of $i(M)$ with $\pi_{i(M)}(z_0) = \{x_1,\ldots, x_s\}$, then 
\[ (m_1,\ldots,m_s,x_1,\ldots, x_s,[i],\ldots,[i])\in W_2^{(s)} \cap \Im(j^r_s(i) : M^{(s)} \rightarrow J^r_s(M,\R^D)) \]
for some $m_1,\ldots,m_s\in M$ with $z_0$ satisfying the constraints for $(m_1,\ldots,m_s,x_1,\ldots, x_s,[i],\ldots,[i])$ in the definition of $W_2^{(s)}$. This is proved further below with Lemma \ref{lem:P2_P3}.
Note that the set of $W_2^{(s)}$ also depends on $r$, though we hide that dependency in our notations.

\begin{lemma}\label{lem:w2_structure}
    For any $s,r\in \mathbb{N}$ with $2\leq s\leq D+1$ and $1\leq r\leq k-1$, the set $W_2^{(s)}$ is a finite union of closed Whitney stratifiable subsets $W_{2,\alpha}^{(s)}$ of  $J^r_s(M,\R^D)$ of codimension at least $ sm$.
    
\end{lemma}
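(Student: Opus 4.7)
The approach is to realize $W_2^{(s)}$ as the image of a larger semialgebraic set under a linear projection. Introduce the enlarged space $J^r_s(M,\R^D)\times\R^D$ and let
\[
\tilde W_2^{(s)} = \{(b,z):\ z\in\Conv(x_1,\dots,x_s),\ \|z{-}x_1\|=\cdots=\|z{-}x_s\|,\ z{-}x_i \perp \Im((df_i)_{m_i})\ \forall i\},
\]
so that $W_2^{(s)}$ is the image of $\tilde W_2^{(s)}$ under the projection $\pi$ that forgets $z$. In adapted charts of $J^r_s(M,\R^D)$, the first-order part of each class $[f_i]$ is a matrix of chart coordinates, so $\Im((df_i)_{m_i})$ is the column span of a matrix whose entries are coordinate functions, and the three defining conditions become polynomial equalities and inequalities. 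Hence $\tilde W_2^{(s)}$ is semialgebraic, and by Tarski--Seidenberg so is its projection $W_2^{(s)}$. Thom's theorem \cite{Thom1966PropritsDL} then yields a finite Whitney stratification of its closure; taking the closures of the semialgebraic pieces produces the finite union of closed Whitney stratifiable subsets $W_{2,\alpha}^{(s)}$.

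For the codimension bound, I would stratify $\tilde W_2^{(s)}$ according to (a) the affine dimension of $\{x_1,\dots,x_s\}$, (b) the ranks of the differentials $(df_i)_{m_i}$, and (c) the minimal face of $\Conv(x_1,\dots,x_s)$ containing $z$. On the generic stratum --- $\{x_i\}$ affinely independent (dimension $s-1$), each $(df_i)_{m_i}$ of full rank $m$, $z$ in the relative interior of $\Conv(x_1,\dots,x_s)$ --- the defining constraints have respective codimensions $D-(s-1)$ (confining $z$ to the affine span), $s-1$ (equidistance), and $sm$ (orthogonality). These are independent: the first two involve only $(x_i,z)$ while the orthogonality relations constrain the Grassmannian data $V_i := \Im((df_i)_{m_i})$, which vary freely over the remaining parameters. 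Summing gives codimension at least $sm+D$ for $\tilde W_2^{(s)}$ in $J^r_s(M,\R^D)\times\R^D$, hence codimension at least $sm$ for $W_2^{(s)}$ in $J^r_s(M,\R^D)$ after projection, since semialgebraic projections do not increase dimension.

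The main obstacle is the book-keeping in the degenerate strata, since a naive count can undershoot. The cleanest plan is a stratum-independent argument: work fiberwise over $M^{(s)}$ and show that the conditions cut out a subset of codimension at least $sm+D$ in each fiber of $J^r_s(M,\R^D)\times\R^D$, which reduces to a linear-algebraic question in $\mathrm{Gr}(m,D)^s \times \R^D$ (the higher-order jet data being unconstrained). Rank drops of $(df_i)_{m_i}$ cost codimension in the fiber that exceeds the resulting loss of independent orthogonality equations; affine degeneracies of $\{x_i\}$ cost codimension on the $x$-variables that exceeds the loss in independent equidistance relations; and restricting $z$ to a proper face of $\Conv(x_1,\dots,x_s)$ is an inductive case of the same type for a smaller value of $s$. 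Aggregating these bounds across the finitely many strata and passing to closures yields the claimed decomposition $W_2^{(s)} = \bigcup_\alpha W_{2,\alpha}^{(s)}$.
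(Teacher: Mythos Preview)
Your overall strategy matches the paper's: introduce an auxiliary variable $z$, show the enlarged set is semialgebraic in adapted charts, project via Tarski--Seidenberg, and invoke Thom's result that semialgebraic sets are Whitney stratifiable. Two points deserve comment.

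First, a technical gap: $J^r_s(M,\R^D)$ is a manifold, not a Euclidean space, so ``semialgebraic'' only makes sense chart by chart, and it is not automatic that a set which is semialgebraic in each adapted chart is globally Whitney stratifiable. The paper does not try to prove this; instead it fixes a finite cover of $M^s$ by closed balls lying inside adapted chart domains, and defines each $W_{2,\alpha}^{(s)}$ as the intersection of (the closure of) $W_2^{(s)}$ with one such closed piece. Each piece is then genuinely semialgebraic in a single chart, hence closed and Whitney stratifiable. This is where the finite decomposition actually comes from, and your sketch does not supply it.

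Second, your plan to bound the codimension by stratifying according to affine rank of $\{x_i\}$, ranks of the $(df_i)_{m_i}$, and the face of the simplex containing $z$ is correct in principle but, as you acknowledge, the bookkeeping for the degenerate strata is not carried out. The paper sidesteps this entirely with a much cleaner trick: it restricts to the open ``generic'' locus $W'_2$ (affinely independent $x_i$, full-rank differentials, $z$ in the relative interior), shows by an explicit submersion argument that $W'_2$ is a genuine submanifold of codimension $sm+D$ in $J^r(M,\R^D)^s\times\R^D$, and then uses two standard facts about semialgebraic sets, namely $\dim(\overline A)=\dim A$ and $\dim(\pi(A))\le\dim A$. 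Since the closure of $\pi(W'_2)$ equals the whole of $\tilde W_2$, the codimension bound follows immediately for every stratum at once, with no case analysis. This is the key simplification you are missing.
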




To prove Lemma \ref{lem:w2_structure}, rather than to identify possible strata and to manually show that they verify Whitney's conditions, we will use the fact that semialgebraic subsets of an Euclidean space are Whitney stratifiable (see \cite{Thom1966PropritsDL}).
Remember that a set $ A\subset \R^n$ is called \textit{semialgebraic set} if it can be written as
\[ A = \bigcup_{i=1}^l \bigcap_{j=1}^{k_i}\{ x_1,\ldots, x_n \in \R^n :\; P_{i,j}(x_1,\ldots,x_n) \; \text{op}_{i,j} \; 0\},\]
 where $P_{i,j}$ is a polynomial in $n$ variables and $\text{op}_{i,j} \in \{<, = , >\}$ for all $i = 1,\ldots, l$ and $j=1,\ldots,k_i$. This definition is deceptively simple, as there are some semialgebraic sets that one would not naively expect to be. To learn more on semialgebraic sets, read e.g., \cite{RislerBenedettiAlgAndSemialg}.

\begin{proof}
We fix $s\in \{2,\dots,D+1\}$ in the proof, and write $W_2=W_2^{(s)}$ to alleviate notation. 
First of all, let us define the set
\begin{equation}\label{eq:def_tildeW2}
\begin{split}
    \tilde{W}_2 :=& \{(m_1,\ldots,m_s,x_1,\ldots, x_s, [f_1], \ldots, [f_s]) \in J^r(M,\R^D)^s \;: \; \exists z\in \R^D \text{ s.t. } \\&
    \qquad   \|x_1 -z\|^2 = \ldots = \|x_s-z \|^2, \; \forall i \in [s],\; \Im((df_i)_{m_i} : T_{m_i}M \rightarrow T_{x_i}\R^D) \perp (z-x_i) \ \; \\&
   \qquad \text{and }\exists \lambda_1,\ldots, \lambda_s \in [0,1] \text{ s.t. } \sum_{i=1}^s \lambda_i = 1 \text{ and } z= \sum_{i=1}^s \lambda_i x_i\}.
   \end{split}
\end{equation}

There are two differences between $W_2$ and $\tilde{W}_2$: the first is that it's defined as a subset of $J^r(M,\R^D)^s$ rather than of $J_s^r(M,\R^D) \subset J^r(M,\R^D)^s$, the second is that the constraints in its definitions are the same, but have been rewritten in a more visibly algebraic fashion.
As a result, we have that $W_2 = \tilde{W}_2 \cap J_s^r(M,\R^D) $. We will use $\tilde{W}_2$ to circumvent the lack of compactness of $M^{(s)}$.

\paragraph{Closedness.} 
Let us first show that $\tilde{W}_2$ is closed. Let $(a_n)_{n\in \mathbb{N}} \subset \tilde{W}_2$ be such that $\lim_{n\rightarrow \infty} a_n = a\in J^r(M,\R^D)^s $, and let us write $a_n = (m_{1,n}, \ldots, m_{s,n}, x_{1,n}, \ldots, x_{s,n}, [f_{1,n}], \ldots, [f_{s,n}])$. We want to show that $a\in \tilde{W}_2$.

For each $n\in \mathbb{N}$, let $z_n\in \R^D$ be a point such that $ d(x_{1,n},z_n) = \ldots = d(x_{s,n},z_n)$, $ \Im((df_{i,n})_{m_{i,n}} : T_{m_{i,n}}M \rightarrow T_{x_{i,n}}\R^D) \perp (z_n-x_{i,n})$ for $i\in [s]$ and $z_n\in \Conv (x_{1,n},\ldots,x_{s,n}) $ (in fact, such a point $z_n$ is unique). Write $z_n = \sum_{i=1}^s \lambda_{i,n} x_{i,n} $ for some  $\lambda_{i,n} \in [0,1]$.

We can find a subsequence for which $(\lambda_{1,n},\ldots,\lambda_{s,n})$ converges to $(\lambda_1,\ldots, \lambda_s)$, hence for which $z_n$ converges to $z:= \sum_{i=1}^s \lambda_{i} x_{i} \in \R^D$.
Then by definition $z \in \Conv (x_{1},\ldots,x_{n}) $ and by continuity $ d(x_{1},z) = \ldots = d(x_{s},z)$. Furthermore, choose charts of $M$ around $m_1,\ldots, m_s$, and consider the induced adapted chart of $J^r(M,\R^D)^s $ around $a$.
In the coordinates given by this chart, the derivative $(df_{i,n})_{m_{i,n}} : T_{m_{i,n}}M \rightarrow T_{x_{i,n}}\R^D$ (for each $i=1,\ldots,s$) can be represented by $D$ degree $1$ homogeneous polynomials in $m$ variables, or equivalently by a $D\times m$ matrix $A_{i,n}$.
The condition that $\Im((df_{i,n})_{m_{i,n}}) \perp (z_n-x_{i,n})$ can be written as $(A_{i,n})^\top (z_n-x_{i,n}) = 0$ (as soon as $n$ is large enough for $m_{i,n}$ to belong to the domain of the chart around $m_i$), which is a closed constraint, hence $\Im((df_{i})_{m_{i}} : T_{m_{i}}M \rightarrow T_{x_{i}}\R^D) \perp (z-x_{i})$ and $z$ satisfies all three conditions in the definition of $\tilde{W}_2$. This shows that $a\in \tilde{W}_2$, and thus that $\tilde{W}_2$ is closed in $J^r(M,\R^D)^s$. As $W_2 = \tilde{W}_2 \cap J^r_s(M,\R^D)$, we get that $W_2$ is also closed in $J^r_s(M,\R^D)$.


\paragraph{Finite union of closed Whitney stratified sets.}

A preliminary remark: as was mentioned earlier, it was shown by Thom in \cite{Thom1966PropritsDL} that semialgebraic subsets of Euclidean space are Whitney stratifiable. As $\tilde{W}_2$ is semialgebraic in the coordinates of any adapted chart of $J^r(M,\R^D)^s$ (see below), its intersection with the domain of any such chart is Whitney stratifiable. We are somewhat convinced that this should easily imply that $\tilde{W}_2$ is globally Whitney stratifiable (on the other hand, we cannot in general expect $\tilde{W}_2$ to be globally semialgebraic), but we could not find any such statement in the literature. Consequently, we have to consider each chart separately,\footnote{In fact, we think that a variant of the argument exposed below could be used to show that $\tilde{W}_2$ as a whole is Whitney stratifiable, though it is not required for our purposes.} as follows.

For any  $(m_1,\ldots,m_s,x_1,\ldots, x_s, [f_1], \ldots, [f_s]) \in J^r(M,\R^D)^s$, we can find charts $\phi_i : U_i \rightarrow \R^m$ of $M$ with domains $U_i \ni m_i$ for $i=1,\ldots,s$, and from them get an adapted chart
\[\phi : U\longrightarrow \phi_1(U_1)\times \ldots \times \phi_s(U_s) \times (\R^D)^s \times (B^r_{m,D})^s \]
with domain $ U := \{(m_1,\ldots,m_s,x_1,\ldots, x_s, [f_1], \ldots, [f_s]) \in J^r(M,\R^D)^s :\ m_i \in U_i \text{ for }i\in [s]\} $ (we do not need charts around the targets $x_i\in \R^D$, as $\R^D$ is already an Euclidean space).
For any such chart $U$, let us choose $\eps_U>0$ small enough that for $i\in [s]$ we have
\[ \overline{B}(\phi_i(m_i), \eps_U) \subset \phi_i(U_i) \subset \R^m.\]
Using the compactness of $M^s$, we can find a finite collection of such charts $(\phi_\alpha, U_\alpha)_{\alpha \in A}$ (where $A$ is a finite collection of indices) such that 
\[\bigcup_{\alpha \in A} \phi_{1,\alpha}^{-1}(\overline{B}(\phi_{1,\alpha}(m_{1,\alpha}), \eps_{U_\alpha})) \times \ldots \times \phi_{s,\alpha}^{-1}(\overline{B}(\phi_{s,\alpha}(m_{s,\alpha}), \eps_{U_\alpha})) = M^s,\]
which is equivalent to the charts $(\phi_\alpha, U_\alpha)_{\alpha \in A}$ corestricted to 
\[   \phi_{1,\alpha}^{-1}(\overline{B}(\phi_{1,\alpha}(m_{1,\alpha}), \eps_{U_\alpha})) \times \ldots \times \phi_{s,\alpha}^{-1}(\overline{B}(\phi_{s,\alpha}(m_{s,\alpha}), \eps_{U_\alpha})) \times (\R^D)^s \times (B^r_{m,D})^s\]
covering  $J^r(M,\R^D)^s$.

In the coordinates of any such chart, the set $\tilde{W}_2$ (intersected with the domain of the chart) is semialgebraic. Indeed, a consequence of the Tarski-Seidenberg theorem (see e.g.~\cite[Theorem 2.3.4]{RislerBenedettiAlgAndSemialg}) is that any set defined as 
\[\{X  \in \R^p :\ \exists Z  \in \R^m \text{ s.t. } (P_1(X,Z) \text{ op}_1\; 0) \land \ldots \land (P_q(X,Z) \text{ op}_q \; 0)\}\]
is semialgebraic, where $P_i$ is a polynomial and $\text{op}_i \in \{>,\geq, =\}$ for $i=1,\ldots, q$ and  $\land$ is the $\texttt{AND}$ logical operator (in fact, more general formula featuring $\texttt{OR}$ operators, negations and parentheses are allowed, but this is enough for our purposes).
In the coordinates of a chart $(\phi_\alpha, U_\alpha)$, the set $\tilde{W}_2$ is of this form. 
Indeed, its definition (see \eqref{eq:def_tildeW2}) is a combination of:
\begin{itemize}
    \item A $\exists$ operation for $z,\lambda_1,\ldots, \lambda_s$.
    \item Algebraic constraints on $z$ and the targets $x_i$,
    \item A constraint that the image of the derivatives $(df_i)_{m_i}$ 
    must be perpendicular to $z-x_i$. As noted earlier, this is equivalent to the transpose $(A_i)^\top$ of the matrix representing $(df_i)_{m_i}$ in the coordinates of the charts verifying $(A_i)^\top (z-x_i) = 0$, which is an algebraic constraint on the coordinates representing $(df_i)_{m_i}$, $z$ and $x_i$.
    \item Some more algebraic constraints on the $\lambda_i$, $x_i$ and $z$.
\end{itemize}
Furthermore, the closed ball $\overline{B}(\phi_{s,\alpha}(m_{s,\alpha}), \eps_{U_\alpha})$ is also semialgebraic in the coordinate of the chart $(\phi_\alpha, U_\alpha)$, as  it is defined using a  polynomial inequality.
Consequently, the intersection 
\[ \tilde{W}_{2,\alpha} := \tilde{W}_2 \cap (\phi_{1,\alpha}^{-1}(\overline{B}(\phi_{1,\alpha}(m_{1,\alpha}), \eps_{U_\alpha})) \times \ldots \times \phi_{s,\alpha}^{-1}(\overline{B}(\phi_{s,\alpha}(m_{s,\alpha}), \eps_{U_\alpha})) \times (\R^D)^s \times (B^r_{m,D})^s)\]
is semialgebraic in the coordinates of $(\phi_\alpha, U_\alpha)$, and as a result it is Whitney stratifiable in $J^r(M,\R^D)^s$ (the image of a Whitney stratifiable set by a diffeomorphism is also Whitney stratifiable). It is also closed. 

As Whitney's conditions a) and b) are purely local, it is trivial to check that the intersection of a Whitney stratified set with an open set of its ambient manifold is still Whitney stratified.
As $J^r_s(M,\R^D)$ is an open set of $J^r(M,\R^D)^s$, the intersection 
$$W_{2,\alpha}:=\tilde{W}_{2,\alpha} \cap J^r_s(M,\R^D)$$
is therefore still Whitney stratifiable (and closed) in $J^r_s(M,\R^D)$. We also have
$$W_2 = \tilde{W}_2 \cap J^r_s(M,\R^D) = \left( \bigcup_{\alpha\in A}  \tilde{W}_{2,\alpha}  \right) \cap J^r_s(M,\R^D)  =   \bigcup_{\alpha\in A} W_{2,\alpha}. $$

\paragraph{Dimension.}

We only have to show that the Whitney stratifiable sets $W_{2,\alpha}$ are of codimension $sm$ in $J^r_s(M,\R^D)$ to conclude. To do so, consider the following auxiliary set:
\begin{equation}\label{eq:def_W2'}
\begin{split}
   W'_2 := &\{((m_1,\ldots,m_s,x_1,\ldots, x_s, [f_1], \ldots, [f_s]),z) \in J^r(M,\R^D)^s\times \R^D : \;\\
   &\qquad   \|x_1 -z\|^2 = \ldots = \|x_s-z \|^2, \; \forall i\in [s],\; \Im((df_i)_{m_i} : T_{m_i}M \rightarrow T_{x_i}\R^D) \perp (z-x_i), \\&
   \qquad \text{rank}((df_i)_{m_i} ) = m,\; \exists \lambda_1,\ldots, \lambda_s \in ]0,1[ \text{ s.t. } \sum_{i=1}^s \lambda_i = 1,\; z= \sum_{i=1}^s \lambda_i x_i \\
   &\qquad  \text{and }\dim \text{Aff}(x_1,\ldots,x_s) = s-1\}.
   \end{split}
\end{equation}
Note the four differences compared to $\tilde{W}_2$: the point $z$ is among the coordinates rather than the object of a $\exists$ statement, we ask that the points $x_i$ be affinely independent and that the derivatives $(df_i)_{m_i}$ be of full rank, and the coefficients $\lambda_i$ cannot be trivial.
We claim that $W_2'$ is a (non-closed) submanifold of $J^r(M,\R^D)^s\times \R^D $ of codimension $D+sm$, hence of dimension $\dim J^r(M,\R^D)^s - sm$.


Assume for now that the claim is true, and consider the projection 
\[\pi: J^r(M,\R^D)^s\times \R^D \rightarrow J^r(M,\R^D)^s.\]
Restrict it to $U_\alpha\times \R^D $, where $U_\alpha$ is the domain of one of the charts defined in the previous section of the proof. In the coordinates induced by this chart, the set $W_2'$ is semialgebraic\footnote{In this paragraph, we are guilty of a small abuse of notations: when we say that $W_2'$ is semialgebraic in the coordinates of the chart, we mean that its intersection with the domain of the chart is semialgebraic in the coordinates (and same for $\pi(W_2')$ and $\overline{\pi(W_2')}$).} (for the same reasons as $\tilde{W}_2$), and the map $\pi$ is semialgebraic in the coordinates, as it is simply a projection from one vector space to another (see e.g., \cite[Section 2.3]{RislerBenedettiAlgAndSemialg}). Thus $\pi(W_2')$ is also semialgebraic in the coordinates of the chart,\footnote{In fact, the projection $\pi$ restricted to $W_2'$ is a diffeomorphism onto its image, but it is not needed for the proof.} and so is its closure $\overline{\pi(W_2')}$  (see e.g., \cite[Proposition 1.12] {Coste2002ANIT}).  Furthermore, it is easy to see that the closure $\overline{\pi(W_2')}$ of $\pi(W_2')$ in $J^r(M,\R^D)^s$ is equal to $\tilde{W}_2$.
Hence (see \cite[2.5.4]{RislerBenedettiAlgAndSemialg})
$$\dim \tilde{W}_2 = \dim(\overline{\pi(W_2')}) = \dim(\pi(W_2')) \leq \dim W_2' = \dim J^r(M,\R^D)^s - sm, $$
as the same inequalities are true when considering the intersections of those sets with the domain $U_\alpha$ (respectively $U_\alpha \times \R^D$) of any of the charts $(\phi_\alpha, U_\alpha)_\alpha$.
As $W_2$ is the intersection of $\tilde{W}_2$ with the open set $J^r_s(M,\R^D)$ of $J^r(M,\R^D)^s$, we have $\dim W_2 \leq \dim \tilde{W}_2 \leq \dim J^r(M,\R^D)^s - sm = \dim J^r_s(M,\R^D) - sm$, and each of the closed subsets $W_{2,\alpha}$ of $W_2$ is of codimension at least $sm$ in $J^r_s(M,\R^D) $.

The only thing left is to prove the claim that $W_2'$ is a submanifold of $J^r(M,\R^D)^s\times \R^D $ of codimension $D+sm$. This is a local property; in what follows, we implicitly place ourselves in the coordinates of an adapted chart $(\phi, U)$, without explicitly saying that we are considering the intersection of $W_2'$ with $U\times \R^D$ or constantly specifying "in the coordinates of...".

Let $(a,z) $ be a point of $W_2'$, where $a\in J^r(M,\R^D)^s$.  The conditions that $\dim \text{Aff}(x_1,\ldots,x_s) = s-1$ and that $  \text{rank}((df_i)_{m_i} ) = m \text{ for } i=1\in[s]$ are open, and the conditions that $\|x_1 -z\|^2 = \ldots = \|x_s-z \|^2$ and that there exist $\lambda_1,\ldots, \lambda_s \in ]0,1[$ such that $\sum_{i=1}^s \lambda_i = 1 $ and $z= \sum_{i=1}^s \lambda_i x_i$ mean that $z$ must be the (unique) center of the circumsphere of $x_1,\ldots, x_s$, and that this center must belong to the relative interior of the convex hull of $x_1,\ldots,x_s$, which is another open condition on $x_1,\ldots,x_s$. Hence we can find a small neighborhood $V$ of $a$ such that these three open conditions are satisfied on the neighborhood $V\times \R^D$ of $(a,z)$. For any point $(b,z)= ((m_1,\ldots,m_s,x_1,\ldots, x_s, [f_1], \ldots, [f_s]),z)$ in $V\times \R^D$, the point $z$ must again be the center of the circumsphere of $x_1,\ldots, x_s$, and for such a point $(b,z)\in V\times \R^D$, the coordinates of $z$ are a (smooth) function $z(x_1,\ldots,x_s)$ of the coordinates of $x_1,\ldots, x_s$ (see \cite{coxeter1930circumradius}).

 Identify the first order derivatives at $m_i$ of the  functions $f_i$ with $D \times m$ matrices $A_i$ and let $d^{\geq 2}f_i$ represent all derivatives at $m_i$ of higher order, and consider the function $F :V  \longrightarrow (\R^m)^s$ given by
 \begin{equation}\label{eq:def_F}
F(m_1,\ldots,m_s,x_1,\ldots, x_s, A_1, d^{\geq 2}f_1, \ldots, A_s, d^{\geq 2}f_s) =(A_1^\top (z-x_1), \ldots, A_s^\top (z -x_s)),
\end{equation}
where $z = z(x_1,\ldots, x_s) $. 
Then $F$ is a submersion, as the derivative of $(A_i, x_1,\ldots, x_s) \mapsto A_i^\top (z(x_1,\ldots,x_s)-x_i) $ with respect to $A_i$ is surjective (since $z-x_i \neq 0$). Consequently, the set 
\begin{align*}
F^{-1}(\{0\}) =& \{(m_1,\ldots,m_s,x_1,\ldots, x_s, A_1, d^{\geq 2}f_1, \ldots, A_s, d^{\geq 2}f_s)\in V\; :\\
&\qquad \forall i\in [s],\ A_i^\top (z(x_1,\ldots, x_s) -x_i) = 0
\}
\end{align*}
is a submanifold of codimension $sm$ in $J^r(M,\R^D)^s$, and 
$W_2'\cap (V\times \R^D) $
is simply the graph of the function
\begin{equation*}
    \begin{split}
F^{-1}(\{0\}) & \longrightarrow \R^D\\
(m_1,\ldots,m_s,x_1,\ldots, x_s, [f_1], \ldots, [f_s]) &\longmapsto  z(x_1,\ldots, x_s).
    \end{split}
\end{equation*}
Thus it is a submanifold of codimension $sm+D$ in $J^r(M,\R^D)^s\times \R^D$, as claimed.
\end{proof}

\begin{remark}
    We are quite convinced that $W_2^{(s)}$ is in fact a Whitney stratifiable subset of $J^r_s(M,\R^D)$  (rather than a union) - we know how to prove it in the case where $M$ is smooth rather than just $C^k$ using Nash-Tognoli's theorem, but it should be easy to prove in general using some theorem  along the lines of ``any subset of a manifold $X$ that is semialgebraic in the coordinates of a well-chosen atlas of $X$ is Whitney stratifiable"; however, we are no specialists of semialgebraic sets and could not find such a statement in the literature.

\end{remark}

Lemma \ref{lem:w2_structure} gives the existence of a decomposition $W_2^{(s)} = \cup_{\alpha \in A} W_{2,\alpha}^{(s)}$ of $W_2^{(s)}$ into a finite union of closed Whitney stratifiable sets of codimension at least $sm$, but this decomposition is not unique.
We choose for a given $M$ and for each  $s,r\in \mathbb{N}$ with $2\leq s\leq D+1$ and $1\leq r\leq k-1$ such a decomposition $\{W_{2,\alpha}^{(s)}\}_{\alpha \in A}$, and we simply refer to it as ``the decomposition from Lemma \ref{lem:w2_structure}" or ``the sets $\{W_{2,\alpha}^{(s)}\}_{\alpha \in A}$ from  Lemma \ref{lem:w2_structure}"  in what follows.

We now need another set that is very similar to $W_2^{(s)}$, except that it will allow us to select embeddings $i\in \Emb^k(M,\R^D)$ such that  for each critical point $z_0\in Z(i(M))$ and each $x\in \pi_{i(M)}(z_0)$, the sphere $S(z_0,d_{i(M)}(z_0))$ does not osculate $i(M)$ at $x$.
\begin{definition}
For any $s,r\in \mathbb{N}$ with $s\geq 2$ and $2\leq r\leq k$, we let $W_3^{(s)}$ be the set of multijets $(m_1,\ldots,m_s,x_1,\ldots, x_s, [f_1], \ldots, [f_s]) \in J^r_s(M,\R^D)$ that belong to $W_2^{(s)}$ and such that there exists $i\in [s]$  for which the quadratic form $v\mapsto \|d(f_i\circ \phi)_0(v)\|^2 - \dotp{z-x_i,d^2(f_i\circ \phi)_0(v,v)}$ is degenerate for any chart $\phi : (U,0) \rightarrow (M, m_i)$.
\end{definition}

\begin{remark}
The set $W_3^{(s)}$ is well-defined. Indeed, the condition on the quadratic form only depends on the derivatives of order less than $2$ of the functions $f_i$; hence it is independent of the choice of representative $f_i\in[f_i] \subset C^k(M,\R^D)_{m_i,x_i}$. Furthermore, as 
$\dotp{z-x_i,d^2(f_i\circ \phi)_0(v,v)} = \dotp{z-x_i,d^2(f_i)_{m_i}( d\phi_0(v),d\phi_0(v))}$ (because $\Im((df_i)_{m_i}) \perp (z-x_i)$),
it does not depend on the choice of the chart $\phi$ either - if it is true for one chart, it is true for all.
\end{remark}

\begin{lemma}\label{lem:w3_structure}
    For any $s,r\in \mathbb{N}$ with $2\leq s\leq D+1$ and $2\leq r\leq k-1$, the set $W_3^{(s)}$ is a finite union of closed Whitney stratifiable subsets $W_{3,\beta}^{(s)}$ of  $J^r_s(M,\R^D)$ of codimension at least $ sm + 1$.
\end{lemma}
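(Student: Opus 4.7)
The plan is to mimic closely the proof of Lemma \ref{lem:w2_structure}, the extra codimension coming from a polynomial vanishing condition. I would first decompose $W_3^{(s)} = \bigcup_{i=1}^s W_{3,i}^{(s)}$, where $W_{3,i}^{(s)}$ is the subset of $W_2^{(s)}$ on which the $i$-th quadratic form
\[ Q_i : v\mapsto \|d(f_i \circ \phi)_0(v)\|^2 - \dotp{z-x_i,\, d^2(f_i\circ \phi)_0(v,v)} \]
is degenerate. By symmetry it suffices to treat $W_{3,1}^{(s)}$, and the final covering of $W_3^{(s)}$ is made of $s \cdot |A|$ pieces, where $\{W_{2,\alpha}^{(s)}\}_{\alpha \in A}$ is the decomposition from Lemma \ref{lem:w2_structure}.

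Next I would reuse the same finite covering $\{(\phi_\alpha, U_\alpha)\}_{\alpha \in A}$ of $J^r(M,\R^D)^s$ by adapted charts produced in the previous proof. In any such chart the degeneracy of $Q_1$ translates into the single polynomial equation $\det(G_1) = 0$, where $G_1$ is the symmetric $m\times m$ matrix representing $Q_1$ in the chart coordinates; its entries are polynomials in the local coordinates of $x_1$, $z$, and the first- and second-order jet coefficients of $f_1$. Intersecting each closed semialgebraic piece $W_{2,\alpha}^{(s)}$ with $\{\det(G_1) = 0\}$ produces closed semialgebraic sets $W_{3,1,\alpha}^{(s)}$ covering $W_{3,1}^{(s)}$, each Whitney stratifiable by Thom's theorem \cite{Thom1966PropritsDL}. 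Closedness of $W_{3,1}^{(s)}$ globally then follows from the closedness of $W_2^{(s)}$ combined with continuity of $\det(G_1)$.

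For the codimension count I would imitate the auxiliary construction of Lemma \ref{lem:w2_structure} by defining
\[ W_3' := \{(a,z) \in W_2' :\ \det(G_1(a,z)) = 0\}, \]
with $W_2'$ as in \eqref{eq:def_W2'}. On $W_2'$ the defining constraints involve only the targets $x_i$ and the first derivatives $A_i$ (through $A_i^\top(z-x_i) = 0$), so the second-order jet $d^2 f_1$ is unconstrained; since $z-x_1 \neq 0$ on $W_2'$, the map $d^2 f_1 \mapsto G_1$ (other data fixed) is affine and surjective onto the space of symmetric $m \times m$ matrices. Hence $\{\det(G_1) = 0\}$ cuts $W_2'$ in a proper algebraic hypersurface in every adapted chart, giving $\dim W_3' \leq \dim W_2' - 1$. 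Projecting along $\pi : J^r(M,\R^D)^s \times \R^D \to J^r(M,\R^D)^s$ and passing to the closure chart by chart, exactly as in the proof of Lemma \ref{lem:w2_structure}, shows that each piece $W_{3,1,\alpha}^{(s)}$ has codimension at least $sm + 1$ in $J_s^r(M,\R^D)$.

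The step I expect to be most delicate is the non-triviality check just described: one must make sure that $\det(G_1)$ is not identically zero on $W_2'$ in any adapted chart, so that its vanishing really drops dimension by one rather than being automatic on some component. An explicit witness — for instance a multijet whose second fundamental form at $m_1$ vanishes, in which case $G_1$ reduces to the Gram matrix of an injective first derivative and is positive definite — suffices pointwise, but uniformly producing such a witness in every connected component of $W_2' \cap U_\alpha$ is the main bookkeeping obstacle.
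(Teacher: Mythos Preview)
Your proposal is correct and follows essentially the same skeleton as the paper's proof: split $W_3^{(s)}$ according to which index $i$ carries the degenerate form, reuse the adapted-chart covering and the semialgebraic machinery from Lemma~\ref{lem:w2_structure}, and show that the extra condition $\det G_1=0$ drops the codimension by one relative to $W_2'$.

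The one substantive difference is in how the codimension drop is justified. The paper does not work with the full locus $\{\det G_1=0\}$ (where the determinant fails to be a submersion at matrices of rank $\leq m-2$); instead it defines $(W_3^i)'$ by the \emph{open} condition $\mathrm{rank}\,G_i=m-1$, on which $\det$ is a genuine submersion, checks transversality of $F=0$ and $G_i=0$ (immediate since $F$ does not involve $d^2 f_i$), obtains a clean submanifold of codimension $sm+D+1$, and then closes up using that rank-$(m-1)$ matrices are dense in the degenerate ones. Your route---treat $\{\det G_1=0\}$ as a semialgebraic hypersurface in $W_2'$ and count semialgebraic dimensions---is equally valid and arguably shorter.

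Regarding the step you flag as delicate: the worry dissolves once you notice that the defining equations of $W_2'$ (see \eqref{eq:def_W2'}) place no constraint whatsoever on $d^{\geq 2}f_i$. In any adapted chart $W_2'$ is therefore a product with the entire $d^2 f_1$-space as a free factor, so from \emph{any} point of $W_2'\cap U_\alpha$ you can move continuously along that factor to the witness $d^2 f_1=0$ (giving $G_1=A_1^\top A_1$, positive definite) without leaving the connected component. Hence $\det G_1$ is nonconstant on every component of $W_2'\cap U_\alpha$, and the dimension drop is automatic. There is no further bookkeeping obstacle.
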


\begin{proof}
The proof is almost identical to that of Lemma \ref{lem:w2_structure}; we omit most details and only comment on the few meaningful differences. As in the previous proof, we fix $s\in \{2,\dots,D+1\}$ and write $W_3=W_3^{(s)}$ and $W_2=W_2^{(s)}$. 
The set $W_3$ can be written as $W_3=\bigcup_{i=1}^s W_3^i$, where $W_3^i$ is the set of multijets $(m_1,\ldots,m_s,x_1,\ldots, x_s, [f_1], \ldots, [f_s])\in W_2$ for which the quadratic form  $v\mapsto \|d(f_i\circ \phi)_0(v)\|^2 - \dotp{z-x_i,d^2(f_i\circ \phi)_0(v,v)}$ is degenerate for any chart $\phi : (U,0) \rightarrow (M, m_i)$.
Our goal is to conclude using the same arguments as in the proof of Lemma \ref{lem:w2_structure} that each $W_3^i$ is a finite union of closed Whitney stratifiable subsets of  $J^r_s(M,\R^D)$ of codimension at least $ sm + 1$, hence that $W_3$ can also be written as such a union.

The condition for a point  $(m_1,\ldots,m_s,x_1,\ldots, x_s, [f_1], \ldots, [f_s]) \in J^r_s(M,\R^D)$ that $ v\mapsto \|d(f_i\circ \phi)_0(v)\|^2 - \dotp{z-x_i,d^2(f_i\circ \phi)_0(v,v)}$  be degenerate is equivalent to a closed condition on the first and  second derivative of the function $[f_i]$ (that does not depend on the choice of the representative $f_i$), and  this condition is algebraic in adapted charts of $J^r_s(M,\R^D)$; indeed, in such a system of coordinates, it is equivalent to asking that the matrix $A_i^\top A_i-B_i$ has zero determinant, where $A_i$ represents the first differential of $f_i$ and $B_i$ represents the bilinear form that applies the second differential of $f_i$ to a pair of vectors and returns the scalar product of the result with $z-x_i$ (both being represented in the chosen system of coordinates).

It remains to show that the added condition does increase the codimension by one. To that end, we would like the condition on the determinant of the matrix to be transverse to our other conditions, but sadly it is not even transverse to the ambient space: the determinant is not a submersion from the space of $m\times m$ matrices to $\R$ (in particular the set of all singular matrices is semialgebraic, but is not a manifold). However, this minor difficulty can be avoided. As the differential of the determinant at a matrix $K$ is equal to $d(\det)_K : H\mapsto \mathrm{tr}(\mathrm{adj}(K)H)$, where $\mathrm{adj}(K)$ is the adjugate of $K$ and $\mathrm{tr}$ is the trace, the determinant is indeed a submersion at any matrix $K$ of rank at least $m-1$.

In the proof of Lemma \ref{lem:w2_structure}, we defined an auxiliary set $W_2' \subset J^r(M,\R^D)^s\times \R^D $ such that $\overline{\pi(W_2')}\cap J_s^r(M,\R^D) = W_2$ and such that $\dim W_2 \leq \dim W_2'$, see \eqref{eq:def_W2'}. Similarly, we  define the set $(W^i_3)'$ of pairs $((m_1,\ldots,m_s,x_1,\ldots, x_s, [f_1], \ldots, [f_s]),z) \in W_2'$ such that the form $v\mapsto \|d(f_i\circ \phi)_0(v)\|^2 - \dotp{z-x_i,d^2(f_i\circ \phi)_0(v,v)}$ is of rank $m-1$.
We claim that the set $ (W^i_3)' $ is a submanifold of codimension  $sm+D+1$ in $J^r(M,\R^D)^s\times \R^D$ (compared to the codimension $sm+D$ of $W_2'$). If this claim is true, the rest of the proof is the same as that of Lemma \ref{lem:w2_structure} (with the additional fact that the matrices of rank $m-1$ are dense in the set of all degenerate matrices).

 To prove the claim, note that the only difference with $W_2'$ is the condition that  the form $v\mapsto \|d(f_i\circ \phi)_0(v)\|^2 - \dotp{z-x_i,d^2(f_i\circ \phi)_0(v,v)}$ be of rank $m-1$. 
This is the intersection of the condition that it be degenerate (as in the definition of $W_3$), or equivalently that the determinant of the associated matrix be $0$, and the condition that it be of rank at least $m-1$, which is an open condition.
Similarly to the proof of Lemma \ref{lem:w2_structure}, any point in $(W_3^i)'$ admits an open neighborhood $V \times \R^D \subset J^r(M,\R^D)^s\times \R^D$  such that the intersection of the set  $(W^i_3)'$ with this neighborhood is the graph by the function $(m_1,\ldots,m_s,x_1,\ldots, x_s, [f_1], \ldots, [f_s])\mapsto z(x_1,\ldots, x_s)$ of the intersection $F^{-1}(\{0\}) \cap G_i^{-1}(\{0\})$  of the zero sets of two maps. Here, the map $F$ is defined in \eqref{eq:def_F} and the map $G_i:V\to \R$ is given by
\begin{equation}
    G_i(m_1,\ldots,m_s,x_1,\ldots, x_s, A_1,d^{ 2}f_1, \ldots, A_s, d^{ 2}f_s, d^{\geq 3}f_1,\ldots,d^{\geq 3}f_s) =  \det(A_j^\top A_j -B_j)
\end{equation}
where 
the $D\times m$ matrix $A_i$ represents the first differential of $f_i$ at $m_i$ and the  $ m\times m $ matrix $B_i$ represents the bilinear form that applies the second differential $(d^{ 2}f_i)_{m_i}$ of $f_i$ (which we simply write $d^{ 2}f_i$ here) to a pair of vectors and returns the scalar product of the result with $z-x_i$.
We already know that $F$ is a submersion; so is $G_j$, because $V$ is chosen (among other open conditions) so that the matrix $A_i^\top A_i -B_i$ be of rank at least $m-1$ for any point in $V$. Furthermore, it is easy to show that the conditions ``$F=0$" and ``$G_j = 0$" are transverse to each other, essentially because ``$F=0$" does not constrain $(d^2f_i)_{m_i}$ in any way. Hence  $F^{-1}(\{0\}) \cap G_j^{-1}(\{0\}) \subset V$ is a submanifold of codimension $sm+1$ in $J^r(M,\R^D)^s$, and $(W^i_3)'\cap (V\times \R^D)$ is a submanifold of codimension $sm+D+1$ in $J^r(M,\R^D)^s\times \R^D$. This proves the claim, and thence the lemma.
\end{proof}

As for Lemma \ref{lem:w2_structure}, we choose  for a given $M$ and for each $s,r\in \mathbb{N}$ with $2\leq s\leq D+1$ and $2\leq r\leq k-1$ a decomposition $W_3^{(s)} = \cup_{\beta \in B} W_{3,\beta}^{(s)}$ of $W_3^{(s)}$ into a finite union of closed Whitney stratifiable sets of codimension at least $sm +1$ whose existence is guaranteed by Lemma \ref{lem:w3_structure}, and we simply refer to it as ``the decomposition from Lemma \ref{lem:w3_structure}" or ``the sets $\{W_{3,\beta}^{(s)}\}_{\beta \in B}$ from  Lemma \ref{lem:w3_structure}"  in what follows.

\begin{lemma}\label{lem:P2_P3}
For any $2\leq r \leq k-1$, if $i\in \Emb^k(M,\R^D)$ is such that $\M=i(M)$ satisfies condition \ref{P0} and  that for all $s=2,\dots,D+1$, $ j^r_s(i) : M^{(s)} \rightarrow  J_s^r(M,\R^D) $ is transverse to the sets $\{W_{2,\alpha}^{(s)}\}_{\alpha \in A}$ from Lemma \ref{lem:w2_structure} and to the sets  $\{W_{3,\beta}^{(s)}\}_{\beta \in B}$ from Lemma \ref{lem:w3_structure}, then $\M =i(M)$ satisfies conditions 
\begin{enumerate}[start=2, label={(P\arabic*)}]
    \item The set $Z(\M)$ is finite. 
    \item  For every $z_0\in Z(\M)$ and every $x_0\in \pi_\M(z_0)$, the sphere $S(z_0,d_\M(z_0))$ is non-osculating $\M$ at $x_0$.
    \end{enumerate}
\end{lemma}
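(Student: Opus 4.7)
The key correspondence is: every critical point $z_0 \in Z(\M)$ with $\pi_\M(z_0) = \{x_1,\ldots,x_s\}$ (so $2 \leq s \leq D+1$ by \ref{P0}) and $m_k := i^{-1}(x_k)$ produces a multijet $(m_1,\ldots,m_s,x_1,\ldots,x_s,[i],\ldots,[i])$ lying in $W_2^{(s)}$: the witness $z = z_0$ satisfies all three conditions of Definition \ref{def:W2}, since $z_0 \in \Conv(x_1,\ldots,x_s)$, $\|z_0 - x_k\| = d_\M(z_0)$ for all $k$, and $\Im((di)_{m_k}) = T_{x_k}\M \perp (z_0 - x_k)$ because $x_k$ projects $z_0$ orthogonally onto $\M$.

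I would prove \ref{P3} first. Suppose some sphere $S(z_0, d_\M(z_0))$ osculated $\M$ at a projection $x_j$. Since $x_j$ is a local minimum of $x \mapsto \|x - z_0\|^2$ on $\M$, the quadratic form of \Cref{lem:characterization_osculation}.\ref{it:osc_chart} (taken with $\alpha = 0$) is automatically positive semidefinite; osculation is exactly the failure of positive definiteness, i.e.~degeneracy. Hence the above multijet would lie in $W_3^{(s)}$. But each $W_{3,\beta}^{(s)}$ has codimension $\geq sm + 1$ in $J^r_s(M,\R^D)$, and $\dim M^{(s)} = sm$, so transversality of $j^r_s(i)$ to each $W_{3,\beta}^{(s)}$ forces the preimage to have negative dimension and therefore to be empty. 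This contradiction proves \ref{P3}.

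For \ref{P2}, the analogous codimension count on $W_2^{(s)}$ (codimension $\geq sm = \dim M^{(s)}$) combined with the finite decomposition of \Cref{lem:w2_structure} yields that $K_s := j^r_s(i)^{-1}(W_2^{(s)})$ is a closed, locally finite subset of $M^{(s)}$; moreover, distinct critical points with $s$ projections give disjoint $S_s$-orbits in $K_s$ (a critical point is determined by its projection set as the center of its smallest enclosing ball). The set $Z(\M)$ is compact: it is bounded since contained in $\Conv(\M)$, closed in $\R^D \setminus \M$ by lower semicontinuity of $\|\nabla d_\M\|$, and bounded away from $\M$ by the reach $\tau(\M) > 0$. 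If $Z(\M)$ were infinite, I would extract pairwise distinct $z_n \to z_* \in Z(\M)$ with $s = |\pi_\M(z_n)|$ constant, choose orderings producing tuples $(m_1^n,\ldots,m_s^n) \in K_s$, and pass to a further subsequence so that $m_k^n \to m_k^* \in M$.

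If all $m_k^*$ are distinct, then $(m_1^*,\ldots,m_s^*) \in M^{(s)}$ belongs to $K_s$ by closedness and is approximated by distinct tuples of $K_s$, contradicting local finiteness. Otherwise some $m_k^* = m_l^*$; here I would invoke the just-proved \ref{P3}, applying \Cref{lem:lipschitz_projection} separately at each projection of $z_*$ and combining with \Cref{lem:elementary_metric}, to obtain $\delta' > 0$ and local projections $p_1,\ldots,p_{s''}$ on $B(z_*,\delta')$, with $s'' = |\pi_\M(z_*)|$ and the balls around the $y_\kappa := p_\kappa(z_*)$ chosen pairwise disjoint, satisfying $\pi_\M(z) \subset \{p_1(z),\ldots,p_{s''}(z)\}$ for $z$ near $z_*$. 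For large $n$, each $x_k^n$ equals some $p_{\kappa(k,n)}(z_n)$, and $k \mapsto \kappa(k,n)$ is injective since the $p_\kappa(z_n)$ remain distinct; a further subsequence makes $\kappa(k,n) = \kappa(k)$ constant, so the limits $i(m_k^*) = p_{\kappa(k)}(z_*) = y_{\kappa(k)}$ are pairwise distinct, contradicting $m_k^* = m_l^*$. I expect this second case to be the main obstacle: ruling out critical points whose projections merge in the limit requires \ref{P3} together with an ad hoc local projection construction that bypasses the circular use of \Cref{lem:local_projections_well_defined}, which itself presupposes \ref{P2}.
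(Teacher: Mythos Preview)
Your proposal is correct and follows essentially the same approach as the paper's own proof: the same correspondence between critical points and multijets in $W_2^{(s)}$, the same codimension argument for \ref{P3}, and the same two-case compactness argument for \ref{P2} (distinct limit projections versus merging ones, the latter ruled out via \ref{P3} and local unicity of the projection). Your treatment of the merging case is slightly more explicit than the paper's in that you carefully avoid invoking \Cref{lem:local_projections_well_defined} (which presupposes \ref{P2}) and instead rebuild the needed conclusion directly from \Cref{lem:lipschitz_projection} and \Cref{lem:elementary_metric}; the paper simply cites \Cref{lem:lipschitz_projection} for local unicity, which amounts to the same thing.
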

As will become apparent in the proof of Lemma \ref{lem:P2_P3}, condition \ref{P3} is a consequence of the transversality to the sets $\{W_{3,\beta}^{(s)}\}_{\beta \in B}$ only. On the other hand, we need transversality to both collections of sets to get condition \ref{P2}: transversality to the sets $\{W_{2,\alpha}^{(s)}\}_{\alpha \in A}$ is required so that the dimension of the set of critical points $Z(i(M))$ be $0$, and transversality to the sets   $\{W_{3,\beta}^{(s)}\}_{\beta \in B}$ so that $Z(i(M))$  have no accumulation point.

\begin{proof}

Fix $2\leq r\leq k -1$ and $i\in \Emb^k(M,\R^D)$ as in the statement.
As we assume that $i(M)$ satisfies condition \ref{P0}, we know that any critical point $z$ of $i(M)$  is such that the set $\pi_{i(M)}(z)$ of its projections on $i(M)$ is of cardinality at most $D+1$.
Let us write as $Z(i(M))_s$ the points $z\in Z(i(M))$ such that $\pi_{i(M)}(z)$  is of cardinality exactly $2\leq s\leq D+1$.

Let $z\in Z(i(M))_s$  and let us write $\pi_{i(M)}(z)= \{x_1,\ldots, x_s\}$. We also call $m_1,\ldots,m_s\in M$ the points such that $i(m_j) = x_j$.
By definition $d(x_1,z) = \ldots = d(x_s,z)$, and  $T_{x_j}i(M) = \Im(di_{m_j}:T_{m_j}M \rightarrow T_{x_j}\R^D)$ must be perpendicular to $z-x_j$ for $j\in [s]$ for $x_j$ to belong to $\pi_{i(M)}(z)$.
Furthermore, the point $z$ must belong to the convex hull of $x_1,\ldots,x_s$, as it is equal to $m(\{x_1,\dots,x_s\})$, the center of the smallest enclosing ball of $\{x_1,\dots,x_s\}$ (see Section 2.1 of \cite{Chazal_CS_Lieutier_OGarticle}).

This shows that $j^r_s(i)(m_1,\ldots, m_s) = (m_1,\ldots,m_s,x_1,\ldots,x_s,[i],\ldots,[i])$ and $z$ satisfy the conditions in the definition of $W_2^{(s)}$, hence that $j^r_s(i)(m_1,\ldots, m_s)$ belongs to $W_2^{(s)}\cap \Im(j^r_s(i)) \subset J^r_s(M,\R^D)$. In other words, each point $z$ of $Z(i(M))_s$ corresponds to a point $j^r_s(i)(m_1,\ldots, m_s)$ in the intersection $W_2^{(s)}\cap \Im(j^r_s(i)) $. Note that the converse is not true: there can be points $(m_1,\ldots,m_s,x_1,\ldots,x_s,[i],\ldots,[i]) \in \Im(j^r_s(i))$ and $z\in\R^D$ that satisfy the conditions of $W_2^{(s)}$, yet such that $z$ is not a critical point of $i(M)$ (simply because the points $x_j$ are not the closest points to $z$ in $i(M)$).

Furthermore, for any point $(m_1,\ldots,m_s,x_1,\ldots,x_s,[f_1],\ldots,[f_s]) \in J^r_s(M,\R^D) $, there can be at most a single point $z$ for which the conditions of $W_2^{(s)}$ are satisfied, this point $z$ being equal to the center of the smallest enclosing ball of $\{x_1,\dots,x_s\}$ (see Section 2.1 of \cite{Chazal_CS_Lieutier_OGarticle}). 
Consequently, there is an injection $\psi : Z(i(M))_s \xhookrightarrow{} W_2^{(s)}\cap \Im(j^r_s(i)) $. 

Consider $z$ a point of $Z(i(M))_s$ with $\pi_{i(M)}(z) = \{x_1,\ldots, x_s\}$ and 
\[\psi(z) = (m_1,\ldots,m_s,x_1,\ldots,x_s,[i],\ldots,[i])\]
the associated point of  $ W_2^{(s)}\cap \Im(j^r_s(i))$. Then by definition of $W_3^{(s)}\subset W_2^{(s)}$, $\psi(z) \in W_3^{(s)}$ if and only if there exists $j\in [s]$ such that for any chart $\phi : (U,0) \rightarrow (M, m_j)$ the form  $v\mapsto \|d(i\circ \phi)_0(v)\|^2 - \dotp{z-x_j,d^2(i\circ \phi)_0(v,v)}$ is degenerate. For a fixed $j\in [s]$, this condition is equivalent to the sphere $S(z,d_{i(M)}(z))$ osculating $i(M)$ at $x_j$, as shown in the Osculation Characterization Lemma \ref{lem:characterization_osculation}.
In other words, we only need $ W_3^{(s)}\cap \Im(j^r_s(i))$  to be empty for $s=2,\ldots, D+1$ for the embedding $i$ to satisfy condition \ref{P3}.

But the strata of the Whitney stratified sets $ W_{3,\beta}^{(s)}$ have codimension at least $sm+1$ (see Lemma \ref{lem:w3_structure}), and the map  $ j^r_s(i) : M^{(s)} \rightarrow  J_s^r(M,\R^D) $ intersects them transversally by hypothesis.
As $M^{(s)}$ is of dimension $sm$, the image $\Im(j^r_s(i))$ can only have empty intersections with the strata of each $ W_{3,\beta}^{(s)}$, hence with $W_3^{(s)}$ as a whole. We have shown that $i$ verifies condition \ref{P3}.

Similarly, the strata of the Whitney stratified sets $ W_{2,\alpha}^{(s)}$ have codimension at least $sm$, hence the image $\Im(j^r_s(i))$ can only have a non-empty intersection with the strata of codimension $sm$ of  $ W_{2,\alpha}^{(s)}$, and that intersection must be of dimension $0$ and be discrete. The same holds for $ W_2^{(s)}\cap \Im(j^r_s(i))$.

Let us assume by contradiction that $i$ does not satisfy condition \ref{P2}; there must be an index $2\leq s\leq D+1$ such that $Z(i(M))_s$ contains infinitely many points. As each critical point $z$ and its projections $\pi_{i(M)}(z)$ belong to the compact set $\Conv (i(M))$, we can find a sequence of critical points $(z_n)_{n\in \mathbb{N}}$ with projections $\pi_{i(M)}(z_n) = \{x_{n,1},\ldots,x_{n,s}\}$, whose preimages by $i$ we write $\{m_{n,1},\ldots,m_{n,s}\} \subset M$,  such that $\lim_{n\rightarrow \infty} z_n = z$, $\lim_{n\rightarrow \infty} m_{n,j} =  m_j$ and $\lim_{n\rightarrow \infty} x_{n,j} =  x_j$ for some points $z \in \R^D$, $m_1,\ldots,m_s \in M$ and $x_1,\ldots,x_s \in i(M)$ (with of course $i(m_j) = x_j$).

If the points $m_j$ (or equivalently the points $x_j$) are pairwise distinct, the sequence of points $\psi(z_n) = (m_{n,1},\ldots,m_{n,s},x_{n,1},\ldots,x_{n,s},[i],\ldots, [i]) \in W_2^{(s)}\cap \Im(j^r_s(i)) $ converges to 
\[(m_{1},\ldots,m_{s},x_{1},\ldots,x_{s},[i],\ldots, [i]) \in J^r_s(M,\R^D),\] which does belong to $W_2^{(s)}\cap \Im(j^r_s(i))$ as well (as $W_2^{(s)}\cap \Im(j^r_s(i))$ is closed in $J^r_s(M,\R^D)$). Hence we have found an infinite sequence with an accumulation point in $W_2^{(s)}\cap \Im(j^r_s(i))$, contradicting its discrete nature - this is impossible.

Let us now consider the case where the points $m_j$ are not pairwise distinct. As $i(M)$ has positive reach, there cannot be critical points arbitrarily close to it; consequently $z$ does not belong to $i(M)$. Furthermore, as the norm $\|\nabla d_{i(M)} \|$ of the generalized gradient is lower semicontinuous, we have $\nabla d_{i(M)}(z) = 0$, hence $z$ is a critical point. By continuity, the limit points $x_j$ are also such that $d(x_j,z) = d(i(M),z)$, hence $\{x_1,\ldots,x_s\} \subset \pi_{i(M)} (z)$.
As the points $m_j$ are not pairwise distinct, we can assume (up to permuting some indices) that $m_1 = m_2$, hence that $\lim_{n\rightarrow \infty} x_{n,1} = \lim_{n\rightarrow \infty} x_{n,2} = x_1$. This means that for $n$ large enough, there are points $z_n$ arbitrarily close to a critical point $z$ such that they have two distinct projections $x_{n,1}$ and $x_{n,2}$ themselves arbitrarily close to a projection $x_1$ of $z$. As we have shown earlier that $i$ satisfies condition \ref{P3}, we know that the sphere $S(z,d_{i(M)}(z))$ does not osculate $i(M)$ at any of the $x_j$: this yields a contradiction with the local unicity of the projection from Lemma \ref{lem:lipschitz_projection}.
Hence $i$ satisfies condition \ref{P2} as well, as claimed.
\end{proof}
It might have been possible to handle all cases $s\in \{2,\ldots, D+1\}$ at once, and in particular the case of the sequences of projections $x_{n,1}$ and $x_{n,2}$ having the same limit, using some compactification of $J^r_{D+1}(M,\R^D)$ such as the one recently defined in \cite{letendre2023multijet}. As there is no version of Thom and Mather's transversality theorem adapted to such a compactification yet, we favoured a  more elementary approach.

\begin{remark}
A tube in $\R^3$ is a simple example (that can easily be compactified) of a manifold that satisfies neither \ref{P2} nor \ref{P3}.
\end{remark}

\subsubsection{Condition \ref{P1}, part 2}\label{subsubsec:P1''}

We will now handle the second part of Condition \ref{P1}, i.e.~the fact that each critical point $z\in Z(i(M))$ must belong to the relative interior of the convex hull of its projections on $i(M)$, in order to avoid the situation illustrated earlier in Example \ref{ex:P1''_not_satisfied}.

\begin{definition}\label{def:W1}
For any $s,r\in \mathbb{N}$ with $s\geq 3$ and $1\leq r\leq k $, we let $W_1^{(s)}$ be the set of multijets $(m_1,\ldots,m_s,x_1,\ldots, x_s, [f_1], \ldots, [f_s]) \in J^r_s(M,\R^D)$ that belong to $W_2^{(s)}$ and such that the (unique) point $z\in \R^D$ satisfying the conditions in the definition of $W_2^{(s)}$ belongs to $\Conv(x_1,\dots,x_{s-1})$.
\end{definition}

\begin{lemma}\label{lem:w1_structure}
    For any $s,r\in \mathbb{N}$ with $3 \leq s\leq D+1$ and $1\leq r\leq k-1$, the set $W_1^{(s)}$ is a finite union of closed Whitney stratifiable subsets $W_{1,\gamma}^{(s)}$ of  $J^r_s(M,\R^D)$ of codimension at least $ sm + 1$.
\end{lemma}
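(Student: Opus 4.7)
My plan is to follow the proof of Lemma \ref{lem:w2_structure} almost verbatim, exploiting the fact that $W_1^{(s)}$ is obtained from $W_2^{(s)}$ by adding a single additional algebraic equation of codimension $1$.

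First, I would mimic the closedness argument. Define $\tilde W_1^{(s)} \subset J^r(M,\R^D)^s$ by the same conditions as $\tilde W_2^{(s)}$ in \eqref{eq:def_tildeW2}, with the additional requirement that there exist $\mu_1, \ldots, \mu_{s-1} \in [0,1]$ with $\sum_{i=1}^{s-1} \mu_i = 1$ and $z = \sum_{i=1}^{s-1} \mu_i x_i$. By compactness of the standard simplex, this additional condition is preserved under limits (one extracts a convergent subsequence of the $\mu_i$'s just as is done for the $\lambda_i$'s in the proof of Lemma \ref{lem:w2_structure}), so $\tilde W_1^{(s)}$ is closed in $J^r(M,\R^D)^s$, and $W_1^{(s)} = \tilde W_1^{(s)} \cap J^r_s(M,\R^D)$ is closed in $J^r_s(M,\R^D)$.

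Second, in the coordinates of any adapted chart, $W_1^{(s)}$ is semialgebraic for exactly the same reason as $W_2^{(s)}$: the only added defining condition is an existential statement over the $\mu_i$'s involving polynomial (in fact linear) equalities and inequalities, which Tarski--Seidenberg eliminates. Applying the same finite cover of $J^r(M,\R^D)^s$ by domains of adapted charts as in the proof of Lemma \ref{lem:w2_structure} and using Thom's theorem that semialgebraic sets are Whitney stratifiable, I get the desired decomposition $W_1^{(s)} = \bigcup_{\gamma} W_{1,\gamma}^{(s)}$ as a finite union of closed Whitney stratifiable sets.

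Third, for the codimension bound, I would introduce an auxiliary set $W_1'$ in $J^r(M,\R^D)^s \times \R^D$ analogous to $W_2'$ in \eqref{eq:def_W2'}, by taking $W_2'$ and adding the single scalar equation $\lambda_s(x_1, \ldots, x_s, z) = 0$, where $\lambda_s$ denotes the last barycentric coordinate of $z$ with respect to the affinely independent tuple $(x_1, \ldots, x_s)$; this function is well-defined and smooth on the open set where the $x_i$'s are affinely independent, and its differential with respect to $z$ is nonzero there (it is the linear form on $\R^D$ whose kernel contains the affine hyperplane $\mathrm{Aff}(x_1,\ldots,x_{s-1})$). Since the defining equations of $W_2'$ as a submanifold constrain only the first-order jets $A_i$ (via $F$ from \eqref{eq:def_F}) and the pair $(x_1,\ldots,x_s,z)$ only through the graph condition $z = z(x_1,\ldots,x_s)$, adding the equation $\lambda_s = 0$ is a transverse codimension-$1$ condition: the composite map $(F, \lambda_s)$ remains a submersion on the relevant open set. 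Hence $W_1'$ is a submanifold of codimension $sm + D + 1$ in $J^r(M,\R^D)^s \times \R^D$. Applying the same projection argument via Tarski--Seidenberg and \cite[Proposition 1.12]{Coste2002ANIT}, I conclude that $\tilde W_1^{(s)}$ (which equals the closure of the image of $W_1'$ under projection to $J^r(M,\R^D)^s$) has dimension at most $\dim W_1' - D = \dim J^r_s(M,\R^D) - sm - 1$, so each $W_{1,\gamma}^{(s)}$ has codimension at least $sm + 1$.

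I anticipate no substantial obstacle beyond careful bookkeeping; the main point to check is that the added scalar equation $\lambda_s = 0$ is indeed transverse to the defining equations of $W_2'$, which reduces to the fact that $\lambda_s$ is a non-constant affine function in the $z$-variable on the open locus of affine independence. Everything else is a word-for-word adaptation of the proof of Lemma \ref{lem:w2_structure}.
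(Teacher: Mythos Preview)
Your approach is essentially correct and parallel to the paper's, but there are two points where your write-up is loose.

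First, a literal reading of ``taking $W_2'$ and adding the single scalar equation $\lambda_s = 0$'' gives the empty set: the definition \eqref{eq:def_W2'} of $W_2'$ already demands $\lambda_s \in\, ]0,1[$. You need instead to replace that open condition by $\lambda_s = 0$ (keeping $\lambda_1,\dots,\lambda_{s-1}\in\,]0,1[$), which is presumably what you intend.

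Second, and more substantively, your transversality justification does not quite close. You argue that $\partial\lambda_s/\partial z \neq 0$, but on $W_2'$ the variable $z$ is not free: it is the circumcenter $z(x_1,\dots,x_s)$. What you actually need is that the composite $\tilde\lambda_s(x_1,\dots,x_s) := \lambda_s(x_1,\dots,x_s, z(x_1,\dots,x_s))$ is a submersion at its zero locus. This is true, but it does not follow from $\partial_z\lambda_s\neq 0$ alone; one must check that varying the $x_i$'s (say $x_s$) genuinely moves $\tilde\lambda_s$.

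The paper sidesteps both issues by changing the parameterization: instead of viewing $z$ as the circumcenter of all $s$ points and imposing $\lambda_s=0$, it takes $z = z(x_1,\dots,x_{s-1})$ to be the circumcenter of only the first $s-1$ points, and then imposes the scalar equation
\[
H = \|z - x_s\|^2 - \|z - x_1\|^2 = 0.
\]
Since $H$ depends explicitly on $x_s$ with $\partial_{x_s} H = -2(z - x_s) \neq 0$, the submersion check is immediate, and transversality with $F=0$ is clear because $H$ does not involve the matrices $A_i$. The two auxiliary sets $W_1'$ coincide, so the difference is purely in how one verifies the codimension bound; the paper's choice makes that verification a one-liner.
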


\begin{proof}

The proof is yet again almost identical to that of Lemma \ref{lem:w2_structure}; we omit most details and only comment on the few meaningful differences. As in the previous proofs, we fix $s\in \{3,\dots,D+1\}$ and write $W_1=W_1^{(s)}$. 
The definition of $W_1$ is the same as that of $W_2$, except that we ask that  $z\in \Conv (x_1,\ldots,x_{s-1}) $ rather than $z\in \Conv (x_1,\ldots,x_{s-1},x_{s}) $. This simply replaces one closed semialgebraic condition by another; the only non-trivial point is to show that this change does increase the codimension by one compared to $W_2$.

In the proof of Lemma \ref{lem:w2_structure}, we defined an auxiliary set $W_2' \subset J^r(M,\R^D)^s\times \R^D $ such that $\overline{\pi(W_2')}\cap J_s^r(M,\R^D) = W_2$ and such that $\dim W_2 \leq \dim W_2'$, see \eqref{eq:def_W2'}. Similarly, we can define the set
\begin{equation}\label{eq:def_W1'}
\begin{split}
   W'_1 := &\{((m_1,\ldots,m_s,x_1,\ldots, x_s, [f_1], \ldots, [f_s]),z) \in J^r(M,\R^D)^s\times \R^D : \;\\
   &\qquad   \|x_1 -z\|^2 = \ldots = \|x_s-z \|^2, \; \forall i\in [s],\; \Im((df_i)_{m_i} : T_{m_i}M \rightarrow T_{x_i}\R^D) \perp (z-x_i), \\&
   \qquad \text{rank}((df_i)_{m_i} ) = m,\; \exists \lambda_1,\ldots, \lambda_{s-1} \in ]0,1[ \text{ s.t. } \sum_{i=1}^{s-1} \lambda_i = 1,\; z= \sum_{i=1}^{s-1} \lambda_i x_i \\
   &\qquad  \text{and }\dim \text{Aff}(x_1,\ldots,x_s) = s-1\}.
   \end{split}
\end{equation}
We claim that the set $ W_1' $ is a submanifold of codimension  $sm+D+1$ in $J^r(M,\R^D)^s\times \R^D$ (compared to the codimension $sm+D$ of $W_2'$). Once this has been shown, the rest of the proof is the same as that of Lemma \ref{lem:w2_structure}.
This is a local property; in what follows, we implicitly place ourselves in the coordinates of an adapted chart $(\phi, U)$ of $ J_s^r(M,\R^D) $, without explicitly saying that we are considering the intersection of $W_1'$ with $U\times \R^D$ or constantly specifying "in the coordinates of...".

Let $(a,z) $ be a point of $W_1'$, with $a\in J^r(M,\R^D)^s$. As in the proof of Lemma \ref{lem:w2_structure}, the conditions that $\dim \text{Aff}(x_1,\ldots,x_s) = s-1$, that $  \text{rank}((df_i)_{m_i} ) = m \text{ for } i=1,\ldots,s$ and that the center of the circumsphere of $x_1,\ldots,x_{s-1}$ belongs to the relative interior of $\Conv(x_1,\ldots,x_{s-1})$ are open, and remain true in a small neighborhood $V$ of $a$, hence in the neighborhood $V\times \R^D$ around $(a,z)$. 
Likewise, for $(b,z)= ((m_1,\ldots,m_s,x_1,\ldots, x_s, [f_1], \ldots, [f_s]),z)$ in $V\times \R^D$, the conditions that $\|x_1 -z\|^2 = \ldots = \|x_{s-1}-z \|^2$ and that there exist $\lambda_1,\ldots, \lambda_{s-1} \in ]0,1[$ such that $\sum_{i=1}^{s-1} \lambda_i = 1 $ and $z= \sum_{i=1}^{s-1} \lambda_i x_i$ means that the coordinates of $z$ are a (smooth) function $z(x_1,\ldots,x_{s-1})$ of the coordinates of $x_1,\ldots, x_{s-1}$, as $z$ must be the center of the circumsphere of $x_1,\ldots, x_{s-1}$.

Similarly to the proofs of Lemmas \ref{lem:w2_structure} and \ref{lem:w2_structure}, the set $W_1'\cap (V \times \R^D) \subset J^r(M,\R^D)^s\times \R^D$ is the graph by the function $(m_1,\ldots,m_s,x_1,\ldots, x_s, [f_1], \ldots, [f_s])\mapsto z(x_1,\ldots, x_{s-1})$ of the intersection $F^{-1}(\{0\}) \cap H^{-1}(\{0\})$  of the zero sets of two maps. Here, the map $F$ is defined in \eqref{eq:def_F} and the map $H:V\to \R$ is given by
\begin{equation}
    H(m_1,\ldots,m_s,x_1,\ldots, x_s,  d^{\geq 1}f_1, \ldots, d^{\geq 1}f_s) =\|z-x_s\|^2 - \|z-x_1\|^2,
\end{equation}
where $z = z(x_1, \ldots,x_{s-1})$ and $d^{\geq 1}f_i$ represents the differentials of $f_i$ of order greater than or equal to $1$  at $m_i$.

We already know that $F$ is a submersion and so is $G$ (in particular because $z\neq x_s$). The conditions ``$F=0$" and ``$H=0$" are transverse to each other essentially because ``$H=0$" does not constrain the differentials $(df_i)_{m_i}$ (represented by the matrices $A_i$) in any way.
 Hence  $F^{-1}(\{0\}) \cap H^{-1}(\{0\}) \subset V$ is a submanifold of codimension $sm+1$ in $J^r(M,\R^D)^s$, and $W_1'\cap (V\times \R^D)$ is a submanifold of codimension $sm+D+1$ in $J^r(M,\R^D)^s\times \R^D$. This proves the claim, and thence the lemma.
\end{proof}

As for Lemmas \ref{lem:w2_structure} and \ref{lem:w3_structure}, we choose  for a given $M$ and for each $s,r\in \mathbb{N}$ with $3\leq s \leq D+1$ and $1\leq r\leq k-1$ a decomposition $W_1^{(s)} = \cup_{\gamma \in C} W_{1,\gamma}^{(s)} $ of $W_1^{(s)} $ into a finite union of closed Whitney stratifiable sets of codimension at least $sm +1$ whose existence is guaranteed by Lemma \ref{lem:w1_structure}, and we simply refer to it as ``the decomposition from Lemma \ref{lem:w1_structure}" or ``the sets $\{W_{1,\gamma}^{(s)} \}_{\gamma \in C}$ from  Lemma \ref{lem:w1_structure}"  in what follows.

\begin{lemma}\label{lem:P1}
For any $1\leq r \leq k-1$, if $i\in \Emb^k(M,\R^D)$ is such that $\M = i(M)$ satisfies condition \ref{P0} and that for all $s=3,\ldots, D+1$, $ j^r_s(i) : M^{(s)} \rightarrow  J_s^r(M,\R^D) $ is transverse to the sets $\{W_{1,\gamma}^{(s)} \}_{\gamma \in C}$ from Lemma \ref{lem:w1_structure} , then $\M$ satisfies condition 
\begin{enumerate}[start=1, label={(P\arabic*)}]
\item For every $z_0\in Z(\M)$, the projections $\pi_\M(z_0)$ are the vertices of a non-degenerate simplex of $\R^D$. In particular, $z_0$ has at most  $D+1$ projections. Moreover, the point $z_0$ belongs to the relative interior of the convex hull of $\pi_\M(z_0)$ 
\end{enumerate}
\end{lemma}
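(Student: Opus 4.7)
The plan is to follow the same template as in the proofs of Lemmas \ref{lem:P0} and \ref{lem:P2_P3}: translate the failure of \ref{P1} into the existence of a point in the intersection of $\Im(j^r_s(i))$ with the ``bad'' set $W_1^{(s)}$, and then use transversality plus dimension counting to rule this out.

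First, \ref{P0} (which is part of the hypothesis) already gives the first statement of \ref{P1}: every critical point $z_0 \in Z(\M)$ has a set of projections $\pi_\M(z_0) = \{x_1,\dots,x_s\}$ with $s \leq D+1$, forming the vertices of a non-degenerate simplex. It remains only to show that $z_0$ lies in the relative interior of $\Conv(x_1,\dots,x_s)$. For $s = 1$, the norm of the generalized gradient is $1 \neq 0$, so $z_0$ cannot be critical. For $s = 2$, the critical point $z_0 = m(\{x_1,x_2\})$ is the midpoint of $[x_1,x_2]$ and automatically lies in the relative interior, so nothing needs to be checked. Thus we may assume $s \geq 3$.

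Suppose by contradiction that $z_0$ lies on the boundary of $\Conv(x_1,\dots,x_s)$. Since $z_0 = m(\{x_1,\dots,x_s\}) \in \Conv(x_1,\dots,x_s)$ and the simplex is non-degenerate, $z_0$ lies in the interior of some proper face; after relabelling we may assume $z_0 \in \Conv(x_1,\dots,x_{s-1})$. Let $m_1,\dots,m_s \in M$ be the preimages $i^{-1}(x_j)$. Exactly as in the proof of Lemma \ref{lem:P2_P3}, we have
\[
j^r_s(i)(m_1,\dots,m_s) = (m_1,\dots,m_s, x_1,\dots,x_s, [i],\dots,[i]) \in W_2^{(s)},
\]
because $z_0$ is equidistant from the $x_j$ and $T_{x_j}\M \perp (z_0-x_j)$, and the unique point $z$ of Definition \ref{def:W2} associated to this multijet is $z_0$ itself. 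Since $z_0 \in \Conv(x_1,\dots,x_{s-1})$ by assumption, Definition \ref{def:W1} gives $j^r_s(i)(m_1,\dots,m_s) \in W_1^{(s)}$, hence it lies in one of the strata $W_{1,\gamma}^{(s)}$ of the decomposition from Lemma \ref{lem:w1_structure}.

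The contradiction comes from dimension counting. By hypothesis, $j^r_s(i) : M^{(s)} \to J^r_s(M,\R^D)$ is transverse to every stratum of every $W_{1,\gamma}^{(s)}$, and each such stratum has codimension at least $sm+1$ in $J^r_s(M,\R^D)$ by Lemma \ref{lem:w1_structure}. As $\dim M^{(s)} = sm < sm+1$, transversality forces the preimage $j^r_s(i)^{-1}(W_{1,\gamma}^{(s)})$ to be empty for every $\gamma$, and so $\Im(j^r_s(i)) \cap W_1^{(s)} = \emptyset$. This contradicts the membership established above, proving that $z_0$ must lie in the relative interior of $\Conv(\pi_\M(z_0))$ and completing the proof of \ref{P1}. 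The only nontrivial input is Lemma \ref{lem:w1_structure}, already established; the rest is the standard transversality-plus-dimension argument used throughout this section.
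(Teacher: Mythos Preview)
Your proof is correct and follows essentially the same approach as the paper's: both reduce to showing that a boundary-located critical point would produce a multijet in $W_1^{(s)}$, then use transversality together with the codimension bound $\geq sm+1$ from Lemma~\ref{lem:w1_structure} to force $\Im(j^r_s(i)) \cap W_1^{(s)} = \emptyset$. The only cosmetic difference is that you dispose of the cases $s=1,2$ upfront, while the paper handles $s=2$ inside the contradiction argument.
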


\begin{proof}
Fix $1\leq r\leq k-1$ and $i\in \Emb^k(M,\R^D)$ as in the statement.
As we assume that $i$ satisfies condition \ref{P0}, we know that any critical point $z$ of $i(M)$  is such that its projections $\pi_{i(M)}(z)$ are the vertices of a non-degenerate simplex of $\R^D$, and that in particular there can be no more than $D+1$ of them. We only have to show that each such $z$ must belong to the relative interior of this simplex for \ref{P1} to be true.

Let $z\in Z(i(M))$ with $\pi_{i(M)}(z)= \{x_1,\ldots, x_s\}$ for some $s=2,\ldots, D+1$. Let us assume, by way of contradiction, that $z$ does not belong to the relative interior of $\Conv(x_1,\ldots,x_s)$. As we have shown in the proof of Lemma \ref{lem:P2_P3} that $z$ does belong to $\Conv(x_1,\ldots,x_s)$, it must then belong to a strict face of the simplex, hence to the convex hull of a strict subset of its vertices. If $s=2$, we already find a contradiction (as $z$ cannot be equal to any of the $x_i$). Otherwise, up to reordering the vertices, we can assume that $z \in \Conv(x_1,\ldots,x_{s-1})$. As in the proof of Lemma \ref{lem:P2_P3}, we also have that the point $j^r_s(i)(m_1,\ldots, m_s) = (m_1,\ldots,m_s,x_1,\ldots,x_s,[i],\ldots,[i])$ and $z$ are such that $d(x_1,z) = \ldots = d(x_s,z)$ and that $\Im((df_i)_{m_i}) \perp (z-x_i)$ for $i=1,\ldots,s$. Thus, the point $j^r_s(i)(m_1,\ldots, m_s)$ must belong to $W_1^{(s)}\cap \Im(j^r_s(i))$.

But the strata of the Whitney stratified sets $ W_{1,\gamma}^{(s)}$ have codimension at least $sm+1$ (see Lemma \ref{lem:w3_structure}), and the map  $ j^r_s(i) : M^{(s)} \rightarrow  J_s^r(M,\R^D) $ intersects them transversally by hypothesis.
As $M^{(s)}$ is of dimension $sm$, the image $\Im(j^r_s(i))$ can only have empty intersections with the strata of each $ W_{1,\gamma}^{(s)}$, hence with $W_1^{(s)}$ as a whole. This yields a contradiction, thus we have shown that $i(M)$ verifies condition \ref{P1}.    
\end{proof}

\subsubsection{Condition \ref{P4}}\label{subsubsec:P4}

The somewhat delicate case of condition \ref{P4} is all that remains; to tackle it, we will need the results from Section \ref{sec:BSP}.
Similarly to how we proceeded earlier, we want to define a family of sets $W_4^{(s)} \subset J^r_s(M,\R^D)$ such that transversality to them will yield the Big Simplex Property \ref{BSP}, and consequently condition \ref{P4}.
 To do so, we will use the expression from Lemma \ref{lem:diff_projection_in_charts} of the differential of the projection in terms of charts of our $C^k$ compact manifold $M$, as it is compatible with the formalism of jets.

Let $m_0\in M$,  $x_0,z_0\in \R^D$ and $\phi : (V,u_0) \rightarrow (M, m_0)$ be any chart of $M$ around $m_0$. Let also   $f\in C^2(M,\R^D)$ be some function that maps $m_0$ to $x_0$ such that
$df_{m_0}$ has full rank, that $\Im(df_{m_0}) \perp z_0-x_0$, and that the quadratic form
$ v\mapsto \|d(f\circ \phi)_{u_0}(v)\|^2 - \dotp{z_0-x_0,d^2(f\circ \phi)_{u_0}(v,v)}$
is  positive definite. Note that these conditions depend only of the value and the first and second differentials of $f$ at $m_0$.
Then, Lemma \ref{lem:diff_projection_in_charts} guarantees the existence of $\delta>0$ and of a neighborhood $U\subset M$ of $m_0$ such that 
$f|_U : U \rightarrow \R^D$ is an embedding and that each $z\in B(z_0,\delta)$ admits a unique projection $p(z)$ on $\overline{f(U)}$ with $p(z)\in f(U)$. The lemma also gives an expression for the differential $d p_{z_0}:\R^D \rightarrow  \Im(df_{m_0})$ at $z_0$ of $p:B(z_0,\delta) \longrightarrow f(U)$, and this expression also only depends on the first two differentials of $f$ at $m_0$.
Hence, given $r\geq 2$, a jet $(m,x,[f])\in J^r(M,\R^D)$ such that $df_{m}$ has full rank  and a point $z\in \R^D$  such that $\Im(df_{m}) \perp z-x$, and assuming that $v\longmapsto \|d(f\circ \phi)_0(v)\|^2 - \dotp{z-x,d^2(f\circ \phi)_0(v,v)}$ is positive definite (for some chart $\phi : (V,0) \rightarrow (M, m)$), we can unambiguously define the differential at $z$ of the projection $p$ on the image of $f$, independently from the choice of representative $f\in [f]$.
\begin{definition}
    When properly defined, we write as $P_{[f],x,z}$ the differential at $z$ of the projection $p$ on the image of $f$ around $x$.
\end{definition}

Now we can formulate an analogue in the language of jets of the quadratic form $B$ from Lemma \ref{lem:formula_simplex_volume}.
\begin{definition}\label{def:Baz}
Let $r,s\geq 2$ and let $a = (m_1,\ldots,m_s,x_1,\ldots, x_s, [f_1], \ldots, [f_s]) \in J^r_s(M,\R^D)$ and $z\in \R^D$. Assume that  for $ i\in [s]$ the map $(df_i)_{m_i}$ is of rank $m$, the quadratic form $v\longmapsto \|d(f_i\circ \phi_i)_0(v)\|^2 - \dotp{z-x_i,d^2(f_i\circ \phi_i)_0(v,v)}$ is positive definite (for some chart $\phi_i : (V,0) \rightarrow (M, m_i)$) and $\Im((df_i)_{m_i})\perp (z-x_i)$.

We define the quadratic form 
\[ B_{a,z}(h) :=  \sum_{i,j =1}^{s} \det(A_{a,z,i}(h)^\top A_{a,z,j}(h)) \]
on $\R^D$, where $A_{a,z,i}(h)$ is the $D\times s$ matrix whose $j$-th column is equal to $x_j-z$ if $j\neq i$ and whose $i$-th column is $P_{[f_i],x_i,z}(h) - h $.
\end{definition}

We will typically consider the case where $z\in \Conv (x_1,\ldots,x_s)$ and $h \in \text{Vec}(x_1-z,\ldots, x_s -z)^\perp$, but this is not needed for the definition to make sense.

We can now finally formulate (the negation of) the Big Simplex Property \ref{BSP} in a way that is compatible with jets and thence define $W_4^{(s)}$.

\begin{definition}
For any $s,r\in \mathbb{N}$ with $s\geq 2$ and $2\leq r\leq k $, we let $(W_4^\circ)^{(s)}$ be the set of multijets $(m_1,\ldots,m_s,x_1,\ldots, x_s, [f_1], \ldots, [f_s])\in J^r_s(M,\R^D)$  that belong to $W_2^{(s)}$ and  such that
\begin{itemize}
    \item for all $i\in [s]$, $\mathrm{rank}((df_i)_{m_i})=m$;
    \item for all $i\in [s]$ and for any chart $\phi : (U,0) \rightarrow (M, m_i)$, the form $v\mapsto \|d(f_i\circ \phi)_0(v)\|^2 - \dotp{z-x_i,d^2(f_i\circ \phi)_0(v,v)}$ is positive definite;
    \item and the quadratic form $B_{a,z}$ is degenerate on $\mathrm{Vec}(x_1-z,\ldots,x_s-z)^\perp$.
\end{itemize}
Furthermore, we let $W_4^{(s)}$ be the closure of $(W_4^\circ)^{(s)}$ in $J^r_s(M,\R^D)$.
\end{definition}

\begin{lemma}\label{lem:w4_structure}
    For any $s,r\in \mathbb{N}$ with $2\leq s\leq  D+1$ and $2\leq r\leq k-1$, the set $W_4^{(s)}$ is a finite union of closed Whitney stratifiable subsets $W_{4,\nu}^{(s)}$ of  $J^r_s(M,\R^D)$ of codimension at least $ sm + 1$.    
\end{lemma}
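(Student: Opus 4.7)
The plan is to mimic closely the proofs of Lemmas \ref{lem:w2_structure}, \ref{lem:w3_structure} and \ref{lem:w1_structure}. As before, I would first introduce an auxiliary set
\[
(W_4^\circ)' \;\subset\; J^r(M,\R^D)^s\times \R^D
\]
consisting of those tuples $(a,z)$ such that $a\in W_2'$ (with the notations of \eqref{eq:def_W2'}) and such that $B_{a,z}$ is degenerate on $\mathrm{Vec}(x_1-z,\dots,x_s-z)^\perp$. Then I would cover $J^r(M,\R^D)^s$ by finitely many adapted charts of the form $(\phi_\alpha,U_\alpha)$ as in the proof of Lemma \ref{lem:w2_structure}. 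In each such chart, once we write down $P_{[f_i],x_i,z}$ via \Cref{lem:diff_projection_in_charts} as $Q_i^{-1}\circ\pi_{\Im(df_i)_{m_i}}$ with $Q_i=\mathrm{id}-\Lambda_i$, the matrix $A_{a,z,i}(h)$ depends rationally on the coordinates of $a$ and $z$ (on the open locus where $Q_i$ is invertible, which is ensured by positive definiteness of the osculation form), hence $B_{a,z}(h)$ and its determinant restricted to $\mathrm{Vec}(x_1-z,\dots,x_s-z)^\perp$ are semialgebraic functions of those coordinates. Thus $(W_4^\circ)'$, and with it its closure $W_4'$, is a finite union of closed semialgebraic sets in adapted charts, which yields a decomposition into closed Whitney stratifiable pieces $\{W_{4,\nu}^{(s)}\}_\nu$ of $W_4^{(s)}=\pi(W_4')\cap J^r_s(M,\R^D)$ after projection, exactly as for $W_2^{(s)}$.

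The only nontrivial point is the codimension bound. Since $W_2'$ already has codimension $sm+D$ in $J^r(M,\R^D)^s\times\R^D$, it suffices to show that adding the ``degenerate on $E^\perp$'' condition, where $E=\mathrm{Vec}(x_1-z,\dots,x_s-z)$, cuts out at least one more dimension. As in Lemma \ref{lem:w3_structure}, the determinant of a quadratic form is not a submersion at all singular points, so I would restrict to the open locus in $W_2'$ where the restriction $B_{a,z}|_{E^\perp}$ has corank exactly one (the remaining lower-corank strata being of strictly smaller dimension and thus harmless for the bound). On this locus, the map $(a,z)\mapsto \det(B_{a,z}|_{E^\perp})$ is a submersion provided its derivative in \emph{some} direction tangent to $W_2'$ is non-zero; the natural directions to use are perturbations of the second differentials $d^2 f_i$ at the source points $m_i$, since these affect only the shape operators $\Lambda_i$ and hence the maps $P_{[f_i],x_i,z}$, while leaving $(m_i,x_i,z,(df_i)_{m_i})$ and all the defining equations of $W_2'$ untouched.

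The main obstacle, and the one step that really needs verification, is exactly this transversality claim: that by perturbing the $d^2 f_i$ one can indeed move $\det(B_{a,z}|_{E^\perp})$. I would argue this as follows. Fix $(a,z)\in W_2'$ where $B_{a,z}|_{E^\perp}$ has corank one and let $h_0\in E^\perp$ lie in the kernel. Using \Cref{lem:formula_simplex_volume} together with Definition \ref{def:Baz}, the variation of $B_{a,z}(h_0)$ under a small perturbation $d^2 f_i\rightsquigarrow d^2 f_i+\eps\,\Sigma_i$ of the $i$-th second differential is, to first order in $\eps$, a nonzero explicit linear form in $\Sigma_i$, because it is expressible as a sum of $s-1$-minors of matrices involving $x_j-z$ for $j\neq i$ and the variation of $P_{[f_i],x_i,z}(h_0)$, and the latter is a surjective linear function of $\Sigma_i$ for $h_0\neq 0$ (the shape operator can be perturbed arbitrarily by perturbing $d^2 f_i$). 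Since $(x_j-z)_{j\neq i}$ is a basis of $E$ (using \ref{P1}-type non-degeneracy guaranteed by membership in $W_2'$), none of these minors vanish identically, so a suitable $\Sigma_i$ produces a nonzero derivative of $\det(B_{a,z}|_{E^\perp})$. This yields the codimension bound $\geq sm+D+1$ on the appropriate open stratum of $(W_4^\circ)'$, hence codimension $\geq sm+D+1$ on $W_4'$ as a whole (the boundary strata having even larger codimension), and consequently codimension $\geq sm+1$ for each $W_{4,\nu}^{(s)}$ in $J^r_s(M,\R^D)$ after projection by $\pi$ and intersection with $J^r_s(M,\R^D)$, concluding the proof.
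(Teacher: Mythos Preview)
Your setup—passing to an auxiliary set in $J^r(M,\R^D)^s\times\R^D$, noting that $P_{[f_i],x_i,z}=Q_i^{-1}\circ\pi_{\Im(df_i)_{m_i}}$ is rational in chart coordinates, and covering by finitely many adapted charts to obtain closed semialgebraic (hence Whitney stratifiable) pieces—is correct and matches the paper's treatment of the stratifiability part.

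The gap is in the codimension argument. Your claim that $\Sigma_i\mapsto\delta P_{[f_i],x_i,z}(h_0)$ is surjective ``for $h_0\neq 0$'' is false as stated: a direct computation gives $\delta P_i(h_0)=Q_i^{-1}(\delta\Lambda_i)\bigl(Q_i^{-1}\pi_{T_{x_i}\M}(h_0)\bigr)$, which vanishes whenever $h_0\perp T_{x_i}\M$, regardless of $\Sigma_i$. More seriously, even when $\pi_{T_{x_i}\M}(h_0)\neq 0$ so that $\delta v_i$ can be any vector in $T_{x_i}\M$, you have not checked that the induced variation of $B_{a,z}(h_0)=\sum_{j,k}\det(A_j(h_0)^\top A_k(h_0))$ is actually nonzero; the phrase ``none of these minors vanish identically'' does not establish this, since the derivative is a specific linear functional on $T_{x_i}\M$ built from many interacting terms.

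The paper avoids these issues by arguing fibrewise over $a_{\leq 1}$ (sources, targets, first differentials) rather than via a submersion: fixing $a_{\leq 1}\in W_2'$, it shows the polynomial $a_{\geq 2}\mapsto\det\bigl(B_{(a_{\leq 1},a_{\geq 2}),z}|_{E^\perp}\bigr)$ in the second-order data is not identically zero. Concretely, sending the eigenvalues of all $\Lambda_i$ to $-\infty$ forces $P_{[f_i],x_i,z}\to 0$, so $A_{a,z,i}(h)$ tends to the matrix $\tilde A_i(h)$ with $-h$ in the $i$-th slot; by the computation of \Cref{lem:formula_simplex_volume}, $\sum_{i,j}\det(\tilde A_i(h)^\top\tilde A_j(h))$ is, up to a factor, $\Vol_s(\Conv(x_1,\dots,x_s,z+h))^2$, which is positive definite on $E^\perp$ since $x_1,\dots,x_s$ are affinely independent. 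Hence the degeneracy locus is a genuine hypersurface in each fibre, and the semialgebraic fibre-dimension inequality yields $\mathrm{codim}\,W_4^{(s)}\geq sm+1$. Your submersion route can likely be completed, but it would need this same volume computation (to rule out $h_0\perp T_{x_i}\M$ for all $i$) plus an explicit check that the functional on $T_{x_i}\M$ is nonzero; the paper's argument is shorter.
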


\begin{proof}
The proof is yet again similar to that of Lemma \ref{lem:w2_structure}; as for Lemmas \ref{lem:w3_structure} and \ref{lem:w1_structure}, we omit details and focus on the differences of import. As in the previous proofs, we fix $s\in \{2,\dots,D+1\}$ and write $W_4=W_4^{(s)}$. 

The definition of $W_4^\circ$ is similar to that of $W_2$, but three conditions have been added:
the first is that for $(m_1,\ldots,m_s,x_1,\ldots, x_s, [f_1], \ldots, [f_s]) \in J^r_s(M,\R^D)$ the rank of $(df_i)_{m_i}$ must be $m$ for $i\in [s]$. This is an algebraic constraint in any adapted chart, and an open condition, and does not add any difficulty to the proof. The second is that for any $i\in [s]$ and any chart $\phi : (U,0) \rightarrow (M, m_i)$, the form $v\mapsto \|d(f_i\circ \phi)_0(v)\|^2 - \dotp{z-x_i,d^2(f_i\circ \phi)_0(v,v)}$ must be positive definite (where $z$, if it exists, is the only point in $ \Conv (x_1,\ldots,x_s)$ such that $ d(x_1,z) = \ldots = d(x_s,z)$). We know from the proof of Lemma \ref{lem:w3_structure} that this condition is algebraic in adapted charts, and it is also open. The third and last addition is that we ask that the form  $B_{a,z}$ be degenerate on $\text{Vec}(x_1-z,\ldots,x_s-z)^\perp $. If we prove that this condition is semialgebraic in adapted charts and that it does increase the codimension by one, the same arguments as in Lemmas \ref{lem:w2_structure}, \ref{lem:w3_structure} and \ref{lem:w1_structure} show that $W_4^\circ$ is locally semialgebraic of codimension $sm+1$, hence that so  is its closure $W_4$, which can then be decomposed as a union of finitely many closed Whitney stratified sets $\{W_{4,\nu}\}_{\nu \in N}$ of codimension at least $sm+1$ (for some finite set of indices $N$), as claimed.

Let $(m,x,[f])\in J^r(M,\R^D)$ and $z\in \R^D$ be such that the conditions of Lemma \ref{lem:diff_projection_in_charts} are satisfied by $x,f,z$. Let  $\phi : (V,0) \rightarrow (M, m)$ be a chart of $M$ around $m$, and let us place ourselves in the adapted chart of $J^r(M,\R^D)$ to which $\phi$ gives rise.
In Lemma \ref{lem:diff_projection_in_charts}, we defined the linear endomorphism $\Lambda : \Im(df_m)\rightarrow \Im(df_m)$ associated to $x,z,f$ by stating that $\Lambda(v)$ must be the only vector in $\Im(df_m)$ such that $$\dotp{z-x, d^2 (f\circ \phi)_{0}(d(f\circ \phi)_{0}^{-1}(v),d(f\circ \phi)_{0}^{-1}(w)) } = \dotp{\Lambda(v),w} $$
for any $w\in \Im(df_{m_0})$.
Let $A $ be the $D\times m$ matrix that represents $d(f\circ \phi)_0 : \R^m \rightarrow \R^D$, and let $B$ be the $m\times m$ matrix that represents the bilinear form $\R^m\times \R^m \rightarrow \R,  (v,w)\mapsto \dotp{z-x, d^2 (f\circ \phi)_{0}(v,w) }$. Then $\tilde{\Lambda} := d(f\circ \phi)_0^{-1}\circ \Lambda \circ d(f\circ \phi)_0 : \R^m \rightarrow \R^m$ is such that 
\[\dotp{z-x, d^2 (f\circ \phi)_{0}(v,w) } =  \dotp{d (f\circ \phi)_{0}(\tilde{\Lambda}(v)),d(f\circ \phi)_{0}(w)} \]
for any $v,w\in \R^m$, which in terms of matrices means that 
\[ w^\top B v = w^\top A^\top A \tilde{\Lambda}(v)\]
for all $v,w\in \R^m$, or equivalently
\[ (A^\top A)^{-1} B v = \tilde{\Lambda}(v)\]
(where we use the same notation for the map $\tilde{\Lambda}$ and the matrix that represents it in the chosen system of coordinates). 
We also see that the map $d (f\circ \phi)_{0}^{-1} \circ \pi_{\Im(d f_{m})} : \R^D \rightarrow \R^m$ is represented by the pseudo-inverse matrix
\[(A^\top A)^{-1}A^\top.\]
This expression is well-defined because $A$ is of full rank, and the coefficients of the resulting matrix are polynomial fractions in the coefficients of $A$.

Finally, remember that we defined $P_{[f],x,z}$  as the differential at $z$ on the image of $f$, which was shown in Lemma \ref{lem:diff_projection_in_charts} to be equal to
\[ P_{[f],x,z} = (\id- \Lambda)^{-1} \circ  \pi_{\Im(d f_{m})} : \R^D \rightarrow \Im(df_m).\]
This can be rewritten as
\begin{align*}
   P_{[f],x,z} & = (\id- \Lambda)^{-1} \circ  \pi_{\Im(d f_{m})}  =
   d (f\circ \phi)_{0}d (f\circ \phi)_{0}^{-1}(\id- \Lambda)^{-1}d (f\circ \phi)_{0}d (f\circ \phi)_{0}^{-1}\pi_{\Im(d f_{m})} \\&
   = d (f\circ \phi)_{0} (\id- d (f\circ \phi)_{0}^{-1}\Lambda d (f\circ \phi)_{0})^{-1}d (f\circ \phi)_{0}^{-1}\pi_{\Im(d f_{m})} \\&
   =d (f\circ \phi)_{0} (\id- \tilde{\Lambda})^{-1}d (f\circ \phi)_{0}^{-1}\pi_{\Im(d f_{m})} 
\end{align*}
which is thus represented by the $D\times D$ matrix 
\[A(I_m - (A^\top A)^{-1} B)^{-1}(A^\top A)^{-1}A^\top\]
whose coefficients are polynomial fractions in the coordinates of the adapted chart.

For $a =(m_1,\ldots,m_s,x_1,\ldots, x_s, [f_1], \ldots, [f_s]) \in J^r_s(M,\R^D)$ and $z\in \R^D$, we defined $B_{a,z}$ as
\[B_{a,z}(h) :=  \sum_{i,j =1}^{s} \det(A_{a,z,i}(h)^\top A_{a,z,j}(h))\]
for $h\in \R^D$, where $A_{a,z,i}(h)$ is the $D\times s$ matrix whose $j$-th column is equal to $x_j-z$ if $j\neq i$ and whose $i$-th column is $P_{[f_i],x_i,z}(h) - h $.
Sums, scalar products and determinants are all polynomial operations, and we have shown that the coefficients of the matrices representing the maps $P_{[f_i],x_i,z}$ are polynomial fractions in the coordinates, hence so are the coefficients of the $D\times D$ matrix $O$ representing $B_{a,z}$.
One can locally define a linear map $\iota : \R^{D-m} \rightarrow \R^D$ such that the image of $\iota$ is $E^\perp = \text{Vec}(x_2-x_1, \ldots, x_s-x_2)^\perp$ and such that it is represented by a matrix $J$ whose coefficients are polynomial fractions in the coordinates of $x_1,\ldots, x_s$; then $B_{a,z}$ is degenerate when restricted to $E^\perp$ if and only if the determinant of $J^\top O J$ is zero, and this determinant is a polynomial fraction in the coordinates of the adapted chart (possibly restricted to a smaller domain).
This proves  that asking that $B_{a,z}$ be degenerate is a semialgebraic constraint in the adapted charts.

We now turn to the codimension of $W_4^\circ$. In what follows, we restrict ourselves to an adapted chart of sufficiently small domain $U$ of $J^r_s(M,\R^D)$  that 
there exists a polynomial $P$ in the coordinates of the chart and in those of $z\in \R^D$ such that wherever $B_{a,z}$ is well-defined, it is degenerate on $E^\perp$ if and only if $P(a,z) = 0$
 (to simplify notations, we write $P(a,z)$ for the value of $P$ evaluated in the coordinates corresponding to $a$ and $z$).
Such a polynomial exists because we have shown that the determinant of $B_{a,z}$ can be locally expressed as a polynomial fraction, which is $0$ if and only if its numerator is $0$.

We only need to show that the intersection of $W_4^\circ$ with the domain of such a chart is of codimension at least $sm+1$: then the same arguments as in the proofs of Lemmas \ref{lem:w2_structure}, \ref{lem:w3_structure} and \ref{lem:w1_structure} are enough to conclude.
 In what follows, we do not always specify "in the coordinates of the chart" or "restricted to the domain of the chart" and write $f_i$ for the precomposition of $f_i$ with the coordinates of the chart.

We face two challenges: the first is geometric in nature. Indeed, it is not trivial that asking that the form $B_{a,z}$ be degenerate on $E^\perp$ truly increases the codimension by one; after all, the form $B_{a,z}$ \textit{is} degenerate on $\R^D$. Nonetheless, we will show that it is true. The second difficulty is that as in the proof of Lemma  \ref{lem:w2_structure}, the polynomial constraint $P(a,z)=0$ is not a priori everywhere transverse to the other constraints, as $P$ might have singularities. A few tricks are enough to circumvent this mostly technical problem.

Write $a\in U$ as $a=(a_{\leq 1},a_{\geq 2})$, where
\begin{align*}
    a_{\leq 1} &:=  (m_1,\ldots,m_s,x_1,\ldots, x_s, (df_1)_{m_1},  \ldots, (df_s)_{m_s} )\\
    a_{\geq 2} &= ((d^{2}f_1)_{m_1}, \ldots, (d^{2}f_s)_{m_s}, (d^{\geq 3}f_1)_{m_1},\ldots,(d^{\geq 3}f_s)_{m_s}),
\end{align*}
(up to some reordering of the coordinates). Let $a$ be such that for some (unique) $z\in \R^D$, we have $\dim \text{Aff}(x_1,\ldots,x_s) = s-1$, $ \|x_1 -z\|^2 = \ldots = \|x_s-z \|^2$, $\Im((df_i)_{m_i}) \perp (z-x_i)$ and $\text{rank}((df_i)_{m_i} ) = m$ for $i=1,\ldots,s$.
Let us consider all $a'\in U$ such that the sources $m_i'$, the targets $x_i'$ and the first differential $(df_i')_{m_i'}$ of $a'$ are equal to those of $a$, i.e.~such that $a'_{\leq 1} = a_{\leq 1}$ (we let the $a'_{\geq 2}$ vary freely). All the conditions listed above are true for all such $a'$.

The key observation is the following:  we can make the eigenvalues associated to the form 
$v\mapsto  \dotp{z-x_i,d^2(f_i')_{m_i}(v,v)}$ arbitrarily smaller than $0$ (and doing so will ensure, as a side effect, that  $v\mapsto \|d(f_i')_{m_i}(v)\|^2 - \dotp{z-x_i,d^2(f_i')_{m_i}(v,v)}$ be positive definite). To summarize what follows, this makes the differential $P_{[f_i'],x_i,z}$ at $z$ of the projection on the image of $f_i'$  tend to $0$, which in turn makes $B_{a',z}(h)$  approximately equal to the square of the $s$-volume of the simplex $\Conv (x_1,\ldots,x_s,z+h)$ for $h\in E^\perp$, which is non-zero. This shows that the polynomial that associates the differentials of degree $2$ of $a'$ to $P(a',z)$ is non-trivial, thus that this polynomial is non-zero for most $a'$ and defines a codimension $1$ constraint. Now let us formalize this argument.

Remember that we defined the linear endomorphism $\Lambda : \Im(d(f_i')_{m_i})\rightarrow \Im(d(f_i')_{m_i})$ associated to $x_i,z,f_i'$ by stating that $\Lambda(v)$ must be the only vector in $\Im(d(f_i')_{m_i})$ such that (in the coordinates) 
\[\dotp{z-x, d^2 (f_i')_{m_i}((d (f_i')_{m_i})^{-1}(v),(d (f_i')_{m_i})^{-1}(w)) } = \dotp{\Lambda(v),w} \]
for any $w\in \Im(d(f_i')_{m_i})$.
As the eigenvalues associated to the form 
$v\mapsto  \dotp{z-x_i,d^2(f_i')_{m_i}(v,v)}$ go to minus infinity, so must the eigenvalues of $\Lambda$, hence the linear map
\[ P_{[f_i'],x_i,z} = (\id- \Lambda)^{-1} \circ  \pi_{\Im(d (f_i')_{m_i})} : \R^D \rightarrow \Im(d (f_i')_{m_i})\]
tends to $0$.
Then $B_{a',z}$ tends to
\[h\mapsto \sum_{i,j =1}^{s} \det(\tilde{A}_{a,z,i}(h)^\top \tilde{A}_{a,z,j}(h)),\]
where $ \tilde{A}_{a,z,i}(h)$ is the $D\times s$ matrix whose $j$-th column is equal to $x_j-z$ if $j\neq i$ and whose $i$-th column is $ - h $.
Now the same arguments as in the proof of Lemma \ref{lem:formula_simplex_volume} show that 
\begin{align*}
\sum_{i,j =1}^{s} \det(\tilde{A}_{a,z,i}(h)^\top \tilde{A}_{a,z,j}(h))
= (s!)^2\Vol _{s}(\Conv (x_1,\ldots, x_s, z+h))^2 + o(\|h\|^2).
\end{align*}
But as $\dim \text{Aff}(x_1,\ldots,x_s) = s-1$, if $h\in E^\perp =  \text{Vec}(x_1-z,\ldots,x_s-z)^\perp$ then $\Vol _{s}(\Conv (x_1,\ldots, x_s, z+h)) \geq C\|h\|$ for some constant $C>0$ that depends on the positions of $x_1,\ldots, x_s\in \R^D$. This shows that $B_{a',z}$ is non-degenerate if the eigenvalues associated to $v\mapsto  \dotp{z-x_i,d^2(f_i')_{m_i}(v,v)}$  are negative enough.


For the same fixed $a=(a_{\leq 1}, a_{\geq 2}) \in U$ and associated $z\in \R^D$, the map 
\[ P_{a_{\leq 1}}:a'_{\geq 2} \mapsto P((a_{\leq 1}, a'_{\geq 2}),z)\]
is polynomial in the coordinates. As $B_{a',z}$ is non-degenerate for an appropriate choice of $a'_{\geq 2}$, the polynomial $P_{a_{\leq 1}}$ is non-trivially $0$, hence 
\[ S_{a_{\leq 1}}:=\{a'_{\geq 2} :\; B_{(a_{\leq 1}, a'_{\geq 2}),z} \text{ is well-defined and } P_{a_{\leq 1}}(a'_{\geq 2} ) = 0\}\]
is an algebraic hypersurface (possibly with singularities) in the open set 
\[ \{a'_{\geq 2} :\; B_{(a_{\leq 1}, a'_{\geq 2}),z} \text{ is well-defined}\},\]
and in particular a semialgebraic set of codimension (at least) $1$.

Consider the projection 
$$\pi_{\leq 1} : (a_{\leq 1},a_{\geq 2}) \mapsto a_{\leq 1}$$
defined on the domain $U$ of the adapted chart. The image of the restriction of $\pi_{\leq 1}$ to $W_4^\circ \cap U$ is included
in the set $W_2 = W_2^{(s)}$ of Definition \ref{def:W2} for $r=1$. We know from Lemma \ref{lem:w2_structure} that $W_2$ is a finite union of Whitney stratified sets of codimension $sm$ in  $J^1_s(M,\R^D)$, and we can in fact assume that the image of $\pi_{\leq 1}|_{W_4^\circ \cap U}$ is included in one of those Whitney stratified sets by making $U$ small enough.
An easy consequence of Theorem 2.7.1 from \cite{RislerBenedettiAlgAndSemialg} is that if $f:A\rightarrow B$ is a semialgebraic map between $A$ and $B$ two semialgebraic sets, the dimension of $A$ is less than or equal to the dimension of $B$ plus the maximum of the dimensions of the fibers $f^{-1}(b)$ (for $b\in B$).
Now $\pi_{\leq 1} : W_4^\circ \cap U \rightarrow W_2 $ is a semialgebraic map, and we have just shown that the fibers $(\pi_{\leq 1}|_{W_4^\circ \cap U})^{-1}(a_{\leq 1})$ for $a_{\leq 1}\in \Im(\pi_{\leq 1}|_{W_4^\circ \cap U})$  have codimension at least $1$. Hence $W_4^\circ \cap U$ has codimension at least $sm+1$, which is enough to conclude.
\end{proof}

As for Lemmas \ref{lem:w2_structure}, \ref{lem:w3_structure} and \ref{lem:w1_structure}, we choose  for a given $M$ and for each $s,r\in \mathbb{N}$ with $2\leq s\leq D+1$ and $2\leq r\leq k-1$ a decomposition $W_4^{(s)} = \cup_{\nu\in N} W_{4,\nu}^{(s)}$ of $W_4^{(s)}$ into a finite union of closed Whitney stratifiable sets of codimension at least $sm +1$ whose existence is guaranteed by Lemma \ref{lem:w4_structure}, and we simply refer to it as ``the decomposition from Lemma \ref{lem:w4_structure}" or ``the sets $\{W_{4,\nu}\}_{\nu \in N}$ from  Lemma \ref{lem:w4_structure}"  in what follows.

\begin{lemma}\label{lem:P4}
For any $2\leq r \leq k-1$, if $i\in \Emb^k(M,\R^D)$ is such that $\M = i(M) $ satisfies condition \ref{P1}, \ref{P2} and \ref{P3} and is such that for all $s=2,\ldots, D+1$, $ j^r_s(i) : M^{(s)} \rightarrow  J_s^r(M,\R^D) $ is transverse to the sets $\{W_{4,\nu}^{(s)}\}_{\nu\in N}$ from Lemma \ref{lem:w4_structure}, then $\M$ satisfies condition 
\begin{enumerate}[start=4, label={(P\arabic*)}]
\item There exist constants $C>0$ and $\mu_0\in (0,1)$ such that for every $\mu\in [0,\mu_0)$, the set $Z_\mu(\M)$ is included in a tubular neighborhood of size $C\mu$ of $Z(\M)$, that is every point of $Z_\mu(\M)$ is at distance less than $C\mu$ from $Z(\M)$.
\end{enumerate}

\end{lemma}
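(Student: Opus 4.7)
The strategy is to combine the two equivalent characterizations of \ref{P4} we have already established, namely that \ref{P4} is equivalent to the Big Simplex Property (Proposition \ref{prop:big_simplex_is_enough}), which in turn holds if and only if the quadratic form $B$ from Lemma \ref{lem:formula_simplex_volume} is non-degenerate on $E^\perp$ at every critical point (Corollary \ref{cor:BSP_equivalent_BI_non_degenerate}). Thanks to these equivalences, it is enough to show that under our transversality hypotheses, the form $B$ associated to any $z_0\in Z(\M)$ must be non-degenerate on $E^\perp$.

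The first step is to match up the two incarnations of the quadratic form. Fix $z_0\in Z(\M)$ with $\pi_\M(z_0)=\{x_1,\ldots,x_s\}$ and let $m_i\in M$ with $i(m_i)=x_i$. Consider the multijet $a = j_s^r(i)(m_1,\ldots,m_s) = (m_1,\ldots,m_s,x_1,\ldots,x_s,[i],\ldots,[i])$. Since $\M$ satisfies \ref{P1}, \ref{P2}, \ref{P3}, the point $a$ lies in $W_2^{(s)}$ (with associated point $z_0$), each $d i_{m_i}$ has full rank $m$, and by the Osculation Characterization Lemma \ref{lem:characterization_osculation}, the quadratic forms $v\mapsto \|d(i\circ\phi_i)_0(v)\|^2 - \langle z_0-x_i, d^2(i\circ\phi_i)_0(v,v)\rangle$ are positive definite. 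By Proposition \ref{prop:diff_projection} and Lemma \ref{lem:diff_projection_in_charts}, the map $P_{[i],x_i,z_0}$ coincides with $(dp_i)_{z_0}$, where $p_i$ is the local projection from Lemma \ref{lem:local_projections_well_defined}. Comparing Definition \ref{def:Baz} with formula \eqref{eq:first_form}, we see that the quadratic form $B_{a,z_0}$ is exactly the form $B$ appearing in Lemma \ref{lem:formula_simplex_volume}.

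Now suppose, for contradiction, that $B_{a,z_0}$ is degenerate on $E^\perp = \mathrm{Vec}(x_1-z_0,\ldots,x_s-z_0)^\perp$. Then $a$ satisfies every condition in the definition of $(W_4^\circ)^{(s)}$, so $a \in (W_4^\circ)^{(s)} \subset W_4^{(s)}$. By Lemma \ref{lem:w4_structure}, $W_4^{(s)} = \bigcup_{\nu\in N} W_{4,\nu}^{(s)}$ where each $W_{4,\nu}^{(s)}$ is a closed Whitney stratified subset of $J^r_s(M,\R^D)$ of codimension at least $sm+1$. Hence $a$ belongs to some stratum $S$ of some $W_{4,\nu}^{(s)}$ with $\mathrm{codim}\, S \geq sm+1$. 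But $j_s^r(i)$ is transverse to $W_{4,\nu}^{(s)}$ by hypothesis, so for the preimage of $S$ to be non-empty, we would need $\dim M^{(s)} = sm \geq \mathrm{codim}\, S \geq sm+1$, a contradiction.

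Therefore $B_{a,z_0}$ is non-degenerate on $E^\perp$ for every $z_0\in Z(\M)$ (which is a finite set by \ref{P2}), so by Corollary \ref{cor:BSP_equivalent_BI_non_degenerate} the submanifold $\M$ satisfies \ref{BSP}, and by Proposition \ref{prop:big_simplex_is_enough} it satisfies \ref{P4}. The main (and essentially only) conceptual obstacle in this proof is the verification that $B_{a,z_0}$ as defined abstractly from jets really agrees with the concrete $B$ from Lemma \ref{lem:formula_simplex_volume}; this reduces to matching the jet-theoretic projection operator $P_{[i],x_i,z_0}$ with the differential of the local orthogonal projection $p_i$, which is precisely the content of Lemma \ref{lem:diff_projection_in_charts}.
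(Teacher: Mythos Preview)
Your proposal is correct and follows essentially the same approach as the paper's proof: identify the jet-theoretic form $B_{a,z_0}$ with the concrete quadratic form $B$ of Lemma~\ref{lem:formula_simplex_volume} via the identification $P_{[i],x_i,z_0}=(dp_i)_{z_0}$, then use the codimension estimate from Lemma~\ref{lem:w4_structure} together with the transversality hypothesis to force $\Im(j^r_s(i))\cap W_4^{(s)}=\emptyset$, and conclude via Corollary~\ref{cor:BSP_equivalent_BI_non_degenerate}. The structure and the key steps are the same.
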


\begin{proof}
    Fix $2\leq r\leq k\-1$ and $i\in \Emb^k(M,\R^D)$ as in the statement.
    Our goal is to show that each critical point of $i(M)$ satisfies the criterion in Corollary \ref{cor:BSP_equivalent_BI_non_degenerate}; then the corollary will guarantee that $i(M)$ satisfies Condition \ref{P4}.

    Let $z_0\in Z(\M)$ with $\pi_\M(z_0) = \{x_1,\ldots,x_s\}$ for some $s\in \{2,\ldots, D+1\}$, and $E=\mathrm{Vec}(x_j-x_l:\ j,l\in [s]\}$. Let $\delta>0$ be small enough that there exist maps $p_j: B(z_0,\delta) \rightarrow B(x_j,\delta')\cap \M$ that satisfy the conclusions of Lemma \ref{lem:local_projections_well_defined} for $j=1,\ldots,s$ (and for some $\delta'>0$). We have to prove that there exist constants $\delta>\tilde{\delta}>0$ and $L>0$ such that for any $h\in E^\perp$ with $\|h\|< \tilde{\delta}$, 
the $s$-volume of the  $s$-simplex $\Delta(h):= \Conv (\{z_0+h\}\cup \{p_j(z_0+h) :\; j\in [s]\})$ satisfies
\begin{equation}
\Vol _{s}(\Delta(h))\geq L \|h\|.
\end{equation}
As in the proof of Lemma \ref{lem:P2_P3}, the point  $a = (m_1,\ldots,m_s,x_1,\ldots, x_s, [i], \ldots, [i]) \in J^r_s(M,\R^D)$ (where $i(m_j) = x_j$) is such that 
$d(x_1,z_0) = \ldots = d(x_s,z_0)$, $z_0\in \Conv (x_1,\ldots,x_s)$, the rank of $di_{m_j}$  is $m$, and for any chart $\phi : (U,0) \rightarrow (M, m_j)$ the quadratic form $\R^m\rightarrow\R, v\mapsto \|d(i\circ \phi)_0(v)\|^2 - \dotp{z_0-x_j,d^2(i\circ \phi)_0(v,v)}$ is positive definite (for $j=1,\ldots,s$).
The last point is true because we have assumed that $i$ satisfies Condition \ref{P3}, which is equivalent to this property (as was shown in Lemma \ref{lem:characterization_osculation}).
If the form $B_{a,z_0}$ from Definition \ref{def:Baz} was degenerate on $E^\perp$, the point $a$ would satisfy all the conditions of the definition of $(W_4^\circ)^{(s)}$ and hence would belong to $W_4^{(s)}\cap \Im(j^r_s(i))\subset 
J^r_s(M,\R^D)$. But by hypothesis $ j_s^r(i) : M^{(s)} \rightarrow  J_s^r(M,\R^D) $ is transverse to the Whitney stratifiable sets $\{W_{4,\nu}^{(s)}\}_{\nu \in N}$ and those are of codimension at least $sm+1$; hence the intersection of $\Im(j^r_s(i))$ with $W_{4,\nu}^{(s)}$ is trivial for any $\nu\in N$, and so is its intersection with $W_4^{(s)}$. Consequently, the form $B_{a,z_0}$ is non-degenerate on $E^\perp$.

Remember that by definition, the form $B_{a,z_0}$ is defined as
$$B_{a,z_0}(h) :=  \sum_{j_1,j_2 =1}^{s} \det(A_{a,z_0,j_1}(h)^\top A_{a,z_0,j_2}(h))$$
for $h\in \R^D$, where $A_{a,z_0,j}(h)$ is the $D\times s$ matrix whose $l$-th column is equal to $x_l-z_0$ if $l\neq j$ and whose $j$-th column is $P_{[i],x_j,z_0}(h) - h $, with $P_{[i],x_j,z_0}$ being the differential at $z_0$ of the projection on a small neighborhood of $x_j$ in $\M$ (as in the statement of Lemma \ref{lem:diff_projection_in_charts}).
This projection  coincides with $p_j: B(z_0,\delta) \rightarrow B(x_j,\delta')\cap \M$,
hence by definition of the form  $B$ from Equation \ref{eq:first_form} in Lemma \ref{lem:formula_simplex_volume} we have $B(h) = B_{a,z_0}(h)$ for all $h\in E^\perp$ with $\|h\|<\overline{\delta}$ for some $0<\overline{\delta}<\delta$.
By applying Corollary \ref{cor:BSP_equivalent_BI_non_degenerate} and using the same reasoning for each critical point of $\M$, this is enough to conclude that $\M$ satisfies Condition \ref{P4}. 
\end{proof}

\subsection{Density of the transversality conditions}\label{subsection:density}

In the previous subsection, we  defined five sets in the space of multijets that can be written as finite unions of closed Whitney stratified sets, and shown that the transversality to those sets of the map $j^r_s(i) :M^{(s)} \rightarrow  J^r_s(M,\R^D)$ (respectively the map $ (j^r_s)_M\rho_i : M^{(s)}\times \R^D \rightarrow  J^r_s(M,\R) $) induced by an embedding $i:M\rightarrow \R^D$ implies that $i(M)$ satisfies the conditions \ref{P1}-\ref{P4} that are of interest to us.\footnote{The property of transversality must be verified for either $s=2,\ldots, D+2$, $s=2,\ldots, D+1$ or $s=3,\ldots, D+1$ depending on the set considered.}
It remains to show that the set of embeddings for which these properties of transversality are satisfied is dense in the Whitney topology; with this, we will have proved the second half of the Genericity Theorem \ref{thm:generic}, as well as Remark \ref{rmk:C1_also_ok}.
To that end, the transversality theorems presented in Subsection \ref{subsection:transversality_thms} are the key ingredients.

\begin{thm}[The Density Theorem]\label{thm:genericity_thm_density}
     Let $M$ be a compact $C^k$ manifold for some $k\in \N\cup\{\infty\}$ with $k\geq 1$. 
    Then, the set of $C^k$ embeddings $i:M\to \R^D$ such that $\M = i(M)$ satisfies  conditions \ref{P1}, \ref{P2}, \ref{P3} and \ref{P4} is dense in $\text{Emb}^k(M,\R^D)$ for the Whitney $C^k$-topology.
\end{thm}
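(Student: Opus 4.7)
The plan is to apply the two transversality theorems from Subsection~\ref{subsection:transversality_thms} to the Whitney stratified sets $W_j^{(s)}$ constructed in Subsection~\ref{subsection:sets_Wi}, combine the resulting density statements via the Baire category theorem, and finally handle low regularity by smooth approximation.

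I would first establish the theorem under the hypothesis $k \geq 3$, which is what is needed to apply the jet transversality theorems at order $r = 2$. For each $s \in \{2,\ldots,D+2\}$, the closed submanifold $W_0^{(s)} \subset J_s^1(M,\R)$ has its single stratum invariant under addition, so Theorem~\ref{thm:distance_function_transversality} (with $r=1$ and $A = M^{(s)} \times \R^D$) yields a residual subset of $C^k(M,\R^D)$, in the Whitney $C^k$-topology, consisting of maps $f$ for which $(j^1_s)_M \rho_f$ is transverse to $W_0^{(s)}$. In parallel, for each $s$ in the appropriate range and for each of the finitely many closed Whitney stratified components $W_{1,\gamma}^{(s)}$, $W_{2,\alpha}^{(s)}$, $W_{3,\beta}^{(s)}$, $W_{4,\nu}^{(s)}$ supplied by Lemmas~\ref{lem:w1_structure}, \ref{lem:w2_structure}, \ref{lem:w3_structure}, \ref{lem:w4_structure}, Theorem~\ref{thm:Damon_Transversality} (with $r=2$ and $A = M^{(s)}$) produces a residual set of $f$ making $j^2_s(f)$ transverse to that component. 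Since $C^k(M,\R^D)$ in the Whitney $C^k$-topology is a Baire space and we are intersecting only finitely many residual sets, the intersection is itself dense; and because $\Emb^k(M,\R^D)$ is open in $C^k(M,\R^D)$ for the Whitney $C^1$-topology (hence for the $C^k$-topology), density is preserved after restriction to embeddings. Any $i$ in this dense subset satisfies the transversality hypotheses of Lemmas~\ref{lem:P0}, \ref{lem:P2_P3}, \ref{lem:P1}, \ref{lem:P4}, so $i(M)$ satisfies \Pall, which settles the case $k \geq 3$.

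To descend to $k \in \{1,2\}$ (and to obtain the $C^1$ version stated in Remark~\ref{rmk:C1_also_ok}), I would reduce to the smooth case by approximation. A compact $C^k$ manifold ($k \geq 1$) admits a compatible $C^\infty$ structure (Whitney), and the $C^\infty$ embeddings are dense in $\Emb^k(M,\R^D)$ for the Whitney $C^k$-topology. Given $i \in \Emb^k(M,\R^D)$ and any Whitney $C^k$-neighborhood $U$ of $i$, I would first pick $i_1 \in U \cap \Emb^\infty(M,\R^D)$, then choose a Whitney $C^\infty$-neighborhood $V$ of $i_1$ small enough that $V \subset U$ (which is possible because the Whitney $C^\infty$-topology is finer than the Whitney $C^k$-topology), and finally apply the case $k=\infty$ inside $V$ to extract an embedding $j \in V \subset U$ with $j(M)$ satisfying \Pall. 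The same argument run in the Whitney $C^1$-topology throughout yields Remark~\ref{rmk:C1_also_ok}.

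At this stage almost all the genuine work has already been done in Subsections~\ref{subsection:sets_Wi} and \ref{subsection:transversality_thms}, so the present proof is largely organizational. The only genuinely subtle point is the non-compactness of $M^{(s)}$ and of $M^{(s)} \times \R^D$: it prevents Theorems~\ref{thm:Damon_Transversality} and \ref{thm:distance_function_transversality} from yielding anything stronger than residuality here, and it is precisely the reason the openness half of the Genericity Theorem~\ref{thm:generic} requires the separate and considerably more delicate analysis carried out in Section~\ref{sec:C2_stability}.
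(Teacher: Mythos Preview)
Your proposal is correct and follows essentially the same approach as the paper: apply the two transversality theorems to the finitely many Whitney stratified pieces of the sets $W_j^{(s)}$, intersect the resulting residual sets using Baire, restrict to the open set of embeddings, invoke Lemmas~\ref{lem:P0}, \ref{lem:P2_P3}, \ref{lem:P1}, \ref{lem:P4}, and then handle $k\in\{1,2\}$ by passing to a compatible higher-regularity structure and approximating. The only cosmetic differences are that the paper uses $r=2$ uniformly (you use $r=1$ for $W_0^{(s)}$, which is equally valid) and smooths to a $C^3$ structure rather than $C^\infty$; neither affects the argument.
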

\begin{proof}

 The only difficulty here is that the transversality theorems \ref{thm:Damon_Transversality} and \ref{thm:distance_function_transversality} only state  results regarding the density of $C^{r+1}$ functions whose multijets are transverse to sets in $J^r_s(M,\R)$ and $J^r_s(M,\R^D)$, as opposed to  $C^r$ functions, and relatedly, we had to assume throughout Subsection \ref{subsection:sets_Wi} that $M$ was a $C^k$ manifold for $k\geq 3$.
 Hence we have to be somewhat cautious to account for the cases $k=1,2$, as proceeding naively would force us to ask that $M$ be at least $C^3$. The key ingredient here will be that $C^{r_1}$ maps are dense in $C^{r_2}$ maps for $r_1\geq r_2$ and for the Whitney $C^{r_2}$-topology.

 Throughout the proof, the sets $W_0^{(s)}, W_1^{(s)}, W_2^{(s)}, W_3^{(s)}$ and $W_4^{(s)}$ are as defined in Subsection \ref{subsection:sets_Wi}.
 Let us start by assuming that $ k\geq 3$. 
 For $r=2$ and a given $s=2,\ldots, D+2$, the set $W_0^{(s)} \subset J^2_s(M)$ is a closed submanifold (hence in particular a closed Whitney stratifiable set). As $k\geq r+1 = 3$, we can apply the transversality theorem adapted to the case of distance functions (\Cref{thm:distance_function_transversality}) to $W_0^{(s)}$ (simply with $A = M^{(s)}\times \R^D$, in the notations of the theorem) to get that the set 
 \[F_{W_0^{(s)}} := \{f \in C^{k}(M,\R^D) :\; (j^2_s)_{M}\rho_f \text{ is transverse to }W_0^{(s)} \text{ in } J_s^2(M,\R)\}\]
 is residual in $ C^{k}(M,\R^D)$ (note that we have not yet restricted ourselves to embeddings).
 Similarly, we have shown in Lemmas \ref{lem:w2_structure}, \ref{lem:w3_structure}, \ref{lem:w1_structure} and \ref{lem:w4_structure} that $W_1^{(s)}, W_2^{(s)},W_3^{(s)},W_4^{(s)} \subset J^2_s(M,\R^D)$ are finite unions of closed Whitney stratifiable subsets  $\{W_{2,\alpha}^{(s)}\}_{\alpha \in A}$, $\{W_{3,\beta}^{(s)}\}_{\beta \in B}$, $\{W_{1,\gamma}^{(s)}\}_{\gamma \in C}$ and $\{W_{4,\nu}^{(s)}\}_{\nu \in N}$ respectively.
 The transversality theorem \ref{thm:Damon_Transversality} (with $A = M^{(s)}$) then yields that for any $s=2,\ldots, D+1$ and $\alpha \in A$, the set 
 $$F_{W_{2,\alpha}^{(s)}} := \{f \in C^{k}(M,\R^D) :\; j^2_s(f) \text{ is transverse to }W_{2,\alpha}^{(s)} \text{ in } J_s^2(M,\R)\}$$
 is residual in $ C^{k}(M,\R^D)$, and similarly for the sets 
  $$F_{W_{3,\beta}^{(s)}} := \{f \in C^{k}(M,\R^D) :\; j^2_s(f) \text{ is transverse to }W_{3,\beta}^{(s)} \text{ in } J_s^2(M,\R)\}$$
   and 
   $$F_{W_{4,\nu}^{(s)}} := \{f \in C^{k}(M,\R^D) :\; j^2_s(f) \text{ is transverse to }W_{4,\nu}^{(s)} \text{ in } J_s^2(M_{k},\R)\}$$
   for $\beta \in B$, $\nu \in N$ and $s=2,\ldots, D+1$ and the sets 
   $$F_{W_{1,\gamma}^{(s)}} := \{f \in C^{k}(M,\R^D) :\; j^2_s(f) \text{ is transverse to }W_{1,\gamma}^{(s)} \text{ in } J_s^2(M,\R)\}$$
   for $\gamma \in C$ and $s=3,\ldots, D+1$. 
 As $ C^{k}(M,\R^D)$ is a Baire space (see e.g.~\cite[Thm 4.4]{HirschDifferentialTopology}), the (finite) intersection
$$ F:= \bigcap_{s=2}^{D+2} F_{W_0^{(s)}}
\cap 
\bigcap_{s=3}^{D+1}\left(\bigcap_{\gamma \in C}F_{W_{1,\gamma}^{(s)}} \right) 
\cap 
\bigcap_{s=2}^{D+1}\left(\bigcap_{\alpha \in A}F_{W_{2,\alpha}^{(s)}} \cap \bigcap_{\beta \in B} F_{W_{3,\beta}^{(s)}} \cap \bigcap_{\nu\in N} F_{W_{4,\nu}^{(s)}} \right)$$
 of residual sets is yet again residual, and in particular dense in  $ C^{k}(M,\R^D)$.
As the set of embeddings $\text{Emb}^{k}(M,\R^D)$ is open in $ C^{k}(M,\R^D)$ (see \cite[Prop 5.3]{michor1980manifolds}), the set $E:= F \cap \text{Emb}^{k}(M,\R^D)$ is dense in $\text{Emb}^{k}(M,\R^D)$.

Let $i\in E$; then $(j^2_s)_{M}\rho_i$ is transverse to $W_0^{(s)}$ for $s=2,\ldots,D+2$, hence we know from Yomdin's Lemma \ref{lem:P1} that $i(M)$ satisfies Condition \ref{P0}. From Lemma \ref{lem:P2_P3} (which we can apply, as we have assumed that $i$ is $C^k$ with $k\geq 3 = r+1$), and as $j^2_s(i) $ is transverse to the sets  $\{W_{2,\alpha}^{(s)}\}_{\alpha \in A}$ and  $\{W_{3,\beta}^{(s)}\}_{\beta \in B}$ for $s=2,\ldots, D+1$, we get that $i(M)$ satisfies conditions \ref{P2} and \ref{P3}.
This, in addition to the transversality of $j^2_s(i) $  to $\{W_{1,\gamma}^{(s)}\}_{\gamma \in C}$ for $s=3,\ldots,D+1$ and to $\{W_{4,\nu}^{(s)}\}_{\nu \in N}$ for $s=2,\ldots,D+1$, allows us to apply Lemmas \ref{lem:P1} and \ref{lem:P4} (again, $i$ is at least $C^3$) to get that $i(M)$ also fulfills Conditions \ref{P1} and \ref{P4}.
Hence we have shown that the $C^k$ embeddings that satisfy conditions \ref{P1}-\ref{P4} are dense in $\text{Emb}^{k}(M,\R^D) $ for the Whitney $C^k$-topology, which completes the proof when $k\geq 3$.

Assume now that $k\in \{1,2\}$.
Using Theorem 2.9 from \cite[Chapter 2]{HirschDifferentialTopology}, we can equip $M$ with a $C^{3}$ differential structure that is compatible with its given $C^k$ differential structure. To make a clear distinction, we write $M_{3}$ to refer to $M$ equipped with this structure.
Let $U\subset \text{Emb}^{k}(M,\R^D) = \text{Emb}^{k}(M_{3},\R^D)  $ be any non-empty open set of the Whitney $C^k$-topology.
As the $C^{3}$ embeddings $\text{Emb}^{3}(M_{3},\R^D)$ seen as a subset of $\text{Emb}^{k}(M_{3},\R^D)$ are dense in $\text{Emb}^{k}(M_{3},\R^D)$ for the Whitney $C^k$-topology (see e.g.~\cite[Section 2, Thm 2.7]{HirschDifferentialTopology}), there is a $C^3$ embedding $i \in U\cap \text{Emb}^{3}(M_{3},\R^D)$.
As we have shown above that the $C^3$ embeddings $i'$ such that $i'(M)$ satisfies Conditions \Pall~are dense in $\text{Emb}^{3}(M_{3},\R^D)$ for the Whitney $C^3$-topology, we can find such embeddings  arbitrarily close to $i$ for the $C^3$-norm, hence also for the coarser $C^k$-norm. Such an embedding $i'$ is in particular a $C^k$ embedding for the initial structure on $M$, and if $\|i-i'\|_{C^k}$ is small enough, then $i'$ must belong to $U$.
Hence we can find $i' \in U$ such that $i'(M)$ satisfies Conditions \Pall, which is enough to conclude.
\end{proof}

\begin{remark}\label{rmk:openness_alternative_proof}
The observant reader might wonder why we did not use the ``openness" part of the transversality theorems \ref{thm:Damon_Transversality} and \ref{thm:distance_function_transversality} to directly get that the set of $C^k$ embeddings $i:M\to \R^D$ such that $ i(M)$ satisfies  conditions \ref{P1}, \ref{P2}, \ref{P3} and \ref{P4} is open in $\text{Emb}^k(M,\R^D)$ for the Whitney $C^k$-topology.
The even more observant reader remembers that Theorem \ref{thm:Damon_Transversality} guarantees that if $k\geq r+1$, the set of $C^k$ embeddings $i$ such that $j^r_s(i)$ is transverse on $A\subset M^{(s)}$ to some closed Whitney stratified set $W \subset J_s^r(M,\R^D)$ is open in  the Whitney $C^{r+1}$-topology under the condition that $A$ be compact (and similarly for Theorem \ref{thm:distance_function_transversality}).
Here we face two problems: the first is that we need transversality on $A =M^{(s)}$, which is not compact.
This hurdle can in fact be avoided by controlling the distance between the projections of critical points, though some work is required.
The second problem is that for our sets $W \subset J_s^r(M,\R^D)$ to be defined, we need $r\geq 2$, hence $k\geq 3$, and we could not find a way around this limitation;
hence, though we could have shown that the set of suitable embeddings is open in the Whitney $C^3$-topology using the two transversality theorems, we needed the more concrete and geometric proof of the $C^2$ Stability Theorem \ref{thm:C2_stab} to show that this set is also open in the Whitney $C^2$-topology.
\end{remark}

As a corollary of the Density Theorem and the $C^2$ Stability Theorem \ref{thm:C2_stab}, we finally get the Genericity Theorem \ref{thm:generic}.

\begin{proof}[Proof of the Genericity Theorem \ref{thm:generic}]
 The density part of the statement comes directly from the Density Theorem \ref{thm:genericity_thm_density}.
 From the $C^2$ Stability Theorem \ref{thm:C2_stab}, we get that the set of embeddings $i\in \Emb^2(M,\R^D)$ such that $i(M)$ satisfies \ref{P1}, \ref{P2}, \ref{P3} and \ref{P4} is open in $\Emb^2(M,\R^D)$ for the Whitney $C^2$-topology. As the Whitney $C^2$-topology on $\Emb^k(M,\R^D) \subset \Emb^2(M,\R^D)$ is the subspace topology inherited from  the Whitney $C^2$-topology  on $\Emb^2(M,\R^D)$, the set of $C^k $ embeddings such that $i(M)$ satisfies \ref{P1}, \ref{P2}, \ref{P3} and \ref{P4} is also open in $\Emb^k(M,\R^D)$.
\end{proof}

\bibliography{biblio_new}

\end{document}